%
%
%
%
%
\RequirePackage{fix-cm}
\documentclass[smallextended]{svjour3}       
\smartqed  
\usepackage{graphicx}
%
%
%
%
%
\usepackage[utf8]{inputenc}
\usepackage[english]{babel}
\usepackage{bbm}

\usepackage[nottoc]{tocbibind}
\usepackage{mathrsfs,amsfonts,amssymb,amsmath}

\usepackage{amsthm}
\usepackage{enumerate}
\usepackage{graphicx,cite}
\usepackage{romannum}
\usepackage{hyperref}
\usepackage{authblk}
\usepackage{graphicx}
\usepackage{pst-node}
\usepackage{tikz-cd} 
\newcommand\sbullet[1][.5]{\mathbin{\vcenter{\hbox{\scalebox{#1}{$\bullet$}}}}}
\allowdisplaybreaks

\hypersetup{colorlinks=true, linkcolor=blue, citecolor=red}

\textwidth=15.0cm \textheight=21.0cm \hoffset=-1.1cm \voffset=-0.5cm

\DeclareMathOperator{\diam}{diam}
\DeclareMathOperator{\Leb}{Leb}

\DeclareMathOperator{\interior}{int}

\DeclareMathOperator{\Emb}{Emb}
\DeclareMathOperator{\dist}{Dist}

\newcommand{\N}{\mathbb{N}}


\numberwithin{equation}{section}


\newtheorem{assumption}{Assumption}

\usepackage[square,numbers]{natbib}
\bibliographystyle{abbrvnat}

\begin{document}
\pagenumbering{arabic}
\setlength{\belowdisplayskip}{0pt}

\title{Back to Boundaries in Billiards}


\author{Leonid A. Bunimovich  \and
        Yaofeng Su
}


\institute{Leonid A. Bunimovich \at
              School of Mathematics, Georgia Institute of Technology,  Atlanta, USA\\
             \email{leonid.bunimovich@math.gatech.edu}           
           \and
           Corresponding author: Yaofeng Su  \at
              School of Mathematics, Georgia Institute of Technology,  Atlanta, USA\\
              \email{yaofeng.su@math.gatech.edu} 
}

\date{Received: date / Accepted: date}

\maketitle

\begin{abstract}
We prove Poisson limit laws for open billiard systems with holes in the boundary of billiard tables. Traditionally some abstract holes in the phase space of a billiard were studied. Holes in the boundary are of an intrinsic interest for billiard systems, especially for applications. Sinai billiards with or without a finite horizon, diamond billiards, and semi-dispersing billiards are considered. However, the emphasis is on focusing billiards with slow decay of correlations, where various new technical difficulties arise.

\keywords{Poisson limit laws \and  Billiards}
\end{abstract}

\tableofcontents

\section{Introduction}\ \par

The studies of Poisson approximations for recurrences to small subsets in the phase spaces of chaotic dynamical systems are developed now into a large active area. Another view at this type of problems is a subject of the theory of open dynamical systems, where some positive measure subset of the phase space is named a hole, and the process of escape through the hole is studied. 

In a general setup, one picks a small measure subset (a hole) in the phase space of a hyperbolic (chaotic) ergodic dynamical system and attempts to prove that in the limit, when the measure of the hole approaches zero, the corresponding process of recurrences to the hole converges to a Poisson process. 

This area received an essential boost after L-S.Young papers \cite{Young,Young2}, where a new general framework was introduced for analysis of statistical properties of hyperbolic dynamical systems. This approach employs a representation of the phase space of a dynamical system as a tower (later called a Young tower, a Gibbs-Markov-Young tower, etc), which allows to study dynamics by analysing recurrences to the base of the tower.  
Particularly, in the papers \cite{peneijm,peneetds,Su} the holes, which are the balls, shrinking to a point in the phase space of billiard systems, were studied. The paper \cite{vaientinullset} deals with the holes (shrinking to a curve) in the phase space of Sinai billiards with finite horizons.  The paper \cite{penebacktoball} studied holes within the Sinai's billiard tables, and the work \cite{peneijm} considered the holes near corners of a diamond-shaped billiard table. Such holes correspond to strip-shaped holes in the phase  space  which  are  shrinking to broken line segments. Also, the paper \cite{vaientinullset} should be mentioned, which is dealing with various holes shrinking to null sets in the phase space. Some of the systems, which were considered in this paper \cite{vaientinullset}, can not be modelled by Young towers, but they have milder singularities than the ones in the billiard systems.

Here we present a new development of this area, which, particularly, allows to prove Poisson approximations for various billiard systems with arbitrarily slow decays of correlations. Moreover, we also consider holes located on the boundaries of billiard tables. (Such holes are of special interest for applications, where the holes in the boundary are really made by switching off a particular field generated, e.g., by scanning lasers, and measuring escapes of particles through such holes in the boundary \cite{friedman2001observation, milner2001optical}).  Our main result can be informally described as presented below (a formal description can be found in Theorem \ref{thm}).\begin{proof}[Theorem] For a large class of hyperbolic billiards, the processes of hitting and escaping through a hole in the boundary of a billiard table (generated by billiard maps)  are asymptotically Poissonian.\phantom\qedhere 
\end{proof}The holes within the boundaries are natural to consider for the process of escape in billiard systems. In the phase space such holes tend to straight segments, when a hole within the boundary shrinks to a point. Moreover, exactly such holes are studied in real systems, most notably in  physics experiments \cite{nockel1997ray, nockel1996directional, richter2001wave, friedman2001observation, milner2001optical}. It should be also mentioned that real experiments in physics dealing with billiards usually consider simply connected (i.e., without inner ``holes") billiard tables. In such experiments the particles (which could be considered as noninteracting between themselves) escape through a hole within the boundary. Then a special (measuring) experimental device counts a number of escaping per unit time particles. It is especially important for quantum chaos experiments because the counting devices must be located outside of  billiard tables in order to measure the dynamics of the system, rather than interactions between the system and the measuring device. However, in numerical, rather than real, experiments one can certainly consider a hole of any type.  For instance, Sinai billiards and the Lorentz gas have not simply connected billiard tables. We are not familiar with physics experiments where open billiards of this type were studied. One can make a standard formal assumption of the mathematical theory of open systems that when the particle gets into a hole then it ``disappears". (In real experiments  the particles after escape do not disappear instantly, but continue to propagate outside the billiard table, and interact with other particles, fields, etc). However, for the sake of generality, we do not assume in the present paper that the billiard tables are simply connected.

In comparison to previous papers we obtain here new results, some of which are a kind of unexpected.
\begin{enumerate}
    \item The technique used in \cite{penebacktoball} works  for Sinai billiards only with finite horizons. Our Theorem \ref{thm} is applicable to a larger class of billiard systems. The technique used here is also new for general open dynamical systems.
    \item The approach employed in the papers \cite{penebacktoball, peneijm} requires to verify the so-called short return conditions specifically for each billiard system, while our main Theorem \ref{thm} assumes only some natural general, and easy to verify, conditions.
    \item Unlike \cite{vaientinullset}, Theorem \ref{thm} shows that, surprisingly, the validity of the Poisson approximation for billiard systems does not depend on fast correlations decay, i.e., it holds for any rate of decay of correlations.
    \item The papers \cite{peneetds, Su} require that the contracting (resp. expanding) rates along stable (resp. unstable) manifolds must be sufficiently large. Our results show that this condition can be weakened.
 \end{enumerate}

The structure of the paper is the following. The section \ref{section2} presents some notations, definitions,  the formulation of the main Theorem \ref{thm}, and ideas of the proofs. The sections \ref{section3}-\ref{section8} contain a proof of this Theorem \ref{thm}. We start by giving a general result on a Poisson approximation for general point processes. Then we simplify it, step by step, from the section \ref{section3} to the section \ref{section8}. The section \ref{sectionapp} deals with applications to various billiard systems, especially to slowly mixing billiards, which are the main focus in this paper.


\section{Definitions, Notations, Main Results, and Ideas of the Proofs}\label{section2}
\subsection{Definitions, notations and main results}
We start by introducing some notations
\begin{enumerate}
 \item $C_z$ denotes a constant depending on $z$.
    \item The notation $``a_n \precsim_z b_n"$  ($``a_n=O_{z}(b_n)"$) means that there is a constant $C_z \ge 1$ such that (s.t.) $ a_n \le C_z  b_n$ for all $n \ge 1$, whereas the notation $``a_n \precsim b_n"$ (or $``a_n=O(b_n)"$) means that there is a constant $C \ge 1$ such that $ a_n \le C  b_n$ for all $n \ge 1$. Next, $``a_n \approx_z b_n"$ and $a_n=C_z^{\pm 1}b_n$ mean that there is a constant $C_z \ge 1$ such that  $ C_z^{-1}  b_n \le a_n \le C_z b_n$ for all $n \ge 1$. Further, the notations $``a_n=C^{\pm1} b_n"$ and $``a_n \approx b_n"$ mean that there is a constant $C \ge 1$ such that $ C^{-1}  b_n \le a_n \le C b_n$ for all $n \ge 1$. Finally, $``a_n =o(b_n)"$ means that  $\lim_{n \to \infty}|a_n/b_n|=0$. 
      \item The notation $\mathbb{P}$ refers to a probability distribution on the probability space, where a random variable lives, and $\mathbb{E}$ denotes the expectation of the random variable.
    \item $\mu_A, \Leb_A$ denote measures on a set $A$, unless it is specifically mentioned.
    \item $\mathcal{T}(\mathcal{A})$ denotes a tangent bundle of a (sub)manifold $\mathcal{A}$.
    \item $\mathbb{N}=\{1,2,3,\cdots\}$, $\mathbb{N}_0=\{0,1,2,3,\cdots\}$. 
\end{enumerate}

\begin{definition}[Billiard tables, billiard maps and phase spaces]\ \par
We consider a billiard in a two-dimensional region $Q\subseteq \mathbb{R}^2$ (called a billiard table) with a piece-wise smooth (of class $C^3$) boundary $\partial Q$. Each smooth piece has a uniformly bounded curvature. The boundary $\partial Q$ is equipped with a field of inward unit normal vectors $n(q), q \in \partial Q$. 

A billiard is a dynamical system generated by the motion of a point
particle with the unit velocity inside the region $Q$ being reflected from its
boundary according to the law ``the angle of incidence equals the angle of
reflection". It means that upon reflection the tangent component of the
velocity remains the same, while the normal component changes its sign
according to the rule $v_{+}=v_{-}-2\langle n(q), v_{-}\rangle n(q)$, where $v_{+}$ (resp. $v_{-}$) is the velocity of the particle immediately after (resp. before) reflection.

The phase space of a billiard is the restriction of the unit tangent
bundle of $\mathbb{R}^2$ to $Q$. We will use the standard notation for phase points $x=(q, v)$, where $q$ is the point of the configuration space $Q$ and $v$ is the unit velocity vector. The billiard preserves the Liouville measure $d\nu:=dq dv$ where $dq$ and $dv$ are Lebesgue measures on $Q$ and on the unit
one-dimensional sphere. The corresponding flow will be denoted by $\{S^t\}$. It is customary for billiard-type systems to study instead of $\{S^t\}$ a
dynamical system with discrete time, which is called a billiard map $f$. Denote \[\mathcal{M}=\{x=(q, v), q \in \partial Q, \langle v, n(q)\rangle >0\}.\]  For $x=(q, v) \in \mathcal{M}$, let $\tau (x)$ be
the first positive moment of reflection from the boundary of the billiard
orbit determined by $x$. Then the billiard map is defined by $f(x)=(q', v')=S^{\tau}
x$, so that $q'$ is the point of
the next reflection and $v'$ is the outcoming velocity vector at that point. We call $\mathcal{M}$ a phase space of the billiard map $f$.

Due to the regularities of $\partial Q$,  the set of singular points of the boundary $\partial Q$ is of measure zero, the angle $\phi$ of the velocity vector $v$ varies from $-\pi/2$ to $\pi/2$ at any regular point $q \in \partial Q$, hence $\mathcal{M}=\partial Q \times [-\pi/2,\pi/2]$ almost surely. In what follows we always identify $\mathcal{M}$ as $\partial Q \times [-\pi/2,\pi/2]$ and denote the phase point $x\in \mathcal{M}$ by $(q, \phi) \in \partial Q \times [-\pi/2,\pi/2]$ throughout the paper.

The phase space  $\mathcal{M}=\partial Q \times [-\pi/2,\pi/2]$ is endowed with a natural Riemannian metric $d_{\mathcal{M}}$ and Riemannian volume $\Leb_{\mathcal{M}}$. The billiard map $f$ preserves \[d\mu_{\mathcal{M}}:=(2\Leb_{\partial Q} \partial Q)^{-1}\cos \phi d\phi dq=(2\Leb_{\partial Q} \partial Q)^{-1}\cos \phi \Leb_{\mathcal{M}},\]where $dq$ is the one-dimensional Lebesgue measure on the boundary $\partial Q$ and
$d\phi$ is the one-dimensional Lebesgue (uniform) measure on $[-\pi/2,\pi/2]$. 
\end{definition}

\begin{definition}[An induced system]\label{inducesystem}\ \par
Suppose that there is a fixed subset $X\subseteq \mathcal{M}$ with $\Leb_{\mathcal{M}}(X)>0$. The first return time to $X$ is $R: X\to \mathbb{N}$. We assume that $X$ can be partitioned into countably many connected pieces
\begin{equation}\label{parition}
    X=\bigcup_{i\ge 1} X_i \mod{0},
\end{equation} so that $R$ is constant on each $X_i$ and  \[\Leb_{\mathcal{M}} (\partial X_i)=0,\quad \interior{X_i} \bigcap \interior{X_j}=\emptyset \text{ for } i \neq j.\]

The first return time $R$ induces a first return map  $f^R: X \to X$ and a new dynamical system $(X, f^R)$.

\end{definition}
\begin{definition}[Singularities and (un)stable manifolds]\label{unstablemfd} \par
 Denote by $\mathbb{S}\subseteq X$ the singularity set for $f^R$. For billiard systems, $\mathbb{S}$ has zero Lebesgue measure, and $\mathbb{S}^c \subseteq X$ consists of countably many open connected components. Unstable (resp. stable) manifolds are the connected components of $(\bigcup_{i \ge 0}(f^R)^{i}\mathbb{S})^c$ (resp. $(\bigcup_{i \ge 0}(f^R)^{-i}\mathbb{S})^c$). A closed and connected part of the unstable (resp. stable) manifold will be called an unstable (resp. stable ) disk. We denote each unstable (resp. stable) manifold/disk by $\gamma^u$ (resp. $\gamma^s$), and its tangent vectors by $v^u$ (resp. $v^s$).
\end{definition}

\begin{remark}\label{singular}
The singularity set $\mathbb{S}$ consists of the points in $X $ which are not ``well-behaved". It includes the discontinuities and the points where the map $f^R$ is not differentiable. It may also include other points in $X$ with some ``bad" properties.
\end{remark}
\begin{definition}[Chernov-Markarian-Zhang (CMZ) structures]\label{cmz}\ \par
We say that an induced system $(X, f^R)$ in Definition \ref{inducesystem} is a CMZ structure of the billiard system $(\mathcal{M}, f)$ if there are constants $C>0$ and $\beta \in (0,1)$ such that the following conditions hold
\begin{enumerate}
    \item Hyperbolicity. For any $n\in \mathbb{N}$, $v^u$ and $v^s$,\begin{align*}
        |D(f^R)^n v^u|\ge C\beta^{-n} |v^u|, \quad  |D(f^R)^{n} v^s|\le C\beta^n |v^s|,
    \end{align*}where $|\cdot|$ is the Riemannian metric induced from $\mathcal{M}$ to (un)stable manifolds.
    \item SRB measures and u-SRB measures. $(X, f^R,\mu_X) $ is a K-system where $\mu_X:=\frac{\mu_{\mathcal{M}}|_X}{\mu_{\mathcal{M}}(X)}$. A corresponding measurable K-partition consists of smooth pieces of stable manifolds. Moreover, conditional distributions on $\gamma^u$ (say $\mu_{\gamma^u}$) are absolutely continuous w.r.t. Lebesgue measure $\Leb_{\gamma^u}$ on $\gamma^u$.
    \item Distortion bounds. Let $d_{\gamma^u}(\cdot, \cdot)$ be the distance measured along $\gamma^u$. By $\det D^uf^R$ we denote the Jacobian of $Df^R$ along the unstable manifolds. Then, if $x, y\in X$ belong to a $\gamma^u$, such that $(f^R)^n$ is smooth on $\gamma^u$, the following relation holds
    \begin{align*}
        \log \frac{\det D^u(f^R)^n(x)}{\det D^u(f^R)^n(y)}\le \psi\left[d_{\gamma^u}\Big((f^R)^nx,(f^R)^ny\Big)\right],
    \end{align*}where $\psi(\cdot)$ is some function, which does not depend on $\gamma^u$, and $\lim_{s\to 0^+}\psi(s)=0$.
    \item Bounded curvatures. The curvatures of all $\gamma^u$ are uniformly
bounded by $C$.
\item Absolute continuity. Consider a holonomy map $h: \gamma^u_1 \to \gamma^u_2$,  which maps a point $x \in \gamma^u_1$ to the point $h(x)\in \gamma^u_2$, such that both $x$ and $h(x)$ belong to the same $\gamma^s$. We assume that the holonomy map satisfies the following relation \begin{align*}
    \frac{\det D^u(f^R)^n(x)}{\det D^u(f^R)^n\big(h(x)\big)}=C^{\pm 1} \text{ for all }n \ge 1 \text{ and }x\in \gamma^u_1,
\end{align*}
\item Growth lemmas. There exist $N \in \mathbb{N}$, sufficiently small $\delta_0>0$ and
constants  $ \kappa, \sigma > 0$ which satisfy the following condition. For any
sufficiently small $\delta>0$ and for any disk on a smooth unstable manifold  $\gamma^u$ with $\diam \gamma^u \le \delta_0$, denote by $U_{\delta} \subseteq \gamma^u$ a $\delta$-neighborhood of the subset $\gamma^u \bigcap \bigcup_{0\le i \le N}(f^R)^{-i}\mathbb{S}$ within the set $\gamma^u$. Then there exists an open subset $V_{\delta} \subseteq \gamma^u \setminus U_{\delta}$, such that $\Leb_{\gamma^u}\big(\gamma^u \setminus (U_{\delta}\bigcup V_{\delta})\big)=0$, and for any $\epsilon >0$ \begin{gather*}
    \Leb_{\gamma^u}(r_{V_{\delta},N}<\epsilon)\le 2\epsilon \beta+\epsilon C \delta_0^{-1} \Leb_{\gamma^u} (\gamma^u),\\
    \Leb_{\gamma^u}(r_{U_{\delta},0}< \epsilon)\le C \delta^{-\kappa} \epsilon,\\
    \Leb_{\gamma^u}(U_{\delta})\le C \delta^{\sigma},
\end{gather*}where $r_{U_{\delta},0}(x):=d_{\gamma^u}(x, \partial U_{\delta})$, $r_{V_{\delta}, N}(x):=d_{(f^R)^N\gamma^u}\big((f^R)^Nx, \partial (f^R)^NV_{\delta}(x)\big)$, and $V_{\delta}(x)$ is a connected component of $V_{\delta}$, which contains $x$.

\item Finiteness: $\int R d\mu_X< \infty$.
\item Mixing: $\gcd \{R\}=1$.

\end{enumerate}

\end{definition}

\begin{remark}
 In the growth lemmas a positive integer $N$ is usually chosen as a sufficiently large number. From the paper \cite{gurevich}, the conditions that $\gcd{R}=1$ and K-mixing of $f^R$ guarantee that $f$ is also K-mixing.
\end{remark}

Consider now the first return tower \begin{align*}
    \Delta:=\{(x,n)\in X \times \{0,1,2,\cdots\}: n < R(x)\}.
\end{align*}

A dynamics $F: \Delta \to \Delta$ is defined as $F(x, n)=(x, n+1)$ if $n+1\le R(x)-1$ and $F(x,n)=(f^Rx, 0)$ if $n=R(x)-1$. The projection $\pi:\Delta \to \mathcal{M}$ is defined by $\pi(x,n):=f^n(x)$ as $\pi \circ F=f \circ \pi$. Finally we introduce projections $\pi_{X}: \Delta \to X$ and $\pi_{\mathbb{N}}: \Delta \to \mathbb{N}_0$, so that for any $(x,n) \in \Delta$ \begin{equation}\label{projections}
    \pi_{X}(x,n)=x,\quad \pi_{\mathbb{N}}(x,n)=n.
\end{equation}

Extend now $\mu_X$ from $X$ to $\Delta$ as \begin{align*}
    \mu_{\Delta}:=(\int R d\mu_X)^{-1}\sum_{j}(F^j)_{*}\big(\mu_X|_{\{R>j\}}\big),
\end{align*} which reproduces the invariant probability measure on $\mathcal{M}$ \begin{align*}
\mu_{\mathcal{M}}=(\pi)_{*}\mu_{\Delta}.  \end{align*}

We identify $\Delta_0:=(X \times \{0\})\bigcap \Delta$ with $X$, $\mu_{\Delta_0}$ with $\mu_X$ and $F^R$ with $f^R$. Therefore $\pi: X \to X$ is the identity map.

Note that $\pi:\Delta \to \mathcal{M}$ is bijective. Thus $(\Delta,F)$ is identical to $(\mathcal{M},f)$, and $(X, f^R)=(\Delta_0, F^R)$ is a CMZ structure of $(\Delta, F)$.

\begin{remark}\ \par
\begin{enumerate}
    \item If $R=1$, then $X=\mathcal{M}$, meaning that $(\mathcal{M}, f, \mu_{\mathcal{M}})$ has a CMZ structure.
    \item It follows from \cite{Chernovjsp, CZnon} that $(X, f^R, \mu_X)$ can be modelled by a hyperbolic Young tower \cite{Young}.
    \item It follows from \cite{CZnon,CZcmp} that the mixing rates for the dynamical system $(\mathcal{M}, f, \mu_{\mathcal{M}})$ are determined by the decay rate of $\mu_X(R>n)$. However, we do not use this fact in the present paper.
    \item $(\mathcal{M}, f, \mu_{\mathcal{M}})$ is a K-system (and therefore mixing) because of the condition that $f^R$ is K-mixing and $\gcd \{R\}=1$.
\end{enumerate}
\end{remark}
\begin{definition}[Holes and dynamical point processes]\label{dynamicalpointprocess} \par
Throughout the paper, a hole within the boundary $\partial Q$ is an open disk $B_{r}(q)$ with radius $r$ and the center at a regular point $q \in \partial Q$ of the boundary of a billiard table. 

We define now a dynamical point process $\mathcal{N}^{r,q}$ on $\mathbb{R}^{+}\bigcup\{0\}$. For any measurable $A\subseteq \mathbb{R}^{+}\bigcup\{0\}$, and any $x \in \mathcal{M}$, \begin{align*}
    \mathcal{N}^{r,q}(x)(A):&=\#\{i\ge 0: f^i(x) \in B_r(q) \times [-\pi/2,\pi/2], \quad i \cdot \mu_{\mathcal{M}}\big(B_r(q) \times [-\pi/2,\pi/2]\big) \in A\}\\
    &=\sum_{i \cdot \mu_{\mathcal{M}}(B_r(q)\times [-\pi/2,\pi/2])\in A}\mathbbm{1}_{B_r(q)\times [-\pi/2,\pi/2]}\circ f^i(x).
\end{align*}

We will usually drop the symbol $x$ and write 
 \begin{align*}
    \mathcal{N}^{r,q}(A)=\sum_{i \cdot \mu_{\mathcal{M}}(B_r(q)\times [-\pi/2,\pi/2])\in A}\mathbbm{1}_{B_r(q)\times [-\pi/2,\pi/2]}\circ f^i.
\end{align*}

By using $\mu_{\mathcal{M}}:=(\pi)_{*}\mu_{\Delta}$, we get 
 \begin{align*}
    \mathcal{N}^{r,q}(A)=\sum_{i \cdot \mu_{\Delta}(A_r)\in A}\mathbbm{1}_{A_r}\circ F^i,
\end{align*}where $A_{r}:=\pi^{-1}(B_r(q)\times [-\pi/2,\pi/2])$.
\end{definition}
\begin{remark}
Following the theory of point processes (see e.g. page 226 of \cite{kallenberg}) we have that 
\[\mathcal{N}^{r,q}(x)=\sum_{i\ge 0: f^i(x) \in B_r(q) \times [-\pi/2,\pi/2]} \delta_{i\cdot \mu_{\mathcal{M}}(B_r(q)\times [-\pi/2, \pi/2]) },\] where $\delta$ is a Dirac measure. Hence, $\mathcal{N}^{r,q}$ is a random counting measure, e.g., $\mathcal{N}^{r,q}(x)[0,1]$ counts the number of $i \in [0, \frac{1}{\mu_{\mathcal{M}}\{B_r(q)\times [-\pi/2, \pi/2]\}}]$, such that $f^i(x)$ lies in $B_r(q)\times [-\pi/2, \pi/2]$.
\end{remark}

\begin{definition}[Poisson point processes]\ \par
We say that $\mathcal{P}$ is a Poisson point process on $\mathbb{R}^+\bigcup\{0\}$ if
\begin{enumerate}
    \item $\mathcal{P}$ is a random counting measure on $\mathbb{R}^+\bigcup\{0\}$.
    \item $\mathcal{P}(A)$ is a Poisson-distributed random variable for any Borel set $A \subseteq \mathbb{R}^+\bigcup\{0\}$.
    \item If $A_1, A_2, \cdots, A_n \subseteq \mathbb{R}^+\bigcup\{0\}$ are pairwise disjoint, then  $\mathcal{P}(A_1), \cdots, \mathcal{P}(A_n)$ are independent.
    \item $\mathbb{E}\mathcal{P}(A)=\Leb_{\mathbb{R}^{+}\bigcup \{0\}}(A)$ for any Borel set $A\subseteq \mathbb{R}^+\bigcup\{0\}$.
\end{enumerate}

\end{definition}

\begin{definition}[Poisson approximations]\label{pdef} \par
We say that $\mathcal{N}^{r,q} \to_d \mathcal{P}$ if  $\mathcal{N}^{r,q}f \to_d \mathcal{P}f$ for any $f\in C^{+}_c(\mathbb{R}^+\bigcup\{0\})$, i.e.,
\begin{align*}
    \lim_{r\to 0}\int \exp{(-t\cdot  \mathcal{N}^{r,q}f)}d\mu_{\mathcal{M}}=\int \exp{(-t\cdot \mathcal{P}f)}d\mathbb{P} \text{ for all }t>0,
\end{align*} where $C^{+}_c(\mathbb{R}^+\bigcup \{0\})$ is the space consisting of positive continuous functions with a compact support, defined on $[0,\infty)$.

This is equivalent to the relation \begin{align*}
    (\mathcal{N}^{r,q}I_1,\mathcal{N}^{r,q}I_2, \cdots, \mathcal{N}^{r,q}I_k) \to_d (\mathcal{P}I_1, \mathcal{P}I_2, \cdots, \mathcal{P}I_k) 
\end{align*}for any $k \in \mathbb{N}$ and any bounded intervals $I_1, \cdots I_k\subseteq{R}^+\bigcup\{0\}$. (See, e.g., Theorem 16.16 in \cite{kallenberg}).

Thus, the limit distribution of $\mathcal{N}^{r,q}$ is Poisson, when the disk $B_r(q)$ shrinks to a regular point on the boundary $\partial Q$ of a billiard table.

\end{definition}

\begin{definition}[Sections and quasi-sections]\ \par
Recall that $\pi_X: \Delta \to X $ is defined as $\pi_X(x,n)=x$. We say that $B_r(q) \times [-\pi/2, \pi/2]\subseteq \mathcal{M}$ is a section if $\pi_X: \pi^{-1} \big(B_r(q) \times [-\pi/2, \pi/2]\big) \to X$ is injective for any sufficiently small $r>0$. Further, $B_r(q) \times [-\pi/2, \pi/2] \subseteq \mathcal{M}$ is a quasi-section if for any sufficiently small $r>0$ there is a measurable set $S_r \subseteq B_r(q) \times [-\pi/2, \pi/2]$, such that  $\mu_{\mathcal{M}}[\big(B_r(q) \times [-\pi/2, \pi/2]\big)\setminus S_r]=o\big(\mu_{\mathcal{M}}\big[B_r(q) \times [-\pi/2, \pi/2]\big]\big)$,  and $\pi_X: \pi^{-1} S_r \to X$ is injective. In this case, we also refer to $S_r$ as a section in $B_r(q) \times [-\pi/2, \pi/2]$.  We will explicitly write what it is for a given example throughout the present paper.
\end{definition}
\begin{remark}\label{remarkonsection}
In the applications to two-dimensional billiards, $B_r(q)\times [-\pi/2,\pi/2]$ is a strip in $\mathcal{M}$, and $ B_r(q)\times [-\pi/2,\pi/2]\setminus S_r$ is usually a union of finitely many rectangles, whose measures are of order $O(r^2)$, see section \ref{sectionapp}. To avoid unnecessary complications, we always assume that $ B_r(q)\times [-\pi/2,\pi/2]\setminus S_r$ has a regular shape, e.g., as a union of finitely many rectangles.
\end{remark}

\begin{assumption}[\textbf{Geometric assumptions}]\label{assumption}\ \par
\begin{enumerate}
    \item For a.e. $q \in \partial Q$ the set $B_r(q)\times [-\pi/2, \pi/2]$ is a quasi-section.
    \item $\bigcup_{i\ge 1}\partial X_i\subseteq \mathbb{S}$ (see the definition of $X_i$ in (\ref{parition})).
    \item There are constants $C>0$ and $\alpha \in (0,1]$, such that for any $\gamma^k$, $k=u$ or $s$ (the condition $\bigcup_{i\ge 1}\partial X_i\subseteq \mathbb{S}$ implies that $\gamma^k \subseteq X_i$ for some $i\ge 1$),  and for any $x,y \in \gamma^k$, \begin{align*}
        d_{f^j\gamma^k}(f^jx, f^jy)\le C d_{\gamma^k}(x,y)^\alpha \text{ for all }j\in[0,R(x)).
    \end{align*}
    \item There exist two cones $C^u,C^s \subseteq \mathcal{T}(\mathcal{M})$, such that \[\dim(\interior{C^u}\bigcap \interior{C^s})<1, \quad (Df)C^u \subseteq C^u, \quad (Df)^{-1}C^s \subseteq C^s,\]
    and for all $n\ge 1$ and $\Leb_{\partial Q}$-a.e. $q \in \partial Q$
    \[\dim \big(\mathcal{T}(\{q\}\times [-\pi/2, \pi/2])\bigcap \interior{C^u}\big)<1, \quad \dim \big(\mathcal{T}(\{q\}\times [-\pi/2, \pi/2])\bigcap \interior{C^s}\big)<1,\]
    \[(Df)^n\mathcal{T}(\{q\}\times [-\pi/2, \pi/2]) \subseteq{\interior{C^u}}, \quad (Df)^{-n}\mathcal{T}(\{q\}\times [-\pi/2, \pi/2]) \subseteq{\interior{C^s}},\]where $\interior{C^u}$ (resp. $\interior{C^s}$) is the interior of $C^u$ (resp. $C^s$).
\end{enumerate}
\begin{remark}\ \par
\begin{enumerate}
    \item We will present later an easy-to-implement scheme to verify the existence of $C^u, C^s$ for billiard systems. The cones $C^u, C^s$ should be transversal (but not necessarily uniformly transversal). This condition on $C^u, C^s$ in Assumption \ref{assumption} is called an  aperiodic condition, because it (almost surely) rules out the periodic orbits (see Lemma \ref{aperiodic}).
    \item The H\"older condition is natural, and it is traditionally used for hyperbolic systems, and, particularly, for billiards.
\end{enumerate}
 
\end{remark}
\end{assumption}
\begin{theorem}[Poisson limit laws]\label{thm}\ \par
Suppose that dynamical system $(\mathcal{M}, f)$ has a CMZ structure $(X, f^R)$ (see Definitions \ref{cmz} and \ref{inducesystem}), and the  Assumption \ref{assumption} holds. Then, when $r\to 0$, we have $\mathcal{N}^{r,q} \to_d \mathcal{P}$ (see Definition \ref{pdef}) for $\Leb_{\partial Q}$-a.s. $q\in\partial Q$.
\end{theorem}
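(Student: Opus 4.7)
The plan is to reduce the convergence $\mathcal{N}^{r,q} \to_d \mathcal{P}$ to a Kallenberg-type criterion for Poisson approximation of a stationary point process: it suffices to show that $\mathbb{E}\mathcal{N}^{r,q}(I) \to \Leb(I)$ for every interval $I \subseteq \mathbb{R}^+$ and that $\mathbb{P}(\mathcal{N}^{r,q}(I) = 0) \to e^{-\Leb(I)}$. Because the CMZ structure and the growth lemmas are posed on $(X, f^R)$ rather than on $(\mathcal{M}, f)$, the natural arena is the first-return tower $(\Delta, F, \mu_\Delta)$. The quasi-section hypothesis is the decisive simplification: up to a subset of $\mu_\Delta$-measure $o(\mu_\Delta(A_r))$, the pulled-back hole $A_r = \pi^{-1}(B_r(q) \times S^d)$ lies in the ground floor $X \times \{0\}$ with $\pi_X$ injective on it, so that hitting statistics of $F$ on $A_r$ can be rewritten as hitting statistics of the induced map $f^R$ on a thin strip $\pi_X(A_r) \subseteq X$.

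First, I would establish an abstract Poisson-approximation lemma in which the Chen--Stein-style error is expressed solely in terms of (i) short-return probabilities $\sum_{k=1}^{g(r)} \mu_X\big(\pi_X(A_r) \cap (f^R)^{-k}\pi_X(A_r)\big)$ for a suitable gap $g(r) \to \infty$, and (ii) a correlation-decay term for $f^R$ acting on indicators (or H\"older approximations) of $\pi_X(A_r)$. Second, for the short-return bound, the cone condition in Assumption \ref{assumption}(4) ensures that $(Df)^k$ sends $\mathcal{T}(\{q\} \times S^d)$ into $\interior{C^u}$, so $f^k(B_r(q) \times S^d)$ meets $B_r(q) \times S^d$ only in a tubular neighbourhood of a transverse intersection; combining this with the H\"older bound of Assumption \ref{assumption}(3) inside a single column and with the growth lemmas (the $\delta^\sigma$ control on neighbourhoods of the singularity preimages, together with the $\delta^{-\kappa}\epsilon$ and $\beta\epsilon$ bounds) yields a short-return estimate that is $o(\mu_\Delta(A_r))$. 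Third, the decorrelation on separated intervals would follow from the K-mixing of $f^R$ applied to H\"older approximations of $\mathbbm{1}_{\pi_X(A_r)}$, whose modulus of continuity is controlled by distance to $\partial \pi_X(A_r)$ and, again, by the growth lemmas.

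The main obstacle is establishing the short-return estimate uniformly in the decay rate of $\mu_X(R > n)$. Because the hole is confined to a single floor of the tower, every short-return orbit factors through the induced map $f^R$, which is uniformly hyperbolic with quantitative bounds independent of the tail of $R$; no iterate of $f$ climbing up high floors ever enters the analysis. This is the structural reason the Poisson limit law survives arbitrarily slow mixing of the ambient system, and it explains the improvement over \cite{penebacktoball, nicolnullset} highlighted in the introduction. The residual subtlety is the bad set of base points $q$ where either the quasi-section property fails or the cones $C^u, C^s$ become tangent to the fiber $\{q\} \times S^d$; this set has zero boundary measure, which is why the conclusion holds for $\Leb$-a.e. $q \in \partial Q$ rather than for every $q$.
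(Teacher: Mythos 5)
Your proposal correctly identifies the opening moves of the paper's proof: the Kallenberg criterion reducing convergence to (i) $\limsup_r \int \mathcal{N}^{r,q}(K)\,d\mu_{\mathcal M} \le \Leb(K)$ and (ii) the vanishing-count asymptotics; the reduction from quasi-sections to sections; and the Chen--Stein style decomposition into a short-return term and a correlation-decay term. You also correctly read the structural reason the result survives slow mixing: inducing to the base kills the slow-tail part of the dynamics. But there are two genuine gaps that your sketch, as written, would not cross.

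First, you propose to induce directly to $(X, f^R)$ and then invoke K-mixing of $f^R$ for the decorrelation step. This cannot work without an extra interpolation layer, for a quantitative reason: the point process $\mathcal{N}^{r,q}$ counts iterates of $f$, not of $f^R$, and the translation between the two is a nontrivial time change. The paper's Lemma \ref{inducing} handles this by inducing to the \emph{truncated tower} $\Delta_m = \Delta \cap (X \times \{0,\ldots,m\})$ rather than to $X$, and controlling the time change via the Birkhoff ergodic theorem applied to $R_m$. The reason $\Delta_m$ is essential (and $X = \Delta_0$ does not suffice) appears in Lemma \ref{approximation}: the translation back from $\mu_{\Delta_m}$ to $\mu_\Delta$ incurs an error $O(\mu_\Delta(\Delta_m^c))$, which vanishes only as $m \to \infty$. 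If you induce straight to $X$, this error is the fixed constant $1 - 1/\int R\,d\mu_X$ and never goes away. So your ``hole confined to a single floor'' picture is too coarse: the high floors do enter, and the entire Section 4 machinery (truncation + approximation) is there precisely to tame them.

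Second, K-mixing of $f^R$ is a qualitative property and will not furnish the quantitative correlation bound your Chen--Stein lemma needs. The paper instead builds the \emph{thicker} Young tower $\Delta_{e,m}$ for $(\Delta_m, F^{R_m}, \mu_{\Delta_m})$, whose induced return time $R_{e,m}$ has exponential tails $\Leb_\gamma(R_{e,m} > n) \le C\beta^{n/(2m)}$ (Lemma \ref{returntailofthickertower}), and deduces an explicit exponential decay of correlations (Lemma \ref{decayofcorrelationofthinkerhyperbolicyoungtower}). This is also the mechanism that circumvents the $\alpha \cdot \dim(\text{hole}) > 1$ obstruction from \cite{peneetds,Su}: by waiting $k \approx r^{-d(1-\epsilon)}$ iterates in the thicker Young tower, the boundary of the covering cylinder set shrinks like $\beta^{c k \alpha d}$, which is exponentially small in $r^{-d(1-\epsilon)}$ and hence negligible against the hole size $r^d$ for \emph{any} $\alpha > 0$ (Lemma \ref{measureofboundary} and the final estimate of Section 8). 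Your proposal never acknowledges this obstruction and its sketch of the correlation step would fail for $\alpha < 1/d$, which is exactly the regime of interest for the stadium and semi-dispersing applications. Relatedly, your treatment of short returns omits the return-time statistics machinery of Section 7.1 (the Borel--Cantelli estimate on $G_{n,r}$, the aperiodicity Lemma \ref{aperiodic} using the cone condition, and the Lebesgue differentiation argument that produces the a.e.-$q$ conclusion); the ``transverse tubular neighborhood'' intuition is pointing in the right direction but is not a substitute for that chain of lemmas.
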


\begin{corollary}[First hitting]\ \par
Under the same conditions as in Theorem \ref{thm} consider the moment of time when the first hitting (passage) of the hole occurs, i.e., $\tau_{r,q}(x):=\inf\left\{n \ge 1: f^n(x) \in B_r(q)\times [-\pi/2, \pi/2]\right\}$ for any $x \in \mathcal{M}$. Then for any $t>0$ and almost every $q \in \partial Q$, the following relation holds for the first hitting probability 
\begin{equation*}
    \lim_{r\to 0}\mu_{\mathcal{M}}\Big\{\tau_{r,q}>t\big/\mu\Big(B_r(q)\times [-\pi/2, \pi/2]\Big)\Big\}=e^{-t}.
\end{equation*}
\end{corollary}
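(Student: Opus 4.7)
The plan is to translate the first-hitting event into a zero-count event for the dynamical point process $\mathcal{N}^{r,q}$, and then to invoke Theorem \ref{thm} together with the explicit distribution of the Poisson random variable $\mathcal{P}([0,t])$.

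First I would unwind the definition of $\mathcal{N}^{r,q}$ from Definition \ref{dynamicalpointprocess}. By construction, a point of $\mathcal{N}^{r,q}$ lands in the interval $[0,t]$ precisely when there exists $i \in \mathbb{N}_0$ with $i\cdot \mu_{\mathcal{M}}(B_r(q)\times S^d) \in [0,t]$ and $f^i(x) \in B_r(q)\times S^d$, i.e.\ when $x$ visits the hole at some integer time $i \le t/\mu_{\mathcal{M}}(B_r(q)\times S^d)$. Thus, up to the inclusion/exclusion of the single index $i = \lfloor t/\mu_{\mathcal{M}}(B_r(q)\times S^d)\rfloor$, the events $\{\tau_{r,q} > t/\mu_{\mathcal{M}}(B_r(q)\times S^d)\}$ and $\{\mathcal{N}^{r,q}([0,t])=0\}$ coincide. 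The boundary discrepancy involves a single point, whose $\mu_{\mathcal{M}}$-measure is bounded by $\mu_{\mathcal{M}}(B_r(q)\times S^d) \to 0$, so it is negligible in the limit.

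Next I would apply Theorem \ref{thm} to obtain $\mathcal{N}^{r,q}\to_d \mathcal{P}$. By the equivalent formulation in Definition \ref{pdef}, the finite-dimensional marginals converge, and in particular $\mathcal{N}^{r,q}([0,t]) \to_d \mathcal{P}([0,t])$. The Poisson point process $\mathcal{P}$ has $\mathbb{E}\mathcal{P}([0,t]) = t$, so $\mathcal{P}([0,t])$ is Poisson with parameter $t$. Since $\mathcal{P}([0,t])$ takes integer values and $\mathbb{P}(\mathcal{P}(\{t\})>0)=0$, the functional $\mathcal{N}\mapsto \mathbbm{1}_{\{\mathcal{N}([0,t])=0\}}$ is a.s.\ continuous at $\mathcal{P}$, whence
\begin{equation*}
\mu_{\mathcal{M}}\bigl(\mathcal{N}^{r,q}([0,t])=0\bigr) \;\longrightarrow\; \mathbb{P}\bigl(\mathcal{P}([0,t])=0\bigr) \;=\; e^{-t}.
\end{equation*}
Combining this with the preceding identification of events yields the claim.

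The only technical point is the last continuity/Portmanteau step, which is standard for point-process convergence: one checks that for any $\varepsilon>0$, $\mathbb{P}(\mathcal{P}([0,t+\varepsilon])=0)\le \liminf_r\mu_{\mathcal{M}}(\tau_{r,q}>t/\mu_{\mathcal{M}}(B_r(q)\times S^d)) \le \limsup_r(\cdots)\le \mathbb{P}(\mathcal{P}([0,t-\varepsilon])=0)$ and lets $\varepsilon\to 0^+$ using continuity of $t\mapsto e^{-t}$. No new estimates on the billiard dynamics are needed; everything follows from Theorem \ref{thm} as a black box, so this step is routine rather than an obstacle.
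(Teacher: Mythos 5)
Your proposal is correct and follows the same route as the paper: identify $\{\tau_{r,q}>t/\mu_{\mathcal{M}}(B_r(q)\times S^d)\}$ with the zero-count event $\{\mathcal{N}^{r,q}([0,t])=0\}$ up to a negligible boundary discrepancy, then invoke Theorem \ref{thm} and $\mathbb{P}(\mathcal{P}([0,t])=0)=e^{-t}$. The paper's one-line proof skips the Portmanteau/continuity discussion because the proof of Theorem \ref{thm} already establishes the survival-probability relation (\ref{survival}) directly, so that step is indeed routine as you note.
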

\begin{proof}
Clearly \[\mu_{\mathcal{M}}\left\{\tau_{r,q}>t/\mu\big(B_r(q)\times [-\pi/2,\pi/2]\big)\right\}=\mu_{\mathcal{M}}\left\{\mathcal{N}^{r,q}\left[0,t/\mu\big(B_r(q)\times [-\pi/2, \pi/2]\big)\right]=0\right\},\] Apply now Theorem \ref{thm}, and the corollary holds.
\end{proof}

\subsection{Related work and comparison with our result}
\begin{enumerate}
    \item Unlike \cite{vaientinullset, penebacktoball}, Theorem \ref{thm} claims that validity of Poisson limit laws does not depend on the rate of correlations decay.  In other words, decay of correlations can be arbitrarily slow as long as the first return time $R \in L^1$, i.e., $R$ must be just integrable.
    \item The technique used in \cite{penebacktoball}  works only for Sinai billiards with finite horizons, while our approach works for arbitrarily slow mixing billiards, and particularly for Sinai billiards with infinite horizons.
    \item The papers \cite{peneetds, peneijm} established the (spatio-temporal) Poisson limit law under several conditions including that contracting (resp. expanding) rates $\alpha$ along stable (resp. unstable) manifolds and $\lim_{r \to 0}\frac{\log \mu_{\mathcal{M}}\{B_r(q)\times [-\pi/2, \pi/2]\}}{\log r}$ are sufficiently large, i.e., $\alpha \cdot \lim_{r \to 0}\frac{\log \mu_{\mathcal{M}}\{B_r(q)\times [-\pi/2, \pi/2]\}}{\log r}>1$. These conditions fail for billiards with focusing components of the boundary (e.g. if $\alpha=1$ for stadium-type billiards). 
    \item One of the main difficulties in proving Poisson limit laws, related to short returns, was outlined in \cite{peneetds}. The papers \cite{vaientinullset, peneijm, penebacktoball,  peneetds} handled it by inspecting the original dynamics $f$, which leads to a requirement of a fast mixing rate. Our approach via an inducing method allows to restrict a hole in the phase space to a good set, and it works for systems with arbitrarily slow mixing.
    
    Another challenge for proving Poisson limit laws, called a corona, comes from the condition $\alpha \cdot \lim_{r \to 0}\frac{\log \mu_{\mathcal{M}}\{B_r(q)\times [-\pi/2, \pi/2]\}}{\log r}>1$ (see \cite{peneetds}). The failure of $\alpha \cdot \lim_{r \to 0}\frac{\log \mu_{\mathcal{M}}\{B_r(q)\times [-\pi/2, \pi/2]\}}{\log r}>1$ in this paper causes essential difficulties in proving Poisson limit laws. Our techniques, which combine the inducing method with an approximation method, allow to overcome this challenge. 
    \item The papers \cite{ peneijm} studied various holes in $\mathcal{M}$ for Sinai billiards with bounded horizons and diamond billiards. Although in the present paper we consider a special type of holes, i.e., the ones in $B_r(q)\times [-\pi/2, \pi/2]$, our technique can be adapted for more general holes. The holes we consider here are the most natural for billiard systems and their applications. A consideration of a general type holes will make the paper much longer and even more technical.
    \item The method used in \cite{peneijm} requires,  besides the existence of CMZ structure, some special properties of Sinai billiards with finite horizons and diamond billiards to hold, (e.g. see page 657 of \cite{peneijm}). Our method only uses the assumption of the existence of CMZ structure, and can be applied to a large class of billiards. 
    \item Unlike \cite{Su}, our result does not provide convergence rates to Poisson limit laws. We believe though that it is possible to get convergence rates by placing more restrictive conditions on the first return time $R$ and singularities $\mathbb{S}$. We expect to deal with convergence rates in another paper.
\end{enumerate}

\subsection{An informal description of the scheme of proof of Theorem \ref{thm}}
\begin{enumerate}
\item The lemmas in section \ref{section3} reduce the point process $\mathcal{N}^{r,q}$ to a new point process generated by a section $S_r$ in $B_r(q) \times [-\pi/2, \pi/2]$.
 \item In section \ref{section4} Lemma \ref{lifting}, we lift this new point process to $\Delta$, which becomes yet another new point process $\mathcal{N}_s^r$.  Then we just need to prove a Poisson approximation for $\mathcal{N}_s^r$ defined on $\Delta$. 
 \item In section \ref{section4} we truncate the tower $\Delta$ as  $\Delta_m$ with height $m$. Since $\lim_{m \to \infty}\Delta_m=\Delta$, we can restrict $\mathcal{N}_s^r$ to $\Delta_m$, in order to investigate the Poisson approximation.
 \item For each $m \ge 1$, the truncated tower $\Delta_m$ can ``induce" a ``better" hyperbolic system and a ``better" point process $\mathcal{N}_i^r$. Lemma \ref{inducing} shows that, to study a  Poisson approximation for $\mathcal{N}_s^r$, restricted to $\Delta_m$, one just needs to prove a Poisson approximation for $\mathcal{N}_i^r$ on $\Delta_m$.
 \item Lemma \ref{approximation} shows that if, for any large $m \ge 1$, a Poisson approximation holds for $\mathcal{N}_i^r$, which is defined on $\Delta_m$, then the Poisson approximation for $\mathcal{N}^{r,q}$ in Theorem \ref{thm} holds. Since we study a specific large $m\ge 1$, a proof of Poisson approximations for $\mathcal{N}_i^r$  on $\Delta_m$ does not require uniformity in $m\ge 1$.
\item $\Delta_m$ can be modeled by a non-mixing hyperbolic Young tower as shown in Section \ref{section5}. 
\item The sections \ref{section6}, \ref{sectionshortreturn} and \ref{section8} apply the non-mixing hyperbolic Young tower of $\Delta_m$ for proving a Poisson approximation for $\mathcal{N}_i^r$ for each large $m\ge 1$.

 In section \ref{section6}, we obtain two conditions (\ref{correlationinpoissonlaw}) and (\ref{shortreturn}), which will be verified to prove Poisson approximations for $\mathcal{N}_i^r$ on $\Delta_m$.

 The sections \ref{sectionshortreturn} and \ref{section8} deal with  (\ref{correlationinpoissonlaw}) and (\ref{shortreturn}) respectively for  $B_r(q)\times [-\pi/2, \pi/2]$ in the phase space. The idea is to compare a measure related to $B_r(q)\times [-\pi/2, \pi/2]$ with a measure of a family of hyperbolic product sets which have non-empty intersections with $B_r(q)\times [-\pi/2, \pi/2]$. 
 The estimates in this procedure do not require uniformity in $m$. The Assumption \ref{assumption} plays a crucial role in these estimates.
\end{enumerate}

\section{Poisson limit laws: from quasi-sections to sections}\label{section3}
To prove Poisson limit laws (see Definition \ref{pdef}), we will need one result from \cite{kallenberg}. To simplify notations, we denote $\Leb_{\mathbb{R}^+\bigcup \{0\}}$ by $\Leb$ throughout this section.
\begin{lemma}[See Proposition 16.17 of \cite{kallenberg}]\label{kallen}\ \par
For any $q \in \partial Q$ convergence $\mathcal{N}^{r,q} \to_d \mathcal{P}$ holds if
 \begin{enumerate}
     \item For any compact set $K\subseteq [0,\infty)$, $\limsup_{r\to 0}\int \mathcal{N}^{r,q}(K)d\mu_{\mathcal{M}}\le \Leb(K)$.
     \item For any disjoint bounded intervals $J_1, J_2, \cdots, J_n\subseteq [0,\infty)$, \begin{align}\label{survival}
         \lim_{r \to 0} \mu_{\mathcal{M}}\Big(\mathcal{N}^{r,q}(\bigcup_{i \le n}J_i)=0\Big)=\mathbb{P}\Big(\mathcal{P}(\bigcup_{i \le n}J_i)=0\Big). 
     \end{align}
 \end{enumerate}
\end{lemma}
Now we will verify these conditions.
\begin{lemma}\label{poissoncondition1}
For any compact set $K\subseteq [0,\infty)$, $\limsup_{r\to 0}\int \mathcal{N}^{r,q}(K)d\mu_{\mathcal{M}}\le \Leb(K)$.
\end{lemma}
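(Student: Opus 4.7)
The plan is to compute $\int \mathcal{N}^{r,q}(K)\,d\mu_{\mathcal{M}}$ exactly using the $f$-invariance of $\mu_{\mathcal{M}}$, and then bound the resulting purely arithmetic quantity by a standard covering argument. Write $p_r := \mu_{\mathcal{M}}(B_r(q)\times S^d)$, so that by Definition \ref{dynamicalpointprocess}
\[
\mathcal{N}^{r,q}(K) \;=\; \sum_{i:\, i\cdot p_r \in K} \mathbbm{1}_{B_r(q)\times S^d}\circ f^i.
\]
By monotone convergence (the summand is nonnegative) and the $f$-invariance of $\mu_{\mathcal{M}}$, each term integrates to $p_r$, so
\[
\int \mathcal{N}^{r,q}(K)\,d\mu_{\mathcal{M}} \;=\; p_r \cdot N_r(K), \qquad \text{where } N_r(K):=\#\{i \in \mathbb{N}:\, i\cdot p_r \in K\}.
\]
Thus the lemma reduces to showing that $\limsup_{r\to 0} p_r \cdot N_r(K) \le \Leb(K)$ whenever $K\subseteq[0,\infty)$ is compact; note that $p_r>0$ for all sufficiently small $r$ by the remark following Definition \ref{cmz} (the density $d\mu_{\mathcal{M}}/d\Leb_{\mathcal{M}}$ is positive a.e.), and $p_r\to 0$ since $\mu_{\mathcal{M}}\ll \Leb_{\mathcal{M}}$.

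Next I would handle the arithmetic estimate by a direct covering argument. Given $\eps>0$, the outer regularity of $\Leb$ yields finitely many open intervals $(a_1,b_1),\ldots,(a_J,b_J)$ whose union covers $K$ and with $\sum_{j\le J}(b_j-a_j) \le \Leb(K)+\eps$. For each $j$, the number of positive integers $i$ with $i\cdot p_r\in (a_j,b_j)$ is at most $(b_j-a_j)/p_r + 1$, hence
\[
N_r(K) \;\le\; \sum_{j\le J}\!\Big(\frac{b_j-a_j}{p_r}+1\Big) \;\le\; \frac{\Leb(K)+\eps}{p_r} + J.
\]
Multiplying by $p_r$ and using $p_r\to 0$ gives $\limsup_{r\to 0} p_r \cdot N_r(K) \le \Leb(K)+\eps$. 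Letting $\eps\to 0$ finishes the argument.

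There is no serious obstacle here: the only subtle point is to notice that no ergodic, mixing, or short-return information is needed for this one-sided bound, because invariance alone turns the integral into a deterministic counting quantity. The harder direction (a matching lower bound, which would require the nontrivial probabilistic content of Theorem \ref{thm}) is precisely not asked for; indeed, the upper bound combined with the survival-function condition \eqref{survival} is exactly what is needed to invoke Proposition 16.17 of \cite{kallenberg}. So the proof will consist of two short paragraphs: the invariance computation, followed by the covering estimate described above.
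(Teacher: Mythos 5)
Your proof is correct and takes essentially the same approach as the paper: use $f$-invariance to reduce the integral to a counting quantity, then bound the count by a covering of $K$ by finitely many intervals. The only cosmetic difference is that you cover $K$ directly via outer regularity, whereas the paper constructs a decreasing sequence $K_N\supseteq K$ of finite unions of intervals from the complement $[0,T)\setminus K$ and then lets $N\to\infty$; both yield the same bound.
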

\begin{proof}
Assume that $K\subseteq [0,T)$ for sufficiently large $T>0$. Then $[0,T)\setminus K=\bigcup_{i\ge 1} (a_i,b_i)$ for some disjoint open intervals $(a_i,b_i)$. Therefore, $K=\bigcap_{N\ge 1}\bigcap_{i \le N}[0,T)\bigcap (a_i,b_i)^c$. Let $K_N:=\bigcap_{i \le N}[0,T)\bigcap (a_i,b_i)^c$, which is a disjoint union of finitely many intervals $J_1, J_2, \cdots, J_{i_N}$. Hence\begin{align*}
    \int \mathcal{N}^{r,q}(J_i)d\mu_{\mathcal{M}}&=\int \sum_{j \cdot \mu_{\mathcal{M}}\big(B_r(q)\times [-\pi/2, \pi/2]\big)\in J_i}\mathbbm{1}_{B_r(q)\times [-\pi/2,\pi/2]}\circ f^jd\mu_{\mathcal{M}}\\
    &\le \big[1+\Leb(J_i)/\mu_{\mathcal{M}}\big(B_r(q)\times [-\pi/2,\pi/2]\big)\big]\mu_{\mathcal{M}}\big(B_r(q)\times [-\pi/2,\pi/2]\big).
\end{align*} 

Therefore
\begin{align*}
    \int \mathcal{N}^{r,q}(K)d\mu_{\mathcal{M}}&\le \int \mathcal{N}^{r,q}(K_N)d\mu_{\mathcal{M}}=\int \sum_{j \cdot \mu_{\mathcal{M}}(B_r(q)\times [-\pi/2, \pi/2])\in K_N}\mathbbm{1}_{B_r(q)\times [-\pi/2,\pi/2]}\circ f^jd\mu_{\mathcal{M}}\\
    &\le i_N\cdot \mu_{\mathcal{M}}\big(B_r(q)\times [-\pi/2,\pi/2]\big)+\Leb(K_N).
\end{align*}

Thus $\limsup_{r \to 0}\int \mathcal{N}^{r,q}(K)d\mu_{\mathcal{M}} \le \Leb(K_N)$. We conclude the proof by letting $N\to \infty$.
\end{proof}

Now we study the relation (\ref{survival}). Suppose that $B_r(q)\times [-\pi/2, \pi/2]$ is a quasi-section for some small $r>0$. Hence there is a section $S_r\subseteq B_r(q)\times [-\pi/2, \pi/2]$, such that $\mu_{\mathcal{M}}\big(B_r(q)\times [-\pi/2, \pi/2]\setminus S_r\big)=o\big(\mu_{\mathcal{M}}(B_r(q)\times [-\pi/2, \pi/2])\big)$ for any sufficiently small $r>0$. Define \begin{align}\label{sectionpointprocess}
    \mathcal{N}_s^{r,q}(A):=\sum_{i \cdot \mu_{\mathcal{M}}(S_r)\in A}\mathbbm{1}_{S_r}\circ f^i,
\end{align} where $A$ is a measurable set in $[0,\infty)$. Then we can further modify the relation (\ref{survival}) as follows. 

\begin{lemma}[From quasi-sections to sections]\label{nonsectiontosection}\ \par For any disjoint bounded intervals $J_1, J_2, \cdots, J_n\subseteq [0,\infty)$, \begin{align*}
         \lim_{r \to 0} \mu_{\mathcal{M}}\Big(\mathcal{N}_s^{r,q}(\bigcup_{i \le n}J_i)=0\Big)=\lim_{r \to 0} \mu_{\mathcal{M}}\Big(\mathcal{N}^{r,q}(\bigcup_{i \le n}J_i)=0\Big). 
     \end{align*}
\end{lemma}
\begin{proof} Denote $J'_i:=\{x:x\cdot  \mu_{\mathcal{M}}\big(B_r(q)\times [-\pi/2, \pi/2]\big)\in J_i\}$, $J''_i:=\{x:x\cdot \mu_{\mathcal{M}}(S_r)\in J_i\}$. Now we estimate the difference between  $\mu_{\mathcal{M}}\big(\mathcal{N}_s^{r,q}(\bigcup_{i \le n}J_i)=0\big)$ and $\mu_{\mathcal{M}}\big(\mathcal{N}^{r,q}(\bigcup_{i \le n}J_i)=0\big)$. \begin{align}
    \Big|\mu_{\mathcal{M}}&\Big(\mathcal{N}_s^{r,q}(\bigcup_{i \le n}J_i)=0\Big)- \mu_{\mathcal{M}}\Big(\mathcal{N}^{r,q}(\bigcup_{i \le n}J_i)=0\Big)\Big|\nonumber\\
    &= \Big|\mu_{\mathcal{M}}\Big(\bigcap_{j\in \bigcup J''_i}f^{j}\notin S_r\Big)- \mu_{\mathcal{M}}\Big(\bigcap_{j\in \bigcup J'_i}f^{j}\notin B_r(q)\times [-\pi/2, \pi/2]\Big)\Big|\nonumber\\
    &\le \Big|\mu_{\mathcal{M}}\Big(\bigcap_{j\in \bigcup J''_i}f^{j}\notin S_r\Big)- \mu_{\mathcal{M}}\Big(\bigcap_{j\in \bigcup J'_i}f^{j}\notin S_r\Big)\Big|\nonumber\\
    &\quad +\Big|\mu_{\mathcal{M}}\Big(\bigcap_{j\in \bigcup J'_i}f^{j}\notin S_r\Big)- \mu_{\mathcal{M}}\Big(\bigcap_{j\in \bigcup J'_i}f^{j}\notin B_r(q)\times [-\pi/2, \pi/2]\Big)\Big|.\nonumber
    \end{align}
    
    Using $\big(\bigcap_{j\in \bigcup J'_i}f^{j}\notin S_r\big)\setminus \big(\bigcap_{j\in \bigcup J'_i}f^{j}\notin B_r(q)\times [-\pi/2, \pi/2]\big) =\big(\bigcap_{j\in \bigcup J'_i}f^{j}\notin S_r\big) \bigcap \big(\bigcup_{j\in \bigcup J'_i}f^{j}\in B_r(q)\times [-\pi/2, \pi/2]\big)\subseteq \bigcup_{j\in \bigcup J'_i}\{f^{j}\in B_r(q)\times [-\pi/2, \pi/2]\setminus S_r\}$ we can continue the estimate above as
    \begin{align}
    &\le \Big|\mu_{\mathcal{M}}\Big(\bigcap_{j\in \bigcup J''_i}f^{j}\notin S_r\Big)- \mu_{\mathcal{M}}\Big(\bigcap_{j\in \bigcup (J'_i\bigcup J''_i)}f^{j}\notin S_r\Big)\Big|\nonumber\\
    &\quad +\Big|\mu_{\mathcal{M}}\Big(\bigcap_{j\in \bigcup J'_i}f^{j}\notin S_r\Big)- \mu_{\mathcal{M}}\Big(\bigcap_{j\in \bigcup (J'_i\bigcup J''_i)}f^{j}\notin S_r\Big)\Big|\nonumber\\
    &\quad +\mu_{\mathcal{M}}\Big(\bigcap_{j\in \bigcup J'_i}f^{j}\notin S_r \bigcap \bigcup_{j\in \bigcup J'_i}f^{j}\in B_r(q)\times [-\pi/2, \pi/2]\Big)\nonumber\\
    &\le \mu_{\mathcal{M}}\Big(\bigcap_{j\in \bigcup J''_i}f^{j}\notin S_r\bigcap \bigcup_{j\in \bigcup (J'_i\bigcup J''_i)}f^{j}\in S_r\Big)\nonumber\\
    &\quad +\mu_{\mathcal{M}}\Big(\bigcap_{j\in \bigcup J'_i}f^{j}\notin S_r\bigcap \bigcup_{j\in \bigcup (J'_i\bigcup J''_i)}f^{j}\in S_r\Big)\nonumber\\
    & \quad +\mu_{\mathcal{M}}\Big(\bigcup_{j\in \bigcup J'_i}f^{j}\in B_r(q)\times [-\pi/2, \pi/2]\setminus S_r\Big)\label{1}.
\end{align}

 There is a small $r_J>0$ depending on $J_1,J_2, \cdots J_n$ such that, for any $r\in (0,r_J)$ and all $i\le n$, the interval $J'_i$ is the only possible one that intersect $J''_i$ and $\Leb J'_i\ge 1$, $\Leb J''_i \ge 1$. Therefore\begin{align*}
     &\Big\{\bigcap_{j\in \bigcup J''_i}f^{j}\notin S_r\bigcap \bigcup_{j\in \bigcup (J'_i\bigcup J''_i)}f^{j}\in S_r\Big\} \subseteq \Big\{\bigcup_{j\in \bigcup (J'_i\setminus J''_i)}f^{j}\in S_r\Big\},\\
     &\Big\{\bigcap_{j\in \bigcup J'_i}f^{j}\notin S_r\bigcap \bigcup_{j\in \bigcup (J''_i\bigcup J'_i)}f^{j}\in S_r\Big\} \subseteq \Big\{\bigcup_{j\in \bigcup (J''_i\setminus J'_i)}f^{j}\in S_r\Big\},
 \end{align*}and $J_i'\setminus J_i'', J_i''\setminus J_i'$ contain at most $\Leb (J_i'\setminus J_i'')+1, \Leb(J_i''\setminus J_i')+1$ positive integers, respectively. 
 
 For any $r\in(0,r_J)$ by making use of $f_{*}\mu_{\mathcal{M}}=\mu_{\mathcal{M}}$ and the fact that $S_r$ is a section in $B_r(q)\times [-\pi/2, \pi/2]$, we can continue the estimate (\ref{1}) as 
\begin{align*}
    &\le \left[\Leb(\bigcup J_i'\setminus J_i'')+\Leb(\bigcup J_i''\setminus J_i')+2n \right]\mu_{\mathcal{M}}(S_r)+\Leb(\bigcup J'_i)\mu_{\mathcal{M}}\Big( B_r(q)\times [-\pi/2, \pi/2]\setminus S_r\Big)\\
    &\le \left[2n\max_i J_i \left|\frac{1}{\mu_{\mathcal{M}}(S_r)}-\frac{1}{\mu_{\mathcal{M}}(B_r(q)\times [-\pi/2, \pi/2])}\right|+2n\right]\mu_{\mathcal{M}}(S_r)\\
    &\quad +n \max_i J_i \frac{\mu_{\mathcal{M}}\Big( B_r(q)\times [-\pi/2, \pi/2]\setminus S_r\Big)}{\mu_{\mathcal{M}}\Big( B_r(q)\times [-\pi/2, \pi/2]\Big)},
\end{align*}where $\max_i J_i$ is the maximal positive number in $\bigcup_iJ_i$. By letting $r\to 0$ we conclude a proof of this lemma.
\end{proof}

It follows from Lemmas \ref{kallen}, \ref{poissoncondition1}, and \ref{nonsectiontosection} that, in order to prove Poisson approximations for quasi-sections $B_r(q)\times [-\pi/2, \pi/2]$ (see Theorem \ref{thm}), we just need to prove the following for sections $S_r \subseteq B_r(q)\times [-\pi/2, \pi/2]$, i.e., \begin{align}\label{poissonsection}
    \lim_{r \to 0} \mu_{\mathcal{M}}\Big(\mathcal{N}_s^{r,q}(\bigcup_{i \le n}J_i)=0\Big)=\mathbb{P}\Big(\mathcal{P}(\bigcup_{i \le n}J_i)=0\Big),
\end{align}where $J_1, \cdots, J_n\subseteq [0,\infty)$ are disjoint bounded intervals. The rest of the paper deals with a proof of the relation (\ref{poissonsection}).
\section{Inducing and approximations}\label{section4}

To prove (\ref{poissonsection}), we will give a sufficient condition in Lemma \ref{approximation} for (\ref{poissonsection}), which is deduced from the technical Lemmas \ref{lifting} and \ref{inducing} in this section.

Before proofs, we consider some notions and definitions used in this section for a CMZ structure (see Definition \ref{cmz}).
\begin{definition}[Truncated towers]\ \par
For each $m\ge 0$ let a truncated sub-tower of $\Delta$ be \begin{gather*}
    \Delta_m:=\Delta \bigcap (X \times \{0,1,2,\cdots, m\}).
\end{gather*}

Define now projections $\pi_{X}: \Delta_m \to X$ and $\pi_{\mathbb{N}}: \Delta_m \to \mathbb{N}_0$ (we use here the same notations as in (\ref{projections})) so that for any $(x,n) \in \Delta_m$ \begin{gather*}
    \pi_{X}(x,n)=x, \quad \pi_{\mathbb{N}}(x,n)=n.
\end{gather*}

The first return time $R_m: \Delta_m \to \mathbb{N}$ is  \begin{align*}
    R_m(x)=\inf\{n \ge 1: F^n(x)\in \Delta_m\} \text{ for any }x \in \Delta_m.
\end{align*} 

Explicitly, for any $(x,l) \in \Delta_m$,
\begin{eqnarray*}
 R_m(x,l)=
\begin{cases}
1,    & l< \min\{R(x)-1,m\} \\
\max\{R(x)-m,1\},  & l=\min\{R(x)-1,m\} \\
\end{cases}.
\end{eqnarray*} 

Thus we also have the first return map $F^{R_m}: \Delta_m \to \Delta_m$. Define the $i$-th return times $R_m^i$ recursively as \begin{align*}
   R_m^1:=R_m, \quad R_m^i:=R_m^{i-1}+R_m \circ F^{R_m} \text{ for any }i \ge 2.
\end{align*}

A probability distribution on $\Delta_m$ is defined by
\begin{align*}
    \mu_{\Delta_m}:=(\int \min\{R, m+1\} d\mu_X)^{-1}\sum_{j\le m}(F^j)_{*}\big(\mu_X|_{\{R>j\}}\big).
\end{align*}

Note that the relation $\int R_m d\mu_{\Delta_m}=\mu_{\Delta}(\Delta_m)^{-1}$ follows from the Kac's lemma (see \cite{kac}).

Now, a map $\pi_{\Delta_m}: \Delta \to \Delta_m$ is defined so, that for any $(x,n) \in \Delta$,\begin{eqnarray*}\label{pmmap}
\pi_{\Delta_m} (x,n) =
\begin{cases}
(x,n),      & n\le m\\
(x,m),  & n>m \\
\end{cases},
\end{eqnarray*} i.e., $\pi_{\Delta_m}$ pulls every element of $\Delta \setminus \Delta_m$ back to the roof of $\Delta_m$ and keeps the elements in $\Delta_m$ unchanged. 

Observe also that, if $R(x)>m$ for $x \in X$, then $F^{R^{m+1}_m}(x)=F^R(x)=f^R(x)$.
\end{definition}

\begin{remark}\label{sectionisstillsection}
Since $S_r$ is a section, then $\pi_X: \pi^{-1}S_r \to X$ is injective, and thus $\pi_{X}: \pi_{\Delta_m}\pi^{-1}S_r \to X$ is also injective.
\end{remark}

Now we introduce some point processes for the section $S_r$, which is contained in the quasi-section $B_r(q)\times [-\pi/2, \pi/2] \subseteq \mathcal{M}$.
\begin{definition}
For each $m \ge 1$, define point processes on $(\Delta_m, \mu_{\Delta_m})$ so that for any measurable set $A\subseteq [0,\infty)$,  \begin{gather*}
    \mathcal{N}_s^{r}(A):=\sum_{i\cdot \mu_{\Delta}(\pi^{-1}S_r)\in A}\mathbbm{1}_{\pi^{-1}S_r}\circ F^i,\\
      \mathcal{N}_{i}^{r}(A):=\sum_{k\cdot \mu_{\Delta_m}(\pi_{\Delta_m}\pi^{-1}S_r)\in A}\mathbbm{1}_{\pi_{\Delta_m}\pi^{-1}S_r}\circ (F^{R_m})^k.
\end{gather*}
\end{definition}

Note that $\mathcal{N}_s^r$ can be viewed as a point process on $(\Delta, \mu_{\Delta})$. Using $\mathcal{N}_s^r=_d\mathcal{N}_s^{r,q}$, where the last one was defined in (\ref{sectionpointprocess}), we get the following lemma.

\begin{lemma}[Lifting]\label{lifting}\ \par
Suppose that for any disjoint bounded intervals $J_1,\cdots, J_n\subseteq [0,\infty)$,\begin{align*}
    \lim_{r \to 0} \mu_{\Delta}\Big(\mathcal{N}_s^{r}(\bigcup_{k \le n}J_k)=0\Big)=\mathbb{P}\Big(\mathcal{P}(\bigcup_{k \le n}J_k)=0\Big). 
\end{align*}

Then for any disjoint bounded intervals $J_1,\cdots, J_n\subseteq [0,\infty)$,\begin{gather*}
    \lim_{r \to 0} \mu_{\mathcal{M}}\Big(\mathcal{N}_s^{r,q}(\bigcup_{i \le n}J_i)=0\Big)=\mathbb{P}\Big(\mathcal{P}(\bigcup_{i \le n}J_i)=0\Big).
\end{gather*}
\end{lemma}

For each $m\ge 1$, we can now study a limit law for $\mathcal{N}_i^r$, which is defined on $\Delta_m$.
\begin{lemma}[Inducing]\label{inducing}\ \par
For $m\ge 1$, suppose that for any disjoint bounded intervals $J_1,\cdots, J_n\subseteq [0,\infty)$,\begin{align*}
    \lim_{r \to 0} \mu_{\Delta_m}\Big(\mathcal{N}_i^{r}(\bigcup_{k \le n}J_k)=0\Big)=\mathbb{P}\Big(\mathcal{P}(\bigcup_{k \le n}J_k)=0\Big).
\end{align*}

Then for any disjoint bounded intervals $J_1,\cdots, J_n\subseteq [0,\infty)$,\begin{gather*}
    \lim_{r \to 0} \mu_{\Delta_m}\Big(\mathcal{N}_s^{r}(\bigcup_{k \le n}J_k)=0\Big)=\mathbb{P}\Big(\mathcal{P}(\bigcup_{k \le n}J_k)=0\Big).
\end{gather*}
\end{lemma}
\begin{proof} We divide the proof into several steps.

As the first step, we introduce hitting times and their properties.  For any $x\in \Delta_m$ define \begin{gather*}
    \tau^1_s(x):=\inf\{n\ge 1: F^n(x)\in \pi^{-1}S_r\},\\ \tau^1_p(x):=\inf\{n\ge 1: F^n(x)\in \pi_{\Delta_m}\pi^{-1}S_r\},\\
   \tau^1_i(x):=\inf\{n\ge 1: (F^{R_m})^n(x)\in \pi_{\Delta_m}\pi^{-1}S_r\}.
\end{gather*} 

The corresponding $j$-th ($j\ge 2$) return times $\tau_s^j, \tau_p^j, \tau_i^j$ are defined inductively as follows\begin{gather*}
    \tau^j_s(x):=\inf\{n> \tau^{j-1}_s(x): F^n(x)\in \pi^{-1}S_r\},\\ \tau^j_p(x):=\inf\{n> \tau^{j-1}_p(x): F^n(x)\in \pi_{\Delta_m}\pi^{-1}S_r\},\\
    \tau^j_i(x):=\inf\{n> \tau^{j-1}_i(x): (F^{R_m})^n(x)\in \pi_{\Delta_m}\pi^{-1}S_r\}.
\end{gather*}

So for any $j\ge 1$ we have \begin{gather*}
    \tau_s:=\tau_s^1<\tau_s^2<\cdots<\tau_s^j<\tau_s^{j+1}<\cdots \to \infty,\\
    \tau_p:=\tau_p^1<\tau_p^2<\cdots<\tau_p^j<\tau_p^{j+1}<\cdots \to \infty,\\
    \tau_i:=\tau_i^1<\tau_i^2<\cdots<\tau_i^j<\tau_i^{j+1}<\cdots \to \infty.
\end{gather*}
 
By making use of the fact that $S_r$ is a section and Remark \ref{sectionisstillsection},  we get that for any $j\ge 1$,  
 \begin{gather*}
     \tau^j_p=R_m+R_m\circ F^{R_m}+ \cdots + R_m \circ (F^{R_m})^{\tau^j_i-1},\\
     |\tau^j_p-\tau^j_s|\le R_m \circ (F^{R_m})^{\tau^j_i}.\\
     \end{gather*}
 
 From the Birkhoff's ergodic theorem we obtain \begin{gather*}
     \lim_{n \to \infty}\frac{\sum_{i=0}^{n-1}R_m\circ (F^{R_m})^i}{n}=\int R_m d\mu_{\Delta_m}=\mu_{\Delta}(\Delta_m)^{-1} \quad  \mu_{\Delta_m}\text{-a.s.}\\
     \lim_{n \to \infty}\frac{R_m \circ (F^{R_m})^n}{n}=0 \quad  \mu_{\Delta_m}\text{-a.s.}
 \end{gather*}
 
 Therefore, for any sufficiently small $\epsilon' \in (0, \min\{\frac{1}{2\mu_{\Delta}(X)}, \min_{j}\diam J_j \})$ and a.e. $x\in \Delta_m$, there is $N_{\epsilon', x}>0$, such that for any $n >N_{\epsilon',x}$,\begin{gather*}
     \Big|\frac{\sum_{i=0}^{ n-1}R_m\circ (F^{R_m})^i}{n}-\mu_{\Delta}(\Delta_m)^{-1}\Big|\le \epsilon', \quad \Big|\frac{R_m \circ (F^{R_m})^n}{n}\Big|\le \epsilon'.
 \end{gather*}
 
 Let $G_{n, \epsilon'}:=\{x\in \Delta_m: n \ge N_{\epsilon',x}\}$. It is obvious that $G_{n,\epsilon'}\subseteq G_{n+1, \epsilon'}$ for any $n \ge 1$. Hence $\lim_{n\to \infty}\mu_{\Delta_m}(G_{n,\epsilon'})=1$. Therefore, there is $N_{\epsilon'}>0$, such that for any $n\ge N_{\epsilon'}$, $\mu_{\Delta_m}(G_{n,\epsilon'}^c)\le \epsilon'$.
 
 Let $x\in G_{N_{\epsilon'}, \epsilon'}$.  We have that $x \in G_{n, \epsilon'}$ for any $n \ge N_{\epsilon'}$ and \begin{gather*}
     \Big|\frac{\sum_{i=0}^{ n-1}R_m\circ (F^{R_m})^i}{n}-\mu_{\Delta}(\Delta_m)^{-1}\Big|\le \epsilon', \quad \Big|\frac{R_m \circ (F^{R_m})^n}{n}\Big|\le \epsilon'.
 \end{gather*}
 
 In particular, if $\tau_i \ge N_{\epsilon'}$, (which implies $\tau_i^j\ge N_{\epsilon'}$ for all $j\ge 1$), then for all $j \ge 1$, \begin{gather*}
     \frac{\tau_p^j}{\tau_i^j}\in [\mu_{\Delta}(\Delta_m)^{-1}-\epsilon', \mu_{\Delta}(\Delta_m)^{-1}+\epsilon'],\\
     \Big|\frac{\tau_s^j}{\tau_i^j}-     \frac{\tau_p^j}{\tau_i^j}\Big| \le \frac{R_m \circ (F^{R_m})^{\tau_i^j}}{\tau_i^j} \le \epsilon'.
 \end{gather*} 
 
 Therefore, for any $j\ge 1$ \begin{gather*}
     \frac{\tau_s^j}{\tau_i^j}\in [\mu_{\Delta}(\Delta_m)^{-1}-2\epsilon', \mu_{\Delta}(\Delta_m)^{-1}+2\epsilon'].\end{gather*}

Now let $H_{\epsilon'}:=\{x\in \Delta_m: \tau_i(x)> N_{\epsilon'}\}$. Then\begin{align*}
    \mu_{\Delta_m}(H_{\epsilon'}^c)=\mu_{\Delta_m}\{x\in \Delta_m: \tau_i(x)\le  N_{\epsilon'}\}&\le N_{\epsilon'}\mu_{\Delta_m}(\pi_{\Delta_m}\pi^{-1}S_r)\\
    &=N_{\epsilon'} \frac{\int R d\mu_X}{\int \min\{R, m+1\} d\mu_X}\mu_{\Delta}(\pi^{-1}S_r)\\
    &=N_{\epsilon'} \frac{\int R d\mu_X}{\int \min\{R, m+1\} d\mu_X}\mu_{\mathcal{M}}(S_r),
\end{align*} where the first equality holds because $S_r$ is a section.

Therefore, there is $r_{\epsilon'}>0$, such that for any $r\in (0,r_{\epsilon'})$,  \begin{gather}\label{step11}
    \mu_{\Delta_m}(H_{\epsilon'}^c)\le \epsilon',\quad  \mu_{\Delta_m}(H_{\epsilon'}^c\bigcup G_{N_{\epsilon'}, \epsilon'}^c)\le 2\epsilon'.
\end{gather}

Besides, for any $x\in H_{\epsilon'}\bigcap G_{N_{\epsilon'}, \epsilon'}$, and any $j\ge 1$, \begin{gather}\label{step12}
     \frac{\tau_s^j(x)}{\tau_i^j(x)}\in [\mu_{\Delta}(\Delta_m)^{-1}-2\epsilon', \mu_{\Delta}(\Delta_m)^{-1}+2\epsilon'].\end{gather}

As the second step, we connect $\mathcal{N}_s^r, \mathcal{N}_i^r$ with the hitting times $\tau_s^j, \tau_i^j$.

Let $aJ_k:=\{aj:j \in J_k\}$ for any $a>0$, and $J_k':= \mu_{\Delta}(\pi^{-1}S_r)^{-1} J_k$, \begin{align}
    J_k'':&=(\mu_{\Delta}(\Delta_m)^{-1}-2\epsilon')^{-1}J_k'\bigcap (\mu_{\Delta}(\Delta_m)^{-1}+2\epsilon')^{-1}J_k' \label{step21}\\
    &=(\mu_{\Delta}(\Delta_m)^{-1}-2\epsilon')^{-1}\mu_{\Delta}(\pi^{-1}S_r)^{-1} J_k \bigcap (\mu_{\Delta}(\Delta_m)^{-1}+2\epsilon')^{-1}\mu_{\Delta}(\pi^{-1}S_r)^{-1} J_k \nonumber\\
    &=(\mu_{\Delta}(\Delta_m)^{-1}-2\epsilon')^{-1}\mu_{\Delta}(\pi_{\Delta_m}\pi^{-1}S_r)^{-1} J_k\bigcap (\mu_{\Delta}(\Delta_m)^{-1}+2\epsilon')^{-1}\mu_{\Delta}(\pi_{\Delta_m}\pi^{-1}S_r)^{-1} J_k\nonumber\\
     &=(1-2\mu_{\Delta}(\Delta_m)\epsilon')^{-1}\mu_{\Delta_m}(\pi_{\Delta_m}\pi^{-1}S_r)^{-1} J_k\bigcap (1+2\mu_{\Delta}(\Delta_m)\epsilon')^{-1}\mu_{\Delta_m}(\pi_{\Delta_m}\pi^{-1}S_r)^{-1} J_k, \nonumber
\end{align}where the third equality holds because $S_r$ is a section. Next, let
\begin{align*}
    J_k''':=(1-2\mu_{\Delta}(\Delta_m)\epsilon')^{-1}J_k\bigcap (1+2\mu_{\Delta}(\Delta_m)\epsilon')^{-1} J_k,
\end{align*} which implies that $\mu_{\Delta_m}(\pi_{\Delta_m}\pi^{-1}S_r)^{-1}J_k'''=J_k''.$
Then we have \begin{gather}
     \{x \in \Delta_m  \setminus \pi^{-1}S_r: \mathcal{N}_s^{r}(\bigcup_{k \le n}J_k)(x)=0 \}=\{x\in \Delta_m \setminus \pi^{-1}S_r: \tau^j_s(x) \notin \bigcup_{k \le n}J'_k \text{ for all }j \ge 1\},\label{step22}\\
     \{x \in \Delta_m  \setminus \pi_{\Delta_m}\pi^{-1}S_r: \mathcal{N}_i^{r}(\bigcup_{k \le n}J_k''')(x)=0 \}=\{x\in \Delta_m \setminus \pi_{\Delta_m}\pi^{-1}S_r: \tau^j_i(x) \notin \bigcup_{k \le n}J''_k \text{ for all }j \ge 1\}. \label{step23}
 \end{gather}

As the third step, we estimate $\mathcal{N}_{s}^r$ via $\mathcal{N}_{i}^r$ by applying (\ref{step11}) and (\ref{step22}),
\begin{align*}
    \mu_{\Delta_m}&\{x \in \Delta_m: \mathcal{N}_s^{r}(\bigcup_{k \le n}J_k)(x)=0 \}\\
    &\le \mu_{\Delta_m}\{x\in \Delta_m \setminus \pi^{-1}S_r: \tau^j_s(x) \notin \bigcup_{k \le n}J'_k \text{ for all }j \ge 1\}+\mu_{\Delta_m}(\pi^{-1}S_r)\\
    &\le \mu_{\Delta_m}\{x\in  H_{\epsilon'}\bigcap G_{N_{\epsilon'}, \epsilon'}: \tau^j_s(x) \notin \bigcup_{k \le n}J'_k \text{ for all }j \ge 1\}+2 \epsilon' +\mu_{\Delta_m}( \pi_{\Delta_m}\pi^{-1}S_r).
    \end{align*}
    
    By using (\ref{step12}) and (\ref{step21}), the expression above can be estimated as
    \begin{align*}
    &\le \mu_{\Delta_m}\{x\in  H_{\epsilon'}\bigcap G_{N_{\epsilon'}, \epsilon'}: \tau^j_i(x) \notin \bigcup_{k \le n}J''_k \text{ for all }j \ge 1\}+2 \epsilon'+\mu_{\Delta_m}( \pi_{\Delta_m}\pi^{-1}S_r)\\
    &\le \mu_{\Delta_m}\{x\in \Delta_m: \tau^j_i(x) \notin \bigcup_{k \le n}J_k'' \text{ for all }j \ge 1\} +2 \epsilon'+ \mu_{\Delta_m}( \pi_{\Delta_m}\pi^{-1}S_r)\\
    &\le \mu_{\Delta_m}\{x\in \Delta_m\setminus \pi_{\Delta_m}\pi^{-1}S_r: \tau^j_i(x) \notin \bigcup_{k \le n}J_k''\text{ for all }j \ge 1\} +2 \epsilon'+2\mu_{\Delta_m}( \pi_{\Delta_m}\pi^{-1}S_r).
    \end{align*}
    
    Now, according to (\ref{step23}), the inequality above can be written as
    \begin{align*}
    &= \mu_{\Delta_m}\{x\in \Delta_m\setminus \pi_{\Delta_m}\pi^{-1}S_r: \mathcal{N}_i^{r}(\bigcup_{k \le n}J_k''')(x)=0 \}+2 \epsilon'+2\mu_{\Delta_m}( \pi_{\Delta_m}\pi^{-1}S_r)\\
    &\le \mu_{\Delta_m}\{x \in \Delta_m: \mathcal{N}_i^{r}(\bigcup_{k \le n}J_k''')(x)=0 \}+2 \epsilon'+\frac{2\int R d\mu_X}{\int \min\{R, m+1\} d\mu_X}\mu_{\mathcal{M}}(S_r).
\end{align*}

From the conditions of this lemma we have\begin{gather*}
    \limsup_{r\to 0}\mu_{\Delta_m}\{x \in \Delta_m: \mathcal{N}_s^{r}(\bigcup_{k \le n}J_k)(x)=0 \}\le e^{-\Leb_{\mathbb{R}^+\bigcup \{0\}} (\bigcup_{k \le n}J_k''')}+2 \epsilon'.
\end{gather*}

Let $\epsilon'\to 0$. Then $\Leb_{\mathbb{R}^+\bigcup \{0\}}(\bigcup_{k \le n} J_k''') \to \Leb_{\mathbb{R}^+\bigcup \{0\}}(\bigcup_{k \le n} J_k)$ and \begin{gather*}
    \limsup_{r\to 0}\mu_{\Delta_m}\{x \in \Delta_m: \mathcal{N}_s^{r}(\bigcup_{k \le n}J_k)(x)=0 \}\le e^{-\Leb_{\mathbb{R}^+\bigcup \{0\}} (\bigcup_{k \le n}J_k)}.
\end{gather*}

As the fourth step, we connect $\mathcal{N}_s^r, \mathcal{N}_i^r$ with the hitting times $\tau_s^j, \tau_i^j$ in a different way. 

Let $\hat{J_k}:=(1-2\mu_{\Delta}(\Delta_m)\epsilon')^{-1}J_k\bigcup (1+2\mu_{\Delta}(\Delta_m)\epsilon')^{-1} J_k$, $\hat{J_k'}:=\mu_{\Delta_m}(\pi_{\Delta_m}\pi^{-1}S_r)^{-1} \hat{J_k}$, \begin{align}
    \hat{J_k''}:&=(\mu_{\Delta}(\Delta_m)^{-1}-2\epsilon')\hat{J_k'} \bigcap (\mu_{\Delta}(\Delta_m)^{-1}+2\epsilon')\hat{J_k'} \label{step41}\\
    &=(\mu_{\Delta}(\Delta_m)^{-1}-2\epsilon')\mu_{\Delta_m}(\pi_{\Delta_m}\pi^{-1}S_r)^{-1} \hat{J_k} \bigcap (\mu_{\Delta}(\Delta_m)^{-1}+2\epsilon')\mu_{\Delta_m}(\pi_{\Delta_m}\pi^{-1}S_r)^{-1} \hat{J_k} \nonumber \\
     &=(1-2\mu_{\Delta}(\Delta_m)\epsilon')\mu_{\Delta}(\pi_{\Delta_m}\pi^{-1}S_r)^{-1} \hat{J_k}\bigcap (1+2\mu_{\Delta}(\Delta_m)\epsilon')\mu_{\Delta}(\pi_{\Delta_m}\pi^{-1}S_r)^{-1} \hat{J_k}\nonumber\\
     &=(1-2\mu_{\Delta}(\Delta_m)\epsilon')\mu_{\Delta}(\pi^{-1}S_r)^{-1} \hat{J_k}\bigcap (1+2\mu_{\Delta}(\Delta_m)\epsilon')\mu_{\Delta}(\pi^{-1}S_r)^{-1} \hat{J_k},\nonumber,
\end{align}where the fourth equality holds because $S_r$ is a section. Clearly, \begin{align*}
    J_k=(1-2\mu_{\Delta}(\Delta_m)\epsilon')\hat{J_k}\bigcap (1+2\mu_{\Delta}(\Delta_m)\epsilon') \hat{J_k}, \quad \mu_{\Delta}(\pi^{-1}S_r)^{-1}J_k=\hat{J_k''}.
\end{align*} Then we have \begin{gather}
     \{x \in \Delta_m \setminus \pi_{\Delta_m}\pi^{-1}S_r: \mathcal{N}_i^{r}(\bigcup_{k \le n}\hat{J_k})(x)=0 \}=\{x\in \Delta_m \setminus \pi_{\Delta_m}\pi^{-1}S_r: \tau^j_i(x) \notin \bigcup_{k \le n}\hat{J_k'}\text{ for all }j \ge 1\},\label{step42}\\
     \{x \in \Delta_m \setminus \pi^{-1}S_r: \mathcal{N}_s^{r}(\bigcup_{k \le n}J_k)(x)=0 \}=\{x\in \Delta_m \setminus \pi^{-1}S_r: \tau^j_s(x) \notin \bigcup_{k \le n}\hat{J_k''} \text{ for all }j \ge 1\}. \label{step43}
 \end{gather}

As the fifth step, we estimate $\mathcal{N}_{i}^r$ via $\mathcal{N}_{s}^r$ from (\ref{step11}) and (\ref{step42})\begin{align*}
    \mu_{\Delta_m}&\{x \in \Delta_m: \mathcal{N}_i^{r}(\bigcup_{k \le n}\hat{J_k})(x)=0 \}\\
    &\le \mu_{\Delta_m}\{x\in \Delta_m \setminus \pi_{\Delta_m}\pi^{-1}S_r: \tau^j_i(x) \notin \bigcup_{k \le n}\hat{J_k'} \text{ for all }j \ge 1\}+\mu_{\Delta_m}(\pi_{\Delta_m}\pi^{-1}S_r)\\
    &\le \mu_{\Delta_m}\{x\in  H_{\epsilon'}\bigcap G_{N_{\epsilon'}, \epsilon'}: \tau^j_i(x) \notin \bigcup_{k \le n}\hat{J_k'}\text{ for all }j \ge 1\}+2 \epsilon'+\mu_{\Delta_m}(\pi_{\Delta_m}\pi^{-1}S_r).
    \end{align*}
    
    By using (\ref{step12}) and (\ref{step41}) the inequality above can be written as
    \begin{align*}
    &\le \mu_{\Delta_m}\{x\in  H_{\epsilon'}\bigcap G_{N_{\epsilon'}, \epsilon'}: \tau^j_s(x) \notin \bigcup_{k \le n}\hat{J_k''} \text{ for all }j \ge 1\}+2 \epsilon'+\mu_{\Delta_m}(\pi_{\Delta_m}\pi^{-1}S_r)\\
    &\le \mu_{\Delta_m}\{x\in \Delta_m: \tau^j_s(x) \notin \bigcup_{k \le n}\hat{J_k''}  \text{ for all }j \ge 1\}+2 \epsilon'+\mu_{\Delta_m}(\pi_{\Delta_m}\pi^{-1}S_r)\\
    &\le \mu_{\Delta_m}\{x\in \Delta_m\setminus \pi^{-1}S_r: \tau^j_s(x) \notin \bigcup_{k \le n}\hat{J_k''} \text{ for all }j \ge 1\}+2 \epsilon'+2\mu_{\Delta_m}(\pi_{\Delta_m}\pi^{-1}S_r).
    \end{align*}
    
    Now, from  (\ref{step43}) the inequality above can be estimated as
    \begin{align*}
    &= \mu_{\Delta_m}\{x\in \Delta_m\setminus \pi^{-1}S_r:\mathcal{N}_s^{r}(\bigcup_{k \le n}J_k)(x)=0 \}+2 \epsilon'+2\mu_{\Delta_m}(\pi_{\Delta_m}\pi^{-1}S_r)\\
    &\le\mu_{\Delta_m}\{x \in \Delta_m: \mathcal{N}_s^{r}(\bigcup_{k \le n}J_k)(x)=0 \}+2 \epsilon'+\frac{2\int R d\mu_X}{\int \min\{R, m+1\} d\mu_X}\mu_{\mathcal{M}}(S_r).
\end{align*}

From the conditions of this lemma we have\begin{gather*}
    \liminf_{r\to 0}\mu_{\Delta_m}\{x \in \Delta_m: \mathcal{N}_s^{r}(\bigcup_{k \le n}J_k)(x)=0 \}\ge e^{-\Leb_{\mathbb{R}^+\bigcup \{0\}} (\bigcup_{k \le n}\hat{J_k})}-2 \epsilon'.
\end{gather*}

Let $\epsilon'\to 0$. Then $\Leb_{\mathbb{R}^+\bigcup \{0\}}(\bigcup_{k \le n} \hat{J}_k) \to \Leb_{\mathbb{R}^+\bigcup \{0\}}(\bigcup_{k \le n} J_k)$ and \begin{gather*}
    \liminf_{r\to 0}\mu_{\Delta_m}\{x \in \Delta_m: \mathcal{N}_s^{r}(\bigcup_{k \le n}J_k)(x)=0 \}\ge e^{-\Leb_{\mathbb{R}^+\bigcup \{0\}} (\bigcup_{k \le n}J_k)}.
\end{gather*}

The previous steps lead to the following conclusion
\begin{gather*}
    \lim_{r\to 0}\mu_{\Delta_m}\{x \in \Delta_m: \mathcal{N}_s^{r}(\bigcup_{k \le n}J_k)(x)=0 \}= e^{-\Leb_{\mathbb{R}^+\bigcup \{0\}} (\bigcup_{k \le n}J_k)}=\mathbb{P}\Big(\mathcal{P}(\bigcup_{k \le n}J_k)=0\Big)
\end{gather*} for any disjoint bounded intervals $J_1, \cdots, J_n\subseteq [0,\infty)$. Hence,  this lemma is proved.
\end{proof}

\begin{remark} In \cite{vaientietdsinducing} a similar result was obtained for return statistics to balls. We are dealing here with a more general hitting statistics for sections.
\end{remark} 

Together with Lemmas \ref{lifting} and \ref{inducing}, the last lemma (an approximation technique) in this section provides a sufficient condition for (\ref{poissonsection}).

\begin{lemma}[Approximations]\label{approximation} \par
Suppose that for any sufficiently large $m \ge 1$, and any disjoint bounded intervals $J_1,\cdots, J_n\subseteq [0,\infty)$,\begin{align}\label{inducingforanym}
    \lim_{r \to 0} \mu_{\Delta_m}\Big(\mathcal{N}_i^{r}(\bigcup_{k \le n}J_k)=0\Big)=\mathbb{P}\Big(\mathcal{P}(\bigcup_{k \le n}J_k)=0\Big).
\end{align} Then for any disjoint bounded intervals $J_1,\cdots, J_n\subseteq [0,\infty)$ we have\begin{align*}
    \lim_{r \to 0} \mu_{\Delta}\Big(\mathcal{N}_s^{r}(\bigcup_{k \le n}J_k)=0\Big)=\mathbb{P}\Big(\mathcal{P}(\bigcup_{k \le n}J_k)=0\Big).
\end{align*} 

By Lemma \ref{lifting} we also get the required relation (\ref{poissonsection}), \begin{align*}
    \lim_{r \to 0} \mu_{\mathcal{M}}\Big(\mathcal{N}_s^{r,q}(\bigcup_{k \le n}J_k)=0\Big)=\mathbb{P}\Big(\mathcal{P}(\bigcup_{k \le n}J_k)=0\Big),
\end{align*} provided that (\ref{inducingforanym}) holds for each large $m \ge 1$. Then this concludes a proof of Poisson limit laws in Theorem \ref{thm}.
\end{lemma}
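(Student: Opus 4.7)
The plan is to apply Lemma \ref{inducing} for each fixed truncation level $m$ to transfer the hypothesis from $\mathcal{N}_i^r$ to $\mathcal{N}_s^r$ on the truncated tower $(\Delta_m,\mu_{\Delta_m})$, and then let $m\to\infty$ via a sandwich argument to promote the conclusion from $(\Delta_m,\mu_{\Delta_m})$ to $(\Delta,\mu_\Delta)$. The two inputs that make this work are: (i) the Kac-type identity $\mu_\Delta|_{\Delta_m}=\mu_\Delta(\Delta_m)\cdot\mu_{\Delta_m}$, which comes directly from the formulas for $\mu_\Delta$ and $\mu_{\Delta_m}$; and (ii) the fact that $\mu_\Delta(\Delta\setminus\Delta_m)\to 0$ as $m\to\infty$, which follows from the Finiteness condition $\int R\,d\mu_X<\infty$ in Definition \ref{cmz}.

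Concretely, fix disjoint bounded intervals $J_1,\dots,J_n\subseteq[0,\infty)$ and write $E_r:=\{x\in\Delta:\mathcal{N}_s^r(\bigcup_{i\le n}J_i)(x)=0\}$. For each $m\ge 1$, hypothesis (\ref{inducingforanym}) together with Lemma \ref{inducing} gives
$$\lim_{r\to 0}\mu_{\Delta_m}(E_r)=e^{-\Leb(\bigcup_{i\le n}J_i)}.$$
Splitting $\Delta=\Delta_m\sqcup(\Delta\setminus\Delta_m)$ and using (i), I would obtain the two-sided estimate
$$\mu_\Delta(\Delta_m)\cdot\mu_{\Delta_m}(E_r)\ \le\ \mu_\Delta(E_r)\ \le\ \mu_\Delta(\Delta_m)\cdot\mu_{\Delta_m}(E_r)+\mu_\Delta(\Delta\setminus\Delta_m).$$
Sending $r\to 0$ with $m$ fixed yields
$$\mu_\Delta(\Delta_m)\,e^{-\Leb(\bigcup J_i)}\ \le\ \liminf_{r\to 0}\mu_\Delta(E_r)\ \le\ \limsup_{r\to 0}\mu_\Delta(E_r)\ \le\ \mu_\Delta(\Delta_m)\,e^{-\Leb(\bigcup J_i)}+\mu_\Delta(\Delta\setminus\Delta_m),$$
and then sending $m\to\infty$ pinches both $\liminf$ and $\limsup$ to $e^{-\Leb(\bigcup J_i)}=\mathbb{P}(\mathcal{P}(\bigcup J_i)=0)$, which is exactly the desired conclusion.

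The one point requiring some care — but not a serious obstacle — is consistency of the normalization: the scaling factor $\mu_\Delta(\pi^{-1}S_r)=\mu_\mathcal{M}(S_r)$ inside the definition of $\mathcal{N}_s^r$ is intrinsic to $\Delta$ and does not change when we restrict to $\Delta_m$, so the set $E_r\subseteq\Delta$ is literally the same object whether evaluated under $\mu_{\Delta_m}$ or $\mu_\Delta$; this is what legitimizes the sandwich above. Once the limit for $\mathcal{N}_s^r$ under $\mu_\Delta$ is established, Remark \ref{ptprocesssameindistribution} transfers it to $\mathcal{N}_s^{r,q}$ under $\mu_\mathcal{M}$, giving (\ref{poissonsection}), and Lemma \ref{nonsectiontosection} then completes the proof for the quasi-section $B_r(q)\times S^d$, finishing the verification of the Poisson limit law.
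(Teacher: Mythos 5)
Your proposal is correct and takes essentially the same approach as the paper: apply Lemma \ref{inducing} at each truncation level $m$, use the Kac-type identity $\mu_\Delta|_{\Delta_m}=\mu_\Delta(\Delta_m)\,\mu_{\Delta_m}$ to split $\mu_\Delta(E_r)$ into a $\Delta_m$ part and an $O(\mu_\Delta(\Delta_m^c))$ remainder, and send $r\to 0$ then $m\to\infty$. The paper's one-line computation $\mu_\Delta(E_r)=\mu_\Delta(\Delta_m)\mu_{\Delta_m}(E_r)+O(\mu_\Delta(\Delta_m^c))$ is exactly your two-sided sandwich, and your remark on normalization (that the rescaling inside $\mathcal{N}_s^r$ uses $\mu_\Delta(\pi^{-1}S_r)$ and so is the same object under $\mu_\Delta$ and $\mu_{\Delta_m}$) is a helpful explicit observation that the paper uses implicitly.
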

\begin{proof} It follows from Lemma \ref{inducing} that for any $m\gg 1$ \begin{align*}
    \lim_{r \to 0} \mu_{\Delta_m}\Big(\mathcal{N}_s^{r}(\bigcup_{k \le n}J_k)=0\Big)=\mathbb{P}\Big(\mathcal{P}(\bigcup_{k \le n}J_k)=0\Big).
\end{align*} Therefore,
\begin{align*}
    \lim_{r\to 0}\mu_{\Delta}\Big(\mathcal{N}_s^{r}(\bigcup_{k \le n}J_k)=0\Big)&= \mu_{\Delta}(\Delta_m)\lim_{r\to 0}\mu_{\Delta_m}\Big(\mathcal{N}_s^{r}(\bigcup_{k \le n}J_k)=0\Big)+O\Big(\mu_{\Delta}(\Delta_m^c)\Big)\\
    &= \mu_{\Delta}(\Delta_m)\mathbb{P}\Big(\mathcal{P}(\bigcup_{k \le n}J_k)=0\Big)+O\Big(\mu_{\Delta}(\Delta_m^c)\Big).
\end{align*} Let $m \to \infty$, then $\lim_{r\to 0}\mu_{\Delta}\Big(\mathcal{N}_s^{r}(\bigcup_{k \le n}J_k)=0\Big)=\mathbb{P}\Big(\mathcal{P}(\bigcup_{k \le n}J_k)=0\Big)$.
\end{proof}
 
\begin{remark}\label{remarkbefore5}
Thanks to Lemmas \ref{approximation}, \ref{nonsectiontosection}, \ref{poissoncondition1} and \ref{kallen}, in order to prove that $\mathcal{N}^{r,q} \to_d \mathcal{P}$ in Theorem \ref{thm}, we just need to verify  (\ref{inducingforanym})  for the dynamical system $(\Delta_m, F^{R_m}, \mu_{\Delta_m})$ for each large $m\ge 1$. Indeed, $(\Delta_m, F^{R_m}, \mu_{\Delta_m})$ is a  hyperbolic dynamical system with exponential decay of correlations and  arbitrarily large contracting (resp. expanding) rates $\alpha$ along stable (resp. unstable) manifolds. It allows us to skip verification of the condition $\alpha \cdot \lim_{r \to 0}\frac{\log \mu_{\mathcal{M}}\{B_r(q)\times [-\pi/2, \pi/2]\}}{\log r}>1$ (see \cite{peneetds, peneijm}), which fails for many slowly mixing billiard systems.
\end{remark}

\section{Thicker hyperbolic and expanding Young towers }\label{section5}
In order to prove (\ref{inducingforanym}), we will model dynamical systems $(\Delta_m, F^{R_m}, \mu_{\Delta_m})$ by hyperbolic (although non-mixing) Young towers (see \cite{Young,Young2}), but with exponential contracting (resp. expanding) rates along stable (resp. unstable) manifolds. Some properties of non-mixing hyperbolic Young towers are introduced in this section.

\subsection{Hyperbolic Young towers for $(X, f^R, \mu_X)$} In this subsection we will consider a dynamical system $(X, f^R, \mu_X)$. To simplify notations denote $f^R$ or $F^R$ by $g$. According to \cite{Chernovjsp,CZnon,Markarianetds} and Definition \ref{cmz} (CMZ structures), there exists a compact set $\Lambda \subseteq  \bigcap_{0\le i \le N}g^{i}\mathbb{S}^c \bigcap \bigcap_{0\le i \le N}g^{-i}\mathbb{S}^c \subseteq X$ with a hyperbolic product structure (here $N$ is defined in the Definition \ref{cmz}). Besides, there are families of $C^1$-stable disks (i.e., closed connected pieces of stable manifolds) $\Gamma^s:=\{\gamma^s\}$, and families of $C^1$-unstable disks (i.e., closed connected pieces of unstable manifolds) $\Gamma^u:=\{\gamma^u\}$, such that the following conditions hold.

\begin{enumerate}

    \item $\Lambda=\left(\bigcup \gamma^s\right) \bigcap \left(\bigcup \gamma^u\right)$,
    
    \item $\dim \gamma^s+\dim \gamma^u=\dim \mathcal{M}$,
    
    \item each $\gamma^s$ intersects every $\gamma^u$ 
    at exactly one point,
    
    \item stable and unstable manifolds are transversal, and the angles between them are uniformly bounded away from 0,
   \item $\Gamma^u:=\{\gamma^u\}$ is a continuous family, i.e., there is a  compact set $K^s$, a unit disk $O^u$ in some Euclidean space and a map $\phi^u: K^s \times O^u \to \mathcal{M}$, such that 
\begin{enumerate}
    \item $\gamma^u=\phi^u\left(\{x\}\times O^u\right)$ is an unstable manifold,
    \item $\phi^u$ maps $K^s \times O^u$ homeomorphically onto its image,
    \item $x \to \phi^u|_{\{x\} \times O^u}$ defines a continuous map from $K^s$ to $\Emb^1\left(O^u, \mathcal{M}\right)$, where $\Emb^1\left(O^u, \mathcal{M}\right)$ is the space of $C^1$-embeddings of $O^u$ into $\mathcal{M}$.
\end{enumerate}
$\Gamma^s:=\{\gamma^s\}$ is also a continuous family in the same sense.
\item Lebesgue detectability:  there exists $\gamma \in \Gamma^u$ such that $\Leb_{\gamma}\left(\Lambda \bigcap \gamma\right) > 0$, where $\Leb_{\gamma}$ is the Lebesgue measure on $\gamma$.
\item Markov property: there exist pairwise disjoint $s$-subsets $\Lambda_1,\Lambda_2, \cdots \subseteq \Lambda$, such that , each $\Lambda_i=\left(\bigcup_{\gamma^s \in \Gamma^s_i} \gamma^s\right) \bigcap \left(\bigcup_{\gamma^u \in \Gamma^u} \gamma^u\right)$ for some $\Gamma^s_i\subseteq \Gamma^s$, and \begin{enumerate}
        \item $\Leb_{\gamma}\left(\Lambda \setminus(\bigcup_{i \ge 1}\Lambda_i)\right)=0$ on each $\gamma \in \Gamma^u$,
        \item there is a return time function $R^e: \Lambda \to \mathbb{N}$ and a return map $g^{R^e}: \Lambda \to \Lambda$, such that for each $i \ge 1$ \[R^e\big|_{\Lambda_i}:=R^{e}_{i}\equiv 0 \pmod N,\quad 
g^{R^{e}}\big|_{\Lambda_i}:=g^{R^{e}_{i}}\big|_{\Lambda_i},\]$g^{R^{e}_{i}} (\Lambda_i)$ is a $u$-subset (i.e., each $g^{R^{e}_{i}}(\Lambda_i)=\left(\bigcup_{\gamma^s \in \Gamma^s} \gamma^s\right) \bigcap \left(\bigcup_{\gamma^u \in \Gamma^u_i} \gamma^u\right)$ for some $\Gamma^u_i\subseteq \Gamma^u$), and for all $x \in \Lambda_i$, \begin{align*}
g^{R^{e}_{i}}\Big(\gamma^s(x)\Big) \subseteq \gamma^s\Big(g^{R^{e}_{i}}(x)\Big), \quad g^{R^{e}_{i}}\Big(\gamma^u(x)\Big) \supseteq \gamma^u\Big(g^{R^{e}_{i}}(x)\Big),
        \end{align*}where $\gamma^u(y)$ (resp. $\gamma^s(y)$) is an element of $\Gamma^u$ (resp. $\Gamma^s$)
which contains $y\in \Lambda$, and $N$ is defined in  \ref{cmz}.
    \end{enumerate}

Moreover, there exist constants $C \ge 1$ and $0 < \beta < 1$ such that the following conditions hold.

\item Exponential contraction of stable disks: for any $\gamma^s \in \Gamma^s, x,y \in \gamma^s,  n \ge 1$,
\[ d\Big(g^n(x), g^n(y)\Big) \le C\beta^n.\]

\item Backward exponential contraction of unstable disks: for any $ \gamma^u \in \Gamma^u, x,y \in \gamma^u,  n\ge 1$,
\[ d\Big(g^{-n}(x), g^{-n}(y)\Big) \le C\beta^n.\]
\item Bounded distortion: for any $ \gamma \in \Gamma^u \text{ and } x, y \in \gamma \bigcap \Lambda_i$ for some $\Lambda_i$, 
\[\log \frac{\det (D^ug^{R^e})(x)}{\det (D^ug^{R^e})(y)} \le C  \beta^{s\left(g^{R^e}(x), g^{R^e}(y)\right)},\] where $s(x, y)$ is the separation time, i.e., for any $x,y \in \Lambda$,
\[s(x,y):=\min \{n \ge 0: (g^{R^e})^n(x) \text{ and } (g^{R^e})^n(y) \text{ belong to different sets } \Lambda_i\}.\]
\item Regularity of the stable foliations: for each $\gamma, \gamma'\in \Gamma^u$, define $\Theta_{\gamma, \gamma'}: \gamma' \bigcap \Lambda \to \gamma \bigcap \Lambda$ by
\[\Theta_{\gamma, \gamma'}(x)=\gamma^s(x)\bigcap \gamma.\]

Then the following properties hold
\begin{enumerate}
    \item $\Theta_{\gamma, \gamma'}$ is absolutely continuous, and for any $ x \in \gamma \bigcap \Lambda$
    \[\frac{d\left(\Theta_{\gamma, \gamma'}\right)_{*}\Leb_{\gamma'}}{d\Leb_{\gamma}}(x)=\prod_{n \ge 0} \frac{\det (D^ug)\left(g^n\left(x\right)\right)}{\det (D^ug)\big(g^n(\Theta_{\gamma, \gamma'}^{-1}x)\big)}= C^{\pm 1},\]
    \item for any $x,y \in \gamma \bigcap \Lambda$, 
    \[\log \frac{\frac{d\left(\Theta_{\gamma, \gamma'}\right)_{*}\Leb_{\gamma'}}{d\Leb_{\gamma}}(x)}{\frac{d\left(\Theta_{\gamma, \gamma'}\right)_{*}\Leb_{\gamma'}}{d\Leb_{\gamma}}(y)} \le C  \beta^{s(x,y)}.\]
\end{enumerate}
\item Decay rate of the return times $R^e$: for any $\gamma\in \Gamma^u$ \begin{align}\label{taildecay}
       \Leb_{\gamma}\left(R^e>n\right) \le C\beta^n.
   \end{align}
\end{enumerate}
\begin{remark}
Note that $\gcd{\{R^{e}\}}\ge N$. Indeed, it follows from the fact that $(f^R)^N$ satisfies the growth lemmas for a CMZ structure.
\end{remark}

Now we can construct a hyperbolic Young tower $\Delta_e$ with dynamics $F_e : \Delta_e \to \Delta_e$, where
\begin{align*}
    &\Delta_e:=\{(x,l) \in \Lambda \times \mathbb{N}_0 : 0 \le l < R^e(x) \},\\
    &F_e(x,l):=\begin{cases}
 (x,l+1),      &l < R^e(x)-1\\
\left(g^{R^e}(x),0\right),  & l=R^e(x)-1\\
\end{cases},
\end{align*}
and  define a projection $\pi_{e}: \Delta_{e} \to X $ such that 
\[\pi_{e}(x,l):=g^l(x).\]

The equivalence relation $\sim$ on $\Lambda$ is then
\[x \sim y \text{ if and only if } x,y \in \gamma^s \text{ for some } \gamma^s \in \Gamma^s.\]

Another equivalence relation $\sim$ on $\Delta_e$ is
\[(x,n) \sim (y,m) \text{ if and only if }  x,y \in \gamma^s \text{ for some } \gamma^s \in \Gamma^s, n=m.\]

By making use of these equivalence relations we can define a quotient tower  $\widetilde{\Delta_{e}}:=\Delta_{e}/\sim$, which has a quotient product structure $\widetilde{\Lambda}:=\Lambda/\sim$ with canonical projections $\widetilde{\pi_{\Delta_{e}}}:\Delta_{e} \to \widetilde{\Delta_{e}}$ and $\widetilde{\pi_{\Lambda}}:\Lambda \to \widetilde{\Lambda}$. We identify $\Lambda, \widetilde{\Lambda}$ with $\Delta_e\bigcap (\Lambda \times \{0\}),\widetilde{\Delta_e}\bigcap (\widetilde{\Lambda} \times \{0\})$, respectively. The quotient maps $\widetilde{F_e}: \widetilde{\Delta_{e}} \to \widetilde{\Delta_{e}}$, $\widetilde{g^{R^{e}}}: \widetilde{\Lambda} \to \widetilde{\Lambda}$, a quotient return time $\widetilde{R^e}:\widetilde{\Delta_e} \to \mathbb{N}$, and a quotient separation time $\widetilde{s}$ on $\widetilde{\Lambda}\times \widetilde{\Lambda}$ are defined via the following relations

\[\widetilde{\pi_{\Delta_e}} \circ F_e=\widetilde{F_e} \circ \widetilde{\pi_{\Delta_e}},\quad R^e=\widetilde{R^e} \circ \widetilde{\pi_{\Lambda}}, \quad \widetilde{g^{R^{e}}}=\widetilde{F_e}^{\widetilde{R^e}}, \quad s=\widetilde{s}\circ (\widetilde{\pi_{\Lambda}}, \widetilde{\pi_{\Lambda}}).\]

It follows from \cite{Young} that there exists a measure $m$ on $\widetilde{\Lambda}$ such that for any $x,y \in \widetilde{\Lambda}$ with $\widetilde{s}(x,y)\ge 1$, \begin{gather}\label{distortionofquotientinducedmap}
    \log \frac{\det D \widetilde{F_e}^{\widetilde{R^e}}(x)}{\det D \widetilde{F_e}^{\widetilde{R^e}}(y)} \le C \beta^{\widetilde{s}(\widetilde{F_e}^{\widetilde{R^e}}x,\widetilde{F_e}^{\widetilde{R^e}}y)},
\end{gather} where $\det D \widetilde{F_e}^{\widetilde{R^e}}$ is the Radon-Nikodym derivative of $\widetilde{F_e}^{\widetilde{R^e}}$ with respect to the measure $m$.

The set $(\widetilde{\Delta_e}, \widetilde{F_e})$, together with (\ref{distortionofquotientinducedmap}), is called an expanding quotient Young tower.

\begin{lemma}[See \cite{Young, Young2}]\label{resultsofyoungpaper}\par There are constants $C \ge 1$ and $0 < \beta < 1$, such that the following holds. \begin{enumerate}
    \item  There exists a probability distribution $\mu_{\widetilde{\Lambda}}$ on  $\widetilde{\Lambda}$,  which is constructed only from the measure $m$ and $\widetilde{F_e}^{\widetilde{R^e}}$, such that \begin{gather}\label{referencemeasureandsrb}
   (\widetilde{F_e}^{\widetilde{R^e}})_{*}\mu_{\widetilde{\Lambda}}=\mu_{\widetilde{\Lambda}}, \quad \frac{d\mu_{\widetilde{\Lambda}}}{dm}=C^{\pm 1}.
\end{gather}

A probability distribution on $\widetilde{\Delta_e}$, defined by \[\mu_{\widetilde{\Delta_e}}:=(\int \widetilde{R^e} d\mu_{\widetilde{\Lambda}})^{-1}\sum_{j}(\widetilde{F_e}^j)_{*}\big(\mu_{\widetilde{\Lambda}}|_{\{\widetilde{R^e}>j\}}\big),\] is an invariant measure, i.e., 
\begin{gather*}
     \widetilde{F_e}_{*}\mu_{\widetilde{\Delta_e}}=\mu_{\widetilde{\Delta_e}}.
\end{gather*}

\item Further, there exists a probability measure $\mu_{\Lambda}$ on $\Lambda$ such that \begin{gather}\label{propertyofsrb}
    \left(\widetilde{\pi_{\Lambda}}\right)_{*}\mu_{\Lambda}=\mu_{\widetilde{\Lambda}}, \quad (g^{R^e})_{*}\mu_{\Lambda}=\mu_{\Lambda}, \quad (\mu_{\Lambda})_{\gamma^u} \ll \Leb_{\gamma^u}, \quad \frac{d(\mu_{\Lambda})_{\gamma^u}}{d\Leb_{\gamma^u}}= C^{\pm 1},
    \end{gather} where $(\mu_{\Lambda})_{\gamma^u}$ is the conditional probability of $\mu_{\Lambda}$ on $\gamma^u \in \Gamma^u$. A probability measure on $\Delta_{e}$ defined by\[\mu_{\Delta_e}:=(\int R^e d\mu_{\Lambda})^{-1}\sum_{j}(F_e^j)_{*}\big(\mu_{\Lambda}|_{\{R^e>j\}}\big)\]
has the following properties \begin{gather*}
    \left(\widetilde{\pi_{\Delta_e}}\right)_{*}\mu_{\Delta_e}=\mu_{\widetilde{\Delta_e}},   \quad {\pi_e}_{*}\mu_{\Delta_e}=\mu_X, \quad {F_e}_{*}\mu_{\Delta_e}=\mu_{\Delta_e}.
\end{gather*} 

\item  Suppose that $\gcd{\{R^{e}\}}=N_e\ge N$. Now we define new towers \begin{gather*}
    \Delta_e':=\{(x,lN_e) \in \Lambda \times \mathbb{N}_0: 0 \le l < R^e(x)/N_e\},\\
    \widetilde{\Delta_e'}:=\{(x,lN_e) \in \widetilde{\Lambda} \times \mathbb{N}_0: 0 \le l < \widetilde{R^e}(x)/N_e\},
\end{gather*} which are  sub-towers of $\Delta_e$ and $\widetilde{\Delta_e}$, respectively. Then the maps $F_e^{N_e}:\Delta_e' \to \Delta_e'$ and $\widetilde{F_e}^{N_e}:\widetilde{\Delta_e'} \to \widetilde{\Delta_e'}$ preserve  probability measures \begin{gather*}
    \mu_{\Delta_e'}:=(\int R^e/N_e d\mu_{\Lambda})^{-1}\sum_{j}(F_e^{jN_e})_{*}\big(\mu_{\Lambda}|_{\{R^e/N_e>j\}}\big),\\
    \mu_{\widetilde{\Delta_e'}}:=(\int \widetilde{R^e}/N_e d\mu_{\widetilde{\Lambda}})^{-1}\sum_{j}(\widetilde{F_e}^{jN_e})_{*}\big(\mu_{\widetilde{\Lambda}}|_{\{\widetilde{R^e}/N_e>j\}}\big),
\end{gather*}respectively. Further, ${\pi_e}_{*}\mu_{\Delta_e'}$ is exactly $\mu_X$, since $(X, g, \mu_X)$ is mixing (see Definition \ref{cmz}).

\item A family of partitions $(\mathcal{Q}_k)_{k \ge 0}$ of $\Delta_{e}'$, $(\widetilde{\mathcal{Q}_k})_{k \ge 0}$ of $\widetilde{\Delta_{e}'}$, defined as \begin{gather*}
    \mathcal{Q}_0:=\{\Lambda_i \times \{lN_e\}, i \ge 1, 0\le l <R^{e}_{i}/N_e\}, \quad \mathcal{Q}_k:= \bigvee_{0 \le i \le k} (F_e^{N_e})^{-i} \mathcal{Q}_0,\\
    \widetilde{\mathcal{Q}_0}:=\{\widetilde{\Lambda_i} \times \{lN_e\}, i \ge 1, 0\le l < R^{e}_{i}/N_e\}, \quad \widetilde{\mathcal{Q}_k}:= \bigvee_{0 \le i \le k} (\widetilde{F_e}^{N_e})^{-i} \widetilde{\mathcal{Q}_0},
\end{gather*} satisfies the relations  \begin{gather*}
    \diam \left(\pi_e \circ F_e^{N_ek}(Q)\right) \le C\beta^{kN_e}, \quad \widetilde{\pi_{\Delta_e}}Q \in \widetilde{\mathcal{Q}_{2k}} \text{ for any }Q \in \mathcal{Q}_{2k}.
\end{gather*}

\item Finally, for any $n >2k\ge 2$,any $(\widetilde{Q_i})_{i \ge 1} \subseteq \widetilde{\mathcal{Q}_k}$, and any bounded function $\widetilde{h}: \widetilde{\Delta_e'} \to \mathbb{R}$, we have the following estimate of decay of correlations
\begin{equation}\label{decorrelationexpanding}
    \Big|\int \mathbbm{1}_{\bigcup_{i \ge 1} \widetilde{Q_i}}  \widetilde{h} \circ (\widetilde{F_e}^{N_e})^{n} d\mu_{\widetilde{\Delta_e'}}-\mu_{\widetilde{\Delta_e'}}\Big(\bigcup_{i \ge 1} \widetilde{Q_i}\Big) \int \widetilde{h} d\mu_{\widetilde{\Delta_e'}}\Big| \le C\beta^{n-2k} \mu_{\widetilde{\Delta_e'}}\Big(\bigcup_{i \ge 1} \widetilde{Q_i}\Big)||\widetilde{h}||_{\infty}.
\end{equation}

From (\ref{decorrelationexpanding}) and $\left(\widetilde{\pi_{\Delta_e}}\right)_{*}\mu_{\Delta_e}=\mu_{\widetilde{\Delta_e}}$, immediately follows an estimate of a rate of decay of correlations. Namely, for any $n>2k\ge 2$, any $(Q_i)_{i \ge 1} \subseteq \mathcal{Q}_k$, and any $\sigma(\bigcup_{k\ge 0}\mathcal{Q}_k)$-measurable function $h: \Delta_e' \to \mathbb{R}$,
\begin{equation}\label{decorrelationhyperbolic}
    \Big|\int \mathbbm{1}_{\bigcup_{i \ge 1} Q_i}  h \circ (F_e^{N_e})^{n} d\mu_{\Delta_e'}-\mu_{\Delta_e'}\Big(\bigcup_{i \ge 1} Q_i\Big) \int h d\mu_{\Delta_e'}\Big| \le C\beta^{n-2k} \mu_{\Delta_e'}\Big(\bigcup_{i \ge 1} Q_i\Big)||h||_{\infty}.
\end{equation}
\end{enumerate}
\end{lemma}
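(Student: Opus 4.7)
The plan is to follow the construction of \cite{Young, Young2} essentially verbatim, adapting it to the present notation. For part (1), I would fix a reference unstable leaf $\gamma_{0} \in \Gamma^{u}$ and push its Lebesgue measure through the stable holonomies to define the reference measure $m$ on $\widetilde{\Lambda}$. Because $\widetilde{F_{e}}^{\widetilde{R^{e}}} : \widetilde{\Lambda} \to \widetilde{\Lambda}$ is a Markov expanding map with the distortion bound (\ref{distortionofquotientinducedmap}), iterates of its Perron-Frobenius operator applied to the constant $1$ produce densities uniformly of order $C^{\pm 1}$; a standard Cesaro averaging (or Birkhoff-cone contraction) argument then yields the invariant probability $\mu_{\widetilde{\Lambda}}$ satisfying (\ref{referencemeasureandsrb}). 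The suspension formula for $\mu_{\widetilde{\Delta_{e}}}$ is $\widetilde{F_{e}}$-invariant because the exponential tail (\ref{taildecay}) of $R^{e}$ forces $\int \widetilde{R^{e}}\, d\mu_{\widetilde{\Lambda}} < \infty$.

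For part (2), I would construct $\mu_{\Lambda}$ by prescribing on each $\gamma^{u} \in \Gamma^{u}$ a density $C^{\pm 1}$ with respect to $\Leb_{\gamma^{u}}$ and a transverse measure that coincides with $\mu_{\widetilde{\Lambda}}$ after quotienting; regularity of the stable foliation (the Radon-Nikodym derivative of holonomies being $C^{\pm 1}$) guarantees consistency. Invariance under $g^{R^{e}}$ is inherited from that of $\mu_{\widetilde{\Lambda}}$, and the suspended measure $\mu_{\Delta_{e}}$ pushes down under $\pi_{e}$ to an absolutely continuous $g$-invariant probability on $X$, which by uniqueness of the SRB measure of the K-system $(X, g, \mu_{X})$ must be $\mu_{X}$. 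For part (3), since $N_{e} = \gcd\{R_{i}^{e}\}$ the tower decomposes into $N_{e}$ cyclic classes invariant under $F_{e}^{N_{e}}$; restricting the measures to the class ``level $\equiv 0 \pmod{N_{e}}$'' gives $\mu_{\Delta_{e}'}$ and $\mu_{\widetilde{\Delta_{e}'}}$, and mixing of $(X, g, \mu_{X})$ rules out support on a proper cyclic subsystem, forcing $(\pi_{e})_{*}\mu_{\Delta_{e}'} = \mu_{X}$. For part (4), the diameter bound $\diam(\pi_{e} \circ F_{e}^{N_{e} k}(Q)) \le C \beta^{k N_{e}}$ for $Q \in \mathcal{Q}_{2k}$ follows by combining $k$ forward iterates (which contract the stable component exponentially) with $k$ backward iterates applied to the image (which contract the unstable component), while $\pi_{\Delta_{e}} Q \in \widetilde{\mathcal{Q}_{2k}}$ is immediate from compatibility of $\mathcal{Q}_{0}$ with the $s$-structure.

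The main obstacle is part (5), the decay of correlations (\ref{decorrelationexpanding}); I would prove it by Young's coupling method. Running two copies of $\widetilde{\Delta_{e}'}$ in parallel, the first with distribution $\mu_{\widetilde{\Delta_{e}'}}$ and the second with a probability $\nu$ equivalent to it and whose log-density is uniformly bounded, the exponential tail (\ref{taildecay}) together with (\ref{distortionofquotientinducedmap}) imply that at each subsequent return a uniformly positive fraction of currently unmatched mass can be coupled; this yields a total variation estimate $||(\widetilde{F_{e}}^{N_{e}})^{n}_{*}\nu - \mu_{\widetilde{\Delta_{e}'}}||_{TV} \le C \beta^{n}$. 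Applying this with $\nu$ equal to the normalization of $\mu_{\widetilde{\Delta_{e}'}}$ restricted to $\bigcup_{i} \widetilde{Q_{i}}$, whose log-density becomes uniformly bounded after $k$ iterates of $\widetilde{F_{e}}^{N_{e}}$ by the distortion (\ref{distortionofquotientinducedmap}), gives (\ref{decorrelationexpanding}) with the advertised loss $\beta^{n - 2k}$. The hyperbolic estimate (\ref{decorrelationhyperbolic}) then follows by pushing forward under $\widetilde{\pi_{\Delta_{e}}}$, since a $\sigma(\bigcup_{k \ge 0} \mathcal{Q}_{k})$-measurable function $h$ on $\Delta_{e}'$ descends to a bounded function on $\widetilde{\Delta_{e}'}$ and $(\widetilde{\pi_{\Delta_{e}}})_{*}\mu_{\Delta_{e}'} = \mu_{\widetilde{\Delta_{e}'}}$.
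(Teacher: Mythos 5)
Your sketch is a faithful recapitulation of the construction in \cite{Young, Young2}, which is exactly what the paper relies on (the lemma is stated with a citation rather than a proof): reference measure on a fixed unstable leaf pushed along holonomies, Perron--Frobenius/Cesaro averaging for $\mu_{\widetilde{\Lambda}}$ under the distortion bound, lift to $\mu_\Lambda$ via regularity of stable holonomies, cyclic decomposition by $N_e=\gcd\{R^e_i\}$, forward/backward contraction for the diameter estimate, and Young's coupling for the decay of correlations followed by descent to the quotient for the hyperbolic version. This is the same route; the only very small looseness is in the final step, where you argue that $k$ iterates of $\widetilde{F_e}^{N_e}$ suffice to regularize the restricted density yet quote the loss $\beta^{n-2k}$ rather than $\beta^{n-k}$ — but since $\beta^{n-k}\le\beta^{n-2k}$ the stated bound is anyway implied, and the $2k$ in the paper is chosen to match the $\mathcal{Q}_{2k}$ convention used elsewhere (e.g.\ in part (4) of the lemma), so no gap results.
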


To make the exposition clearer all towers discussed in this subsection, and relations between them, are summarized in the diagrams below.

\[\begin{tikzcd}[arrows={-Stealth}]
\widetilde{\Delta_{e}'} \arrow{r}{\widetilde{F_{e}}^{N_{e}}} &\widetilde{\Delta_{e}'}&  \arrow{l}{\widetilde{\pi_{\Delta_{e}}}} \Delta'_e \arrow{r}{(F_e)^{N_e}} \arrow{d}{\pi_e }& \Delta'_e \arrow{r}{\text{inclusion}} \arrow{d}{\pi_e } &\Delta_e \arrow{r}{F_e} \arrow{d}{\pi_e }& \Delta_e \arrow{d}{\pi_e } & \Delta\arrow{r}{F} \arrow{d}{\pi }  & \Delta \arrow{d}{\pi }  \\%
&&X \arrow{r}{(f^R)^{N_e}} & X & X \arrow{r}{f^R} &X \arrow{r}{\text{inclusion}} & \mathcal{M}\arrow{r}{f} & \mathcal{M}
\end{tikzcd}
\]

\[\begin{tikzcd}[arrows={-Stealth}]
 &&  \Delta'_e \arrow{r}{(F_e)^{N_e}} \arrow{d}{\pi_e }& \Delta'_e \arrow{r}{\text{inclusion}} \arrow{d}{\pi_e } &\Delta_e \arrow{r}{F_e} \arrow{d}{\pi_e }& \Delta_e \arrow{d}{\pi_e } & \Delta\arrow{r}{F} \arrow{d}{\pi }  & \Delta \arrow{d}{\pi }  \\%
\Lambda\arrow{r}{F_{e}^{R^{e}}}&\Lambda\arrow{r}{\text{inclusion}}&X \arrow{r}{(f^R)^{N_e}} & X & X \arrow{r}{f^R} &X \arrow{r}{\text{inclusion}} & \mathcal{M}\arrow{r}{f} & \mathcal{M}
\end{tikzcd}
\]

\begin{remark} It is not difficult to prove that for any unstable manifold/disk $\gamma^u$ we have $(\mu_X)_{\gamma^u} \ll \Leb_{\gamma^u}$, where $(\mu_X)_{\gamma^u}$ is the conditional measure of $\mu_X$ on an unstable manifold/disk $\gamma^u$. This $\mu_X$ is a Sinai-Ruelle-Bowen measure (\textbf{SRB measure}).
\end{remark}

Thanks to all preparations above, we can construct now thicker hyperbolic and expanding Young towers.

\subsection{Thicker hyperbolic Young towers for $(\Delta_m, F^{R_m}, \mu_{\Delta_m})$} 
\begin{lemma}\label{mixingmlarge}
$(\Delta_m, F^{R_m}, \mu_{\Delta_m})$ is K-mixing for sufficiently large $m\ge 1$. (Observe, that it is not true for a small $m$). 
\end{lemma}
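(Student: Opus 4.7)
The plan is to recognize $(\Delta_m, F^{R_m}, \mu_{\Delta_m})$ as a tower built over the base $(X, f^R, \mu_X)$ with roof function $\tau_m(x) := \min\{R(x), m+1\}$, and then invoke the criterion quoted in the remark after Definition \ref{cmz} (due to \cite{gurevich}): a tower over a K-mixing base is K-mixing exactly when the $\gcd$ of the roof function equals $1$.

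The first step is a direct structural identification. For $(x, j) \in \Delta_m$ one has $0 \le j \le \min\{R(x) - 1, m\} = \tau_m(x) - 1$, and unpacking the definition of the first-return time $R_m$ shows that $F^{R_m}$ moves $(x,j)$ to $(x, j+1)$ while $j < \tau_m(x) - 1$, and to $(f^R x, 0)$ when $j = \tau_m(x) - 1$. A parallel calculation shows that the measure $\mu_{\Delta_m}$ defined in the excerpt coincides with the standard tower measure built from $\mu_X$ and $\tau_m$, via the identity $\{\tau_m > j\} = \{R > j\}$ for $j \le m$ together with the normalization $\int \tau_m\, d\mu_X = \int \min\{R, m+1\}\, d\mu_X$. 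Thus $F^{R_m}$ is literally the tower map, not merely conjugate to one.

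Next I check that $\gcd\{\tau_m\} = 1$ for all sufficiently large $m$. The CMZ assumption gives $\gcd\{R\} = 1$, so there exist positive-measure level sets $\{R = r_1\}, \ldots, \{R = r_k\}$ with $\gcd(r_1, \ldots, r_k) = 1$. Once $m + 1 \ge \max_i r_i$, the function $\tau_m$ agrees with $R$ on each such set, so the range of $\tau_m$ contains $r_1, \ldots, r_k$ and hence $\gcd\{\tau_m\}$ divides $1$. Feeding the K-mixing of the base and this gcd condition into the criterion cited above delivers K-mixing of $(\Delta_m, F^{R_m}, \mu_{\Delta_m})$.

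The principal obstacle is more conceptual than technical: one must resist working with the (generally non-uniformly hyperbolic) geometry of $F^{R_m}$ directly, and instead reduce everything to the base, where the CMZ hypotheses supply exactly what is needed. A secondary worry, that $m+1$ itself lies in the range of $\tau_m$ when $R$ is unbounded and might in principle corrupt the gcd, is harmless, since enlarging the set of values can only shrink the gcd, so the bound $\gcd\{\tau_m\} \mid \gcd(r_1,\ldots,r_k) = 1$ still applies.
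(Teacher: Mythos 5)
Your proof is correct and follows essentially the same route as the paper: identify $(\Delta_m, F^{R_m}, \mu_{\Delta_m})$ as the tower over the K-mixing base $(X, f^R, \mu_X)$ with roof function $\min\{R, m+1\}$, observe that this roof has $\gcd$ equal to $1$ once $m$ exceeds a finite set of values of $R$ whose $\gcd$ is already $1$, and invoke the Gurevich criterion from the remark after Definition~\ref{cmz}. The paper states these steps tersely (writing $\gcd\{R_m\}=\gcd\{R\}=1$ and $F^{R_m}=g$ for the induced base dynamics), whereas you spell out the tower identification and the $\{\tau_m>j\}=\{R>j\}$ matching explicitly; the substance is the same.
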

\begin{proof}
By Definition \ref{cmz}, $\gcd \{R\}=1$. Suppose that $\gcd\{i_1, i_2, \cdots, i_k\}=1$. Then we have $\gcd\{R\}=\gcd\{R|_{\{R=i_1\}}, R|_{\{R=i_2\}}, \cdots, R|_{\{R=i_k\}}\}=1$. Choose now $m \ge \max\{i_1,i_2, \cdots, i_k\}$. Then $\gcd\{R_m\}$ $=\gcd\{R\}=1$. Besides, $F^{R_m^{\min \{R,m+1\}}}|_X=g:X \to X$ is K-mixing. Therefore, $(\Delta_m, F^{R_m}, \mu_{\Delta_m})$ is K-mixing, and thus mixing.
\end{proof}

From now on we will consider only large enough $m$. Define a new tower (called a thicker hyperbolic Young tower) as \begin{gather*}
    \Delta_{e,m}=\bigcup_{i \ge 1} \bigcup_{0\le j \le R^{e}_{i}-1} \Lambda_i \times \{j\} \times \{0,1,2,\cdots, m_{i,j}\},
\end{gather*}where $m_{i,j}=\min\{m,R\big(g^j(x)\big)-1\}$ for $0\le j \le R^{e}_{i}-1$ and any $x\in \Lambda_i$. $\Delta_{e,m}$ can be visualized as ``inserting" extra layers into $\Delta_e$. Hence $\Delta_{e.m}$ is thicker than $\Delta_e$.
\begin{lemma}
$m_{i,j}$ is well-defined.
\end{lemma}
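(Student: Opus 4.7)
The claim is that the value $m_{i,j} = \min\{m, R(g^j(x))-1\}$ does not depend on the choice of $x \in \Lambda_i$, only on the indices $i$ and $j$. Since $m$ is a fixed constant, the whole issue reduces to showing that $R\circ g^j$ is constant on $\Lambda_i$ for every $0 \le j \le R^{e}_{i}-1$.

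The plan is to invoke the CMZ partition $X = \bigcup_{i'} X_{i'}$ on which $R$ is constant, together with the fact that $\bigcup_{i'\ge 1}\partial X_{i'} \subseteq \mathbb{S}$ (from Assumption \ref{assumption}(2)). The key observation is that the Markov property of the hyperbolic Young tower requires $g^{R^{e}_{i}}$ to be defined and smooth on $\Lambda_i$; consequently, for every $0 \le j \le R^{e}_{i}-1$ the image $g^j(\Lambda_i)$ must lie entirely in $\mathbb{S}^c$, since otherwise some higher iterate would not be differentiable there.

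Next I would use connectedness. The set $\Lambda_i$ is an $s$-subset with hyperbolic product structure, so it is a continuous image of a product of disks and hence connected; the map $g^j$ is continuous on $\mathbb{S}^c$, so $g^j(\Lambda_i)$ is also connected. Because $\bigcup_{i'}\partial X_{i'}\subseteq \mathbb{S}$, the connected components of $\mathbb{S}^c$ refine the partition $\{X_{i'}\}$. A connected subset of $\mathbb{S}^c$ therefore lies in a single $X_{i'(i,j)}$, and on this set $R$ is constant. Hence $R\circ g^j|_{\Lambda_i}$ takes a single value, depending only on $(i,j)$.

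The only technically delicate point is verifying connectedness of $\Lambda_i$ and of its forward images under $g^j$ up to time $R^{e}_{i}-1$; everything else is an immediate consequence of the standing assumptions. Once connectedness is in hand, the lemma follows immediately by setting $m_{i,j}:=\min\{m, R|_{X_{i'(i,j)}}-1\}$, which is manifestly independent of the representative $x\in \Lambda_i$.
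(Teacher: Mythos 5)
Your overall plan is the right one and closely tracks the paper's argument: reduce to showing $R\circ g^j$ is constant on $\Lambda_i$, use the Markov property to get $g^j(\Lambda_i)\subseteq\mathbb{S}^c$, and then use $\bigcup_{i'}\partial X_{i'}\subseteq\mathbb{S}$ to place the image in a single $X_{i'}$. However, the step where you assert that $\Lambda_i$ is ``a continuous image of a product of disks and hence connected'' is wrong, and this is a genuine gap. In the hyperbolic product structure of a Young tower the families $\Gamma^s$ and $\Gamma^u$ are indexed by Cantor-type compact sets (this is forced by the presence of the singularity set $\mathbb{S}$), so $\Lambda=(\bigcup\gamma^s)\cap(\bigcup\gamma^u)$ and every $s$-subset $\Lambda_i$ is a Cantor product — totally disconnected, not connected. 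The ``continuous family'' hypothesis only says the parametrizing map $\phi^u:K^s\times O^u\to\mathcal{M}$ is continuous; it does not say $K^s$ is connected, and the $s$-subset $\Lambda_i$ is an intersection with a nowhere-dense set, not the image of the full product. Consequently the inference ``$g^j$ is continuous on $\mathbb{S}^c$, so $g^j(\Lambda_i)$ is connected'' collapses.

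What saves the argument (and what the paper silently invokes) is not connectedness of $\Lambda_i$ but connectedness of an \emph{open continuity cell} containing it. Because $g^{R^e_i}$ is smooth on $\Lambda_i$ and $\mathbb{S}^c$ (together with its iterated preimages up to time $R^e_i$) decomposes into countably many \emph{open connected} components, the whole of $\Lambda_i$ lies inside a single connected component $U_i$ of $\bigcap_{0\le j\le R^e_i}g^{-j}\mathbb{S}^c$; this is part of the Chernov--Zhang construction of the return set $\Lambda$. Then $g^j(U_i)$ is a connected open subset of $\mathbb{S}^c$, hence of $\bigcup_k\interior X_k$, and since distinct interiors are disjoint it lies in a single $\interior X_k$. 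As $g^j(\Lambda_i)\subseteq g^j(U_i)$, you reach the desired conclusion. (An equivalent route: push forward the connected \emph{disks} $\gamma^s,\gamma^u$ rather than $\Lambda_i$ itself, using the Markov property to control smoothness of $g^j$ on them, and note that any two points of $\Lambda_i$ are joined inside $U_i$ by going along one unstable disk and one stable disk.) Either way, you must not base the argument on $\Lambda_i$ being connected, because it isn't.
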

\begin{proof}
To prove this lemma we just need to show that $R\big(g^j(x)\big)$ does not depend on any $x\in \Lambda_i$. It follows from $j \le R^{e}_{i}-1$ that $g^{R^{e}_{i}-j}$ is smooth on $g^j(\Lambda_i)$. Therefore $g^j(\Lambda_i)\subseteq \mathbb{S}^c$. By Assumption \ref{assumption}, $\bigcup_{i\ge 1}\partial X_i\subseteq \mathbb{S}$. Thus $g^j(\Lambda_i) \subsetneq X_k$ for some $k\ge 1$. Since $R$ is constant on $X_k$, then $R(y)=R|_{X_k}$ for any $y \in g^j(\Lambda_i)$, and $m_{i,j}$ is well-defined.
\end{proof}

We identify $\Lambda_i \times \{j\}\times \{0\}$ with $\Lambda_i \times \{j\}$, and $\Lambda \times \{0\}\times \{0\}$ with $\Lambda$. Then $\Delta_e$ is a sub-tower of $\Delta_{e,m}$. Define now a map $F_{e,m}: \Delta_{e,m}\to \Delta_{e,m}$, so that for any $x\in \Lambda_i$ and for some $i \ge 1$
\begin{gather*}
    F_{e,m}(x,j,k):=\begin{cases}
 (x,j,k+1),      &j \le R^{e}_{i}-1, k < m_{i,j} \\
 (x,j+1,0),  &j < R^{e}_{i}-1, k= m_{i,j} \\
\left(g^{R^{e}_{i}}(x),0,0\right),  & j=R^{e}_{i}-1, k= m_{i,j}\\
\end{cases}.
\end{gather*}

The set $\Lambda$, as the base of $\Delta_{e,m}$, has a return time $R_{e,m}|_{\Lambda_i}:=\sum_{j < R^{e}_{i}}(1+m_{i,j})$, such that $F_{e,m}^{R_{e,m}}=g^{R^e}: \Lambda \to \Lambda$ is the induced map for the tower $(\Delta_{e,m}, F_{e,m})$.

Define a probability measure $\mu_{\Delta_{e,m}}$ on $\Delta_{e,m}$ as \begin{align*}
    \mu_{\Delta_{e,m}}:=(\int R_{e,m} d\mu_{\Lambda})^{-1}\sum_{j}(F_{e,m}^j)_{*}\big(\mu_{\Lambda}|_{\{R_{e,m}>j\}}\big).
\end{align*} A projection $\pi_{e,m}: \Delta_{e,m} \to \Delta_m$ is defined by\begin{align}\label{projectiondeltam}
    \pi_{e,m}(x,j,k):=F^k \circ g^j(x).
\end{align}
\begin{lemma}\label{returntailofthickertower}
For any $\gamma \in \Gamma^u$ we have $R^e \le R_{e,m} \le 2mR^e$, and $\Leb_{\gamma}\left(R_{e,m}>n\right) \le C\beta^{n/(2m)}$.
\end{lemma}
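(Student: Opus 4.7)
The plan is to obtain both inequalities pointwise on $\Lambda$ and then derive the tail bound from the known tail estimate for $R^e$ in (\ref{taildecay}).

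For the pointwise bounds, I would fix $i \ge 1$ and $x \in \Lambda_i$ and work directly from the definition $R_{e,m}|_{\Lambda_i} = \sum_{j < R^e_i}(1 + m_{i,j})$ with $m_{i,j} = \min\{m, R(g^j(x)) - 1\}$. Since $m_{i,j} \ge 0$, each summand is at least $1$, which immediately gives $R_{e,m} \ge R^e_i = R^e|_{\Lambda_i}$. For the upper bound, $m_{i,j} \le m$ yields $R_{e,m} \le R^e_i(1 + m) \le 2m R^e_i$ because $1 + m \le 2m$ for $m \ge 1$. (The well-definedness of $m_{i,j}$ on all of $\Lambda_i$ was already established in the preceding lemma, so it is legitimate to treat it as a function of $i,j$ alone.)

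For the tail estimate, I would use monotonicity together with the upper bound just proved. On $\Lambda_i$, $\{R_{e,m} > n\} \subseteq \{2m R^e > n\} = \{R^e > n/(2m)\}$, so
\[
\Leb_\gamma(R_{e,m} > n) \le \Leb_\gamma\bigl(R^e > n/(2m)\bigr) \le C \beta^{n/(2m)},
\]
where the last inequality is exactly (\ref{taildecay}) applied to the exponent $\lceil n/(2m) \rceil$ (absorbing the constant into $C$).

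There is no real obstacle here: the only subtlety is checking that $m_{i,j}$ depends only on the cell index $i$ and the height $j$, not on the chosen point $x \in \Lambda_i$, but this was handled in the lemma immediately preceding the statement. The rest is bookkeeping from the definition of $R_{e,m}$ and a substitution into the exponential tail bound for $R^e$.
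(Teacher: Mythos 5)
Your proof is correct and matches the paper's argument: both the pointwise bounds $R^e \le R_{e,m} \le 2mR^e$ from the explicit formula $R_{e,m}|_{\Lambda_i} = \sum_{j < R^e_i}(1+m_{i,j})$ and the tail estimate via the inclusion $\{R_{e,m}>n\} \subseteq \{R^e>n/(2m)\}$ combined with (\ref{taildecay}) are exactly what the paper does, with your version simply spelling out the details the paper labels ``obvious.''
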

\begin{proof}
The first inequality is obvious, since $m_{i,j}\le m$ for any $i,j$. By (\ref{taildecay}), $\Leb_{\gamma}\left(R_{e,m}>n\right)\le\Leb_{\gamma}\left(R^e>n/2m\right)\le C\beta^{n/(2m)} $.
\end{proof}
\begin{lemma}\label{semiconjugacyforthickeryoungtower}
$(\pi_{e,m})_*\mu_{\Delta_{e,m}}=\mu_{\Delta_m}$ and $\pi_{e,m}$ is a semi-conjugacy, i.e., $\pi_{e,m} \circ F_{e,m}= F^{R_m} \circ \pi_{e,m}$.   
\end{lemma}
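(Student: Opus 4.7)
The plan is to treat the semi-conjugacy and the measure identity separately. Both assertions are essentially bookkeeping computations built directly on the definitions; the nontrivial input from earlier is the representation $\mu_X=(\int R^e\,d\mu_\Lambda)^{-1}\sum_j(g^j)_*(\mu_\Lambda|_{\{R^e>j\}})$, which follows from Lemma~\ref{resultsofyoungpaper} combined with $\pi_e\circ F_e^j=g^j$ on $\{R^e>j\}$.

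For the semi-conjugacy $\pi_{e,m}\circ F_{e,m}=F^{R_m}\circ\pi_{e,m}$, I would verify it pointwise by matching the three branches in the definition of $F_{e,m}$. For $(x,j,k)\in\Delta_{e,m}$ with $x\in\Lambda_i$, the projection gives $\pi_{e,m}(x,j,k)=(g^jx,k)\in\Delta_m$, because $k\le m_{i,j}\le\min\{m,R(g^jx)-1\}$. If $k<m_{i,j}$, then a single $F$-step stays in $\Delta_m$, so $R_m=1$ and both sides equal $(g^jx,k+1)$. If $k=m_{i,j}$ with $j<R^e_i-1$, then either $m_{i,j}=R(g^jx)-1$ (one $F$-step drops to $(g^{j+1}x,0)$) or $m_{i,j}=m<R(g^jx)-1$ ($F$ ascends through the original column $(g^jx,m+1),\ldots,(g^jx,R(g^jx)-1)$ and then drops to $(g^{j+1}x,0)$, with $R_m=R(g^jx)-m$); in both subcases the result is $(g^{j+1}x,0)=\pi_{e,m}(x,j+1,0)$. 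The remaining case $j=R^e_i-1$ is identical and produces $(g^{R^e}x,0)=\pi_{e,m}(g^{R^e}x,0,0)$.

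For the pushforward, the idea is to evaluate both measures on an arbitrary cylinder $B=\bigcup_{k\le m}(A_k\times\{k\})\cap\Delta_m$ and re-index. Plugging the representation of $\mu_X$ into the definition of $\mu_{\Delta_m}$ yields
\begin{equation*}
\mu_{\Delta_m}(B)=\frac{1}{(\int R^e\,d\mu_\Lambda)(\int\min\{R,m+1\}\,d\mu_X)}\sum_{k\le m}\sum_{j}\mu_\Lambda\{x:g^jx\in A_k,\,R^e(x)>j,\,R(g^jx)>k\}.
\end{equation*}
On the other hand, writing the running level $l$ of $\Delta_{e,m}$ as $l=\sum_{j'<j}(1+m_{i,j'})+k$ on each $\Lambda_i$ in the Kac-type definition of $\mu_{\Delta_{e,m}}$, and using $\pi_{e,m}(x,j,k)=(g^jx,k)$, produces the same double sum divided by $\int R_{e,m}\,d\mu_\Lambda$; the combinatorial heart of the matching is the equivalence $\{k\le m_{i,j}\}\Leftrightarrow\{k\le m\text{ and }k<R(g^jx)\}$, which uses that $R\circ g^j$ is constant on $\Lambda_i$. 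The two expressions then agree provided $\int R_{e,m}\,d\mu_\Lambda=(\int R^e\,d\mu_\Lambda)(\int\min\{R,m+1\}\,d\mu_X)$, which I would verify using the column formula $R_{e,m}|_{\Lambda_i}=\sum_{j<R^e_i}\min\{m+1,R(g^jx)\}$: Fubini rewrites the left-hand side as $\sum_j\int_{\{R^e>j\}}\min\{m+1,R\circ g^j\}\,d\mu_\Lambda$, and the same expansion of $\mu_X$ applied to the right-hand side yields the identical sum. I expect this normalization check to be the main place where mistakes can creep in; once it is settled, both parts of the lemma follow.
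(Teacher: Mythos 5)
Your proof is correct and follows essentially the same route as the paper's: a case-by-case pointwise verification of the semi-conjugacy, followed by evaluating both measures on cylinder sets and reducing the matching to the normalization identity $\int R_{e,m}\,d\mu_\Lambda=\bigl(\int R^e\,d\mu_\Lambda\bigr)\bigl(\int\min\{R,m+1\}\,d\mu_X\bigr)$, established by expanding $\mu_X$ through the representation $(\pi_e)_*\mu_{\Delta_e}=\mu_X$. The only cosmetic differences are that you treat the boundary branch $j=R^e_i-1$ explicitly and work with a general cylinder $\bigcup_k(A_k\times\{k\})$, whereas the paper restricts to $A\times\{k\}$ with $A\subseteq\{R=i\}$.
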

\begin{proof}
At first we prove a semi-conjugacy. For any $(x,j,k)\in \Delta_{e,m}$, where $x\in \Lambda_i$ for some $i \ge 1$, suppose that $k < m_{i,j}$, \begin{gather*}
\pi_{e,m} \circ F_{e,m}(x,j,k)=\pi_{e,m} (x,j,k+1)=F^{k+1}\circ g^j(x),\\
F^{R_m}\circ \pi_{e,m}(x,j,k)=F^{R_m}\circ F^{k}\circ g^j(x)=F^{k+1}\circ g^j(x),
\end{gather*}where the last equality holds because $\pi_{\mathbb{N}}\big(F^k \circ g^i(x)\big)=k<m_{i,j}$.

Suppose that $k=m_{i,j}$, $j\le R^e_i-1$,\begin{gather*}
\pi_{e,m} \circ F_{e,m}(x,j,k)=\pi_{e,m} (x,j+1,0)= g^{j+1}(x),\\
F^{R_m}\circ \pi_{e,m}(x,j,k)=F^{R_m}\circ F^{k}\circ g^j(x)=g\circ g^j(x)=g^{j+1}(x),
\end{gather*}where $F^{R_m}\circ F^{k}=g$ follows from the fact that $F^k \circ g^j(x)$ is already on the roof of $\Delta_m$. Therefore $\pi_{e,m}$ is a semi-conjugacy.

Next we prove that $(\pi_{e,m})_*\mu_{\Delta_{e,m}}=\mu_{\Delta_m}$. Denote $\sum_{j}(F_e^j)_{*}(\mu_{\Lambda}|_{R^e>j})$ by $\nu$. Then for any $A\times \{k\}\subseteq \Delta_m$, where $A\subseteq \{R=i\}$ for some $i\ge 1$, we have \begin{align*}
\mu_{\Delta_m}(A\times \{k\})&=(\int \min\{R, m+1\} d\mu_X)^{-1}\mu_{X}(A)\\
&=(\int \min\{R, m+1\} d\mu_X)^{-1}\mu_{\Delta_e}(\pi^{-1}_eA)\\
&=(\int \min\{R, m+1\} d\mu_X)^{-1}\mu_{\Delta_e}\{(x,n)\in \Lambda \times \{0,1,2,\cdots\}:g^n(x)\in A\}\\
&=(\int \min\{R, m+1\} d\mu_X)^{-1}(\int R^ed\mu_{\Lambda})^{-1}\nu( \pi_e^{-1}A).
\end{align*}

On the other hand,
\begin{align*}
    (\pi_{e,m})_{*}\mu_{\Delta_{e,m}}(A\times\{k\})&=\mu_{\Delta_{e,m}}\{\pi^{-1}_{e,m}(A\times \{k\})\}\\
    &=\mu_{\Delta_{e,m}}\{(x,j,i)\in \Delta_{e,m}: F^ig^j(x)\in A\times \{k\}\}\\
    &=\mu_{\Delta_{e,m}}\{(x,j,k)\in \Delta_{e,m}: g^j(x)\in A\}=(\int R_{e,m}d\mu_{\Lambda})^{-1}\nu( \pi_e^{-1}A).
\end{align*}

Since $m_{i,j}=\min\{m,R\big(g^j(x)\big)-1\}$ for $0\le j \le R^{e}_{i}-1, x \in \Lambda_i$, we have \begin{align*}
    \int \min\{R, m+1\} d\mu_{X}&=(\int R^ed\mu_{\Lambda})^{-1}\int \min\{R, m+1\}\circ \pi_e d\nu\\
    &=(\int R^ed\mu_{\Lambda})^{-1}\sum_{i \ge 1}\sum_{j < R^{e}_{i}}\min\{R, m+1\}|_{g^j\Lambda_i} \mu_{\Lambda}(\Lambda_i)\\
    &=(\int R^ed\mu_{\Lambda})^{-1}\sum_{i \ge 1}\sum_{j < R^{e}_{i}}(1+m_{i,j}) \mu_{\Lambda}(\Lambda_i)\\
    &=(\int R^ed\mu_{\Lambda})^{-1}\int R_{e,m}d\mu_{\Lambda}.
\end{align*}

Then \begin{equation*}
    (\int \min\{R,m+1\} d\mu_X)^{-1}(\int R^ed\mu_{\Lambda})^{-1}=(\int R_{e,m}d\mu_{\Lambda})^{-1}.
\end{equation*} 

Therefore, $(\pi_{e,m})_{*}\mu_{\Delta_{e,m}}(A\times \{k\})=\mu_{\Delta_m}(A\times \{k\})$ for any measurable set $A\times \{k\}\subseteq \Delta_m$. Since such set generates the $\sigma$-algebra of $\Delta_m$, then $(\pi_{e,m})_{*}\mu_{\Delta_{e,m}}=\mu_{\Delta_m}$.
\end{proof}

\subsection{Thicker expanding quotient Young tower for $(\Delta_m, F^{R_m}, \mu_{\Delta_m})$}

Introduce an equivalence relation $\sim$ on $\Delta_{e,m}$ as
\[(x,j,k) \sim (y,i,l) \text{ if and only if }  x,y \in \gamma^s \text{ for some } \gamma^s \in \Gamma^s, j=i, k=l.\]

Using this equivalence relation, define a quotient tower  $\widetilde{\Delta_{e,m}}:=\Delta_{e,m}/\sim$,  with canonical projections $\widetilde{\pi_{\Delta_{e,m}}}:\Delta_{e,m} \to \widetilde{\Delta_{e,m}}$. We identify $\widetilde{\Lambda}$ with $\widetilde{\Delta_{e,m}}\bigcap (\widetilde{\Lambda} \times \{0\} \times \{0\})$. A quotient map $\widetilde{F_{e,m}}: \widetilde{\Delta_{e,m}} \to \widetilde{\Delta_{e,m}}$, and a quotient return time $\widetilde{R_{e,m}}:\widetilde{\Lambda} \to \mathbb{N}$ are defined via the following relations\begin{gather}\label{thinkeryoungtowerreturntime}
    \widetilde{\pi_{\Delta_{e,m}}} \circ F_{e,m}=\widetilde{F_{e,m}} \circ \widetilde{\pi_{\Delta_{e,m}}},\quad R_{e,m}=\widetilde{R_{e,m}} \circ \widetilde{\pi_{\Lambda}}.
\end{gather} They satisfy to $\widetilde{g^{R^{e}}}=\widetilde{F_{e,m}}^{\widetilde{R_{e,m}}}$, which is easy to prove from the construction of $\Delta_{e,m}$ and $F_{e,m}$. 

Define a probability measure $\mu_{\widetilde{\Delta_{e,m}}}$ on $\widetilde{\Delta_{e,m}}$ as \begin{align*}
    \mu_{\widetilde{\Delta_{e,m}}}:=(\int \widetilde{R_{e,m}} d\mu_{\widetilde{\Lambda}})^{-1}\sum_{j}(\widetilde{F_{e,m}}^j)_{*}\big(\mu_{\widetilde{\Lambda}}|_{\{\widetilde{R_{e,m}}>j\}}\big).
\end{align*} 

Since $\widetilde{g^{R^{e}}}=\widetilde{F_{e}}^{\widetilde{R^{e}}}$, it is easy to see that $(\widetilde{\pi_{\Delta_{e,m}}})_*\mu_{\Delta_{e,m}}=\mu_{\widetilde{\Delta_{e,m}}}$. By (\ref{distortionofquotientinducedmap}) we obtain the expression for distortions that for any $x,y \in \widetilde{\Lambda}$  with $\widetilde{s}(x,y)\ge 1$,,
\begin{gather}\label{distortionofthickerquotientinducedmap}
    \log \frac{\det D \widetilde{F_{e,m}}^{\widetilde{R_{e,m}}}(x)}{\det D \widetilde{F_{e,m}}^{\widetilde{R_{e,m}}}(y)} \le C \beta^{\widetilde{s}(\widetilde{F_{e,m}}^{\widetilde{R_{e,m}}}x,\widetilde{F_{e,m}}^{\widetilde{R_{e,m}}}y)},
\end{gather}where $\det D \widetilde{F_{e,m}}^{\widetilde{R_{e,m}}}$ is the Radon-Nikodym derivative of $\widetilde{F_{e,m}}^{\widetilde{R_{e,m}}}$ with respect to the measure $m$ on $\widetilde{\Lambda}$.

Therefore $(\widetilde{\Delta_{e,m}}, \widetilde{F_{e,m}})$, together with (\ref{distortionofthickerquotientinducedmap}), is the thicker expanding quotient Young tower for $(\Delta_m, F^{R_m}, \mu_{\Delta_m})$.

\subsection{Decay of correlations}
Suppose that $\gcd{\{R_{e,m}\}}=N_{e,m}$. Now we can define new towers \begin{gather*}
    \Delta_{e,m}':=\{(x,j,k) \in \Delta_{e,m}: x \in \Lambda_i, k+\sum_{0 \le l< j}(1+m_{i,l}) \equiv 0 \pmod{N_{e,m}}\},\\
    \widetilde{\Delta_{e,m}'}:=\{(x,j,k) \in \widetilde{\Delta_{e,m}}: x \in \Lambda_i, k+\sum_{0 \le l<j}(1+m_{i,l}) \equiv 0 \pmod{N_{e,m}}\},
\end{gather*}which are sub-towers of $\Delta_{e,m}$ and $\widetilde{\Delta_{e,m}}$, respectively. Then the dynamics $F_{e,m}^{N_{e,m}}:\Delta_{e,m}' \to \Delta_{e,m}'$ and $\widetilde{F_{e,m}}^{N_{e,m}}:\widetilde{\Delta_{e,m}'} \to \widetilde{\Delta_{e,m}'}$ preserve  probability measures \begin{gather*}
    \mu_{\Delta_{e,m}'}:=(\int R_{e,m}/N_{e,m} d\mu_{\Lambda})^{-1}\sum_{j}(F_{e,m}^{jN_{e,m}})_{*}\big(\mu_{\Lambda}|_{\{R_{e,m}/N_{e,m}>j\}}\big),\\
    \mu_{\widetilde{\Delta_{e,m}'}}:=(\int \widetilde{R_{e,m}}/N_{e,m} d\mu_{\widetilde{\Lambda}})^{-1}\sum_{j}(\widetilde{F_{e,m}}^{jN_{e,m}})_{*}\big(\mu_{\widetilde{\Lambda}}|_{\{\widetilde{R_{e,m}}/N_{e,m}>j\}}\big),
\end{gather*}respectively, and are mixing. Clearly, $(\widetilde{\pi_{\Delta_{e,m}}})_*\mu_{\Delta'_{e,m}}=\mu_{\widetilde{\Delta'_{e,m}}}$. Since $(\Delta_{m}, F^{R_m}, \mu_{\Delta_m})$ is mixing (see Lemma \ref{mixingmlarge}), then, by using the same argument as that on page 607 of \cite{Young}, we have \begin{align*}
    (\pi_{e,m})_{*}\mu_{\Delta_{e,m}'}=\mu_{\Delta_{m}}.
\end{align*}

The following diagrams summarize all towers discussed so far.
\[ \begin{tikzcd}[arrows={-Stealth}]
\widetilde{\Delta_{e,m}'} \arrow{r}{\widetilde{F_{e,m}}^{N_{e,m}}} &\widetilde{\Delta_{e,m}'}  & \arrow{l}{\widetilde{\pi_{\Delta_{e,m}}}}\Delta'_{e,m} \arrow{r}{F_{e,m}^{N_{e,m}}} \arrow{d}{\pi_{e,m} }   & \Delta'_{e,m} \arrow{d}{\pi_{e,m} }  \arrow{r}{\text{inclusion}}&    \Delta_{e,m} \arrow{r}{F_{e,m}} \arrow{d}{\pi_{e,m} }   & \Delta_{e,m} \arrow{d}{\pi_{e,m} } &   &  & \\ &&
\Delta_m \arrow{r}{(F^{R_m})^{N_{e,m}}} &\Delta_m &  \Delta_m \arrow{r}{F^{R_m}}  & \Delta_m \arrow{r}{\text{inclusion}} & \Delta \arrow{d}{\pi } \arrow{r}{F }  & \Delta \arrow{d}{\pi }  &\\%
&&&& & &\mathcal{M}\arrow{r}{f} & \mathcal{M}
\end{tikzcd}\]

\[ \begin{tikzcd}[arrows={-Stealth}]
 & & &\Delta'_{e,m} \arrow{r}{F_{e,m}^{N_{e,m}}} \arrow{d}{\pi_{e,m} }   & \Delta'_{e,m} \arrow{d}{\pi_{e,m} }  \arrow{r}{\text{inclusion}}&    \Delta_{e,m} \arrow{r}{F_{e,m}} \arrow{d}{\pi_{e,m} }   & \Delta_{e,m} \arrow{d}{\pi_{e,m} } &   &  & \\ \Lambda\arrow{r}{F_{e,m}^{R_{e,m}}}&\Lambda\arrow{r}{\text{inclusion}}&X\arrow{r}{\text{inclusion}}&
\Delta_m \arrow{r}{(F^{R_m})^{N_{e,m}}} &\Delta_m &  \Delta_m \arrow{r}{F^{R_m}}  & \Delta_m \arrow{r}{\text{inclusion}} & \Delta \arrow{d}{\pi } \arrow{r}{F }  & \Delta \arrow{d}{\pi }  &\\%
&&&&& & &\mathcal{M}\arrow{r}{f} & \mathcal{M}
\end{tikzcd}\]

Now the families of partitions $(\mathcal{Q}^m_{k})_{k \ge 0}$ of $\Delta_{e,m}'$, and $(\widetilde{\mathcal{Q}^m_{k}})_{k \ge 0}$ of $\widetilde{\Delta_{e,m}'}$ are defined as \begin{gather*}
    \mathcal{Q}^m_{0}:=\{\Lambda_i \times \{j\} \times \{l\} \subseteq  \Delta_{e,m}': i \ge 1, j \ge 0, l\ge 0\}, \quad \mathcal{Q}^m_k:= \bigvee_{0 \le i \le k} (F_{e,m}^{N_{e,m}})^{-i} \mathcal{Q}^m_0,\\
    \widetilde{\mathcal{Q}^m_{0}}:=\{\widetilde{\Lambda_i}\times \{j\} \times \{l\} \subseteq  \widetilde{\Delta_{e,m}'}: i \ge 1, j \ge 0, l\ge 0\}, \quad \widetilde{\mathcal{Q}^m_k}:= \bigvee_{0 \le i \le k} (\widetilde{F_{e,m}}^{N_{e,m}})^{-i} \widetilde{\mathcal{Q}^m_0}.
\end{gather*}Then we have the following results.
\begin{lemma}\label{decayofcorrelationofthinkerhyperbolicyoungtower}
There is a constant $C_{\alpha}>0$, such that $\diam \left(\pi \circ \pi_{e,m} \circ F_{e,m}^{N_{e,m}k}(Q)\right) \le C_{\alpha}\beta^{\frac{\alpha N_{e,m} k}{2m}}$ and $\widetilde{\pi_{\Delta_{e,m}}}Q \in \widetilde{\mathcal{Q}^m_{2k}}$  for any $Q \in \mathcal{Q}^m_{2k}$ and any $k > m+1$. Moreover, there are constants $\beta_m\in (0,1)$ and $C_m>0$, such that for any $n>2k\ge 2$, any $(\widetilde{Q_i})_{i \ge 1} \subseteq \widetilde{\mathcal{Q}^m_k}$, and any bounded function $\widetilde{h}: \widetilde{\Delta_{e,m}'} \to \mathbb{R}$, we have the following estimate for decay of correlations
\begin{equation}\label{decorrelationexpandingthickertower}
    \Big|\int \mathbbm{1}_{\bigcup_{i \ge 1} \widetilde{Q_i}}  \widetilde{h} \circ (\widetilde{F_{e,m}}^{N_{e,m}})^{n} d\mu_{\widetilde{\Delta_{e,m}'}}-\mu_{\widetilde{\Delta_{e,m}'}}\Big(\bigcup_{i \ge 1} \widetilde{Q_i}\Big) \int \widetilde{h} d\mu_{\widetilde{\Delta_{e,m}'}}\Big| \le C_m\beta_m^{n-2k} \mu_{\widetilde{\Delta_{e,m}'}}\Big(\bigcup_{i \ge 1} \widetilde{Q_i}\Big)||\widetilde{h}||_{\infty}.
\end{equation}

Besides, we also have the following estimate for decay of correlations. For any $n>2k\ge 2$, any $(Q_i)_{i \ge 1} \subseteq \mathcal{Q}^m_k$, and for any $\sigma(\bigcup_{k\ge 0}\mathcal{Q}^m_k)$-measurable function $h: \Delta_{e,m}' \to \mathbb{R}$
\begin{align}\label{decorrelationhyperbolicthickertower}
    \Big|\int \mathbbm{1}_{\bigcup_{i \ge 1} Q_i}  h \circ (F_{e,m}^{N_{e,m}})^{n} d\mu_{\Delta_{e,m}'}-\mu_{\Delta_{e,m}'}\Big(\bigcup_{i \ge 1} Q_i\Big) \int h d\mu_{\Delta_{e,m}'}\Big| \le C_m\beta_m^{n-2k} \mu_{\Delta_{e,m}'}\Big(\bigcup_{i \ge 1} Q_i\Big)||h||_{\infty}.
\end{align}
\end{lemma}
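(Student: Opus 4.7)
The plan splits into three tasks: (i) the diameter bound together with the claim that $\pi_{\Delta_{e,m}}Q$ lies in a single cell of $\widetilde{\mathcal{Q}^m_{2k}}$, (ii) the quotient decay of correlations (\ref{decorrelationexpandingthickertower}), and (iii) the transfer to the hyperbolic estimate (\ref{decorrelationhyperbolicthickertower}). The second half of (i) is immediate: each cell of $\mathcal{Q}^m_0$ is a slice $\Lambda_i\times\{lN_{e,m}\}$, which is an $s$-subset because $\Lambda_i$ is, and $F_{e,m}^{N_{e,m}}$ preserves the stable foliation; consequently every cell of $\mathcal{Q}^m_{2k}$ is an $s$-subset, so its image under $\widetilde{\pi_{\Delta_{e,m}}}$ sits in a single cell of the quotient partition.

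For the diameter bound I track two points $x,y\in Q$. The deterministic $2k$-step itinerary forces $F_{e,m}^{N_{e,m}k}(x)=(x_k,j_k,l_k)$ and $F_{e,m}^{N_{e,m}k}(y)=(y_k,j_k,l_k)$ to agree in the tower coordinates, and constancy of $R$ along $g^j(\Lambda_{i_n})$ makes $\pi\circ\pi_{e,m}$ coincide with $f^{l_k}\circ g^{j_k}$ at a common time index on both sides. Two regimes arise. If the remaining $k$ iterations of $F_{e,m}^{N_{e,m}}$ contain base returns, the backward exponential contraction of unstable disks under $g$ gives $d(x_k,y_k)\le C\beta^G$ with $G$ the number of $g$-iterations in the future itinerary, and the bound $R_{e,m,i}\le (m+1)R^e_i$ (coming from $1+m_{i,j}\le m+1$) forces $G\ge N_{e,m}k/(m+1)$. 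If instead the orbit stays in one tall column of $\Delta_{e,m}$, then $R_{e,m,i}>2kN_{e,m}$ forces $R^e_i>2N_{e,m}k/(m+1)$, and the Markov construction gives $\diam^u(\Lambda_i)\le C\beta^{R^e_i}$, so $x,y$ are already exponentially close. In either regime, Hölder continuity from Assumption \ref{assumption}(3) applied to $f^{l_k}$ within the current $R$-level (legitimate because $l_k\le m_{i_n,j_k}<R(g^{j_k}x_k)$) transfers the smallness to $\mathcal{M}$ and introduces the exponent $\alpha$; using $1/(m+1)\ge 1/(2m)$ for $m\ge 1$ yields the stated bound $C_\alpha\beta^{\alpha N_{e,m}k/(2m)}$.

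For (ii), I apply Young's theorem on decay of correlations for aperiodic expanding Young towers \cite{Young,Young2} to $(\widetilde{\Delta_{e,m}'},\widetilde{F_{e,m}}^{N_{e,m}},\mu_{\widetilde{\Delta_{e,m}'}})$. The distortion bound is (\ref{distortionofthickerquotientinducedmap}); the exponential tail $\mu_\Lambda(R_{e,m}>n)\le C\beta^{n/(2m)}$ comes from Lemma \ref{returntailofthickertower}; and aperiodicity of $\widetilde{F_{e,m}}^{N_{e,m}}$ is forced by the $\gcd$-normalization, with degeneracy ruled out by the K-mixing of $F^{R_m}$ from Lemma \ref{mixingmlarge}. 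This yields exponential decay at some rate $\beta_m\in(0,1)$ and prefactor $C_m$, both depending on $m$ through the tail rate $\beta^{1/(2m)}$. For (iii), every $\sigma(\bigcup_k\mathcal{Q}^m_k)$-measurable function on $\Delta_{e,m}'$ is constant on stable fibers (because each cell is an $s$-subset) and therefore factors through the quotient, and indicators of unions $\bigcup Q_i$ with $Q_i\in\mathcal{Q}^m_k$ descend similarly; the identity $(\widetilde{\pi_{\Delta_{e,m}}})_*\mu_{\Delta_{e,m}'}=\mu_{\widetilde{\Delta_{e,m}'}}$ then turns the quotient estimate (\ref{decorrelationexpandingthickertower}) into (\ref{decorrelationhyperbolicthickertower}) term by term.

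The main obstacle is (i): producing a single diameter bound uniform over all $Q\in\mathcal{Q}^m_{2k}$. The delicate point is the coexistence of tall columns (controlled by the exponentially small $\diam^u(\Lambda_i)$) and short columns with frequent base returns (controlled by backward contraction), coupled with the need to transfer the base smallness to $\mathcal{M}$ by a single application of the Hölder inequality so that the exponent $\alpha$ appears once rather than iteratively. The hypothesis $k>m+1$ is what keeps both bounds nontrivial and makes the final arithmetic $1/(m+1)\ge 1/(2m)$ go through.
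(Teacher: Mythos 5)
Your parts (ii) and (iii) match the paper's route: apply Young's decay--of--correlations theorem to the expanding quotient tower using the exponential tail from Lemma \ref{returntailofthickertower} and the distortion bound (\ref{distortionofthickerquotientinducedmap}), then factor $\sigma(\bigcup_k\mathcal{Q}^m_k)$-measurable data through the quotient. The observation that each $Q\in\mathcal{Q}^m_{2k}$ is an $s$-subset, hence $\pi_{\Delta_{e,m}}Q$ is a single cell of $\widetilde{\mathcal{Q}^m_{2k}}$, is also fine.

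The diameter bound, however, has a real gap. You track two generic points $x,y\in Q$ and invoke only the backward exponential contraction of unstable disks. But $x$ and $y$ in general differ in both the stable and the unstable directions, and stable disks \emph{expand} backwards, so the inequality $d(x_k,y_k)\le C\beta^G$ simply fails when $x,y$ are separated along a stable manifold. The paper avoids this by exploiting the product structure of $Q$: it bounds $\diam$ of $\gamma^s\subseteq Q$ by \emph{forward} contraction of stable disks over the $\ge N_{e,m}k/(m+1)$ passes through the base $X$ in $N_{e,m}k$ steps, bounds $\diam$ of $\gamma^u\subseteq Q$ by pushing to time $N_{e,m}2k$ and pulling back, and then combines. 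Your proposal never treats the stable direction at all; the stable contribution to $\diam\bigl(\pi\circ\pi_{e,m}\circ F_{e,m}^{N_{e,m}k}(Q)\bigr)$ is unaccounted for.

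The ``two regimes'' split is also spurious and in one branch incorrect. In both regimes the $F_{e,m}$-orbit passes through the base $X$ of $\Delta_m$ at least $N_{e,m}k/(m+1)$ times, because each $j$-level is occupied for at most $m+1$ consecutive $F_{e,m}$-steps; this is all the backward-contraction argument needs, so whether or not there are $\Lambda$-returns is irrelevant, and regime 1 covers every $Q$. Regime 2 as you state it does not yield the bound: $\diam^u(\Lambda_i)\le C\beta^{R^e_i}$ is a statement at the base, but $\pi\circ\pi_{e,m}\circ F_{e,m}^{N_{e,m}k}$ applies $g^{j_k}$ first, which \emph{expands} unstable directions by roughly $\beta^{-j_k}$, and $j_k$ can be close to $R^e_i$; the smallness of $\Lambda_i$ does not survive that expansion. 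The point of looking $2k$ steps into the future (rather than bounding $Q$ itself) is precisely to sidestep this --- which is why $\mathcal{Q}^m_{2k}$ rather than $\mathcal{Q}^m_{k}$ appears in the statement.
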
\begin{proof}
Since the return time for $\widetilde{\Delta_{e,m}'}$ is $\widetilde{R_{e,m}}/N_{e,m}$, we have $\gcd\{\widetilde{R_{e,m}}/N_{e,m}\}=1$. The return map for $\widetilde{F_{e,m}}^{N_{e,m}}: \widetilde{\Delta_{e,m}'}\to \widetilde{\Delta_{e,m}'}$ is  $\big(\widetilde{F_{e,m}}^{N_{e,m}}\big)^{\widetilde{R_{e,m}}/N_{e,m}}: \widetilde{\Lambda}\to \widetilde{\Lambda}$, and it has the distortion:\begin{align*}
    \log \frac{\det D \big(\widetilde{F_{e,m}}^{N_{e,m}}\big)^{\widetilde{R_{e,m}}/N_{e,m}}(x)}{\det D \big(\widetilde{F_{e,m}}^{N_{e,m}}\big)^{\widetilde{R_{e,m}}/N_{e,m}}(y)} \le C \beta^{\widetilde{s}\Big(\big(\widetilde{F_{e,m}}^{N_{e,m}}\big)^{\widetilde{R_{e,m}}/N_{e,m}}(x),\big(\widetilde{F_{e,m}}^{N_{e,m}}\big)^{\widetilde{R_{e,m}}/N_{e,m}}(y)\Big)}.
\end{align*}

Therefore, $(\widetilde{\Delta_{e,m}'}, \widetilde{F_{e,m}}^{N_{e,m}})$ is a mixing expanding Young tower. Besides, using (\ref{thinkeryoungtowerreturntime}), (\ref{propertyofsrb}) and (\ref{referencemeasureandsrb}), we have \begin{align*}
    m\{x\in \widetilde{\Lambda}: \widetilde{R_{e,m}}(x)/N_{e,m}>n\}&\le C \mu_{\widetilde{\Lambda}}\{x\in \widetilde{\Lambda}: \widetilde{R_{e,m}}(x)/N_{e,m}>n\}\\
    &=C \left(\widetilde{\pi_{\Lambda}}\right)_{*}\mu_{\Lambda}\{x\in \widetilde{\Lambda}: \widetilde{R_{e,m}}(x)/N_{e,m}>n\}\\
    &=C \mu_{\Lambda}\{x\in \Lambda: \widetilde{R_{e,m}}\circ \widetilde{\pi_{\Lambda}}(x)/N_{e,m}>n\}\\
    &=C \mu_{\Lambda}\{x\in \Lambda: R_{e,m}(x)/N_{e,m}>n\}\\
    &=C \int \mu_{\gamma^u}\{x\in \gamma^u: R_{e,m}(x)/N_{e,m}>n\}d\mu_{\Lambda}\\
    &\le C^2 \int \Leb_{\gamma^u}\{x\in \gamma^u: R_{e,m}(x)/N_{e,m}>n\}d\mu_{\Lambda}\le C^3 \big(\beta^{\frac{N_{e,m}}{2m}}\big)^n,
\end{align*}where we applied Lemma \ref{returntailofthickertower} to the last inequality. By making use of Theorem 2 in \cite{Young2} we get (\ref{decorrelationexpandingthickertower}).  Then (\ref{decorrelationhyperbolicthickertower}) 
 follows from (\ref{decorrelationexpandingthickertower}), (\ref{thinkeryoungtowerreturntime}) and $\left(\widetilde{\pi_{\Delta_{e,m}}}\right)_{*}\mu_{\Delta'_{e,m}}=\mu_{\widetilde{\Delta'_{e,m}}}$.

Next we estimate $\diam \left(\pi \circ \pi_{e,m} \circ F_{e,m}^{N_{e,m}k}(Q)\right)$. For any $\hat{\gamma}^s \subseteq Q$ (here $\hat{\gamma}^s=\gamma^s \times \{l'\} \times \{l\}$ for some $\gamma^s \subseteq \Lambda$ and $l',l \ge 0$), assume that $\pi_{\mathbb{N}}\big(\pi_{e,m}\circ F_{e,m}^{N_{e,m}k}(\hat{\gamma}^s)\big)=j\le m$, $j' \in [0, m]$ is the first non-negative number such that $\pi_{e,m}\circ F^{j'}_{e,m}(\hat{\gamma}^s) \subseteq X$, and the disks $\pi \circ \pi_{e,m} \circ F_{e,m}^{j'}(\hat{\gamma}^s), \pi \circ \pi_{e,m} \circ F_{e,m}^{j'+1}(\hat{\gamma}^s), \cdots, \pi \circ \pi_{e,m} \circ F_{e,m}^{N_{e,m}k}(\hat{\gamma}^s)$ visit $X$ exactly $q'$ times. Then 
\[q'+1  \ge \frac{N_{e,m}k+1}{m+1}\ge N_{e,m}k/(2m), \quad g^{q'-1} \circ \pi_{e,m}\circ F^{j'}_{e,m}(\hat{\gamma}^s)= \pi_{e,m}\circ F^{N_{e,m}k-j}_{e,m}(\hat{\gamma}^s) \subseteq  X\]
Then by Assumption \ref{assumption} and by Definition \ref{cmz}, there is a constant $C_{\alpha}>0$, such that \begin{align*}
     \diam \big(\pi \circ \pi_{e,m} \circ F_{e,m}^{N_{e,m}k}(\hat{\gamma}^s)\big)&\le C \diam \big(\pi \circ \pi_{e,m} \circ F_{e,m}^{N_{e,m}k-j}(\hat{\gamma}^s) \big)^{\alpha}\\
     &\le C^{1+\alpha} \beta^{(q'-1)\alpha} \diam \big(\pi \circ \pi_{e,m} \circ F_{e,m}^{j'}(\hat{\gamma}^s) \big)^{\alpha} \le C_{\alpha}\beta^{\frac{N_{e,m}k\alpha}{2m}}/2.
\end{align*}

On the other hand, for any $\hat{\gamma}^u \subseteq Q$ (here $\hat{\gamma}^u=\gamma^u \times \{l'\} \times \{l\}$ for some $\gamma^u \subseteq \Lambda$ and same $l',l \ge 0$), suppose that $\pi_{\mathbb{N}}\big(\pi_{e,m}\circ F_{e,m}^{N_{e,m}k}(\hat{\gamma}^u)\big)=i'\le m$, $\pi_{\mathbb{N}}\big(\pi_{e,m}\circ F_{e,m}^{N_{e,m}2k}(\hat{\gamma}^u)\big)=i\le m$, and the disks $\pi \circ \pi_{e,m} \circ F_{e,m}^{N_{e,m}k-i'}(\hat{\gamma}^u),\pi \circ \pi_{e,m} \circ F_{e,m}^{N_{e,m}k-i'+1}(\hat{\gamma}^u), \cdots, \pi \circ \pi_{e,m} \circ F_{e,m}^{N_{e,m}2k}(\hat{\gamma}^u)$ visit $X$ exactly $q$ times, then $q\ge \frac{N_{e,m}k+i'+1}{m+1}\ge N_{e,m}k/(2m)$ and \begin{gather*}
     \pi_{e,m}\circ F_{e,m}^{N_{e,m}k-i'}(\hat{\gamma}^u)\subseteq X, \quad  \pi_{e,m} \circ F_{e,m}^{N_{e,m}k-i'}(\hat{\gamma}^u)=g^{-(q-1)}\pi_{e,m}\circ F_{e,m}^{N_{e,m}2k-i}(\hat{\gamma}^u) \subseteq X.
\end{gather*} 

By Assumption \ref{assumption} and by Definition \ref{cmz}, there is a constant $C_{\alpha}>0$ such that \begin{align*}
    \diam \big(\pi \circ \pi_{e,m} \circ F_{e,m}^{N_{e,m}k}(\hat{\gamma}^u)\big)&\le C \diam \big(\pi \circ \pi_{e,m} \circ F_{e,m}^{N_{e,m}k-i'}(\hat{\gamma}^u) \big)^{\alpha}\\
    &\le C^{1+\alpha}\beta^{\alpha (q-1)}\diam \big(\pi \circ \pi_{e,m} \circ F_{e,m}^{2N_{e,m}k-i}(\hat{\gamma}^u)\big)^{\alpha}\le C_{\alpha} \beta^{\frac{N_{e,m}k\alpha}{2m}}/2.
\end{align*}

Finally, since any $\hat{\gamma}^u, \hat{\gamma}^s\subseteq Q$ intersect exactly at one point, then for any $x,y \in Q$ there are $o\in \Lambda$ and $\hat{\gamma}^u=\gamma^u(o) \times \{l'\} \times \{l\} \subseteq Q, \hat{\gamma}^s=\gamma^u(o) \times \{l'\} \times \{l\} \subseteq Q$, such that $x \in \hat{\gamma}^u$, $y\in \hat{\gamma}^s$, and \begin{align*}
    d\Big(\pi &\circ \pi_{e,m} \circ  F_{e,m}^{N_{e,m}k}(x), \pi \circ \pi_{e,m} \circ F_{e,m}^{N_{e,m}k}(y)\Big)\\
    &\le \diam \Big(\pi \circ \pi_{e,m} \circ F_{e,m}^{N_{e,m}k}\big(\hat{\gamma}^u\big)\Big)+\diam \Big(\pi \circ \pi_{e,m} \circ F_{e,m}^{N_{e,m}k}\big(\hat{\gamma}^s\big)\Big)\le C_{\alpha} \beta^{\frac{N_{e,m}k\alpha}{2m}}.\\
\end{align*} 

Therefore, $\diam \left(\pi \circ \pi_{e,m} \circ F_{e,m}^{N_{e,m}k}(Q)\right)\le  C_{\alpha} \beta^{\frac{N_{e,m}k\alpha}{2m}}$.\end{proof}

\section{Poisson limit laws for non-mixing hyperbolic Young towers}\label{section6} Now we are ready to present sufficient conditions for (\ref{inducingforanym}). The approach for Poisson approximations, developed in \cite{Su, peneijm, peneetds}, works for mixing hyperbolic Young towers. However, from the previous section, we know that our Young tower for $(\Delta_{e,m}, F_{e,m}, \mu_{\Delta_{e,m}})$ is generally non-mixing. In this section we will prove Poisson limit laws for the dynamics $F^{R_m}$, which can be described by the non-mixing hyperbolic Young tower $(\Delta_{e,m}, F_{e,m}, \mu_{\Delta_{e,m}})$. 

For any $i \ge 0$, we let
\begin{gather*}
    X_i:=\mathbbm{1}_{\pi_{\Delta_m}\pi^{-1}S_r} \circ (F^{R_m})^i, \\
    \pmb{X}_i:=\Big(\mathbbm{1}_{\pi_{\Delta_m}\pi^{-1}S_r} \circ (F^{R_m})^{N_{e,m}i}, \mathbbm{1}_{\pi_{\Delta_m}\pi^{-1}S_r} \circ (F^{R_m})^{N_{e,m}i+1}, \cdots, \mathbbm{1}_{\pi_{\Delta_m}\pi^{-1}S_r} \circ (F^{R_m})^{N_{e,m}(i+1)-1}\Big).
\end{gather*} 

Observe that $(\pmb{X}_i)_{i\ge 0}$ is stationary, since $\mu_{\Delta_m}$ is $(F^{R_m})^{N_{e,m}}$-invariant. Denote by $\{\hat{\pmb{X}}_i\}_{i\ge 0}$ i.i.d. random vectors defined on a probability space $(\hat{\Omega}, \hat{\mathbb{P}})$, such that for each $i \ge 0$ \begin{align*}
    \pmb{X}_i=_d\hat{\pmb{X}}_i,
\end{align*} i.e., they have the same distribution. Throughout this section the notation $h(\underbrace{\sbullet[.5], \sbullet[.5], \cdots, \sbullet[.5]}_\text{k})$ means that a function $h$ is defined on $\mathbb{R}^k$ for some $k\ge 1$, and $h \in [0,1]$ means that a function $h$ takes values in $[0,1]$. Further, $\mathbb{E}$ is the expectation of $\mu_{\Delta_m} \otimes \hat{\mathbb{P}}$. Denote by $\pmb{0}$ the zero vector in $\{0,1\}^{N_{e,m}}$.  For any vector $\pmb a \in \{0,1\}^{N_{e,m}}$, $\pmb a\ge 1$ means that at least one of the coordinates of $\pmb a$ is not zero, and $\pmb a=\pmb{0}$ means that all coordinates of $\pmb a$ are zero.

The scheme of our proof can be roughly described as follows. In order to give sufficient conditions for (\ref{inducingforanym}), we will approximate $(X_i)_{i \ge 0}$ by i.i.d. random variables in Lemma \ref{poissonappro}. To achieve this, we first approximate $(X_i)_{i\ge 0}$ by i.i.d. random vectors $(\hat{\pmb{X}}_i)_{i\ge 0}$ in Lemma \ref{poissonappro1}, then approximate the i.i.d. random vectors $(\hat{\pmb{X}}_i)_{i\ge 0}$ by i.i.d. random variables in Lemma \ref{poissonappro2}. Now we turn to the proofs.

\begin{lemma}\label{poissonappro1}
For any $n\ge 1$ and any integer $p\in (0,n)$,
\begin{align*}
    \sup_{h\in[0,1]}\Big|\mathbb{E}\Big[h(\pmb{X}_0,\pmb{X}_1, \cdots, \pmb{X}_n)-h(\hat{\pmb{X}}_0, \hat{\pmb{X}}_1, \cdots, \hat{\pmb{X}}_n)\Big]\Big|\precsim_m R_1+R_2+R_3, 
    \end{align*}
    where
    \begin{align*}
    &R_1:=\sum_{0 \le l \le n-p}\sup_{\pmb{a}\ge 1}\sup_{h\in[0,1]} \Big| \mathbb{E}\Big[\mathbbm{1}_{\pmb{X}_0=\pmb{a}}  h(\pmb{X}_p,\cdots, \pmb{X}_{n-l})\Big]-\mathbb{E}\mathbbm{1}_{\pmb{X}_0=\pmb{a}}  \mathbb{E}h(\pmb{X}_p,\cdots, \pmb{X}_{n-l})\Big|\\
    &R_2:=\sup_{\pmb{a}\ge 1}n\mathbb{E} \left(\mathbbm{1}_{\pmb{X}_0=\pmb{a}}  \mathbbm{1}_{\sum_{1\le j \le p-1}\pmb{X}_j\ge 1}\right)\\
    &R_3:=pn \mu_{\Delta_m}(\pi_{\Delta_m}\pi^{-1}S_r)^2+ p\mu_{\Delta_m}(\pi_{\Delta_m}\pi^{-1}S_r),
\end{align*} and a constant in $``\precsim_m"$ depends only on $m$. 
\end{lemma}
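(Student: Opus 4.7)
The strategy is a Lindeberg-type swapping telescope: replace $\pmb{X}_l$ by the independent copy $\hat{\pmb{X}}_l$ one index at a time. Setting
\[
\Delta_l := \mathbb{E}h(\hat{\pmb{X}}_0,\dots,\hat{\pmb{X}}_{l-1},\pmb{X}_l,\pmb{X}_{l+1},\dots,\pmb{X}_n) - \mathbb{E}h(\hat{\pmb{X}}_0,\dots,\hat{\pmb{X}}_{l-1},\hat{\pmb{X}}_l,\pmb{X}_{l+1},\dots,\pmb{X}_n),
\]
the target is $|\sum_{l=0}^n \Delta_l|$. Integrating out $\hat{\pmb{X}}_0,\dots,\hat{\pmb{X}}_{l-1}$ first, each $\Delta_l$ reduces to $\mathbb{E}[g(\pmb{X}_l,\pmb{X}_{l+1},\dots,\pmb{X}_n)]-\mathbb{E}[g(\hat{\pmb{X}}_l,\pmb{X}_{l+1},\dots,\pmb{X}_n)]$ for some deterministic $g\in[0,1]$ (depending on $l$ and $h$), where $\hat{\pmb{X}}_l\stackrel{d}{=}\pmb{X}_0$ is independent of $\pmb{X}_{l+1},\dots,\pmb{X}_n$ while $\pmb{X}_l$ is not.

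\textbf{Recombining the $\pmb{a}=\pmb{0}$ contribution.} Expanding $1=\sum_{\pmb{a}}\mathbbm{1}_{\pmb{X}_l=\pmb{a}}$, applying $\mathbbm{1}_{\pmb{X}_l=\pmb{0}}=1-\mathbbm{1}_{\pmb{X}_l\ge 1}$, and using stationarity $\mathbb{P}(\pmb{X}_l\ge 1)=\mathbb{P}(\pmb{X}_0\ge 1)=\sum_{\pmb{a}\ge 1}\mathbb{P}(\pmb{X}_0=\pmb{a})$, direct algebra yields
\[
\Delta_l = \sum_{\pmb{a}\ge 1}\Bigl\{\mathbb{E}\bigl[\mathbbm{1}_{\pmb{X}_l=\pmb{a}}\,\psi^{\pmb{a}}(\pmb{X}_{l+1},\dots,\pmb{X}_n)\bigr]-\mathbb{P}(\pmb{X}_0=\pmb{a})\,\mathbb{E}\bigl[\psi^{\pmb{a}}(\pmb{X}_{l+1},\dots,\pmb{X}_n)\bigr]\Bigr\},
\]
where $\psi^{\pmb{a}}(y):=g(\pmb{a},y)-g(\pmb{0},y)\in[-1,1]$. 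This is the technical heart of the argument: it turns a contribution which would naively be of order $\mathbb{P}(\pmb{X}_l=\pmb{0})\approx 1$ into a sum of terms each controllable by $\mathbb{P}(\pmb{X}_0\ge 1)\precsim_m\mu_{\Delta_m}(\pi_{\Delta_m}\pi^{-1}S_r)$.

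\textbf{Gap insertion for $l\le n-p$.} For each $\pmb{a}\ge 1$ I insert a mixing gap of width $p$ by writing $\psi^{\pmb{a}}(\pmb{X}_{l+1},\dots,\pmb{X}_n)=\Psi^{\pmb{a}}(\pmb{X}_{l+p},\dots,\pmb{X}_n)+D^{\pmb{a}}$, where $\Psi^{\pmb{a}}$ is $\psi^{\pmb{a}}$ evaluated at $\pmb{X}_{l+1}=\cdots=\pmb{X}_{l+p-1}=\pmb{0}$, so that $|D^{\pmb{a}}|\le 2\mathbbm{1}_{\sum_{j=1}^{p-1}\pmb{X}_{l+j}\ge 1}$. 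The $D^{\pmb{a}}$-part of the $\pmb{a}$-summand is bounded, via stationarity, by
\[
2\,\mathbb{E}\bigl[\mathbbm{1}_{\pmb{X}_0=\pmb{a}}\mathbbm{1}_{\sum_{j=1}^{p-1}\pmb{X}_j\ge 1}\bigr]+2(p-1)\,\mathbb{P}(\pmb{X}_0=\pmb{a})\,\mathbb{P}(\pmb{X}_0\ge 1);
\]
summing over $\pmb{a}\ge 1$ costs at most a factor $2^{N_{e,m}}$ (absorbed into $\precsim_m$), and then summing over $l$ produces exactly the $R_2$ piece and, using $\mathbb{P}(\pmb{X}_0\ge 1)\precsim_m\mu_{\Delta_m}(\pi_{\Delta_m}\pi^{-1}S_r)$, the $pn\mu^2$ piece of $R_3$. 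For the main part $\Psi^{\pmb{a}}\in[-1,1]$, I split $\Psi^{\pmb{a}}=\Psi^{\pmb{a},+}-\Psi^{\pmb{a},-}$ into two $[0,1]$-valued functions and shift indices by stationarity from $l,l+p,\dots,n$ to $0,p,\dots,n-l$; each summand then takes exactly the form appearing in the supremum defining $R_1$ at level $l$, producing the $R_1$ contribution.

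\textbf{Boundary indices and assembly.} For the remaining $p$ indices $l>n-p$ the trivial estimate $|\Delta_l|\le 2\mathbb{P}(\pmb{X}_0\ge 1)\precsim_m\mu_{\Delta_m}(\pi_{\Delta_m}\pi^{-1}S_r)$ contributes the $p\mu$ piece of $R_3$. Collecting the three sources of error (mixing gap, short returns, boundary plus the $pn\mu^2$ cross term) yields the advertised inequality. The chief obstacle is the $\pmb{a}=\pmb{0}$ recombination in the second step: without it, the natural decomposition does not produce quantities small in the hole mass $\mu_{\Delta_m}(\pi_{\Delta_m}\pi^{-1}S_r)$, and the resulting bound is useless in the regime $r\to 0$ needed for the Poisson limit; all remaining manipulations are routine bookkeeping.
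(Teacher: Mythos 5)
Your proof is correct and follows essentially the same route as the paper: the Lindeberg swapping telescope, the recombination of the $\pmb{a}=\pmb{0}$ term into a sum over $\pmb{a}\ge 1$ (the paper does the same cancellation, writing $\mathbbm{1}_{\pmb{X}_l=\pmb{0}}=1-\sum_{\pmb{a}\ge 1}\mathbbm{1}_{\pmb{X}_l=\pmb{a}}$ so the ``$1$'' pieces cancel, while you package the result into $\psi^{\pmb{a}}=g(\pmb{a},\cdot)-g(\pmb{0},\cdot)$), the insertion of a mixing gap of width $p$ with error controlled by $\mathbbm{1}_{\sum_{j=1}^{p-1}\pmb{X}_{l+j}\ge 1}$, stationarity to shift indices, and the trivial bound for $l>n-p$. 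The only cosmetic difference is that you split $\psi^{\pmb{a}}\in[-1,1]$ into positive and negative parts to land in the $[0,1]$ class appearing in $R_1$, whereas the paper absorbs this by bounding the two sums separately with a factor $2$; both are valid and absorbed into $\precsim_m$.
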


\begin{proof}Now we can estimate\begin{align*}    \sup_{h\in[0,1]}&\Big|\mathbb{E}\Big[h(\pmb{X}_0,\pmb{X}_1, \cdots, \pmb{X}_n)-h(\hat{\pmb{X}}_0, \hat{\pmb{X}}_1, \cdots, \hat{\pmb{X}}_n)\Big]\Big|\\
    &= \sup_{h\in[0,1]}\Big|\sum_{0 \le l \le n} \mathbb{E} h\left(\hat{\pmb{X}}_0, \cdots, \hat{\pmb{X}}_{l-1},\pmb{X}_l, \cdots, \pmb{X}_n\right)- \mathbb{E} h\left(\hat{\pmb{X}}_0, \cdots, \hat{\pmb{X}}_{l-1},\hat{\pmb{X}}_l, \pmb{X}_{l+1}, \cdots, \pmb{X}_n\right)\Big|\\
    &\le \sup_{h\in[0,1]}\Big|\sum_{0 \le l \le n}  \mathbb{E} h_l\Big(\pmb{X}_l,\pmb{X}_{l+1}, \cdots,\pmb{X}_n\Big)-  \mathbb{E}h_l\left(\hat{\pmb{X}}_l, \pmb{X}_{l+1}, \cdots, \pmb{X}_n\right)\Big|,
\end{align*}
where $h_l(\sbullet[.5]):=h(\hat{\pmb{X}}_0, \cdots,\hat{\pmb{X}}_{l-1}, \sbullet[.5])$. Since $\hat{\pmb{X}}_0, \cdots, \hat{\pmb{X}}_{l-1}$ are independent of other random variables, then, without loss of generality, we can assume that the function $h_l$ does not depend on $\hat{\pmb{X}}_0, \cdots, \hat{\pmb{X}}_{l-1}$. Note that $\pmb{X}_l =_d \hat{\pmb{X}}_l$ are $\{0,1\}^{N_{e,m}}$-valued random vectors. Thus
\begin{align*}
    &\Big|\mathbb{E} h_l\Big(\pmb{X}_l,\pmb{X}_{l+1}, \cdots, \pmb{X}_n\Big)-  \mathbb{E}h_l\left(\hat{\pmb{X}}_l, \pmb{X}_{l+1}, \cdots, \pmb{X}_n\right)\Big|\\
    &=\Big|\mathbb{E}\Big[ \mathbbm{1}_{\pmb{X}_l=\pmb{0}}  h_l(\pmb{0},\pmb{X}_{l+1}, \cdots,\pmb{X}_n)\Big]+\sum_{\pmb{a}\ge 1}\mathbb{E}\Big[ \mathbbm{1}_{\pmb{X}_l=\pmb{a}}  h_l(\pmb{a}, \pmb{X}_{l+1}, \cdots,\pmb{X}_n)\Big]\\
    &\quad-\mathbb{E}\mathbbm{1}_{\hat{\pmb{X}}_l=\pmb{0}} \mathbb{E}h_l(\pmb{0},\pmb{X}_{l+1},\cdots, \pmb{X}_n)- \sum_{\pmb{a}\ge1}\mathbb{E}\mathbbm{1}_{\hat{\pmb{X}}_l=\pmb{a}} \mathbb{E}h_l(\pmb{a},\pmb{X}_{l+1},\cdots, \pmb{X}_n)\Big|\\
    &= \Big|\sum_{\pmb{a}\ge 1}\mathbb{E}\Big[ \mathbbm{1}_{\pmb{X}_l=\pmb{a}} h_l(\pmb{0},\pmb{X}_{l+1}, \cdots,\pmb{X}_n)\Big]+\sum_{\pmb{a}\ge 1}\mathbb{E}\Big[ \mathbbm{1}_{\pmb{X}_l=\pmb{a}}  h_l(\pmb{a}, \pmb{X}_{l+1}, \cdots,\pmb{X}_n)\Big]\\
    &\quad-\sum_{\pmb{a}\ge 1}\mathbb{E}\mathbbm{1}_{\hat{\pmb{X}}_l=\pmb{a}}  \mathbb{E}h_l(\pmb{0},\pmb{X}_{l+1},\cdots, \pmb{X}_n)-\sum_{\pmb{a}\ge 1} \mathbb{E}\mathbbm{1}_{\hat{\pmb{X}}_l=\pmb{a}} \mathbb{E}h_l(\pmb{a},\pmb{X}_{l+1},\cdots, \pmb{X}_n)\Big|\\
    &\le 2 \sum_{\pmb{a}\ge 1}\sup_{h\in[0,1]} \Big|\mathbb{E}\Big[ \mathbbm{1}_{\pmb{X}_l=\pmb{a}}  h(\pmb{X}_{l+1}, \cdots, \pmb{X}_n)\Big]-\mathbb{E}\mathbbm{1}_{\pmb{X}_l=\pmb{a}}  \mathbb{E}h(\pmb{X}_{l+1}, \cdots, \pmb{X}_n)\Big|.
\end{align*}

Therefore,
\begin{align}
    \sup_{h\in[0,1]}&\Big|\mathbb{E}\Big[h(\pmb{X}_0,\pmb{X}_1, \cdots, \pmb{X}_n)-h(\hat{\pmb{X}}_0, \hat{\pmb{X}}_1, \cdots, \hat{\pmb{X}}_n)\Big]\Big|\nonumber \\
    &\le 2 \sum_{0 \le l \le n} \sum_{\pmb{a}\ge 1} \sup_{h \in [0,1]} \Big|\mathbb{E}\Big[ \mathbbm{1}_{\pmb{X}_l=\pmb{a}}  h(\pmb{X}_{l+1}, \cdots, \pmb{X}_n)\Big]-\mathbb{E}\mathbbm{1}_{\pmb{X}_l=\pmb{a}}  \mathbb{E}h(\pmb{X}_{l+1}, \cdots, \pmb{X}_n)\Big|.\label{sum}
\end{align}

We start with estimating the terms with $l\le n-p$ in (\ref{sum}).
\begin{align*}
    \Big|\mathbb{E}&\Big[ \mathbbm{1}_{\pmb{X}_l=\pmb{a}}  h(\pmb{X}_{l+1},\cdots, \pmb{X}_n)\Big]-\mathbb{E}\mathbbm{1}_{\pmb{X}_l=\pmb{a}}  \mathbb{E}h(\pmb{X}_{l+1}, \cdots, \pmb{X}_n)\Big|\\
    &= \Big|\mathbb{E}\Big[ \mathbbm{1}_{\pmb{X}_l=\pmb{a}}  h(\pmb{X}_{l+1},\cdots, \pmb{X}_n)\Big]-\mathbb{E}\Big[ \mathbbm{1}_{\pmb{X}_l=\pmb{a}}  h(\pmb{0},\cdots, \pmb{0},\pmb{X}_{l+p},\cdots,\pmb{X}_n)\Big]\\
    &\quad+\mathbb{E}\Big[ \mathbbm{1}_{\pmb{X}_l=\pmb{a}}  h(\pmb{0},\cdots, \pmb{0},\pmb{X}_{l+p},\cdots,\pmb{X}_n)\Big]-\mathbb{E}\mathbbm{1}_{\pmb{X}_l=\pmb{a}}  \mathbb{E}h(\pmb{X}_{l+1}, \cdots, \pmb{X}_n)\\
    &\quad+\mathbb{E}\mathbbm{1}_{\pmb{X}_l=\pmb{a}}  \mathbb{E}h(\pmb{0},\cdots, \pmb{0},\pmb{X}_{l+p},\cdots,\pmb{X}_n)-\mathbb{E}\mathbbm{1}_{\pmb{X}_l=\pmb{a}} \mathbb{E}h(\pmb{0},\cdots, \pmb{0},\pmb{X}_{l+p},\cdots,\pmb{X}_n)\Big|\\
    &= \Big|\mathbb{E}\Big\{ \mathbbm{1}_{\pmb{X}_l=\pmb{a}}  \Big[h(\pmb{X}_{l+1},\cdots, \pmb{X}_n)-h(\pmb{0},\cdots, \pmb{0},\pmb{X}_{l+p},\cdots,\pmb{X}_n)\Big]\Big\}\\
    &\quad+\mathbb{E}\mathbbm{1}_{\pmb{X}_l=\pmb{a}}  \mathbb{E}\Big[h(\pmb{0},\cdots, \pmb{0},\pmb{X}_{l+p},\cdots,\pmb{X}_n)-h(\pmb{X}_{l+1}, \cdots, \pmb{X}_n)\Big]\\
    &\quad+\mathbb{E}\Big[ \mathbbm{1}_{\pmb{X}_l=\pmb{a}}  h(\pmb{0},\cdots, \pmb{0},\pmb{X}_{l+p},\cdots,\pmb{X}_n)\Big]-\mathbb{E}\mathbbm{1}_{\pmb{X}_l=\pmb{a}}  \mathbb{E}h(\pmb{0},\cdots, \pmb{0},\pmb{X}_{l+p},\cdots,\pmb{X}_n)\Big|.
\end{align*}

Observe that 
\[ |h(\pmb{X}_{l+1}, \cdots, \pmb{X}_n)- h(\pmb{0}, \cdots, \pmb{0},\pmb{X}_{l+p}, \cdots, \pmb{X}_n)| \le 2  \mathbbm{1}_{\sum_{l+1\le j \le l+p-1}\pmb{X}_j\ge 1}.\]

Now, in view of stationarity of $(\pmb{X}_i)_{i\ge 0}$, we can continue estimates as 
\begin{align*}
    &\le \Big| \mathbb{E}\Big[\mathbbm{1}_{\pmb{X}_l=\pmb{a}}  h(\pmb{0}, \cdots, \pmb{0},\pmb{X}_{l+p}, \cdots, \pmb{X}_n)\Big]-\mathbb{E}\mathbbm{1}_{\pmb{X}_l=\pmb{a}}  \mathbb{E}h\left(\pmb{0}, \cdots, \pmb{0},\pmb{X}_{l+p}, \cdots, \pmb{X}_n\right)\Big|\\
    &\quad+2\mathbb{E} \Big(\mathbbm{1}_{\pmb{X}_l=\pmb{a}}  \mathbbm{1}_{\sum_{l+1\le j \le l+p-1}\pmb{X}_j\ge 1}\Big)+2\mathbb{E}\mathbbm{1}_{\pmb{X}_l=\pmb{a}}  \mathbb{E}\mathbbm{1}_{\sum_{l+1\le j \le l+p-1}\pmb{X}_j\ge 1}\\
    &\le \Big| \mathbb{E}\Big[\mathbbm{1}_{\pmb{X}_l=\pmb{a}}   h(\pmb{0}, \cdots, \pmb{0},\pmb{X}_{l+p}, \cdots, \pmb{X}_n)\Big]-\mathbb{E}\mathbbm{1}_{\pmb{X}_l=\pmb{a}}  \mathbb{E}h(\pmb{0}, \cdots, \pmb{0},\pmb{X}_{l+p}, \cdots, \pmb{X}_n)\Big|\\
    &\quad+2\mathbb{E} \Big(\mathbbm{1}_{\pmb{X}_0=\pmb{a}}  \mathbbm{1}_{\sum_{1\le j \le p-1}\pmb{X}_j\ge 1}\Big)+2\mathbb{E}\mathbbm{1}_{\pmb{X}_0=\pmb{a}}  \mathbb{E}\mathbbm{1}_{\sum_{1\le j \le p-1}\pmb{X}_j\ge 1}.
\end{align*}

Observe that \begin{gather*}
    \{\sum_{1\le j \le p-1}\pmb{X}_j\ge 1\}=\bigcup_{N_{e,m}\le j \le N_{e,m}p-1} (F^{R_m})^{-j} (\pi_{\Delta_m}\pi^{-1}S_r),\quad \pmb{a}\ge 1,\\
    \{\pmb{X}_0=\pmb{a}\}\subseteq \bigcup_{0\le j \le N_{e,m}-1} (F^{R_m})^{-j} (\pi_{\Delta_m}\pi^{-1}S_r).
\end{gather*}  

Hence, we can continue the sequence of inequalities above as 
\begin{align*}
    &\precsim_m \Big| \mathbb{E}\Big[\mathbbm{1}_{\pmb{X}_l=\pmb{a}}   h(\pmb{0}, \cdots, \pmb{0},\pmb{X}_{l+p}, \cdots, \pmb{X}_n)\Big]-\mathbb{E}\mathbbm{1}_{\pmb{X}_l=\pmb{a}}  \mathbb{E}h(\pmb{0}, \cdots, \pmb{0},\pmb{X}_{l+p}, \cdots, \pmb{X}_n)\Big|\\
    &\quad+\mathbb{E} \left(\mathbbm{1}_{\pmb{X}_0=\pmb{a}}  \mathbbm{1}_{\sum_{1\le j \le p-1}\pmb{X}_j\ge 1}\right)+ p \mu_{\Delta_m}(\pi_{\Delta_m}\pi^{-1}S_r)^2.
\end{align*}

Therefore for the terms with $l\le n-p$ in (\ref{sum}) we have 
\begin{align*}
    \Big|\mathbb{E}&\Big[ \mathbbm{1}_{\pmb{X}_l=\pmb{a}}  h(\pmb{X}_{l+1}, \cdots, \pmb{X}_n)\Big]-\mathbb{E}\mathbbm{1}_{\pmb{X}_l=\pmb{a}}  \mathbb{E}h(\pmb{X}_{l+1}, \cdots, \pmb{X}_n)\Big|\\
    &\precsim_m \Big|\mathbb{E}\Big[\mathbbm{1}_{\pmb{X}_l=\pmb{a}}   h(\pmb{0}, \cdots, \pmb{0},\pmb{X}_{l+p}, \cdots, \pmb{X}_n)\Big]-\mathbb{E}\mathbbm{1}_{\pmb{X}_l=\pmb{a}} \mathbb{E}h(\pmb{0}, \cdots, \pmb{0},\pmb{X}_{l+p}, \cdots, \pmb{X}_n)\Big|\\
    &\quad+\mathbb{E} \left(\mathbbm{1}_{\pmb{X}_0=\pmb{a}} \mathbbm{1}_{\sum_{1\le j \le p-1}\pmb{X}_j\ge 1}\right)+p  \mu_{\Delta_m}(\pi_{\Delta_m}\pi^{-1}S_r)^2.
\end{align*}

Consider now the terms with $l> n-p$ in (\ref{sum}). Since $\pmb{a}\ge 1$ and \begin{gather*}
    \{\pmb{X}_l=\pmb{a}\}\subseteq \bigcup_{lN_{e,m}\le j \le (l+1)N_{e,m}-1} (F^{R_m})^{-j} (\pi_{\Delta_m}\pi^{-1}S_r), \quad ||h||_{\infty} \le 1,
\end{gather*} then
\begin{align*}
    \Big|\mathbb{E}\Big[ \mathbbm{1}_{\pmb{X}_l=\pmb{a}}  h(\pmb{X}_{l+1},\cdots, \pmb{X}_n)\Big]-\mathbb{E}\mathbbm{1}_{\pmb{X}_l=\pmb{a}}  \mathbb{E}h(\pmb{X}_{l+1},\cdots, \pmb{X}_n) \Big|\precsim_m \mu_{\Delta_m}(\pi_{\Delta_m}\pi^{-1}S_r).
\end{align*}

 Therefore 
 \begin{align*}
     (\ref{sum})&=2 \sum_{0 \le l \le n}\sup_{\pmb{a}\ge 1} \sup_{h\in[0,1]}\Big|\mathbb{E}\Big[ \mathbbm{1}_{\pmb{X}_l=\pmb{a}}  h(\pmb{X}_{l+1},\cdots, \pmb{X}_n)\Big]-\mathbb{E}\mathbbm{1}_{\pmb{X}_l=\pmb{a}}  \mathbb{E}h(\pmb{X}_{l+1},\cdots,\pmb{X}_n)\Big|\\
     &\precsim_m \sum_{0 \le l \le n-p} \sup_{\pmb{a}\ge 1}\sup_{h\in[0,1]} \Big|\mathbb{E}\Big[\mathbbm{1}_{\pmb{X}_l=\pmb{a}}  h(\pmb{X}_{l+p},\cdots, \pmb{X}_n)\Big]-\mathbb{E}\mathbbm{1}_{\pmb{X}_l=\pmb{a}}  \mathbb{E}h(\pmb{X}_{l+p},\cdots, \pmb{X}_n)\Big|\\
     &\quad+\sup_{\pmb{a}\ge 1}n\mathbb{E} \Big(\mathbbm{1}_{\pmb{X}_0=\pmb{a}}  \mathbbm{1}_{\sum_{1\le j \le p-1}\pmb{X}_j\ge 1}\Big)+p n \mu_{\Delta_m}(\pi_{\Delta_m}\pi^{-1}S_r)^2+p\mu_{\Delta_m}(\pi_{\Delta_m}\pi^{-1}S_r).
 \end{align*}

By making use of stationarity of $(\pmb{X}_i)_{i \ge 0}$, the last expression above can be estimated as
\begin{align*}
    &\precsim_m \sum_{0 \le l \le n-p}\sup_{\pmb{a}\ge 1}\sup_{h\in[0,1]} \Big| \mathbb{E}\Big[\mathbbm{1}_{\pmb{X}_0=\pmb{a}}  h(\pmb{X}_p,\cdots, \pmb{X}_{n-l})\Big]-\mathbb{E}\mathbbm{1}_{\pmb{X}_0=\pmb{a}}  \mathbb{E}h(\pmb{X}_p,\cdots, \pmb{X}_{n-l})\Big|\\
    &\quad+\sup_{\pmb{a}\ge 1}n\mathbb{E} \left(\mathbbm{1}_{\pmb{X}_0=\pmb{a}}  \mathbbm{1}_{\sum_{1\le j \le p-1}\pmb{X}_j\ge 1}\right)+pn \mu_{\Delta_m}(\pi_{\Delta_m}\pi^{-1}S_r)^2+ p\mu_{\Delta_m}(\pi_{\Delta_m}\pi^{-1}S_r), 
\end{align*}which concludes a proof.
\end{proof}

Denote by $\{\hat{X}_i\}_{i\ge 0}$ i.i.d. random variables, which do not depend on $(X_i)_{i \ge 0}$ and $(\hat{\pmb{X}}_i)_{i\ge 0}$, and which are defined on a probability space $(\hat{\Omega}, \hat{\mathbb{P}})$ such that for each $i \ge 0$,\begin{equation*}
    X_i=_d\hat{X}_i.
\end{equation*} 

Define now random vectors \begin{align*}
    \pmb{Y}_i:=(\hat{X}_{iN_{e,m}}, \hat{X}_{iN_{e,m}+1}, \cdots, \hat{X}_{(i+1)N_{e,m}-1}).
\end{align*} 

As the next step we will approximate $(\hat{\pmb{X}}_i)_{i \ge 0}$ by $(\pmb{Y}_i)_{i\ge 0}$.

\begin{lemma}\label{poissonappro2}
For any $n\ge 1$,\begin{align*}
    \sup_{h\in[0,1]}\Big|&\mathbb{E}\Big[h(\pmb{Y}_0,\pmb{Y}_1, \cdots, \pmb{Y}_n)-h(\hat{\pmb{X}}_0, \hat{\pmb{X}}_1, \cdots, \hat{\pmb{X}}_n)\Big]\Big|\\
    &\precsim_m n\mathbb{E}\big(\mathbbm{1}_{X_0=1}\mathbbm{1}_{\sum_{1\le j\le N_{e,m}-1}X_j\ge1}\big) +n\mu_{\Delta_m}(\pi_{\Delta_m}\pi^{-1}S_r)^2,
    \end{align*}where a constant in $``\precsim_m"$ depends only on $m$. 
\end{lemma}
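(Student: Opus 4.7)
The plan is to apply a hybrid (telescoping) argument reducing the claim to a total variation bound between $\pmb{Y}_0$ and $\pmb{X}_0$, and then to estimate this total variation by splitting the sum according to Hamming weight in $\{0,1\}^{N_{e,m}}$. Since $(\pmb{Y}_i)_{i\ge 0}$ and $(\hat{\pmb{X}}_i)_{i\ge 0}$ are each i.i.d.\ and independent of one another on the product space $(\Delta_m,\mu_{\Delta_m})\otimes(\hat{\Omega},\hat{\mathbb{P}})$, the standard hybrid identity and integrating out the independent factors give
\[
\sup_{h\in[0,1]}\bigl|\mathbb{E}h(\pmb{Y}_0,\ldots,\pmb{Y}_n)-\mathbb{E}h(\hat{\pmb{X}}_0,\ldots,\hat{\pmb{X}}_n)\bigr|\le 2(n+1)\,d_{\mathrm{TV}}(\pmb{Y}_0,\pmb{X}_0),
\]
where $d_{\mathrm{TV}}(\pmb{Y}_0,\pmb{X}_0)=\tfrac12\sum_{\pmb{a}\in\{0,1\}^{N_{e,m}}}|\mathbb{P}(\pmb{Y}_0=\pmb{a})-\mathbb{P}(\pmb{X}_0=\pmb{a})|$. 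Writing $\mu:=\mu_{\Delta_m}(\pi_{\Delta_m}\pi^{-1}S_r)$, it then suffices to prove
\[
d_{\mathrm{TV}}(\pmb{Y}_0,\pmb{X}_0)\precsim_m \mathbb{E}\bigl[\mathbbm{1}_{X_0=1}\mathbbm{1}_{\sum_{1\le j\le N_{e,m}-1}X_j\ge 1}\bigr]+\mu^2.
\]

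To control this TV distance I would set $S:=\sum_{j=0}^{N_{e,m}-1}X_j$ and $T:=\sum_{j=0}^{N_{e,m}-1}\hat{X}_j$. The contribution of the set $\{|\pmb{a}|\ge 2\}$ is bounded by $\mathbb{P}(S\ge 2)+\mathbb{P}(T\ge 2)$: by independence of the $\hat{X}_j$ one has $\mathbb{P}(T\ge 2)\le\binom{N_{e,m}}{2}\mu^2\precsim_m\mu^2$, while $F^{R_m}$-invariance of $\mu_{\Delta_m}$ yields $\mathbb{P}(S\ge 2)\le\sum_{k=1}^{N_{e,m}-1}(N_{e,m}-k)\mathbb{E}[X_0X_k]$, and the pointwise inequality $\sum_{k=1}^{N_{e,m}-1}X_k\le(N_{e,m}-1)\mathbbm{1}_{\sum_{k=1}^{N_{e,m}-1}X_k\ge 1}$ converts this to $\precsim_m\mathbb{E}[X_0\mathbbm{1}_{\sum_{1\le k\le N_{e,m}-1}X_k\ge 1}]$. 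For $|\pmb{a}|=0$ the difference equals $|\mathbb{P}(S\ge 1)-\mathbb{P}(T\ge 1)|$; expanding $\mathbb{P}(T\ge 1)=1-(1-\mu)^{N_{e,m}}=N_{e,m}\mu-O_m(\mu^2)$ and partitioning $\{S\ge 1\}$ by the first index $j$ with $X_j=1$ to obtain $\mathbb{P}(S\ge 1)=N_{e,m}\mu-\sum_j\mathbb{E}[X_j\mathbbm{1}_{\exists k<j:X_k=1}]$, the very same pairwise-correlation union bound controls the $S$-correction.

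The main obstacle is the weight-one case. Denoting by $\pmb{e}_j$ the vector whose unique $1$ sits in coordinate $j$, one has $\mathbb{P}(\pmb{Y}_0=\pmb{e}_j)=\mu(1-\mu)^{N_{e,m}-1}=\mu+O_m(\mu^2)$ and $\mathbb{P}(\pmb{X}_0=\pmb{e}_j)=\mu-\mathbb{E}[X_j\mathbbm{1}_{\sum_{i\ne j,\,0\le i\le N_{e,m}-1}X_i\ge 1}]$, and the correction term for $j\ne 0$ is not yet in the canonical form appearing on the right-hand side of the lemma. To handle it I will first apply $F^{R_m}$-invariance to shift the base point from $j$ back to $0$, converting the correction to $\mathbb{E}[X_0\mathbbm{1}_{\sum_{k\in I_j}X_k\ge 1}]$ with $I_j=\{-j,\ldots,-1\}\cup\{1,\ldots,N_{e,m}-1-j\}$, then apply a union bound over $k\in I_j$ together with the time-reversal symmetry $\mathbb{E}[X_0X_{-k}]=\mathbb{E}[X_0X_k]$ (another consequence of $F^{R_m}$-invariance) to reach a bound $\le 2\sum_{k=1}^{N_{e,m}-1}\mathbb{E}[X_0X_k]\precsim_m\mathbb{E}[X_0\mathbbm{1}_{\sum_{1\le k\le N_{e,m}-1}X_k\ge 1}]$. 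Summing over the $N_{e,m}$ values of $j$ costs only an $m$-dependent factor, so combining the three weight regimes gives the desired total variation estimate; multiplying by $2(n+1)$ and absorbing the constant into $\precsim_m$ completes the proof.
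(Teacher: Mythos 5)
Your proposal is correct, and the outer step (telescoping over blocks $l=0,\dots,n$ and integrating out the independent factors) is the same as the paper's, but the way you handle the single-block comparison differs substantively. The paper does a \emph{second} telescoping within the block, replacing $Z_l$ by $\hat{X}_l$ one coordinate at a time; at step $l$ only indices in $[l+1,N_{e,m}-1]$ appear, so after stationarity the correlation term lands automatically in the one-sided canonical form $\mathbb{E}\,\mathbbm{1}_{X_0=1}\mathbbm{1}_{\sum_{1\le j\le N_{e,m}-1}X_j\ge 1}$ with no need to deal with negative time shifts. You instead collapse the block comparison to a total-variation distance and estimate it directly by partitioning $\{0,1\}^{N_{e,m}}$ by Hamming weight. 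This is a clean and more ``static'' description of what is being bounded, but it forces you to compare $\mathbb{P}(\pmb{X}_0=\pmb e_j)$ with $\mathbb{P}(\pmb Y_0=\pmb e_j)$ for \emph{every} $j$, so after recentring by stationarity you pick up correlations $\mathbb{E}[X_0X_{-k}]$ at negative lags, which you must then fold back using the time-reversal identity $\mathbb{E}[X_0X_{-k}]=\mathbb{E}[X_0X_k]$ (a legitimate consequence of $T$-invariance of $\mu_{\Delta_m}$, but a step the paper never needs). Both arguments are elementary and both cost only $m$-dependent factors ($N_{e,m}$ and binomial coefficients); the paper's double telescoping is slightly leaner because it avoids the bilateral union bound and the time-reversal trick, while your TV decomposition makes the $O_m(\mu^2)$ deviations from the i.i.d.\ Bernoulli model explicit weight-by-weight. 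One small cosmetic point: for $h\in[0,1]$ the hybrid step gives $(n+1)\,d_{\mathrm{TV}}(\pmb Y_0,\pmb X_0)$ rather than $2(n+1)\,d_{\mathrm{TV}}$, since $\sup_{h\in[0,1]}|\mathbb{E}h(Y)-\mathbb{E}h(X)|=d_{\mathrm{TV}}(Y,X)$; this of course changes nothing in the final bound.
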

\begin{proof} Assume that $\hat{\pmb{X}}_0:=(Z_0,Z_1,\cdots,Z_{N_{e,m}-1})$. Note that all $Z_i$ are not independent, and for all $0\le i\le N_{e,m}-1$, we have \begin{gather*}
    Z_i=_dX_i=_d\hat{X}_i. 
\end{gather*}

We can start now the next estimate.\begin{align*}
    \sup_{h\in[0,1]}&\Big|\mathbb{E}\Big[h(\pmb{Y}_0,\pmb{Y}_1, \cdots, \pmb{Y}_n)-h(\hat{\pmb{X}}_0, \hat{\pmb{X}}_1, \cdots, \hat{\pmb{X}}_n)\Big]\Big|\\
    &= \sup_{h\in[0,1]}\Big|\sum_{0 \le l \le n} \mathbb{E} h\left(\hat{\pmb{X}}_0, \cdots, \hat{\pmb{X}}_{l-1},\pmb{Y}_l, \cdots, \pmb{Y}_n\right)- \mathbb{E} h\left(\hat{\pmb{X}}_0, \cdots, \hat{\pmb{X}}_{l-1},\hat{\pmb{X}}_l, \pmb{Y}_{l+1}, \cdots, \pmb{Y}_n\right)\Big|\\
    &\le \sup_{h\in[0,1]}\Big|\sum_{0 \le l \le n}  \mathbb{E} h'_l(\pmb{Y}_l)-  \mathbb{E}h'_l(\hat{\pmb{X}}_l)\Big|\le \sum_{0 \le l \le n} \sup_{h\in[0,1]}\Big| \mathbb{E} h'_l(\pmb{Y}_l)-  \mathbb{E}h'_l(\hat{\pmb{X}}_l)\Big|,
\end{align*}where $h'_l(\sbullet[.5]):=h(\hat{\pmb{X}}_1, \cdots,\hat{\pmb{X}}_{l-1}, \sbullet[.5], \pmb{Y}_{l+1}, \cdots, \pmb{Y}_{n})$. Since $\hat{\pmb{X}}_1, \cdots, \hat{\pmb{X}}_{l-1}, \pmb{Y}_{l+1}, \cdots, \pmb{Y}_{n}$ do not depend on $\pmb{Y}_l$ and $\hat{\pmb{X}}_l$. Then, without a loss of generality, $h'_l$ can be viewed as a function which does not depend on $\hat{\pmb{X}}_1, \cdots, \hat{\pmb{X}}_{l-1}, \pmb{Y}_{l+1}, \cdots, \pmb{Y}_{n}$. By stationarity of $(\pmb{Y}_i)_{i\ge 0}$ and $(\hat{\pmb{X}}_i)_{i\ge 0}$, we have\begin{align*}
    \sup_{h\in[0,1]}\Big| \mathbb{E} h'_l(\pmb{Y}_l)&-  \mathbb{E}h'_l(\hat{\pmb{X}}_l)\Big|\le \sup_{h\in[0,1]}\Big| \mathbb{E} h(\pmb{Y}_l)-  \mathbb{E}h(\hat{\pmb{X}}_l)\Big|= \sup_{h\in[0,1]}\Big| \mathbb{E} h(\pmb{Y}_0)-  \mathbb{E}h(\hat{\pmb{X}}_0)\Big|\\
    &= \sup_{h\in[0,1]}\Big|\sum_{0 \le l \le N_{e,m}-1} \mathbb{E} h\left(\hat{X}_0, \cdots, \hat{X}_{l-1},Z_l, \cdots, Z_{N_{e,m}-1}\right)\\
    &\quad - \mathbb{E} h\left(\hat{X}_0, \cdots, \hat{X}_{l-1}, \hat{X}_l, Z_{l+1}, \cdots, Z_{N_{e,m}-1}\right)\Big|\\
    &\le \sup_{h\in[0,1]}\Big|\sum_{0 \le l \le N_{e,m}-1}  \mathbb{E} h_l(Z_l, Z_{l+1}, \cdots, Z_{N_{e,m}-1})- \mathbb{E}h_l(\hat{X}_l,Z_{l+1}, \cdots, Z_{N_{e,m}-1})\Big|\\
    &\le \sum_{0 \le l \le N_{e,m}-1}  \sup_{h\in[0,1]}\Big|\mathbb{E} h_l(Z_l, Z_{l+1}, \cdots, Z_{N_{e,m}-1})- \mathbb{E}h_l(\hat{X}_l,Z_{l+1}, \cdots, Z_{N_{e,m}-1})\Big|,
\end{align*}where $h_l(\sbullet[.5]):=h(\hat{X}_1, \cdots,\hat{X}_{l-1}, \sbullet[.5])$. As before, $h_l$ can be regarded as a function which does not depend on $\hat{X}_1, \cdots, \hat{X}_{l-1}$. Note that $X_l =_d \hat{X}_l=_dZ_l$ are $\{0,1\}$-valued random variables. Thus
\begin{align*}
    &\Big|\mathbb{E} h_l\Big(Z_l,Z_{l+1}, \cdots,Z_{N_{e,m}-1}\Big)-  \mathbb{E}h_l\left(\hat{X}_l, Z_{l+1}, \cdots, Z_{N_{e,m}-1}\right)\Big|\\
    &=\Big|\mathbb{E}\Big[ \mathbbm{1}_{Z_l=0}  h_l(0,Z_{l+1}, \cdots,Z_{N_{e,m}-1})\Big]+\mathbb{E}\Big[ \mathbbm{1}_{Z_l=1}  h_l(1, Z_{l+1}, \cdots,Z_{N_{e,m}-1})\Big]\\
    &\quad-\mathbb{E}\mathbbm{1}_{\hat{X}_l=0} \mathbb{E}h_l(0,Z_{l+1},\cdots, Z_{N_{e,m}-1})- \mathbb{E}\mathbbm{1}_{\hat{X}_l=1} \mathbb{E}h_l(1,Z_{l+1},\cdots, Z_{N_{e,m}-1})\Big|\\
    &=\Big|\mathbb{E}\Big[ \mathbbm{1}_{Z_l=1} h_l(0,Z_{l+1}, \cdots,Z_{N_{e,m}-1})\Big]+\mathbb{E}\Big[ \mathbbm{1}_{Z_l=1}  h_l(1, Z_{l+1}, \cdots,Z_{N_{e,m}-1})\Big]\\
    &\quad-\mathbb{E}\mathbbm{1}_{\hat{X}_l=1}  \mathbb{E}h_l(0,Z_{l+1},\cdots, Z_{N_{e,m}-1})- \mathbb{E}\mathbbm{1}_{\hat{X}_l=1} \mathbb{E}h_l(1,Z_{l+1},\cdots, Z_{N_{e,m}-1})\Big|\\
    &\le 2 \sup_{h\in[0,1]} \Big|\mathbb{E}\Big[ \mathbbm{1}_{Z_l=1}  h(Z_{l+1}, \cdots, Z_{N_{e,m}-1})\Big]-\mathbb{E}\mathbbm{1}_{Z_l=1}  \mathbb{E}h(Z_{l+1}, \cdots, Z_{N_{e,m}-1})\Big|\\
    &=2 \sup_{h\in[0,1]} \Big|\mathbb{E}\Big[ \mathbbm{1}_{Z_l=1}  h(Z_{l+1}, \cdots, Z_{N_{e,m}-1})-\mathbbm{1}_{Z_l=1}h(0, \cdots, 0)\Big]\\
    & \quad -\mathbb{E}\mathbbm{1}_{Z_l=1}  \mathbb{E}\Big[h(Z_{l+1}, \cdots, Z_{N_{e,m}-1})-h(0,\cdots, 0)\Big]\Big|.
\end{align*}

Using that $|h(Z_{l+1}, \cdots, Z_{N_{e,m}-1})-h(0,\cdots, 0)| \le2\mathbbm{1}_{\sum_{l+1\le j\le N_{e,m}-1}Z_j\ge1} $, stationarity of $(X_i)_{i=0}^{ N_{e,m}-1}$ and of $(Z_i)_{ i=0}^{ N_{e,m}-1 }$, and the relation  $(X_0,X_1,\cdots,X_{N_{e,m}-1})=_d(Z_0,Z_1,\cdots,Z_{N_{e,m}-1})$, we can continue the estimate above as\begin{align*}
    &\precsim \mathbb{E}\big(\mathbbm{1}_{Z_l=1}\mathbbm{1}_{\sum_{l+1\le j\le N_{e,m}-1}Z_j\ge1}\big)  +\mathbb{E}\mathbbm{1}_{Z_l=1}  \mathbb{E}\mathbbm{1}_{\sum_{l+1\le j\le N_{e,m}-1}Z_j\ge1} \\
    &\precsim \mathbb{E}\big(\mathbbm{1}_{Z_0=1}\mathbbm{1}_{\sum_{1\le j\le N_{e,m}-1}Z_j\ge1} \big) +\mathbb{E}\mathbbm{1}_{Z_0=1}  \mathbb{E}\mathbbm{1}_{\sum_{1\le j\le N_{e,m}-1}Z_j\ge1} \\
    &\precsim\mathbb{E}\big(\mathbbm{1}_{X_0=1}\mathbbm{1}_{\sum_{1\le j\le N_{e,m}-1}X_j\ge1}\big)  +\mathbb{E}\mathbbm{1}_{X_0=1}  \mathbb{E}\mathbbm{1}_{\sum_{1\le j\le N_{e,m}-1}X_j\ge1}\\
    &\precsim\mathbb{E}\big(\mathbbm{1}_{X_0=1}\mathbbm{1}_{\sum_{1\le j\le N_{e,m}-1}X_j\ge1} \big) +\mu_{\Delta_m}(\pi_{\Delta_m}\pi^{-1}S_r) \mathbb{E}\mathbbm{1}_{\sum_{1\le j\le N_{e,m}-1}X_j\ge1}\\
    &\precsim\mathbb{E}\big(\mathbbm{1}_{X_0=1}\mathbbm{1}_{\sum_{1\le j\le N_{e,m}-1}X_j\ge1} \big) +N_{e,m}\mu_{\Delta_m}(\pi_{\Delta_m}\pi^{-1}S_r)^2,
\end{align*}where the last inequality is due to $\{\sum_{1\le j\le N_{e,m}-1}X_j\ge1\}\subseteq \bigcup_{0\le j\le N_{e,m}-1}(F^{R_m})^{-j}(\pi_{\Delta_m}\pi^{-1}S_r) $.

By combining all estimates above we get\begin{align*}
    \sup_{h\in[0,1]}&\Big|\mathbb{E}\Big[h(\pmb{Y}_0,\pmb{Y}_1, \cdots, \pmb{Y}_n)-h(\hat{\pmb{X}}_0, \hat{\pmb{X}}_1, \cdots, \hat{\pmb{X}}_n)\Big]\Big|\le \sum_{0 \le l \le n} \sup_{h\in[0,1]}\Big| \mathbb{E} h'_l(\pmb{Y}_l)-  \mathbb{E}h'_l(\hat{\pmb{X}}_l)\Big|\\
    &\le \sum_{0\le l\le n}\sum_{0 \le l' \le N_{e,m}-1}  \sup_{h\in[0,1]}\Big|\mathbb{E} h_{l'}(Z_{l'}, \cdots, Z_{N_{e,m}-1})- \mathbb{E}h_{l'}(\hat{X}_{l'},Z_{l'+1}, \cdots, Z_{N_{e,m}-1})\Big|\\
    &\le  \sum_{0\le l\le n}\sum_{0 \le l' \le N_{e,m}-1}  \big[\mathbb{E}\big(\mathbbm{1}_{X_0=1}\mathbbm{1}_{\sum_{1\le j\le N_{e,m}-1}X_j\ge1}  \big)+N_{e,m}\mu_{\Delta_m}(\pi_{\Delta_m}\pi^{-1}S_r)^2\big]\\
    &\precsim_m n\mathbb{E}\big(\mathbbm{1}_{X_0=1}\mathbbm{1}_{\sum_{1\le j\le N_{e,m}-1}X_j\ge1}\big)  +n\mu_{\Delta_m}(\pi_{\Delta_m}\pi^{-1}S_r)^2,
\end{align*} which concludes a proof.
\end{proof}

By making use of Lemmas \ref{poissonappro1} and \ref{poissonappro2}, we can simplify now (\ref{inducingforanym}).
\begin{lemma}\label{poissonappro}
 For any $\epsilon\in (0,1)$, and for any disjoint bounded intervals $J_1, J_2, \cdots, J_k\subseteq [0,\infty)$, let \begin{gather*}
     n:=\max\{i: i\cdot \mu_{\Delta_m}(\pi_{\Delta_m}\pi^{-1}S_r)\in J_j, j=1,2,\cdots, k\}, \quad p:=n^{1-\epsilon}.
 \end{gather*} Then
\begin{align*}\Big|\mu_{\Delta_m}\Big(\mathcal{N}_i^{r}(\bigcup_{j \le k}J_j)=0\Big)-\mathbb{P}\Big(\mathcal{P}(\bigcup_{j \le k}J_j)=0\Big)\Big|\precsim_m T_1+T_2+T_3, 
    \end{align*}
    where
    \begin{align}
    &T_1:=\sum_{0 \le l \le n-p}\sup_{\pmb{a}\ge 1}\sup_{h\in[0,1]} \Big| \mathbb{E}\Big[\mathbbm{1}_{\pmb{X}_0=\pmb{a}}  h(\pmb{X}_p,\cdots, \pmb{X}_{n-l})\Big]-\mathbb{E}\mathbbm{1}_{\pmb{X}_0=\pmb{a}}  \mathbb{E}h(\pmb{X}_p,\cdots, \pmb{X}_{n-l})\Big|\label{correlationinpoissonlaw}\\
    &T_2:=\mu_{\Delta_m}(\pi_{\Delta_m}\pi^{-1}S_r)^{-1}\mathbb{E} \left(\mathbbm{1}_{X_0=1}  \mathbbm{1}_{\sum_{1\le j \le pN_{e,m}-1}X_j\ge 1}\right)\label{shortreturn}\\
    &T_3:=\mu_{\Delta_m}(\pi_{\Delta_m}\pi^{-1}S_r)^{\epsilon}\nonumber,
\end{align}where a constant in $``\precsim_m"$ depends only on $m$. Since $T_3\precsim_m \mu_{\mathcal{M}}(S_r)^{\epsilon}\to 0$ when $r\to 0$, then, in order to prove (\ref{inducingforanym}), we just need to show that (\ref{correlationinpoissonlaw}) and (\ref{shortreturn}) converge to zero as $r\to 0$.

We will say in what follows that (\ref{shortreturn}) is a short return.  
\end{lemma}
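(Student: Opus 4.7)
The plan is to compare $\mu_{\Delta_m}(\mathcal{N}_i^{r}(\bigcup_{j\le k} J_j)=0)$ with $\mathbb{P}(\mathcal{P}(\bigcup_{j\le k} J_j)=0)$ by threading through two intermediate expectations, namely the one involving the i.i.d.\ block-copies $(\hat{\pmb{X}}_i)$ and the one involving the componentwise-i.i.d.\ vectors $(\pmb{Y}_i)$. First I encode the event as an expectation. Write $\mu := \mu_{\Delta_m}(\pi_{\Delta_m}\pi^{-1}S_r)$, let $K(r) := \{i\ge 0 : i\mu \in \bigcup_{j\le k} J_j\}$, and for $i\in K(r)$ write $i = qN_{e,m} + l$ with $0\le l < N_{e,m}$. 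Set $M := \lceil n/N_{e,m}\rceil$, and define a $\{0,1\}$-valued function $h$ on $(\{0,1\}^{N_{e,m}})^{M+1}$ by $h((\pmb{x}_q)_{q\le M}) := \prod_{i\in K(r)} \mathbbm{1}_{(\pmb{x}_{\lfloor i/N_{e,m}\rfloor})_{i\bmod N_{e,m}} = 0}$. Then $\mu_{\Delta_m}(\mathcal{N}_i^{r}(\bigcup J_j)=0) = \mathbb{E}h(\pmb{X}_0,\dots,\pmb{X}_M)$, and the triangle inequality yields
\begin{align*}
&\bigl|\mathbb{E}h(\pmb{X}_0,\dots,\pmb{X}_M) - \mathbb{P}(\mathcal{P}(\cup J_j)=0)\bigr| \le \bigl|\mathbb{E}h(\pmb{X}_0,\dots,\pmb{X}_M) - \mathbb{E}h(\hat{\pmb{X}}_0,\dots,\hat{\pmb{X}}_M)\bigr|\\
&\qquad + \bigl|\mathbb{E}h(\hat{\pmb{X}}_0,\dots,\hat{\pmb{X}}_M) - \mathbb{E}h(\pmb{Y}_0,\dots,\pmb{Y}_M)\bigr| + \bigl|\mathbb{E}h(\pmb{Y}_0,\dots,\pmb{Y}_M) - \mathbb{P}(\mathcal{P}(\cup J_j)=0)\bigr|.
\end{align*}

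I would bound the first summand by Lemma \ref{poissonappro1} (playing the role of $n$ there by $M$), yielding $R_1 + R_2 + R_3$, and the second by Lemma \ref{poissonappro2}, yielding $M\cdot \mathbb{E}\mathbbm{1}_{X_0=1}\mathbbm{1}_{\sum_{1\le j\le N_{e,m}-1}X_j\ge 1} + M\mu^2$. For the third summand, since all coordinates of $(\pmb{Y}_i)_{i\ge 0}$ are i.i.d.\ Bernoulli$(\mu)$, one has $\mathbb{E}h(\pmb{Y}_0,\dots,\pmb{Y}_M) = (1-\mu)^{|K(r)|}$; combined with $\mu\cdot|K(r)| \to \mathrm{Leb}(\cup J_j)$ as $r\to 0$, this converges to $e^{-\mathrm{Leb}(\cup J_j)} = \mathbb{P}(\mathcal{P}(\cup J_j)=0)$ with error of order $\mu \le \mu^\epsilon = T_3$.

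It then remains to match these bounds with $T_1, T_2, T_3$ using $M \approx_{J,m} \mu^{-1}$ and $p = n^{1-\epsilon}\approx_J \mu^{\epsilon-1}$. Directly, $R_1$ equals $T_1$ up to the cosmetic relabeling $n\leftrightarrow M$. For $R_2$, given $\pmb{a}\ge 1$ with $a_k=1$, the inclusion $\{\pmb{X}_0=\pmb{a}\}\subseteq\{X_k=1\}$ combined with $F^{R_m}$-invariance of $\mu_{\Delta_m}$ (time-shift by $k$) gives $\mathbb{E}(\mathbbm{1}_{\pmb{X}_0=\pmb{a}}\mathbbm{1}_{\sum_{j=1}^{p-1}\pmb{X}_j\ge 1}) \le \mathbb{E}(\mathbbm{1}_{X_0=1}\mathbbm{1}_{\sum_{l=1}^{pN_{e,m}-1}X_l\ge 1})$, so multiplying by $M\precsim_J \mu^{-1}$ yields $R_2\precsim_{J,m} T_2$. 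For $R_3 = pM\mu^2 + p\mu$, direct substitution gives $R_3 \precsim_J \mu^\epsilon = T_3$. The Lemma \ref{poissonappro2} contributions are absorbed similarly: the first term is dominated by $T_2$ via the trivial inclusion $\{\sum_{j=1}^{N_{e,m}-1}X_j\ge 1\}\subseteq\{\sum_{j=1}^{pN_{e,m}-1}X_j\ge 1\}$ (valid for $p\ge 1$), and $M\mu^2\precsim \mu \le \mu^\epsilon = T_3$.

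The main obstacle is purely bookkeeping: tracking the scale conversion between block-level indices for $\pmb{X}_i$ and the original indices of $X_i$, and showing that the supremum over $\pmb{a}\ge 1$ in $R_2$ can be absorbed into the single-term bound $T_2$ by choosing a coordinate $k$ where $\pmb{a}$ is nonzero and applying $F^{R_m}$-invariance. Beyond this, no further dynamical input is needed beyond Lemmas \ref{poissonappro1}--\ref{poissonappro2}. The closing observation $T_3\to 0$ as $r\to 0$ is immediate from $\mu\to 0$ (since $S_r$ shrinks) and $\epsilon>0$, so the claim reduces the proof of (\ref{inducingforanym}) to controlling the decorrelation term (\ref{correlationinpoissonlaw}) and the short return (\ref{shortreturn}), which are addressed in the subsequent sections.
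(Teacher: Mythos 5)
Your proposal is correct and follows essentially the same route as the paper: both chain through the blocked i.i.d.\ vectors $\hat{\pmb{X}}_i$ and $\pmb{Y}_i$, apply Lemmas \ref{poissonappro1} and \ref{poissonappro2}, and then convert $R_1,R_2,R_3$ (and the Lemma~\ref{poissonappro2} output) into $T_1,T_2,T_3$ via the same scale accounting $n\approx\mu^{-1}$, $p\approx\mu^{\epsilon-1}$, and the shift-invariance argument that absorbs the $\sup_{\pmb{a}\ge 1}$ into the single term $T_2$. The only superficial difference is the final $\pmb{Y}$-to-Poisson step, where you compute $(1-\mu)^{|K(r)|}$ directly for the specific indicator $h$ instead of citing Theorem~2 of \cite{chenmethod} for a total-variation bound over all $h\in[0,1]$; both give the same $O(\mu)$ error, which is dominated by $T_3$.
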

\begin{proof}Let $J_i':=\{j: j\cdot \mu_{\Delta_m}(\pi_{\Delta_m}\pi^{-1}S_r)\in J_i\}$, $ X_{J_i'}:=\sum_{j \in J_i'} \mathbbm{1}_{\pi_{\Delta_m}\pi^{-1}S_r} \circ (F^{R_m})^j$, and $\hat{X}_{J_i'}:=\sum_{j \in J_i'}\hat{X}_{j}$, where $\{\hat{X}_i\}_{i\ge 0}$ are i.i.d. random variables such that $\hat{X}_i=_dX_i=\mathbbm{1}_{\pi_{\Delta_m}\pi^{-1}S_r} \circ (F^{R_m})^i$. Then we have
\begin{align}
\Big|\mu_{\Delta_m}\Big(\mathcal{N}_i^{r}(\bigcup_{j \le k}J_j)=0\Big)-\mathbb{P}\Big(&\mathcal{P}(\bigcup_{j \le k}J_j)=0\Big)\Big|\le \sup_{h \in [0,1]}\Big|\mathbb{E}\Big[h(X_{J_1'}, \cdots, X_{J_k'})-h\Big(\mathcal{P}(J_1), \cdots, \mathcal{P}(J_k)\Big)\Big]\Big|\nonumber\\
&\le \sup_{h\in[0,1]}\Big|\mathbb{E}\Big[h(X_{J_1'}, \cdots, X_{J_k'})-h(\hat{X}_{J_1'}, \cdots, \hat{X}_{J_k'})\Big]\Big|\nonumber\\
&\quad +\sup_{h\in[0,1]}\Big|\mathbb{E}\Big[h\Big(\mathcal{P}(J_1), \cdots, \mathcal{P}(J_k)\Big)-h(\hat{X}_{J_1'}, \cdots, \hat{X}_{J_k'})\Big]\Big|\nonumber\\
&\le \sup_{h\in[0,1]}\Big|\mathbb{E}\Big[h(X_0,X_1, \cdots, X_n)-h(\hat{X}_0,\hat{X}_1, \cdots, \hat{X}_n)\Big]\Big|\nonumber\\
&\quad +\sup_{h\in[0,1]}\Big|\mathbb{E}\Big[h\Big(\mathcal{P}(J_1), \cdots, \mathcal{P}(J_k)\Big)-h(\hat{X}_{J_1'}, \cdots, \hat{X}_{J_k'})\Big]\Big|. \label{20}
\end{align}
    
By applying Theorem 2  from \cite{chenmethod} to $(\hat{X}_{i})_{i \ge 0}$ we get  \begin{align*}
\sup_{h\in[0,1]} \Big|\mathbb{E}\Big[h\Big(P(J_1), \cdots, P(J_k)\Big)-h\left(\hat{X}_{J'_1}, \cdots, \hat{X}_{J'_k}\right)\Big]\Big|\precsim n \mu_{\Delta_m}(\pi_{\Delta_m}\pi^{-1}S_r)^2.
\end{align*} 

Since $(X_0, \cdots, X_n)$, $(\hat{X}_0,\hat{X}_1, \cdots, \hat{X}_n)$  are some entries of $(\pmb{X}_0,\cdots, \pmb{X}_n)$, $(\pmb{Y}_0,\pmb{Y}_1, \cdots, \pmb{Y}_n)$, respectively, then\[\sup_{h\in[0,1]}\Big|\mathbb{E}\Big[h(X_0, \cdots, X_n)-h(\hat{X}_0, \cdots, \hat{X}_n)\Big]\Big|\le \sup_{h\in[0,1]}\Big|\mathbb{E}\Big[h(\pmb{X}_0, \cdots, \pmb{X}_n)-h(\pmb{Y}_0, \cdots, \pmb{Y}_n)\Big]\Big|.\]

Now we can continue the estimate (\ref{20}) as\begin{align*}
    &\precsim_m \sup_{h\in[0,1]}\Big|\mathbb{E}\Big[h(X_0,X_1, \cdots, X_n)-h(\hat{X}_0,\hat{X}_1, \cdots, \hat{X}_n)\Big]\Big|+n \mu_{\Delta_m}(\pi_{\Delta_m}\pi^{-1}S_r)^2\\
    &\precsim_m  \sup_{h\in[0,1]}\Big|\mathbb{E}\Big[h(\pmb{X}_0,\pmb{X}_1, \cdots, \pmb{X}_n)-h(\hat{\pmb{X}}_0, \hat{\pmb{X}}_1, \cdots, \hat{\pmb{X}}_n)\Big]\Big|\\
    &\quad +\sup_{h\in[0,1]}\Big|\mathbb{E}\Big[h(\pmb{Y}_0,\pmb{Y}_1, \cdots, \pmb{Y}_n)-h(\hat{\pmb{X}}_0, \hat{\pmb{X}}_1, \cdots, \hat{\pmb{X}}_n)\Big]\Big|+n \mu_{\Delta_m}(\pi_{\Delta_m}\pi^{-1}S_r)^2\\
    &\precsim_m R_1+R_2+R_3+n\mathbb{E}\mathbbm{1}_{X_0=1}\mathbbm{1}_{\sum_{1\le j\le N_{e,m}-1}X_j\ge1}  +n\mu_{\Delta_m}(\pi_{\Delta_m}\pi^{-1}S_r)^2,
\end{align*}where the last ``$\precsim_m$" is due to Lemmas \ref{poissonappro1} and \ref{poissonappro2}.

Using $n\precsim  \mu_{\Delta_m}(\pi_{\Delta_m}\pi^{-1}S_r)^{-1}$ (where a constant in $``\precsim"$ depends on $\max\{n': n'\in J_i, i=1,2,\cdots, k\}<\infty$), $p=n^{1-\epsilon}$ and stationarity of $(X_i)_{i\ge 0}$, we obtain \begin{gather*}
    R_2=\sup_{\pmb{a}\ge 1}n\mathbb{E} \left(\mathbbm{1}_{\pmb{X}_0=\pmb{a}}  \mathbbm{1}_{\sum_{1\le j \le p-1}\pmb{X}_j\ge 1}\right)\precsim \mu_{\Delta_m}(\pi_{\Delta_m}\pi^{-1}S_r)^{-1}\mathbb{E} \left(\mathbbm{1}_{X_0=1}  \mathbbm{1}_{\sum_{1\le j \le pN_{e,m}-1}X_j\ge 1}\right),\\
    n\mathbb{E}\mathbbm{1}_{X_0=1}\mathbbm{1}_{\sum_{1\le j\le N_{e,m}-1}X_j\ge1}\precsim \mu_{\Delta_m}(\pi_{\Delta_m}\pi^{-1}S_r)^{-1}\mathbb{E} \left(\mathbbm{1}_{X_0=1}  \mathbbm{1}_{\sum_{1\le j \le pN_{e,m}-1}X_j\ge 1}\right).
\end{gather*} 

Then we can continue the estimate above as\begin{align*}
    &\precsim_m \sum_{0 \le l \le n-p}\sup_{\pmb{a}\ge 1}\sup_{h\in[0,1]} \Big| \mathbb{E}\Big[\mathbbm{1}_{\pmb{X}_0=\pmb{a}}  h(\pmb{X}_p,\cdots, \pmb{X}_{n-l})\Big]-\mathbb{E}\mathbbm{1}_{\pmb{X}_0=\pmb{a}}  \mathbb{E}h(\pmb{X}_p,\cdots, \pmb{X}_{n-l})\Big|\\
    &\quad + \mu_{\Delta_m}(\pi_{\Delta_m}\pi^{-1}S_r)^{-1}\mathbb{E} \left(\mathbbm{1}_{X_0=1}  \mathbbm{1}_{\sum_{1\le j \le pN_{e,m}-1}X_j\ge 1}\right)+\mu_{\Delta_m}(\pi_{\Delta_m}\pi^{-1}S_r)^{\epsilon},
\end{align*} which concludes a proof.
\end{proof}
\section{Short returns}\label{sectionshortreturn}
From Lemma \ref{poissonappro}, we just need to prove (\ref{correlationinpoissonlaw}) and (\ref{shortreturn}). In this section we will estimate short returns (\ref{shortreturn}). The papers \cite{penebacktoball,peneijm} provided effective methods to estimate (\ref{shortreturn}) for Sinai billiards with bounded horizons and for diamond billiards. Some specific properties (e.g. bounded free paths and complexity of singularities for these two billiards) were used there. In contrast, we are using here only the hyperbolic product structure $\Lambda$ in hyperbolic Young towers.

First of all we will show that the short return (\ref{shortreturn}) on $\Delta_m$ can be reduced to the short return on $X$.
\begin{lemma}[Reduce (\ref{shortreturn})]\begin{gather*}
   \mathbb{E} \left(\mathbbm{1}_{X_0=1}  \mathbbm{1}_{\sum_{1\le j \le pN_{e,m}-1}X_j\ge 1}\right)\precsim_m \int \mathbbm{1}_{\pi_{X}\pi^{-1}S_r}  \mathbbm{1}_{\bigcup_{1\le j \le pN_{e,m}}(f^{R})^{-j}(\pi_{X}\pi^{-1}S_r)}d\mu_{X},\\
   \mu_{\Delta_m}(\pi_{\Delta_m}\pi^{-1}S_r)\approx_m \mu_{X}(\pi_{X}\pi^{-1}S_r).
\end{gather*}
\end{lemma}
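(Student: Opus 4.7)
My plan rests on the structural fact that $\pi_X$ is injective on $\pi_{\Delta_m}\pi^{-1}S_r$ (Remark~\ref{sectionisstillsection}) together with the explicit formula
\[
\mu_{\Delta_m}(E) = C_m \sum_{j=0}^{m} \mu_X\bigl(\{x\in X : R(x)>j,\, (x,j)\in E\}\bigr),\qquad C_m := \Bigl(\int \min\{R,m+1\}\,d\mu_X\Bigr)^{-1},
\]
which is immediate from the construction of $\mu_{\Delta_m}$. I will also use that $\pi_X\circ \pi_{\Delta_m}=\pi_X$ on $\Delta$.

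For the second (measure) comparison I would apply the formula with $E=\pi_{\Delta_m}\pi^{-1}S_r$. For each $x\in \pi_X\pi^{-1}S_r$ the section hypothesis produces a unique $k(x)$ with $(x,k(x))\in \pi^{-1}S_r$; this $x$ then contributes to exactly one slice $\{y : R(y)>j,\, (y,j)\in \pi_{\Delta_m}\pi^{-1}S_r\}$, namely the one with $j=\min\{k(x),m\}$. Hence the $m+1$ slices form a disjoint partition of $\pi_X\pi^{-1}S_r$, giving
\[
\mu_{\Delta_m}(\pi_{\Delta_m}\pi^{-1}S_r)=C_m\,\mu_X(\pi_X\pi^{-1}S_r)\approx_m \mu_X(\pi_X\pi^{-1}S_r).
\]

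For the first inequality I would set
\[
A:=\pi_{\Delta_m}\pi^{-1}S_r\cap\bigcup_{1\le j\le pN_{e,m}-1}(F^{R_m})^{-j}(\pi_{\Delta_m}\pi^{-1}S_r),
\]
so the left-hand side equals $\mu_{\Delta_m}(A)$, and then unwind $F^{R_m}$ on $\Delta_m$: each iteration either increments the second coordinate (when the next point still lies in $\Delta_m$) or jumps to $(f^R x,0)$, so after $j$ steps $(F^{R_m})^j(x_0,n_0)=((f^R)^k x_0, n')$ with $k\in[0,j]$ counting the jumps. Now if $(x_0,n_0)$ and $(F^{R_m})^j(x_0,n_0)$ both lie in $\pi_{\Delta_m}\pi^{-1}S_r$ for some $1\le j\le pN_{e,m}-1$, then $k$ cannot be zero: $k=0$ would force $(x_0,n_0)$ and $(x_0,n_0+j)$ to lie simultaneously in $\pi_{\Delta_m}\pi^{-1}S_r$, contradicting injectivity of $\pi_X$. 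Thus $1\le k\le j\le pN_{e,m}$, and combined with $(f^R)^k x_0=\pi_X\pi_{\Delta_m}\bigl((F^{R_m})^j(x_0,n_0)\bigr)\in\pi_X\pi^{-1}S_r$ this yields $\pi_X(A)\subseteq B$, where
\[
B:=\pi_X\pi^{-1}S_r\cap \bigcup_{1\le k\le pN_{e,m}}(f^R)^{-k}(\pi_X\pi^{-1}S_r).
\]
Applying the tower formula to $A$, each of the $m+1$ slices $\{x: R(x)>j,\,(x,j)\in A\}$ is contained in $\pi_X(A)\subseteq B$, so $\mu_{\Delta_m}(A)\le (m+1)C_m\,\mu_X(B)\precsim_m \mu_X(B)$, which is the target bound.

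The main obstacle is the step $k\ge 1$: this is the unique place where the section hypothesis is genuinely used, and without it a short $F^{R_m}$-return on $\Delta_m$ could arise from merely moving up inside a single tower column and so would not correspond to any $f^R$-return on $X$. Once that conversion is in place, the rest is routine bookkeeping with the $m+1$ levels of $\Delta_m$ and the explicit tower formula.
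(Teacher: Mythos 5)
Your proof is correct and follows essentially the same strategy as the paper: exploit that $S_r$ is a section (so $\pi_X$ is injective on $\pi_{\Delta_m}\pi^{-1}S_r$) to convert a short $F^{R_m}$-return on $\Delta_m$ into a short $f^R$-return on $X$ with at least one genuine $f^R$-step ($k\ge 1$), and then use the explicit tower formula for $\mu_{\Delta_m}$ to pass from $\mu_{\Delta_m}$ to $\mu_X$. The paper states this as a one-line indicator inequality $\mathbbm{1}_{\pi_{\Delta_m}\pi^{-1}S_r}(x)\,\mathbbm{1}_{\bigcup_{j=1}^{pN_{e,m}-1}(F^{R_m})^{-j}\pi_{\Delta_m}\pi^{-1}S_r}(x)\le\mathbbm{1}_{\pi_X\pi^{-1}S_r}(\pi_Xx)\,\mathbbm{1}_{\bigcup_{j=1}^{pN_{e,m}}(f^R)^{-j}\pi_X\pi^{-1}S_r}(\pi_Xx)$ plus the exact pushforward identity $(\pi_X)_*\bigl(\mu_{\Delta_m}|_{\pi_{\Delta_m}\pi^{-1}S_r}\bigr)=C_m\,\mu_X|_{\pi_X\pi^{-1}S_r}$; you unpack the first by unwinding $F^{R_m}$ and explicitly counting jumps to show $k\ge1$, and you replace the second by a slice-by-slice bound. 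A small inefficiency: the same injectivity argument you already made for the measure comparison shows the slices partition $\pi_X(A)$ disjointly, so $\mu_{\Delta_m}(A)=C_m\,\mu_X(\pi_X A)\le C_m\,\mu_X(B)$ with no factor of $m+1$; but since $\precsim_m$ tolerates $m$-dependent constants this does not affect the result.
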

\begin{proof} Since $S_r$ is a section, then for any $x\in \Delta_m$,
\begin{gather*}
    \mathbbm{1}_{\pi_{\Delta_m}\pi^{-1}S_r}(x) \mathbbm{1}_{\bigcup_{j=1}^{pN_{e,m}-1}(F^{R_m})^{-j}(\pi_{\Delta_m}\pi^{-1}S_r)}(x) \le \mathbbm{1}_{\pi_{X}\pi^{-1}S_r}(\pi_Xx)  \mathbbm{1}_{\bigcup_{j=1}^{pN_{e,m}}(f^{R})^{-j}(\pi_{X}\pi^{-1}S_r)}(\pi_Xx),\\
    (\pi_X)_{*}(\mu_{\Delta_m}|_{\pi_{\Delta_m}\pi^{-1}S_r})=(\int \min\{R,m+1\} d\mu_{X})^{-1}\mu_X|_{\pi_{X}\pi^{-1}S_r}.
\end{gather*}

Therefore, \begin{align*}
     \mathbb{E} \left(\mathbbm{1}_{X_0=1}  \mathbbm{1}_{\sum_{1\le j \le pN_{e,m}-1}X_j\ge 1}\right)&\le \int_{\pi_{\Delta_m}\pi^{-1}S_r} \mathbbm{1}_{\pi_{X}\pi^{-1}S_r}\circ \pi_X  \mathbbm{1}_{\bigcup_{j=1}^{pN_{e,m}}(f^{R})^{-j}(\pi_{X}\pi^{-1}S_r)}\circ \pi_X d\mu_{\Delta_m}\\
     &\precsim_m \int \mathbbm{1}_{\pi_{X}\pi^{-1}S_r} \mathbbm{1}_{\bigcup_{j=1}^{pN_{e,m}}(f^{R})^{-j}(\pi_{X}\pi^{-1}S_r)} d\mu_{X}.
\end{align*}

The relation $\mu_{\Delta_m}(\pi_{\Delta_m}\pi^{-1}S_r)\approx_m \mu_{X}(\pi_{X}\pi^{-1}S_r)$ holds since $S_r$ is a section, which concludes a proof of the lemma.
\end{proof}

Therefore, in order to prove that (\ref{shortreturn}) converges to zero, the relation \begin{gather*}
    \lim_{r\to 0}\frac{1}{\mu_{X}(\pi_{X}\pi^{-1}S_r)}\int \mathbbm{1}_{\pi_{X}\pi^{-1}S_r} \mathbbm{1}_{\bigcup_{1\le j\le pN_{e,m}-1}(f^{R})^{-j}(\pi_{X}\pi^{-1}S_r)} d\mu_{X}=0
\end{gather*} will be proved in Lemma \ref{thelastlemmaofshortreturn}, accompanied by  several technical lemmas in the following subsections. We will use for that decay of correlations in the dynamical system $(X, (f^R)^{N_e}, \mu_X)$ and mixing hyperbolic Young towers $(\Delta_e', F_e^{N_e}, \mu_{\Delta_e'})$, where $N_e$ is the one from Lemma \ref{resultsofyoungpaper}. 

Throughout this section, in order to simplify notations, we let $g=f^R$, and still use $\pi_e$ to denote the semi-conjugacy from $\Delta_e'$ to $X$, i.e., $\pi_e: \Delta_e' \to X$ satisfies \begin{align*}
     g^{N_e}  \circ \pi_e = \pi_e \circ F_e^{N_e}.
\end{align*}

\[ \begin{tikzcd}
\Delta'_e \arrow{r}{F_e^{N_e}} \arrow{d}{\pi_e} & \Delta'_e \arrow{d}{\pi_e} \arrow{r}{\text{inclusion}} & \Delta_e\\%
X \arrow{r}{(f^R)^{N_e}}& X &  \end{tikzcd}, \quad \begin{tikzcd}
\Delta'_e \arrow{r}{F_e^{N_e}} \arrow{d}{\pi_e} & \Delta'_e \arrow{d}{\pi_e} \arrow{r}{\text{inclusion}} & \Delta_e\\%
X \arrow{r}{g^{N_e}}& X &  \end{tikzcd}.\]

This is possible since $(\pi_{e})_*\mu_{\Delta_e'}=\mu_X$ by Lemma \ref{resultsofyoungpaper}. Besides, throughout this section, we define $\pi_\partial: \mathcal{M}=\partial Q \times [-\pi/2, \pi/2] \to \partial Q$ by \[\pi_{\partial}(q,v)=q,\]
and suppose that\begin{align}\label{eachxiisaconstant}
    R|_{X_n}=R_n' \text{ for some }R_n' \in \mathbb{N}.
\end{align} It is the case because $X$ is divided by a part of $\mathbb{S}$ into countably many pieces $X=\bigcup_i X_i$, so that $R$ is constant on each $X_i$ (see Definition \ref{inducesystem}).

\subsection{Return statistics}In this subsection, we will prove that for a.e. $\pmb{m}\in\mathcal{M}$, \begin{align*}
    \liminf_{r\to 0}\frac{\log \mathcal{Z}_{2r}(\pmb{m})}{-\log \mu_{\Delta_m}(\pi_{\Delta_m}\pi^{-1}S_r)}\ge 1,
\end{align*}where for any $\pmb{m}\in \mathcal{M}$,  \begin{gather*}
    \mathcal{Z}_r(\pmb{m}):=\min\{n \ge 1: g^n(\pi_X\pi^{-1}\pmb{m})\in \pi_X \pi^{-1}\big(B_r(\pi_{\partial}\pmb{m})\times [-\pi/2, \pi/2]\big)\},\\
    S_r \text{ is the section contained in the quasi-section }B_r(\pi_{\partial}\pmb{m})\times [-\pi/2, \pi/2].
\end{gather*}This inequality will be proved in Lemma \ref{returnlimit}, followed by a series of technical lemmas in this subsection.

Define for any $n \ge 1, r>0$, \begin{gather*}
    A_r(\pi_{\partial}\pmb{m}):=\bigcup_{0\le i\le N_e-1}g^{i}\pi_{X}\pi^{-1}\big(B_r(\pi_{\partial}\pmb{m})\times [-\pi/2, \pi/2]\big)\\
    G_{n,r}:=\{\pmb{m}\in \mathcal{M}: (g^{N_e})^n(\pi_X \pi^{-1}\pmb{m})\in A_r(\pi_\partial \pmb{m})\}.
\end{gather*}

Fix $p'\in \mathbb{N}$, which will be exactly determined later. From Lemma \ref{resultsofyoungpaper} almost surely \begin{gather*}
    \bigcup_{Z\in \mathcal{Q}_{2p'}}\pi_eF_e^{N_ep'}Z=g^{N_ep'}(\pi_e\bigcup_{Z\in \mathcal{Q}_{2p'}}Z)=X=\bigcup_iX_i \text{ almost surely}.
\end{gather*} 

For each $Z \in \mathcal{Q}_{2p'}$, we know that $\pi_eF_e^{N_ep'}Z$ is the intersection of a family of stable disks and a family of unstable disks. Thus by Assumption \ref{assumption} $\pi_eF_e^{N_ep'}Z$ is completely contained in some $X_i$. Therefore for each $X_i$ we define a family of sets $(Z_{ji})_{j} \subseteq  \mathcal{Q}_{2p'}$ such that each $Z_{ji}$ satisfies the relation \begin{gather*}
  \pi_e F_e^{N_ep'}Z_{ji}\subseteq X_i.
\end{gather*} 

Therefore $\bigcup_{j}\pi_e F_e^{N_ep'}Z_{ji}\subseteq X_i$. 
In fact we have a stronger result.

\begin{lemma}\label{decomposexi}
$\bigcup_{j} Z_{ji}=F_e^{-N_ep'}\pi_e^{-1}X_i$ almost surely, which implies that $\bigcup_{j}\pi_e F_e^{N_ep'} Z_{ji}=X_i$ almost surely.
\end{lemma}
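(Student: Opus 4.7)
The statement is essentially a set-theoretic bookkeeping result, so my plan is to verify the two inclusions separately, relying on the definition of the $Z_{ji}$ and on the fact that $\mathcal{Q}_{2p}$ is a partition of $\Delta_e'$.

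For $\subseteq$ I would simply apply the defining property of $Z_{ji}$: by construction $\pi_e F_e^{N_e p} Z_{ji}\subseteq X_i$, which immediately gives $Z_{ji}\subseteq F_e^{-N_e p}\pi_e^{-1}X_i$, and therefore $\bigcup_j Z_{ji}\subseteq F_e^{-N_e p}\pi_e^{-1}X_i$. No measure-zero exceptional set is needed here.

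For $\supseteq$ I would pick $x\in F_e^{-N_e p}\pi_e^{-1}X_i$, so that $\pi_e F_e^{N_e p}(x)\in X_i$. Since $\mathcal{Q}_{2p}$ is a partition of $\Delta_e'$, there is a unique $Z\in\mathcal{Q}_{2p}$ with $x\in Z$. The paragraph preceding the lemma establishes, using Assumption \ref{assumption}(2) (i.e.\ $\bigcup_{i\ge 1}\partial X_i\subseteq\mathbb{S}$) together with the fact that points in $\pi_e F_e^{N_e p}Z$ avoid $\mathbb{S}$ by the very construction of the hyperbolic product structure, that $\pi_e F_e^{N_e p}Z\subseteq X_k$ for some single index $k$. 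So $\pi_e F_e^{N_e p}(x)\in X_i\cap X_k$; since interiors of the $X_\ell$ are pairwise disjoint, we conclude $k=i$ as soon as $\pi_e F_e^{N_e p}(x)$ lies in the interior of $X_i$, i.e.\ outside $\bigcup_\ell \partial X_\ell$. Hence $Z=Z_{ji}$ for some $j$, giving $x\in\bigcup_j Z_{ji}$.

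The only source of a ``null'' exceptional set is the preimage $F_e^{-N_e p}\pi_e^{-1}\bigl(\bigcup_\ell \partial X_\ell\bigr)$. Since $\mathrm{Leb}(\partial X_\ell)=0$ for each $\ell$, and both $\pi_e$ and $F_e^{N_e p}$ preserve the reference measure class (this is built into the Young tower construction used in Lemma \ref{resultsofyoungpaper}), this exceptional set has $\mu_{\Delta_e'}$-measure zero, which justifies the qualifier ``almost surely.'' The final assertion $\bigcup_j \pi_e F_e^{N_e p} Z_{ji}=X_i$ follows by applying $\pi_e F_e^{N_e p}$ to the identity just proved, since $\pi_e F_e^{N_e p}\circ F_e^{-N_e p}\pi_e^{-1}$ is the identity on $X_i$ up to a null set. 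I do not expect any real obstacle here; the only thing to be careful about is articulating precisely that $Z\in\mathcal{Q}_{2p}$ misses $\mathbb{S}$ and hence $\partial X_i$, which is exactly where Assumption \ref{assumption}(2) enters.
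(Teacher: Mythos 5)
Your proof is correct and follows essentially the same route as the paper's: both rely on the fact (established in the preceding paragraph via Assumption~\ref{assumption}(2)) that $\pi_e F_e^{N_e p}Z$ lies inside a single $X_k$ for each $Z\in\mathcal{Q}_{2p}$, and both conclude $Z=Z_{ji}$ by matching indices. Your write-up is slightly more explicit about the trivial $\subseteq$ inclusion and the null exceptional set coming from $\bigcup_\ell\partial X_\ell$, which the paper leaves implicit, but the core argument is identical.
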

\begin{proof}
For a.e. $x\in X_i$ there is $z\in Z\subseteq \Delta_e$ for some $Z\in \mathcal{Q}_{2p'}$, such that $\pi_eF_e^{N_ep'}z=x$. By Assumption \ref{assumption} we know that $\pi_eF_e^{N_ep'}Z$ is completely contained in some $X_j$, which must be $X_i$, since $\pi_eF_e^{N_ep'}Z$ also contains $x\in X_i$. From the way how $Z_{ji}$ was chosen, we know that $Z$ is one of the $Z_{ji}$. Therefore $\bigcup_{j} Z_{ji}=F_e^{-N_ep'}\pi_e^{-1}X_i$ almost surely. 
\end{proof}

Since $\mathcal{M}=\bigcup_{i}\bigcup_{k <R_i'}f^kX_i$, in order to estimate $\mu_{\mathcal{M}}(G_{n,r})$, we will estimate a measure of $f^kX_i \bigcap G_{n,r}=f^k(\bigcup_{j}\pi_e F_e^{N_ep'} Z_{ji}) \bigcap G_{n,r}$, where $k< R_i'$.  And finally we will sum up these estimates.

For each $Z_{ji}$, we choose and fix  $m_{jik} \in f^k(\pi_e F_e^{N_ep'} Z_{ji})\bigcap G_{n,r}$. Then the following statement holds.
\begin{lemma}\label{decomposegnr}
There is $C_{\alpha}>0$ such that for each $k< R_i'$, \begin{gather*}
    \mu_{\mathcal{M}}(f^kX_i\bigcap G_{n,r})\le \sum_j\mu_{\Delta_e'}\Big(Z_{ji}\bigcap F_e^{-N_en}F_e^{-N_ep'}\pi_e^{-1} A_{r+C_{\alpha}\beta^{\alpha p'N_e}}(\pi_{\partial}m_{jik})\Big).
\end{gather*}
\end{lemma}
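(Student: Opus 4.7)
The plan is to lift the event $f^k X_i \cap G_{n,r}$ to the tower $\Delta_e'$ via the Markov decomposition from Lemma~\ref{decomposexi}, and then absorb the $\pmb{m}$-dependence of the ball $B_r(\pi_\partial\pmb{m})$ into a small enlargement controlled by the diameter of $f^k\pi_e F_e^{N_e p}Z_{ji}$. Since $\pi:\Delta \to \mathcal{M}$ is bijective with $\pi(x,k)=f^k x$ for $x \in X_i$, $k < R_i'$, one first observes
\begin{equation*}
\mu_{\mathcal{M}}(f^k X_i \cap G_{n,r}) \;=\; \Bigl(\int R\,d\mu_X\Bigr)^{-1}\mu_X(Y_{ik}) \;\le\; \mu_X(Y_{ik}),
\end{equation*}
where $Y_{ik} := \{x \in X_i : (g^{N_e})^n(x) \in A_r(\pi_\partial f^k x)\}$ and $\int R\,d\mu_X \ge 1$. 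Using $(\pi_e)_*\mu_{\Delta_e'}=\mu_X$, the $F_e^{N_e}$-invariance of $\mu_{\Delta_e'}$, and $F_e^{-N_ep}\pi_e^{-1}Y_{ik}\subseteq F_e^{-N_ep}\pi_e^{-1}X_i = \bigcup_j Z_{ji}$ almost surely (by Lemma~\ref{decomposexi}), I would obtain
\begin{equation*}
\mu_X(Y_{ik}) \;=\; \mu_{\Delta_e'}\bigl(F_e^{-N_e p}\pi_e^{-1}Y_{ik}\bigr) \;\le\; \sum_j \mu_{\Delta_e'}\bigl(Z_{ji} \cap F_e^{-N_e p}\pi_e^{-1}Y_{ik}\bigr).
\end{equation*}

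The second ingredient is the diameter bound $\diam\bigl(f^k\pi_e F_e^{N_e p}Z_{ji}\bigr) \le C_\alpha\beta^{\alpha p N_e}$. Since $Z_{ji} \in \mathcal{Q}_{2p}$, Lemma~\ref{resultsofyoungpaper}(4) gives $\diam(\pi_e F_e^{N_e p}Z_{ji}) \le C\beta^{p N_e}$. Joining arbitrary pairs of points in $\pi_eF_e^{N_ep}Z_{ji}\subseteq X_i$ through their local stable and unstable disks, each of diameter $\le C\beta^{pN_e}$, and then applying the H\"older estimate of Assumption~\ref{assumption}(3) along the orbit segment $f,f^2,\dots,f^k$ (which is smooth because $k<R_i'$ and $R\equiv R_i'$ on $X_i$) yields the claimed $\alpha$-power bound; since $\pi_\partial$ is $1$-Lipschitz, the same estimate controls $\diam\bigl(\pi_\partial f^k\pi_e F_e^{N_e p}Z_{ji}\bigr)$.

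Finally, for $z \in Z_{ji}\cap F_e^{-N_e p}\pi_e^{-1}Y_{ik}$, both $f^k\pi_e F_e^{N_ep}z$ and the chosen basepoint $m_{jik}$ lie in $f^k\pi_e F_e^{N_ep}Z_{ji}$, so the triangle inequality gives $B_r(\pi_\partial f^k\pi_e F_e^{N_ep}z) \subseteq B_{r+C_\alpha\beta^{\alpha p N_e}}(\pi_\partial m_{jik})$, and by monotonicity of $A_r(\cdot)$ in its underlying ball, $A_r(\pi_\partial f^k\pi_e F_e^{N_ep}z) \subseteq A_{r+C_\alpha\beta^{\alpha p N_e}}(\pi_\partial m_{jik})$. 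Combined with the semi-conjugacy $(g^{N_e})^n \circ \pi_e = \pi_e \circ F_e^{N_e n}$, this shows $z \in F_e^{-N_e(n+p)}\pi_e^{-1}A_{r+C_\alpha\beta^{\alpha p N_e}}(\pi_\partial m_{jik})$; summing over $j$ concludes the argument. The main obstacle is recovering the H\"older $\alpha$ in the diameter estimate correctly: without Assumption~\ref{assumption}(3), iterating $f^j$ for $j<R_i'$ could a priori distort distances uncontrollably within a single return cycle, so one must exploit that each such $f^j$ keeps the piece $\pi_eF_e^{N_ep}Z_{ji}$ inside a region of $\mathcal{M}$ on which the H\"older bound of Assumption~\ref{assumption}(3) is available.
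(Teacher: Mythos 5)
Your proof is correct and takes essentially the same route as the paper: lift via $\bigcup_j Z_{ji}=F_e^{-N_ep}\pi_e^{-1}X_i$, use the product structure of $Z_{ji}$ plus the tower diameter bound and Assumption~\ref{assumption}(3) to get $\diam\bigl(f^k\pi_eF_e^{N_ep}Z_{ji}\bigr)\le C_\alpha\beta^{\alpha pN_e}$, then absorb the basepoint change into the radius and unwind with the semi-conjugacy. Your treatment of the first step is in fact slightly more careful — you keep the factor $\bigl(\int R\,d\mu_X\bigr)^{-1}$ explicit and pass to an inequality, where the paper writes what is nominally an equality between $\mu_{\mathcal M}$ and $\mu_{\Delta_e'}\circ\pi_e^{-1}=\mu_X$ on subsets of $X$.
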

\begin{proof}
For any $\pmb{m}\in f^k(\pi_e F_e^{N_ep'} Z_{ji})\bigcap G_{n,r}$ we have \begin{gather*}
  (g^{N_e})^n(\pi_X\pi^{-1}\pmb{m})\in A_r(\pi_{\partial} \pmb{m}), \quad (g^{N_e})^n(\pi_X\pi^{-1}m_{jik})\in A_r(\pi_{\partial} m_{jik}).
\end{gather*}

Since $Z_{ji}$ has a product structure, then $f^k(\pi_e F_e^{N_ep'} Z_{ji})$ is close to a product structure in $\mathcal{M}$. So it is an intersection of families of stable and unstable disks, where each stable disk $\gamma^s$ and each unstable disk $\gamma^u$ intersect exactly at one point in $f^k\pi_e F_e^{N_ep'} Z_{ji}$. However, the angles between unstable and stable disks are not uniformly bounded away from $0$. Then there is $z\in Z_{ji}$ such that $\pmb{m}\in f^k\pi_e F_e^{N_ep'}\gamma^u(z)$ and $m_{jik}\in f^k\pi_e F_e^{N_ep'}\gamma^s(z)$. Then by Lemma \ref{resultsofyoungpaper},\begin{gather*}
    \diam \big(\pi_eF_e^{N_ep'}\gamma^u(z)\big)\le C\beta^{p'N_e},\quad \diam \big(\pi_eF_e^{N_ep'}\gamma^s(z)\big)\le C\beta^{p'N_e}.
\end{gather*} 

By Assumption \ref{assumption} we have\begin{gather*}
    \diam \big(f^k\pi_eF_e^{N_ep'}\gamma^u(z)\big)\le C^{1+\alpha}\beta^{\alpha p'N_e},\quad \diam \big(f^k\pi_eF_e^{N_ep'}\gamma^s(z)\big)\le C^{1+\alpha}\beta^{\alpha p'N_e}.
\end{gather*} 

Therefore, $d(\pmb{m},m_{jik})\le C_{\alpha}\beta^{\alpha p'N_e}$ for some $C_{\alpha}>0$. Having this estimate, we can compare $A_r(\pi_{\partial}\pmb{m})$ and $A_r(\pi_{\partial}m_{jik})$.

\textbf{Claim}: $A_r(\pi_{\partial}\pmb{m})\subseteq A_{r+C_{\alpha}\beta^{\alpha p'N_e}}(\pi_{\partial}m_{jik})$.

For each $x\in A_r(\pi_{\partial}\pmb{m})$ there exists $0\le i \le N_e-1$, such that\begin{align*}
    x\in g^{i}\pi_{X}\pi^{-1}\big(B_r(\pi_{\partial}\pmb{m})\times [-\pi/2, \pi/2]\big)\subseteq g^{i}\pi_{X}\pi^{-1}\big(B_{r+C_{\alpha}\beta^{\alpha p'N_e}}(\pi_{\partial}m_{jik})\times [-\pi/2, \pi/2]\big),
\end{align*}which means that $x\in A_{r+C_{\alpha}\beta^{\alpha p' N_e}}(\pi_{\partial}m_{jik})$. Hence, this claim holds.

\textbf{Claim}: $Z_{ji}\bigcap F_e^{-N_ep'}\pi_e^{-1}f^{-k}G_{n,r}\subseteq Z_{ji}\bigcap F_e^{-N_ep'}\pi_e^{-1}f^{-k}\pi \pi_X^{-1} (g^{N_e})^{-n}A_{r+C_{\alpha}\beta^{\alpha p'N_e}}(\pi_{\partial}m_{jik})$.

For any $z \in Z_{ji}\bigcap F_e^{-N_ep'}\pi_e^{-1}f^{-k}G_{n,r}$ we have $f^k\pi_eF_e^{N_ep'}z\in f^k(\pi_e F_e^{N_ep'} Z_{ji})\bigcap G_{n,r}$, and \begin{gather*}
  (g^{N_e})^n(\pi_X\pi^{-1}f^k\pi_eF_e^{N_ep'}z)\in A_r(\pi_{\partial} f^k\pi_eF_e^{N_ep'}z)\subseteq A_{r+C_{\alpha}\beta^{\alpha p'N_e}}(\pi_{\partial}m_{jik}),
\end{gather*} i.e., $z \in Z_{ji}\bigcap F_e^{-N_ep'}\pi_e^{-1}f^{-k}\pi \pi_X^{-1} (g^{N_e})^{-n}A_{r+C_{\alpha}\beta^{\alpha p'N_e}}(\pi_{\partial}m_{jik})$. So, this claim holds.

Using the claims above, Lemma \ref{decomposexi}, the relations $f_* \mu_{\mathcal{M}}=\mu_{\mathcal{M}}$ and $(F_e^{N_e})_{*}\mu_{\Delta_e'}=\mu_{\Delta_e'}$ we can estimate\begin{align*}
    \mu_{\mathcal{M}}(f^kX_i\bigcap G_{n,r})&=\mu_{\mathcal{M}}(X_i\bigcap f^{-k}G_{n,r})=\mu_{\Delta_e'}(\pi_e^{-1}X_i\bigcap \pi_e^{-1}f^{-k}G_{n,r})\\
    &=\mu_{\Delta_e'}(F_e^{-N_ep'}\pi_e^{-1}X_i\bigcap F_e^{-N_ep'}\pi_e^{-1}f^{-k}G_{n,r})\\
    &=\mu_{\Delta_e'}(\bigcup_{j}Z_{ji}\bigcap F_e^{-N_ep'}\pi_e^{-1}f^{-k}G_{n,r})\\
    &\le \sum_j\mu_{\Delta_e'}\Big(Z_{ji}\bigcap F_e^{-N_ep'}\pi_e^{-1}f^{-k}\pi \pi_X^{-1} (g^{N_e})^{-n}A_{r+C_{\alpha}\beta^{\alpha p'N_e}}(\pi_{\partial}m_{jik})\Big).
\end{align*}

Since $\pi_eF_e^{p'N_e}Z_{ji} \subseteq X_i$ and $k<R'_i$, then $\pi_X \pi^{-1}f^k=\pi_X F^k\pi^{-1}$ is an identity map on $\pi_eF_e^{p'N_e}Z_{ji}$. Using this we can continue our estimate above as\begin{align*}
    &\le \sum_j\mu_{\Delta_e'}\Big(Z_{ji}\bigcap F_e^{-N_ep'}\pi_e^{-1} (g^{N_e})^{-n}A_{r+C_{\alpha}\beta^{\alpha p'N_e}}(\pi_{\partial}m_{jik})\Big)\nonumber\\
    &\le \sum_j\mu_{\Delta_e'}\Big(Z_{ji}\bigcap F_e^{-N_en}F_e^{-N_ep'}\pi_e^{-1} A_{r+C_{\alpha}\beta^{\alpha p'N_e}}(\pi_{\partial}m_{jik})\Big).
\end{align*}

Thus the lemma is proved.
\end{proof}

In order to proceed with further estimates we need to study $F_e^{-N_ep'}\pi_e^{-1} A_{r+C_{\alpha}\beta^{\alpha p'N_e}}(\pi_{\partial}m_{jik})$. Define a family of sets $(Z')\subseteq \mathcal{Q}_{2p'}$ such that \begin{align}\label{z'cover}
    \bigcup Z' \supseteq F_e^{-N_ep'}\pi_e^{-1} A_{r+C_{\alpha}\beta^{\alpha p'N_e}}(\pi_{\partial}m_{jik}),
\end{align}and each $Z'$ in this family satisfies to \begin{gather*}
    Z' \bigcap F_e^{-N_ep'}\pi_e^{-1} A_{r+C_{\alpha}\beta^{\alpha p'N_e}}(\pi_{\partial}m_{jik}) \neq \emptyset.
\end{gather*}

By Lemma \ref{resultsofyoungpaper}, and by definitions of $Z'$ and $A_{r+C_{\alpha}\beta^{\alpha p'N_e}}(\pi_{\partial}m_{jik})$, we have $\diam \big(\pi_eF_e^{N_ep'}Z'\big)\le C\beta^{p'N_e}$, and there exists the smallest integer $k'\in[0,N_e)$, which depends on $Z'$, such that\begin{align*}
    g^{-k'}\pi_e F_e^{N_ep'}Z'\bigcap \pi_{X}\pi^{-1}\big(B_{r+C_{\alpha}\beta^{\alpha p' N_e}}(\pi_{\partial}m_{jik})\times [-\pi/2, \pi/2]\big)\neq \emptyset,
\end{align*} that is, $\pi_e F_e^{N_ep'-k'}Z'\bigcap \pi_{X}\pi^{-1}\big(B_{r+C_{\alpha}\beta^{\alpha p' N_e}}(\pi_{\partial}m_{jik})\times [-\pi/2, \pi/2]\big)\neq \emptyset$. 

For each $X_{i'}$, we collect all $Z'$, e.g., $\bigcup_{j'} Z_{j'i'}'$ such that $\bigcup_{j'}\pi_e F_e^{N_ep'-k'}Z'_{j'i'}\subseteq X_{i'}$. Then for each $k'\in [0,N_{e})$, we collect $Z'_{j'i'}$ from $(Z'_{j'i'})_{j'}$, say, $Z'_{j'i'k'}$, such that
\begin{align*}
    \pi_e F_e^{N_ep'-k'}Z'_{j'i'k'}\bigcap \pi_{X}\pi^{-1}\big(B_{r+C_{\alpha}\beta^{\alpha p' N_e}}(\pi_{\partial}m_{jik})\times [-\pi/2, \pi/2]\big)\neq \emptyset.
\end{align*}

Hence, for each fixed $i'$ and $k'\in [0,N_e)$, there exists a set of all such allowable $j'$, which we denote by $S(i',k')$. Therefore, $\bigcup_{j'}\pi_e F_e^{N_ep'}Z'_{j'i'}=\bigcup_{k'=0}^{N_e-1}\bigcup_{j' \in S(i',k')}\pi_e F_e^{N_ep'}Z'_{j'i'k'}$. Let  a function $K_{i'}: \pi_e F_e^{N_ep'-k'} \bigcup_{j' \in S(i',k')}Z'_{j'i'k'} \to  \mathbb{N}_0$ be defined as
\begin{align*}
    K_{i'}(x):&=\min \{n\ge 0: x \in \pi_e F_e^{N_ep'-k'} Z'_{j'i'k'} \text{ for some }j' \in S(i',k') \text{ such that }\\
    &F^n \pi_e F_e^{N_ep'-k'} Z'_{j'i'k'}\bigcap \pi^{-1}\big(B_{r+C_{\alpha}\beta^{\alpha p' N_e}}(\pi_{\partial}m_{jik})\times [-\pi/2, \pi/2]\big)\neq \emptyset\}.
\end{align*}

Clearly, $F^{K_{i'}}: \pi_e F_e^{N_ep'-k'} \bigcup_{j' \in S(i',k')}Z'_{j'i'k'} \to \Delta \bigcap (X_{i'}\times \mathbb{N})$ is an injective function, and pushes $\pi_e F_e^{N_ep'-k'} \bigcup_{j' \in S(i',k')}Z'_{j'i'k'} $ up to $\Delta$, so that it intersect $\pi^{-1}\Big(B_{r+C_{\alpha}\beta^{\alpha p' N_e}}(\pi_{\partial}m_{jik})\times [-\pi/2, \pi/2]\Big)$. With these preparations, we can now estimate $\mu_{\Delta_e'}(\bigcup Z')$.

\begin{lemma}\label{estimatez'}
$\mu_{\Delta_e'}(\bigcup Z')\precsim r+C'_{\alpha}\beta^{\alpha p' N_e}$ for some $ C_{\alpha}'>0$.
\end{lemma}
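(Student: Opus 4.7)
The plan is to push the measure estimate on $\Delta_e'$ down to a volume bound on a slightly thickened ball in $\mathcal{M}$, using the semiconjugacy $\pi_e$, the Markov diameter control in Lemma \ref{resultsofyoungpaper}, and a single application of the H\"older bound in Assumption \ref{assumption}.

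First, since $F_e^{N_e}$ preserves $\mu_{\Delta_e'}$ and $(\pi_e)_{\ast}\mu_{\Delta_e'}=\mu_X$, I would write
\begin{align*}
\mu_{\Delta_e'}\bigl(\bigcup Z'\bigr)\le \mu_{\Delta_e'}\bigl(F_e^{-N_e p}\pi_e^{-1}\bigl(\pi_e F_e^{N_e p}\bigl(\bigcup Z'\bigr)\bigr)\bigr)=\mu_X\bigl(\pi_e F_e^{N_e p}\bigl(\bigcup Z'\bigr)\bigr).
\end{align*}
By Lemma \ref{resultsofyoungpaper} every $Z'\in\mathcal{Q}_{2p}$ satisfies $\diam(\pi_e F_e^{N_e p}Z')\le C\beta^{pN_e}$, and since $Z'$ meets $F_e^{-N_e p}\pi_e^{-1}A_{r+\delta}(\pi_\partial m_{jik})$ with $\delta:=C_\alpha\beta^{\alpha pN_e}$, its image under $\pi_e F_e^{N_e p}$ lies in the $C\beta^{pN_e}$-neighborhood of $A_{r+\delta}$ in $X$. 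Thus
\[
\mu_{\Delta_e'}\bigl(\bigcup Z'\bigr)\le\mu_X\bigl((A_{r+\delta})^{(C\beta^{pN_e})}\bigr).
\]

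Next I would promote this intrinsic $X$-neighborhood to an enlarged $A$-set. Suppose $d_{\mathcal{M}}(z,g^iy)\le C\beta^{pN_e}$ with $y\in\pi_X\pi^{-1}(B_{r+\delta}(q)\times S^d)$, so that $f^ny\in B_{r+\delta}(q)\times S^d$ for some $n<R(y)$. Setting $x:=g^{-i}z$ and using $g^i=f^{R^i}$ with $R^i:=R(x)+\cdots+R(g^{i-1}x)$, the crucial observation is the collapse $f^n\circ g^{-i}=f^{n-R^i}$ as a single billiard iterate, so that
\[
d_{\mathcal{M}}(f^nx,f^ny)=d_{\mathcal{M}}\bigl(f^{n-R^i}z,f^{n-R^i}(g^iy)\bigr)\le Cd_{\mathcal{M}}(z,g^iy)^{\alpha}\le C'\beta^{\alpha pN_e},
\]
where the H\"older bound comes from Assumption \ref{assumption} applied once in the backward direction (legitimate by the time-symmetry of the billiard map). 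Hence $f^nx\in B_{r+\delta+C'\beta^{\alpha pN_e}}(q)\times S^d$, so $z\in A_{r+C'_\alpha\beta^{\alpha pN_e}}(q)$ after absorbing constants. The exceptional set where $g^{-i}$ would cross a singularity is controlled by the growth lemma and the inclusion $\bigcup\partial X_i\subseteq\mathbb{S}$, so up to a null set $(A_{r+\delta})^{(C\beta^{pN_e})}\subseteq A_{r+C'_\alpha\beta^{\alpha pN_e}}(q)$.

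Finally, for any $\rho>0$ the $g$-invariance of $\mu_X$ together with the Kac decomposition of $\mu_\Delta$ yields
\[
\mu_X\bigl(A_{\rho}(q)\bigr)\le N_e\,\mu_X\bigl(\pi_X\pi^{-1}(B_\rho(q)\times S^d)\bigr)\precsim \mu_{\mathcal{M}}(B_\rho(q)\times S^d)\precsim \rho^d,
\]
since $\mu_{\mathcal{M}}\ll\Leb_{\mathcal{M}}$ has bounded density and $\Leb(B_\rho(q))\precsim\rho^d$. Plugging in $\rho=r+C'_\alpha\beta^{\alpha pN_e}$ concludes the estimate. The main obstacle is the H\"older step: one must apply Assumption \ref{assumption} exactly once, through the composite backward iterate $f^{n-R^i}$, rather than splitting into the successive applications $z\mapsto g^{-i}z$ and then $y\mapsto f^ny$; the latter would force the weaker exponent $\alpha^2$ and break the claimed bound.
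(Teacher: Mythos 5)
Your reduction $\mu_{\Delta_e'}\bigl(\bigcup Z'\bigr)\le\mu_X\bigl(\pi_e F_e^{N_ep}\bigl(\bigcup Z'\bigr)\bigr)$ and the final measure bound $\mu_X(A_\rho)\precsim\rho^d$ are both sound, but the middle step --- showing the $C\beta^{pN_e}$-neighborhood of $A_{r+\delta}$ in $X$ sits inside $A_{r+C'_\alpha\beta^{\alpha pN_e}}$ --- has two genuine gaps.

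First, the ``collapse $f^n\circ g^{-i}=f^{n-R^i}$ as a single billiard iterate'' is not something Assumption~\ref{assumption}(3) licenses. That H\"older bound applies to $f^j$ for $j<R(x)$ \emph{within a single excursion} from the base; the exponent $n-R^i$ crosses $i$ full returns plus a partial one, so applying the bound to it in one shot is not legitimate --- it is not a matter of forward versus backward symmetry. If you instead iterate the H\"older bound return-by-return you do degrade the exponent, as you yourself note, and if you instead want to use the CMZ exponential contraction over the intermediate returns, you run into the second problem.

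Second, and more fundamentally, the backward map $g^{-i}$ \emph{expands} along stable directions. Your metric ball $(A_{r+\delta})^{(C\beta^{pN_e})}$ contains displacements of both stable and unstable type, and nothing in your argument prevents the stable-direction part from blowing up under $g^{-i}$. The paper avoids both issues precisely by never abandoning the Markov sets $Z'\in\mathcal{Q}_{2p}$: a cylinder of depth $2p$ encodes past \emph{and} future, so that $\pi_eF_e^{N_ep-k'}Z'$ has small diameter in \emph{both} the stable direction (forward contraction over $N_ep-k'$ steps, Def.~\ref{cmz}(8)) and the unstable direction (backward contraction from time $2N_ep$, Def.~\ref{cmz}(9)); only then does the paper apply Assumption~\ref{assumption}(3) exactly once, via the single excursion $F^{K_{i'}}$ with $K_{i'}<R'_{i'}$. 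It then sums the resulting small pieces over the disjoint $X_{i'}$'s and bounds the total by $\mu_{\mathcal{M}}\bigl(B_\rho\times S^d\bigr)$ directly, rather than via $\mu_X(A_\rho)$. Discarding the cylinder structure in favor of a bare metric neighborhood in $X$ throws away the two-sided control that makes the argument close.

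To repair your proof you would need to keep the rectangles $Z'$ in play: group them by the $X_{i'}$ their image falls into, bound $\diam\bigl(\pi_eF_e^{N_ep-k'}Z'\bigr)$ via the Young-tower contraction (not H\"older), and only apply H\"older once when pushing up within the return tower. At that point you have essentially reconstructed the paper's proof.
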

\begin{proof}
Now using $(\pi_e)_*\mu_{\Delta_e'}=\mu_X, (F^{N_e}_e)_*\mu_{\Delta_e'}=\mu_{\Delta_e'}$ and $g_*\mu_X=\mu_X$, we have\begin{align}
    \mu_{\Delta_e'}(\bigcup Z')&\le \mu_{\Delta_e'}(F_e^{-N_ep'}\pi_e^{-1} \pi_e F_e^{N_ep'}\bigcup Z')\nonumber\\
    &=\mu_{X}(\pi_e F_e^{N_ep'}\bigcup Z')\nonumber\\
    &\le \sum_{i'}\mu_{X}(\pi_e F_e^{N_ep'} \bigcup_{j'}Z'_{j'i'})\nonumber\\
    &\le  \sum_{i'}\sum_{k'}\mu_{X}(\pi_e F_e^{N_ep'} \bigcup_{j' \in S(i',k')}Z'_{j'i'k'})\nonumber\\
    &= \sum_{i'}\sum_{k'}\mu_{X}(g^{-k'}\pi_e F_e^{N_ep'} \bigcup_{j' \in S(i',k')}Z'_{j'i'k'})\nonumber\\
    &=  \sum_{i'}\sum_{k'}\mu_{X}(\pi_e F_e^{N_ep'-k'} \bigcup_{j' \in S(i',k')}Z'_{j'i'k'})\nonumber\\
    &\precsim \sum_{i'}\sum_{k'}\mu_{\Delta}(\pi_e F_e^{N_ep'-k'} \bigcup_{j' \in S(i',k')}Z'_{j'i'k'})\nonumber\\
    &\precsim  \sum_{i'}\sum_{k'}\mu_{\Delta}\Big(F^{K_{i'}}\pi_e F_e^{N_ep'-k'} \bigcup_{j' \in S(i',k')}Z'_{j'i'k'}\Big)\label{measurez'},
\end{align}where``$\precsim$" in (\ref{measurez'}) is due to the injectivity and the measure of $\Delta$.

\textbf{Claim}: There is a constant $C_{\alpha}'>0$ such that $F^{K_{i'}}\pi_e F_e^{N_ep'-k'} \bigcup_{j'\in S(i',k')}Z'_{j'i'k'}\subseteq \big(X_{i'}\times \mathbb{N}_0\big) \bigcap \pi^{-1}\big(B_{r+C'_{\alpha}\beta^{\alpha p' N_e}}(\pi_{\partial}m_{jik})\times [-\pi/2, \pi/2]\big)$.

For any $x \in  \bigcup_{j'\in S(i',k')} \pi_e F_e^{N_ep'-k'}Z'_{j'i'k'}$,  by the definition of $K_{i'}$, there is $j' \in S(i',k')$ such that $x \in \pi_e F_e^{N_ep'-k'} Z'_{j'i'k'}$ and 

\[F^{K_{i'}(x)}\pi_e F_e^{N_ep'-k'} Z'_{j'i'k'} \bigcap \pi^{-1}\big(B_{r+C_{\alpha}\beta^{\alpha p' N_e}}(\pi_{\partial}m_{jik})\times [-\pi/2, \pi/2]\big) \neq \emptyset.\] 

For any $z,y \in Z'_{j'i'k'}$ there is $o\in Z'_{j'i'k'}$ such that $z \in \gamma^u(o)$, and $y\in \gamma^s(o)$. By Definition \ref{cmz} and Assumption \ref{assumption} we have \begin{gather*}
    \diam \big(\pi_eF_e^{N_ep'-k'} \gamma^s(o)\big)\le C \beta^{N_ep'-k'}, \quad \diam \big(\pi_eF_e^{N_ep'-k'} \gamma^u(o)\big)\le C \beta^{N_ep'+k'},
    \end{gather*}
    \begin{align*}
        d(f^{K_{i'}(x)}\pi_e&F_e^{N_ep'-k'}z,   f^{K_{i'}(x)}\pi_eF_e^{N_ep'-k'}y)\\
        &\le \diam \big(f^{K_{i'}(x)}\pi_eF_e^{N_ep'-k'} \gamma^u(o)\big)+\diam \big(f^{K_{i'}(x)}\pi_eF_e^{N_ep'-k'} \gamma^s(o)\big)\le 2C^{\alpha}\beta^{\alpha N_ep'-\alpha N_e}. 
    \end{align*}
   
We can now estimate the distance between  $\pi F^{K_{i'}(x)}x$ and $B_{r+C_{\alpha}\beta^{\alpha p' N_e}}(\pi_{\partial}m_{jik})\times [-\pi/2, \pi/2]$ as follows \begin{align*}
   \dist \big(\pi F^{K_{i'}(x)}x, B_{r+C_{\alpha}\beta^{\alpha p' N_e}}(\pi_{\partial}&m_{jik})\times [-\pi/2, \pi/2]\big)\\
   &\le \diam \big(\pi F^{K_{i'}(x)}\pi_e F_e^{N_ep'-k'} Z'_{j'i'k'}\big)\\
   &=\diam \big(f^{K_{i'}(x)}\pi_e F_e^{N_ep'-k'} Z'_{j'i'k'}\big)\le  2^{\alpha}C^{1+\alpha}\beta^{\alpha N_ep'-\alpha N_e}.
\end{align*}

Therefore, \[\pi F^{K_{i'}}\bigcup_{j'\in S(i',k')}\pi_e F_e^{N_ep'-k'} Z'_{j'i'k'}\subseteq B_{r+C'_{\alpha}\beta^{\alpha p' N_e}}(\pi_{\partial}m_{jik})\times [-\pi/2, \pi/2],\]
where $C_{\alpha}':=2^{\alpha}C^{1+\alpha}\beta^{ -\alpha N_e}+C_{\alpha}$. Hence. the claim holds.

Now, using the claims above and the relation $\Delta \bigcap \bigcup_{i'} (X_{i'}\times \mathbb{N}_0)=\Delta$, we can continue an estimate of (\ref{measurez'}) as

\begin{align*}
    &\le \sum_{i'}\sum_{k'}\mu_{\Delta}\big[\big(X_{i'}\times \mathbb{N}_0\big) \bigcap \pi^{-1}\big(B_{r+C'_{\alpha}\beta^{\alpha p' N_e}}(\pi_{\partial}m_{jik})\times [-\pi/2, \pi/2]\big)\big]\\
    &\le N_e\sum_{i'}\mu_{\Delta}\big[\big(X_{i'}\times \mathbb{N}_0\big) \bigcap \pi^{-1}\big(B_{r+C'_{\alpha}\beta^{\alpha p' N_e}}(\pi_{\partial}m_{jik})\times [-\pi/2, \pi/2]\big)\big]\\
    &\precsim \mu_{\Delta}\big[\pi^{-1}\big(B_{r+C'_{\alpha}\beta^{\alpha p' N_e}}(\pi_{\partial}m_{jik})\times [-\pi/2, \pi/2]\big)\big]\\
    &\precsim \mu_{\mathcal{M}}\big(B_{r+C'_{\alpha}\beta^{\alpha p' N_e}}(\pi_{\partial}m_{jik})\times [-\pi/2, \pi/2]\big)\precsim r+C'_{\alpha}\beta^{\alpha p' N_e},
\end{align*}where the last $``\precsim"$ holds because $\frac{d\mu_{\mathcal{M}}}{d\Leb_{\mathcal{M}}}\in L^{\infty}$, and because $\partial Q$ has an uniformly bounded curvature.
\end{proof}

We choose $p'=n/4$, and estimate now $\mu_{\mathcal{M}}(G_{n,r})$.
\begin{lemma}\label{gnr}
$\mu_{\mathcal{M}}(G_{n,r})\le C' \beta^{n/2}+C'(r+C'_{\alpha}\beta^{\alpha N_e n/4}) $ for some $C'>0$.
\end{lemma}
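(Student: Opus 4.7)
The plan is to use Lemma \ref{decomposegnr} to decompose $\mu_{\mathcal{M}}(G_{n,r})$ over the partition $\mathcal{M}=\bigcup_i\bigcup_{k<R'_i}f^k X_i$ (modulo a null set), combine it with the covering $F_e^{-N_e p}\pi_e^{-1}A_{r+C_\alpha\beta^{\alpha pN_e}}(\pi_\partial m_{jik})\subseteq\bigcup Z'$ from (\ref{z'cover}), and then estimate each term by means of the exponential decay of correlations (\ref{decorrelationhyperbolic}) for the mixing hyperbolic Young tower $(\Delta_e',F_e^{N_e},\mu_{\Delta_e'})$. The role of the choice $p=n/4$ is to balance the hyperbolic-contraction error coming from the enlarged ball $B_{r+C_\alpha\beta^{\alpha pN_e}}$ against the correlation error $\beta^{n-2k}$.

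First, since the cells $Z_{ji}\in\mathcal{Q}_{2p}$ are pairwise disjoint and, by Lemma \ref{decomposexi}, $\bigcup_j Z_{ji}=F_e^{-N_e p}\pi_e^{-1}X_i$ almost surely, Lemma \ref{decomposegnr} together with (\ref{z'cover}) yields
\[
\mu_{\mathcal{M}}(f^k X_i\bigcap G_{n,r})\le\mu_{\Delta_e'}\!\left(\bigcup\nolimits_j Z_{ji}\bigcap F_e^{-N_e n}(\bigcup Z')\right).
\]
Next, I apply (\ref{decorrelationhyperbolic}) with the $Z_{ji}$'s as the test cells and $h=\mathbbm{1}_{\bigcup Z'}$, which is $\sigma(\bigcup_k\mathcal{Q}_k)$-measurable since each $Z'\in\mathcal{Q}_{2p}$. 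This splits the above into a product term $\mu_{\Delta_e'}(\bigcup_j Z_{ji})\,\mu_{\Delta_e'}(\bigcup Z')$ plus a correlation error. The product factor $\mu_{\Delta_e'}(\bigcup_j Z_{ji})$ equals $\mu_X(X_i)$ by $(\pi_e)_*\mu_{\Delta_e'}=\mu_X$ and $F_e^{N_e}$-invariance, while $\mu_{\Delta_e'}(\bigcup Z')\precsim(r+C'_\alpha\beta^{\alpha pN_e})^d$ by Lemma \ref{estimatez'}. The correlation error is $\precsim\beta^{n/2}\mu_X(X_i)$ after the choice $p=n/4$.

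Finally, summing over $k<R'_i$ contributes a factor of $R'_i$ and then summing over $i$ invokes CMZ-finiteness:
\[
\mu_{\mathcal{M}}(G_{n,r})\le\sum_i R'_i\mu_X(X_i)\,\Big[C''(r+C'_\alpha\beta^{\alpha N_e n/4})^d+C''\beta^{n/2}\Big]=\int R\,d\mu_X\cdot C''\big[(r+C'_\alpha\beta^{\alpha N_e n/4})^d+\beta^{n/2}\big],
\]
which is the claimed bound with $C'=C''\int R\,d\mu_X<\infty$.

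The main subtlety is extracting the exponent $\beta^{n/2}$: a naive application of (\ref{decorrelationhyperbolic}) with test cells at level $k=2p=n/2$ forces the bound $\beta^{n-4p}=1$, which is vacuous. The remedy is to exploit that the union $\bigcup_j Z_{ji}=F_e^{-N_e p}\pi_e^{-1}X_i$ is supported inside its parent $\mathcal{Q}_p$-cells; enlarging the test family to these $\mathcal{Q}_p$-cells costs only a bounded overcount (controlled by the uniformly bounded number of $\mathcal{Q}_{2p}$ sub-cells per $\mathcal{Q}_p$-cell and by the transverse-Lipschitz nature of $g^{N_e p}$ guaranteed by Assumption \ref{assumption}) and allows decorrelation at partition level $k=p$, giving $\beta^{n-2p}=\beta^{n/2}$. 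Verifying that this overcounting is $O(\mu_X(X_i))$ is the only nontrivial calculation; it follows from the bounded distortion and product structure of $\Lambda$ already used in the proofs of Lemmas \ref{decomposexi} and \ref{estimatez'}.
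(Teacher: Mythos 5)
Your overall plan mirrors the paper's: decompose $\mu_{\mathcal{M}}(G_{n,r})$ over $\bigcup_i\bigcup_{k<R'_i}f^kX_i$, invoke Lemma \ref{decomposegnr} and the covering $\bigcup Z'$, use (\ref{decorrelationhyperbolic}) together with Lemma \ref{estimatez'}, and finish by summing against $\int R\,d\mu_X<\infty$. That part is fine.

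The gap is in the last paragraph, which is also where you depart from the paper. You correctly observe that the exponent $\beta^{n-2k}$ in (\ref{decorrelationhyperbolic}), read literally for cells at level $2p$ with $p=n/4$, gives the vacuous $\beta^{n-4p}=1$. But your proposed fix — replacing each $Z_{ji}$ by its parent $\mathcal{Q}_p$-cell $\tilde Z_{ji}$ so as to apply the correlation bound at level $p$ — does not work. First, the claim that each $\mathcal{Q}_p$-cell has a uniformly bounded number of $\mathcal{Q}_{2p}$ sub-cells is false: $\mathcal{Q}_{2p}=\bigvee_{0\le i\le 2p}(F_e^{N_e})^{-i}\mathcal{Q}_0$ refines $\mathcal{Q}_p$ by $p$ further pullbacks of $\mathcal{Q}_0$, which is a countably infinite partition, so every $\mathcal{Q}_p$-cell generically contains countably many $\mathcal{Q}_{2p}$-sub-cells. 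Second, and more seriously, the measure overcount is not controlled: you need $\mu_{\Delta_e'}\big(\bigcup_j\tilde Z_{ji}\big)\precsim\mu_X(X_i)$, but $\bigcup_j Z_{ji}=F_e^{-N_e p}\pi_e^{-1}X_i$ is typically spread across essentially all of $\mathcal{Q}_p$ (every $\mathcal{Q}_p$-cell whose image under $F_e^{N_ep}$ meets $X_i$, which after $p$ iterates is essentially all of them), so $\bigcup_j\tilde Z_{ji}$ can carry measure of order $1$ even when $\mu_X(X_i)$ is tiny. Bounded distortion compares the densities of $Z_{ji}$ and $\tilde Z_{ji}$ along unstable leaves, but says nothing about the ratio of their measures. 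Finally, the key property exploited in Lemmas \ref{decomposexi} and \ref{decomposegnr} is precisely that $\pi_eF_e^{N_ep}Z$ lies in a \emph{single} $X_t$ for $Z\in\mathcal{Q}_{2p}$; the parent $\mathcal{Q}_p$-cells do not have this property, so using them also disturbs the localization to $X_i$ elsewhere in the argument.

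There is in fact no need for the overcount argument. If one reads (\ref{decorrelationhyperbolic}) as stated (rate $\beta^{n-2k}$ for $\mathcal{Q}_k$-cells), the clean fix is simply to take $p$ to be a smaller fraction of $n$ — for instance $p=\lfloor n/8\rfloor$ — keeping the $Z_{ji}$'s at level $2p$; this yields $\beta^{n-4p}=\beta^{n/2}$ in the correlation term and $(r+C'_\alpha\beta^{\alpha N_e n/8})^d$ in the covering term, which has the same form as the claimed bound (up to a constant in the exponent) and is entirely sufficient for the subsequent application in Lemma \ref{rangeofzr}, where both exponential terms are merely dominated by $n^{-1-\delta}$. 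Alternatively, the paper's computation as written is consistent with reading the correlation estimate as applying to $\mathcal{Q}_{2k}$-cells with rate $\beta^{n-2k}$, which matches the indexing in the diameter bound $\diam(\pi_e\circ F_e^{N_ek}(Q))\le C\beta^{kN_e}$ for $Q\in\mathcal{Q}_{2k}$ in Lemma \ref{resultsofyoungpaper}. Under either reading, the remedy you propose is both unnecessary and unsound.
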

\begin{proof}
By (\ref{z'cover}) and Lemma \ref{decomposegnr} we have\begin{align*}
    \mu_{\mathcal{M}}(G_{n,r})&=\sum_{i}\sum_{k <R_i'}\mu_{\mathcal{M}}(f^kX_i\bigcap G_{n,r})\\
    &\le \sum_{i}\sum_{k <R_i'} \sum_j\mu_{\Delta_e'}\Big(Z_{ji}\bigcap F_e^{-N_en}F_e^{-N_ep'}\pi_e^{-1} A_{r+C_{\alpha}\beta^{\alpha p'N_e}}(\pi_{\partial}m_{jik})\Big)\\
    &\le \sum_{i}\sum_{k <R_i'} \sum_j\mu_{\Delta_e'}\Big(Z_{ji}\bigcap F_e^{-N_en}\bigcup Z'\Big)-\mu_{\Delta_e'}(Z_{ji})\mu_{\Delta_e'}(\bigcup Z')+\mu_{\Delta_e'}(Z_{ji})\mu_{\Delta_e'}(\bigcup Z')\\
     &\precsim \sum_{i}\sum_{k <R_i'} \sum_j  \beta^{n-2p'}\mu_{\Delta_e'}(Z_{ji})+\mu_{\Delta_e'}(Z_{ji})(r+C'_{\alpha}\beta^{\alpha p' N_e}),
\end{align*} where the last $``\precsim"$ is due to  (\ref{decorrelationhyperbolic}) and Lemma \ref{estimatez'}. 

By making use of Lemma \ref{decomposexi}, $(\pi_e)_{*}\mu_{\Delta_e'}=\mu_X$, $\int Rd\mu_X< \infty$ and $(F_e^{N_e})_{*}\mu_{\Delta_e'}=\mu_{\Delta_e'}$, we can now continue the estimate above as \begin{align*}
    &\precsim\sum_{i}\sum_{k <R_i'}  \beta^{n-2p'}\mu_{\Delta_e'}(F_e^{-N_ep'}\pi_e^{-1}X_i)+\mu_{\Delta_e'}(F_e^{-N_ep'}\pi_e^{-1}X_i)(r+C'_{\alpha}\beta^{\alpha p' N_e})\\
    &\precsim\sum_{i}\sum_{k <R_i'}  \beta^{n-2p'}\mu_{\Delta_e'}(\pi_e^{-1}X_i)+\mu_{\Delta_e'}(\pi_e^{-1}X_i)(r+C'_{\alpha}\beta^{\alpha p' N_e})\\
    &\precsim\sum_{i}\sum_{k <R_i'}  \beta^{n-2p'}\mu_{X}(X_i)+\mu_{X}(X_i)(r+C'_{\alpha}\beta^{\alpha p' N_e})\\
    &\precsim\sum_{i}R_i' \beta^{n-2p'}\mu_{X}(X_i)+R_i'\mu_{X}(X_i)(r+C'_{\alpha}\beta^{\alpha p' N_e})\\
    &\precsim\int R d\mu_X \beta^{n-2p'}+\int Rd\mu_{X}(r+C'_{\alpha}\beta^{\alpha p' N_e})\precsim \beta^{n/2}+r+C'_{\alpha}\beta^{\alpha N_e n/4},
\end{align*}which concludes a proof of this lemma.
\end{proof}

Now we can estimate the range of $\mathcal{Z}_{r}(\pmb{m})$ for a particular $r>0$.
\begin{lemma}\label{rangeofzr}
Let any $\delta>0$ be sufficiently small. Then for a.e. $\pmb{m}\in \mathcal{M}$, there exists $N_{\pmb{m}} \in \mathbb{N}$ such that for any $n>N_e N_{\pmb{m}}$, \begin{align*}
    \mathcal{Z}_{\lfloor n/N_e\rfloor^{-(1+\delta)}}(\pmb{m})\in \{1,2, \cdots, N_eN_{\pmb{m}}-N_e\}\bigcup (n-N_e,\infty).
\end{align*}
\end{lemma}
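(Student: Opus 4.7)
The plan is to recast the lemma as a first Borel--Cantelli statement about the sets $G_{n', r}$ whose measures are bounded in the preceding lemma, exploiting the monotonicity of the radii $r_n := \lfloor n/N_e\rfloor^{-(1+\delta)/d}$ in $n$.

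First, I would translate the conclusion into a condition on the $G_{n', r}$'s. The failure event
$$E_N := \Big\{\pmb{m} \in \mathcal{M} : \exists\, n > N_e N,\ \exists\, m \in [N_e(N-1)+1,\, n-N_e]\ \text{with}\ g^m(\pi_X\pi^{-1}\pmb{m}) \in \pi_X\pi^{-1}\big(B_{r_n}(\pi_\partial\pmb{m}) \times S^d\big)\Big\}$$
is precisely the set on which the claim of the lemma fails with threshold $N$; since the sequence $E_N$ is monotone decreasing in $N$, it suffices to produce, for $\mu_{\mathcal{M}}$-a.e.\ $\pmb{m}$, a finite $N_{\pmb{m}}$ with $\pmb{m} \notin E_{N_{\pmb{m}}}$. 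Recalling $A_r(\pi_\partial\pmb{m}) = \bigcup_{0 \le i \le N_e-1} g^i \pi_X\pi^{-1}(B_r(\pi_\partial\pmb{m}) \times S^d)$, the hitting event at time $m$ with radius $r$ is a sub-event of $G_{n', r}$ for $n' := \lceil m/N_e\rceil$.

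Next I would collapse the union over $n$. For fixed $m$ the set $\{g^m(\pi_X\pi^{-1}\pmb{m}) \in \pi_X\pi^{-1}(B_{r_n}(\pi_\partial\pmb{m}) \times S^d)\}$ is decreasing in $n$ (the ball shrinks), so its union over the admissible range $n \ge m+N_e$ equals its value at $n = m+N_e$; combined with $m+N_e \ge N_e n'+1$ and the monotonicity of $G_{n', r}$ in $r$, this yields
$$E_N \subseteq \bigcup_{n' \ge N} G_{n',\, r_{N_e n'+1}}.$$
Since $r_{N_e n'+1}$ is comparable to $(n')^{-(1+\delta)/d}$, the preceding lemma delivers
$$\mu_{\mathcal{M}}\big(G_{n', r_{N_e n'+1}}\big) \precsim \beta^{n'/2} + (n')^{-(1+\delta)} + \beta^{\alpha N_e n' d/4},$$
which is summable over $n' \ge 1$ for any $\delta > 0$. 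The first Borel--Cantelli lemma then forces $\pmb{m} \in G_{n', r_{N_e n'+1}}$ for only finitely many $n'$, a.s.; taking $N_{\pmb{m}}$ strictly larger than all such $n'$ (and $N_{\pmb{m}} := 1$ if none occur) gives $\pmb{m} \notin \bigcup_{n' \ge N_{\pmb{m}}} G_{n', r_{N_e n'+1}} \supseteq E_{N_{\pmb{m}}}$, as required.

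The main obstacle is the bookkeeping in the collapse step: one must simultaneously use the decrease of $r_n$ in $n$ and the increase of $G_{n', r}$ in $r$, and correctly convert between the hitting time $m$ in $X$ and the block index $n' = \lceil m/N_e\rceil$ employed by $G_{n', r}$, while keeping track of the lower bound $n \ge m+N_e$ baked into the definition of $E_N$. Once these indices are properly aligned, the preceding lemma's bound is tailored to make the Borel--Cantelli tail convergent for any $\delta > 0$, and the remainder of the argument is a standard application of the first Borel--Cantelli lemma.
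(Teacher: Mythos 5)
Your proposal is correct and takes essentially the same route as the paper: both apply the first Borel--Cantelli lemma to the sequence of sets $G_{n',(n')^{-(1+\delta)/d}}$ (noting $r_{N_en'+1}=(n')^{-(1+\delta)/d}$) using the summable bound from the preceding lemma, and then unravel the almost-sure tail estimate into the stated range for $\mathcal{Z}_{\lfloor n/N_e\rfloor^{-(1+\delta)/d}}(\pmb{m})$ by converting between the block index $n'$ and the hitting time $m$ and using monotonicity of the radii. The only presentational difference is that you package the unravelling as an explicit inclusion $E_N\subseteq\bigcup_{n'\ge N}G_{n',r_{N_en'+1}}$ before invoking Borel--Cantelli, whereas the paper invokes Borel--Cantelli first and then deduces the conclusion directly.
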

\begin{proof}
Let $r=n^{-(1+\delta)}$. By Lemma \ref{gnr} we have\begin{align*}
    \mu_{\mathcal{M}}(G_{n,n^{-(1+\delta)}}) \precsim \beta^{n/2}+(n^{-(1+\delta)}+C_{\alpha}'\beta^{\alpha N_e n/4})\precsim n^{-1-\delta}.
\end{align*}

By Borel-Cantelli lemma, for a.e. $\pmb{m}\in \mathcal{M}$ there exists $N_{\pmb{m}}>1$ such that for any $n > N_{\pmb{m}}$, \begin{align*}
    (g^{N_e})^n(\pi_X \pi^{-1}\pmb{m})\notin \bigcup_{0\le i\le N_e-1}g^{i}\pi_{X}\pi^{-1}\big(B_{n^{-(1+\delta)}}(\pi_{\partial}\pmb{m})\times [-\pi/2, \pi/2]\big),
\end{align*}which means that for any $n> N_e N_{\pmb{m}}$, \begin{align*}
    g^{n-N_e}(\pi_X \pi^{-1}\pmb{m}) \notin \pi_{X}\pi^{-1}\big(B_{\lfloor n/N_e\rfloor^{-(1+\delta)}}(\pi_{\partial}\pmb{m})\times [-\pi/2, \pi/2]\big).
\end{align*}

Furthermore, for any $n\ge m>N_eN_{\pmb{m}}$, \begin{align*}
   g^{m-N_e}(\pi_X \pi^{-1}\pmb{m}) \notin \pi_{X}\pi^{-1}\big(B_{\lfloor n/N_e\rfloor^{-(1+\delta)}}(\pi_{\partial}\pmb{m})\times [-\pi/2, \pi/2]\big). 
\end{align*}

Therefore, \begin{align*}
    \mathcal{Z}_{\lfloor n/N_e\rfloor^{-(1+\delta)}}(\pmb{m})&=\min\{m \ge 1: g^m(\pi_X\pi^{-1}\pmb{m})\in \pi_X \pi^{-1}\big(B_{\lfloor n/N_e\rfloor^{-(1+\delta)}}(\pi_{\partial}\pmb{m})\times [-\pi/2, \pi/2]\big)\}\\
    &\in \{1,2, \cdots, N_eN_{\pmb{m}}-N_e\}\bigcup (n-N_e,\infty).
\end{align*}

Thus the lemma is proved.
\end{proof}

To rule out the set $\{1,2,\cdots, N_eN_{\pmb{m}}-N_e\}$, we need the following lemma.

\begin{lemma}[Aperiodicity]\label{aperiodic}\ \par
For a.e. $\pmb{m}\in \mathcal{M}$ and for any $k\in \mathbb{N}$, $g^k(\pi_X\pi^{-1}\pmb{m})\notin \pi_X \pi^{-1}(\{\pi_{\partial}\pmb{m}\}\times [-\pi/2, \pi/2])$.
\end{lemma}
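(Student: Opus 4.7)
The lemma asserts that for a.e. $\pmb m\in\mathcal M$ and every $k\ge 1$, $g^{k}(\pi_{X}\pi^{-1}\pmb m)\notin \pi_{X}\pi^{-1}(\{\pi_{\partial}\pmb m\}\times S^{d})$. My first move is to reformulate the conclusion in terms of the base map $f$ alone. Since $\pi:\Delta\to\mathcal M$ is bijective, writing $(x,j_{0}) := \pi^{-1}\pmb m$ and $g^{k}x = f^{R^{k}(x)}(x)$, the negation of the conclusion unwinds to the existence of some $j\ge 0$ with $j<R(g^{k}x)$ and $f^{R^{k}(x)+j}(x)\in\{\pi_{\partial}\pmb m\}\times S^{d}$. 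Setting $N := R^{k}(x)+j-j_{0}\ge 1$, this is equivalent to $\pi_{\partial}(f^{N}\pmb m)=\pi_{\partial}\pmb m$. Hence, defining
\[
B_{N} := \{\pmb m\in\mathcal M\colon \pi_{\partial}(f^{N}\pmb m)=\pi_{\partial}\pmb m\},
\]
it suffices to prove $\mu_{\mathcal M}(B_{N})=0$ for each $N\ge 1$, since the union is countable.

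Fix $N\ge 1$. Because $\mu_{\mathcal M}\ll\Leb_{\mathcal M}$ and $\Leb_{\mathcal M}$ is (equivalent to) a product measure on $\mathcal M\subseteq \partial Q\times S^{d}$, Fubini reduces the goal to showing that $\Leb_{F_{q}}(B_{N}\cap F_{q})=0$ for Leb-a.e. $q\in\partial Q$, where $F_{q}:=\{q\}\times S^{d}$ is the vertical fiber. Introduce the map
\[
\phi_{q} := \pi_{\partial}\circ f^{N}|_{F_{q}}\colon F_{q}\to\partial Q,
\]
smooth off the singularity locus of $f^{N}$, so that $B_{N}\cap F_{q}=\phi_{q}^{-1}(q)$.

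The crux is to show that $D\phi_{q}$ has rank at least one at every point of $\phi_{q}^{-1}(q)$, for Leb-a.e. $q$. Fix such a $q$ in the full-measure set where the fourth part of Assumption \ref{assumption} holds, and take $\pmb m\in B_{N}\cap F_{q}$. A vector $v\in T_{\pmb m}F_{q}$ lies in $\ker D\phi_{q}(\pmb m)$ if and only if $Df^{N}v\in T_{f^{N}\pmb m}F_{q}$ (the kernel of $D\pi_{\partial}$ at $f^{N}\pmb m$, using that $f^{N}\pmb m\in F_{q}$ since $\pmb m\in B_{N}$). By Assumption \ref{assumption}(4), $Df^{N}v\in\interior C^{u}$, so $Df^{N}(\ker D\phi_{q}(\pmb m))\subseteq T_{f^{N}\pmb m}F_{q}\cap\interior C^{u}$, a set of dimension strictly less than $d$ by the very same assumption. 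Since $Df^{N}$ is a linear isomorphism, $\dim\ker D\phi_{q}(\pmb m)<d$, i.e., $\operatorname{rank} D\phi_{q}(\pmb m)\ge 1$. The key geometric fact is that the only $d$-dimensional linear subspace of $T_{f^{N}\pmb m}F_{q}$ is itself, and it cannot be contained in the open cone $\interior C^{u}$ by the strict dimension inequality.

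Now suppose for contradiction that $\Leb_{F_{q}}(\phi_{q}^{-1}(q))>0$ for some good $q$. By the Lebesgue density theorem, a.e. point of $\phi_{q}^{-1}(q)$ is a density point, and by smoothness of $f^{N}$ off a null set, a.e. point of $F_{q}$ is a point of differentiability of $\phi_{q}$; pick $v_{0}$ satisfying both. For every unit direction $\xi\in T_{v_{0}}F_{q}$, the density-$1$ condition supplies a sequence $v_{n}\in\phi_{q}^{-1}(q)$ with $v_{n}\to v_{0}$ and $(v_{n}-v_{0})/|v_{n}-v_{0}|\to\xi$ (points of the set sit inside every narrow cone at $v_{0}$). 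Since $\phi_{q}(v_{n})-\phi_{q}(v_{0})=0$, differentiability yields $D\phi_{q}(v_{0})\xi=0$ for every $\xi$, hence $D\phi_{q}(v_{0})=0$, contradicting the rank bound. Therefore $B_{N}\cap F_{q}$ is $\Leb_{F_{q}}$-null for a.e. $q$, Fubini gives $\mu_{\mathcal M}(B_{N})=0$, and countable union over $N$ concludes the proof. I expect the main obstacle to be the rank computation, i.e., extracting precisely what the aperiodic cone condition in Assumption \ref{assumption} buys us; the density-point/differentiability argument is standard once rank $\ge 1$ is in hand.
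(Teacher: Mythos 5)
Your proof is correct, and it takes a genuinely different route from the paper's. The paper's argument stays in the induced system $(X,g)$: it fixes $q$, forms $A_0(q)=\pi_X\pi^{-1}(\{q\}\times S^d)$ inside $X$, and at a ``periodic'' point $x$ with $g^k x\in A_0(q)$ propagates the fiber tangent spaces both forward into $\interior C^u$ (through $g$) and backward into $\interior C^s$ (through $g^{-(k-1)}$); the transversality $\dim(\interior C^u\bigcap\interior C^s)<d$ then forces $g(A_0(q))$ and $g^{-(k-1)}(A_0(q))$ to intersect in a $(d-1)$-dimensional set, so the periodic set inside $A_0(q)$ is Lebesgue-null, and a lift to $\Delta$ plus Fubini finish. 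You work instead directly with iterates of $f$ on $\mathcal{M}$: you reduce to showing each $B_N:=\{\pmb{m}:\pi_{\partial}(f^N\pmb{m})=\pi_{\partial}\pmb{m}\}$ is null, show that $\phi_q:=\pi_{\partial}\circ f^N|_{F_q}$ has rank at least one at every differentiability point of $B_N\bigcap F_q$ --- using only the forward inclusion $(Df)^N\mathcal{T}(\{q\}\times S^d)\subseteq\interior C^u$ together with $\dim(\mathcal{T}(\{q\}\times S^d)\bigcap\interior C^u)<d$, never invoking $C^s$ --- and then conclude by a Lebesgue density-point argument rather than by a submanifold-intersection dimension count. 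Your version is a bit more economical (one cone, one transversality condition, no detour through the induced map) and is phrased analytically rather than geometrically; the paper's two-cone formulation fits the hyperbolic symmetry of the setup but buys nothing extra for this lemma. One small imprecision: you describe your reformulation as an equivalence, but $\bigcup_{N\ge1}B_N$ is in general strictly larger than the lemma's bad set (the reverse implication can fail when $f^N\pmb{m}$ lies in the same tower column as $\pmb{m}$, i.e., when the orbit segment between them never re-enters $X$); since you only use the forward inclusion, the argument is unaffected.
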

\begin{proof}
For any $q \in \partial Q$, let $\bar{A}_0(q):=\pi_X \pi^{-1} (\{q\}\times [-\pi/2, \pi/2])$. If $q $ is a periodic point, then there exists $x\in \bar{A}_0(q)$ such that there is $k\ge 1$ satisfying $g^k(x) \in \bar{A}_0(q)$. Therefore,  \begin{gather*}
   g(x) \in g(\bar{A}_0(q)), \quad g(x) \in g^{-(k-1)}(\bar{A}_0(q)).
\end{gather*}

If $k>1$, then by Assumption \ref{assumption},\begin{gather*}
    \mathcal{T}_{g(x)}g\big(\bar{A}_0(q)\big)\subseteq (Df^i)\mathcal{T}(\{q\}\times [-\pi/2, \pi/2])\subseteq \interior C^u \text{ for some } i\ge 1,\\
    \mathcal{T}_{g(x)}g^{-(k-1)}\big(\bar{A}_0(q)\big)\subseteq (Df^{-j})\mathcal{T}(\{q\}\times [-\pi/2, \pi/2])\subseteq \interior C^s \text{ for some } j\ge 1.
\end{gather*} 

If $k=1$, then by Assumption \ref{assumption},\begin{gather*}
    \mathcal{T}_{g(x)}g\big(\bar{A}_0(q)\big)\subseteq (Df^i)\mathcal{T}(\{q\}\times [-\pi/2, \pi/2])\subseteq \interior C^u \text{ for some } i\ge 1,
    \end{gather*}and there exists $j \ge 0$ such that \begin{eqnarray*}
 \mathcal{T}_{g(x)}g^{-(k-1)}\big(\bar{A}_0(q)\big)\subseteq (Df^{-j})\mathcal{T}(\{q\}\times [-\pi/2, \pi/2])\subseteq 
\begin{cases}
\interior C^s, & \text{if }j>0\\
\mathcal{T}(\{q\}\times [-\pi/2, \pi/2]),  & \text{if } j=0 \\
\end{cases}.
\end{eqnarray*}

Then by Assumption \ref{assumption}, $\dim \big\{\mathcal{T}_{g(x)}g\big(\bar{A}_0(q)\big) \bigcap \mathcal{T}_{g(x)}g^{-(k-1)}\big(\bar{A}_0(q)\big)\big\}<1$. On the other hand, since $\bar{A}_0(q)$ is a countable union of one-dimensional connected submanifolds, such are as well $g\big(\bar{A}_0(q)\big)$ and $g^{-(k-1)}\big(\bar{A}_0(q)\big)$. Their intersection is a union of countably-many points. Thus, \begin{align*}
    x\in g^{-1} \big[g\big(\bar{A}_0(q)\big)\bigcap g^{-(k-1)}\big(\bar{A}_0(q)\big)\big]
\end{align*} belongs to a union of countably-many points contained in $\bar{A}_0(q)$. Therefore, for any $q \in \partial Q$\begin{gather*}
    \Leb_{\bar{A}_0(q)} \{x\in X: g^{k}x \in \bar{A}_0(q) \text{ for some }k \ge 1\}=0,
\end{gather*}where $\Leb_{\bar{A}_0(q)}$ is the Lebesgue measure conditioned on the submanifold $\bar{A}_0(q)$. The same notations will be used below. 

By lifting it to $\Delta$ we have\begin{align*}
    \Leb_{\pi^{-1}(\{q\}\times [-\pi/2, \pi/2])} \{x\in \Delta: g^k(\pi_X x)\in \pi_X \pi^{-1} (\{q\}\times [-\pi/2, \pi/2]) \text{ for some } k\ge 1\}=0.
\end{align*}

Since $\pi$ is an isomorphism, then \begin{align*}
    \Leb_{\{q\}\times [-\pi/2, \pi/2]} \{\pmb{m}\in \mathcal{M}: g^k(\pi_X \pi^{-1}\pmb{m})\in \pi_X \pi^{-1} (\{q\}\times [-\pi/2, \pi/2]) \text{ for some } k\ge 1\}=0.
\end{align*}

By Fubini's theorem,\begin{align*}
    &\mu_{\mathcal{M}}\{\pmb{m}\in \mathcal{M}: g^k(\pi_X\pi^{-1}\pmb{m})\in \pi_X \pi^{-1}(\{\pi_{\partial}\pmb{m}\}\times [-\pi/2, \pi/2])\text{ for some }k \ge 1\}\\
    &\precsim \Leb_{\mathcal{M}} \{\pmb{m}\in \mathcal{M}: g^k(\pi_X\pi^{-1}\pmb{m})\in \pi_X \pi^{-1}(\{\pi_{\partial}\pmb{m}\}\times [-\pi/2, \pi/2])\text{ for some }k \ge 1\}\\
    &\precsim \int_{\partial Q} \Leb_{\{q\}\times [-\pi/2, \pi/2]} \{\pmb{m}\in \mathcal{M}: g^k(\pi_X\pi^{-1}\pmb{m})\in \pi_X \pi^{-1}(\{\pi_{\partial}\pmb{m}\}\times [-\pi/2, \pi/2])\text{ for some }k \ge 1\}dq\\
    &\precsim \int_{\partial Q} \Leb_{\{q\}\times [-\pi/2, \pi/2]} \{\pmb{m}\in \mathcal{M}: g^k(\pi_X\pi^{-1}\pmb{m})\in \pi_X \pi^{-1}(\{q\}\times [-\pi/2, \pi/2])\text{ for some }k \ge 1\}dq=0,
\end{align*}which concludes a proof of this lemma.
\end{proof}

Now we can proceed to proving the main result of this subsection.
\begin{lemma}\label{returnlimit}
For a.e. $\pmb{m}\in \mathcal{M}$,\begin{align*}
    \liminf_{r\to 0}\frac{\log \mathcal{Z}_{2r}(\pmb{m})}{-\log \mu_{\Delta_m}(\pi_{\Delta_m}\pi^{-1}S_r)}\ge 1,
\end{align*}where $S_r$ is the section contained in the quasi-section $B_r(\pi_\partial\pmb{m})\times [-\pi/2, \pi/2]$.
\end{lemma}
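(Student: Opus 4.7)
The plan is to lower bound $\mathcal{Z}_{2r}(\pmb{m})$ by roughly $r^{-d/(1+\delta)}$ for every fixed $\delta>0$, then compare with the easy upper bound $-\log\mu_{\Delta_m}(\pi_{\Delta_m}\pi^{-1}S_r)\le d\log(1/r)+O_m(1)$, and finally send $\delta\downarrow 0$ along a countable sequence. Throughout, I work with a point $\pmb{m}$ lying in the intersection over $\delta=1/k$ of the full-measure sets on which both Lemma \ref{aperiodic} and Lemma \ref{rangeofzr} apply; write $x_0:=\pi_X\pi^{-1}\pmb{m}$, $q:=\pi_\partial\pmb{m}$, and $K_\delta:=N_eN_{\pmb{m},\delta}-N_e$, where $N_{\pmb{m},\delta}$ is the constant from Lemma \ref{rangeofzr}.

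First, I use aperiodicity to kill the bounded branch of Lemma \ref{rangeofzr}. For each $k\in\{1,\dots,K_\delta\}$, Lemma \ref{aperiodic} gives $g^k(x_0)\notin \pi_X\pi^{-1}(\{q\}\times S^d)$, which means that the finite orbit segment $\{f^n(g^k(x_0)):0\le n<R(g^k(x_0))\}\subset \mathcal{M}$ is disjoint from the fibre $\{q\}\times S^d$. Because this is a finite set of points at positive distance from $\{q\}\times S^d$, there is an $r^\ast=r^\ast(\pmb{m},\delta)>0$ such that the segment is disjoint from $B_{r'}(q)\times S^d$ for every $r'<r^\ast$; hence $g^k(x_0)\notin \pi_X\pi^{-1}(B_{r'}(q)\times S^d)$ for all $k\le K_\delta$, giving $\mathcal{Z}_{r'}(\pmb{m})>K_\delta$.

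Next, given small $r$, I choose $n=n(r)$ to be the largest integer with $\lfloor n/N_e\rfloor\le (2r)^{-d/(1+\delta)}$, so $n\asymp N_e(2r)^{-d/(1+\delta)}$ and the auxiliary radius $r_n:=\lfloor n/N_e\rfloor^{-(1+\delta)/d}$ satisfies $r_n\ge 2r$ and $r_n\to 0$ as $r\to 0$. For $r$ small enough, $r_n<r^\ast$ and $n>N_eN_{\pmb{m},\delta}$, so Step~1 and Lemma~\ref{rangeofzr} together force $\mathcal{Z}_{r_n}(\pmb{m})\in(n-N_e,\infty)$. The map $r'\mapsto \mathcal{Z}_{r'}(\pmb{m})$ is nonincreasing (a smaller target makes the first hit later), and $r_n\ge 2r$ gives $\mathcal{Z}_{2r}(\pmb{m})\ge \mathcal{Z}_{r_n}(\pmb{m})>n-N_e\gtrsim r^{-d/(1+\delta)}$.

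Finally, because $S_r\subseteq B_r(q)\times S^d$ is a section, the measure identity in Section~6 combined with $d\mu_\mathcal{M}/d\Leb_\mathcal{M}\in L^\infty$ and the uniform bound on the sectional curvature of $\partial Q$ yields $\mu_{\Delta_m}(\pi_{\Delta_m}\pi^{-1}S_r)\approx_m \mu_\mathcal{M}(S_r)\le \mu_\mathcal{M}(B_r(q)\times S^d)\precsim r^d$, so that $-\log\mu_{\Delta_m}(\pi_{\Delta_m}\pi^{-1}S_r)\le d\log(1/r)+O_m(1)$. Dividing,
\[
\liminf_{r\to 0}\frac{\log\mathcal{Z}_{2r}(\pmb{m})}{-\log\mu_{\Delta_m}(\pi_{\Delta_m}\pi^{-1}S_r)}\ge \frac{1}{1+\delta},
\]
and intersecting full-measure sets over $\delta=1/k\to 0$ gives the claim. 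The delicate step I expect to be the main obstacle is Step~1: one has to promote the pointwise aperiodicity statement (which only controls hits of the single fibre $\{q\}\times S^d$) to a uniform-in-$r'$ statement about the possibly large preimage $\pi_X\pi^{-1}(B_{r'}(q)\times S^d)$ in $X$, despite the return times $R(g^k(x_0))$ being unbounded as $k$ varies; this works only because, for a \emph{fixed} $\pmb{m}$, Step~1 involves only finitely many orbit segments, each of finite length.
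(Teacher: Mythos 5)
Your strategy is essentially the same as the paper's, and Steps 1--4 (killing the bounded branch of Lemma \ref{rangeofzr} via Lemma \ref{aperiodic}, choosing $n(r)$ so that $r_n\ge 2r$, and exploiting the monotonicity of $r'\mapsto\mathcal{Z}_{r'}(\pmb{m})$) are correct. The final comparison step, however, has an inverted inequality. You establish only the \emph{upper} bound $\mu_{\Delta_m}(\pi_{\Delta_m}\pi^{-1}S_r)\approx_m\mu_{\mathcal{M}}(S_r)\le\mu_{\mathcal{M}}(B_r(q)\times S^d)\precsim r^d$, but from $\mu\le C r^d$ one gets $-\log\mu\ge d\log(1/r)-\log C$, the \emph{opposite} of the bound $-\log\mu_{\Delta_m}(\pi_{\Delta_m}\pi^{-1}S_r)\le d\log(1/r)+O_m(1)$ that you then assert. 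Since the denominator of the liminf ratio must be bounded from \emph{above} by $d\log(1/r)+O(1)$, what you actually need is the \emph{lower} bound $\mu_{\Delta_m}(\pi_{\Delta_m}\pi^{-1}S_r)\succsim_{\pmb{m}} r^d$.

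That lower bound does hold for a.e.\ $\pmb{m}$, but it requires an argument you did not give: by the quasi-section property $\mu_{\mathcal{M}}(S_r)=(1-o(1))\mu_{\mathcal{M}}(B_r(q)\times S^d)$, and by Lebesgue differentiation together with $d\mu_{\mathcal{M}}/d\Leb_{\mathcal{M}}>0$ a.s.\ and the uniformly bounded curvature of $\partial Q$, one has $\mu_{\mathcal{M}}(B_r(q)\times S^d)\sim\mu_q(S^d)\,\Leb_{\partial Q}(B_r(q))\approx_q r^d$ with $\mu_q(S^d)>0$ for a.e.\ $q\in\partial Q$. The paper sidesteps this by invoking the two-sided equivalence $\mu_{\Delta_m}(\pi_{\Delta_m}\pi^{-1}S_r)\approx r^d$ directly (and rewriting $\liminf_r\log\mathcal{Z}_{2r}/(-\log\mu(\cdots S_r))$ in terms of $S_{2r}$ using that both measures are $\approx r^d$). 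Once you supply the lower bound, the rest of your argument goes through.
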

\begin{proof}
From Lemma \ref{aperiodic} for a.e. $\pmb{m}\in \mathcal{M}$ we have that for any $k=1,2,\cdots, N_eN_{\pmb{m}}-N_e$, and any $j<R(g^k\pi_X \pi^{-1}\pmb{m})$,\begin{gather*}\pi F^j(g^k\pi_X \pi^{-1}\pmb{m})\notin \{\pi_{\partial} \pmb{m}\}\times [-\pi/2, \pi/2].
\end{gather*} 

If it is not the case, then there is $j<R(g^k\pi_X \pi^{-1}\pmb{m})$ such that \begin{gather*}
    F^j(g^k\pi_X \pi^{-1}\pmb{m})\in \pi^{-1} (\{\pi_{\partial} \pmb{m}\}\times [-\pi/2, \pi/2]),
\end{gather*}which implies that $g^k\pi_X \pi^{-1}\pmb{m}\in \pi_X\pi^{-1} (\{\{\pi_{\partial} \pmb{m}\}\times [-\pi/2, \pi/2])$. But it is in contradiction with Lemma \ref{aperiodic}. Choose now a small $r_{\pmb{m}}>0$ such that for any $r\in (0,r_{\pmb{m}})$, any $k=1,2,\cdots, N_eN_{\pmb{m}}-N_e$, and any $j<R(g^k\pi_X \pi^{-1}\pmb{m})$, \begin{gather*}\pi F^j(g^k\pi_X \pi^{-1}\pmb{m})\notin B_{r}(\pi_{\partial} \pmb{m})\times [-\pi/2, \pi/2].
\end{gather*}

This implies that for any $k=1,2,\cdots, N_eN_{\pmb{m}}-N_e$, and for any $r\in (0,r_{\pmb{m}})$ \begin{gather*}g^k\pi_X \pi^{-1}\pmb{m}\notin \pi_X\pi^{-1}\big(B_{r}(\pi_{\partial} \pmb{m})\times [-\pi/2, \pi/2]\big).
\end{gather*}

Furthermore, for any $\delta>0$, and any $k=1,2,\cdots, N_eN_{\pmb{m}}-N_e$, if $n>N_er_{\pmb{m}}^{-1/(1+\delta)}$, then \begin{gather*}g^k\pi_X \pi^{-1}\pmb{m}\notin \pi_X\pi^{-1}\big(B_{\lfloor n/N_e\rfloor^{-(1+\delta)}}(\pi_{\partial} \pmb{m})\times [-\pi/2, \pi/2]\big).
\end{gather*}

It follows from Lemma \ref{rangeofzr} that for any $n>\max \{N_eN_{\pmb{m}}, N_er_{\pmb{m}}^{-1/(1+\delta)}\}$,\begin{align*}
    \mathcal{Z}_{\lfloor n/N_e\rfloor^{-(1+\delta)}}(\pmb{m})&=\min\Big\{m \ge 1: g^m(\pi_X\pi^{-1}\pmb{m})\in \pi_X \pi^{-1}\Big(B_{\lfloor n/N_e\rfloor^{-(1+\delta)}}(\pi_{\partial}\pmb{m})\times [-\pi/2, \pi/2]\Big)\Big\}\\
    &>n-N_e.
\end{align*}

Therefore, \begin{gather*}
    \liminf_{n\to \infty}\frac{\log  \mathcal{Z}_{\lfloor n/N_e\rfloor^{-(1+\delta)}}(\pmb{m})}{\log n} \ge 1.
\end{gather*}

Then for any sufficiently small $r\in(0, r_{\pmb{m}})$ such that $r\in (\lfloor (1+n)/N_e\rfloor^{-(1+\delta)}, \lfloor n/N_e\rfloor^{-(1+\delta)})$ for a sufficienly large $n>\max\{N_eN_{\pmb{m}}, N_er_{\pmb{m}}^{-1/(1+\delta)}\}$, we have $\mathcal{Z}_r(\pmb{m})\ge n-N_e$, and \begin{gather*}
     \liminf_{r\to 0}\frac{\log  \mathcal{Z}_{r}(\pmb{m})}{\log r^{-1/(1+\delta)}} \ge \liminf_{n\to \infty}\frac{\log (n-N_e)}{\log (n+1)/N_e}=1\\
     \implies \liminf_{r\to 0}\frac{\log  \mathcal{Z}_{r}(\pmb{m})}{-\log r} \ge 1/(1+\delta).
\end{gather*}

Note that $B_r(\pi_\partial \pmb{m})\times [-\pi/2, \pi/2]$ is a quasi-section, and if $r>0$ is sufficiently small, then, because $\partial Q$ has a uniformly bounded curvature and $\frac{d\mu_{\mathcal{M}}}{d\Leb_{\mathcal{M}}} \in L^{\infty}$, we have   \begin{align*}
    \mu_{\Delta_m}(\pi_{\Delta_m}\pi^{-1}S_r)\approx_m \mu_{\Delta}(\pi^{-1}S_r)\approx (\pi_*\mu_{\Delta})\big(B_r(\pi_\partial \pmb{m})\times [-\pi/2, \pi/2]\big)\approx r.
\end{align*}

Then \begin{align*}
    \liminf_{r\to 0}\frac{\log  \mathcal{Z}_{2r}(\pmb{m})}{-\log \mu_{\Delta_m}(\pi_{\Delta_m}\pi^{-1}S_r)}&=\liminf_{r\to 0}\frac{\log  \mathcal{Z}_{2r}(\pmb{m})}{-\log \mu_{\Delta_m}(\pi_{\Delta_m}\pi^{-1}S_{2r})} \frac{{\log \mu_{\Delta_m}(\pi_{\Delta_m}\pi^{-1}S_{2r})}}{{\log \mu_{\Delta_m}(\pi_{\Delta_m}\pi^{-1}S_{r})}}\\
    &=\liminf_{r\to 0}\frac{\log  \mathcal{Z}_{r}(\pmb{m})}{-\log r}  \ge 1/(1+\delta).
\end{align*} 

By letting now $\delta\to 0$ we conclude a proof of this lemma.
\end{proof}
\subsection{Short returns on $X$}
In this subsection, we will prove short returns on $X$, i.e., \begin{gather*}
    \lim_{r\to 0}\frac{1}{\mu_{X}(\pi_{X}\pi^{-1}S_r)}\int \mathbbm{1}_{\pi_{X}\pi^{-1}S_r} \mathbbm{1}_{\bigcup_{1\le j\le pN_{e,m}-1}(f^{R})^{-j}(\pi_{X}\pi^{-1}S_r)} d\mu_{X}=0.
\end{gather*}

Recall that $S_r:=S_r(\pmb{m})$ is a section in $B_{r}(\pi_{\partial} \pmb{m}) \times [-\pi/2, \pi/2]$. Let 
\begin{align*}
    \Lambda_{r,pN_{e,m}}(\pmb{m}):=\{y \in S_r(\pmb{m}): \text{ there exists }&i \ge 0, \text{ such that } y, f(y) \cdots f^i(y) \text{ visit }\\
    &X \text{ at most }pN_{e,m} \text{ times and }f^i(y) \in S_{r}(\pmb{m})\}.
\end{align*}

Since $\Lambda_{r,pN_{e,m}}(\pmb{m})\subseteq S_r(\pmb{m})$, then $\mu_{\mathcal{M}}\{\Lambda_{r, pN_{e,m}}(\pmb{m})\}\approx \mu_X\{\pi_X \pi^{-1}\Lambda_{r,pN_{e,m}}(\pmb{m})\}$. Now we have the following lemma.

\begin{lemma}\label{shortreturninclusion}For any $\pmb{m}\in \mathcal{M}$, \begin{gather*}
      \Lambda_{r,pN_{e,m}}(\pmb{m})\subseteq S_r(\pmb{m})\bigcap \{y\in \mathcal{M}: \mathcal{Z}_{2r}(y)\le pN_{e,m}\},\\
      \pi_X\pi^{-1}S_r(\pmb{m})\bigcap {\bigcup_{1\le j\le pN_{e,m}-1}g^{-j}\pi_X\pi^{-1}S_r(\pmb{m})} \subseteq \pi_X\pi^{-1}\Lambda_{r,pN_{e,m}}(\pmb{m}),
  \end{gather*}where we recall that $\mathcal{Z}_r(\pmb{m}):=\min\{n \ge 1: g^n(\pi_X\pi^{-1}\pmb{m})\in \pi_X \pi^{-1}\big(B_r(\pi_{\partial}\pmb{m})\times [-\pi/2, \pi/2]\big)\}$.
\end{lemma}
\begin{proof}
Let $x \in \Lambda_{r,pN_{e,m}}(\pmb{m})$, then $x \in S_r(\pmb{m})$, and there is $i\in \mathbb{N}$ such that $x, f(x), \cdots f^i(x)$ visit $X$ at most $pN_{e,m}$ times and $f^i(x) \in S_r(\pmb{m})$. Then $\pi_X\pi^{-1}x, \pi_X\pi^{-1}f(x), \cdots \pi_X\pi^{-1}f^i(x)$ visit $X$ at most $pN_{e,m}$ or $pN_{e,m}+1$ times. Thus, 
\begin{gather*}
    x, f^i(x)\in S_{r}(\pmb{m}) \subseteq B_{2r}(\pi_{\partial}x) \times [-\pi/2, \pi/2], \\
    \pi_X\pi^{-1}x, \pi_X \pi^{-1}f^i(x)\in \pi_X \pi^{-1}[B_{2r}(\pi_{\partial}x) \times [-\pi/2, \pi/2]].
\end{gather*}

Then $\mathcal{Z}_{2r}(x)\le pN_{e,m}$, that is, $\Lambda_{r,pN_{e,m}}(\pmb{m})\subseteq S_r(\pmb{m})\bigcap \{y\in \mathcal{M}: \mathcal{Z}_{2r}(y)\le pN_{e,m}\}$.

Now let $x \in \pi_X \pi^{-1}S_r(\pmb{m})$, $g^jx \in \pi_X \pi^{-1}S_r(\pmb{m})$ for some $j \in [1,pN_{e,m}-1]$. Then there are $k_1 < R(x)$,  $k_2<R(g^jx)$, such that $ y:=F^{k_1}(x)\in \pi^{-1}S_r(\pmb{m})$ and $F^{k_2}g^j(x)\in \pi^{-1}S_r(\pmb{m})$. So, there is $i\in \mathbb{N}$ such that $y, Fy, \cdots, F^iy$ visit $X$ at most $pN_{e,m}$ times and $F^iy\in \pi^{-1}S_r(\pmb{m})$, i.e., $\pi y \in \Lambda_{r,pN_{e,m}}(\pmb{m})$ and $x=\pi_X y \in \pi_X \pi^{-1}\Lambda_{r,pN_{e,m}}(\pmb{m})$. Therefore, we prove \[ \pi_X\pi^{-1}S_r(\pmb{m})\bigcap {\bigcup_{1\le j\le pN_{e,m}-1}g^{-j}\pi_X\pi^{-1}S_r(\pmb{m})} \subseteq \pi_X\pi^{-1}\Lambda_{r,pN_{e,m}}(\pmb{m}),\]which concludes the proof.\end{proof}

Now we can prove the main result of this subsection.
\begin{lemma}\label{thelastlemmaofshortreturn} For $\Leb_{\partial Q}$-a.e. $q\in \partial Q$ (and $\pmb{m}\in \pi_{\partial}^{-1}\{q\}$)
 \begin{gather*}
    \lim_{r\to 0}\frac{1}{\mu_{X}\big(\pi_X\pi^{-1}S_r\big)}\int_{\pi_X\pi^{-1}S_r}\mathbbm{1}_{\bigcup_{j=1}^{ pN_{e,m}-1}g^{-j}\pi_X\pi^{-1}S_r} d\mu_{X}=0.
\end{gather*}
\end{lemma}
\begin{proof} Recall that  $B_{r}(\pi_{\partial}\pmb{m})\times [-\pi/2,\pi/2]$ is a quasi-section, $S_r=S_r(\pmb{m})$ is a section, and $\Lambda_{r,pN_{e,m}}(\pmb{m})\subseteq S_r(\pmb{m})$. By using these and Lemma \ref{shortreturninclusion} we have \begin{align}
    \mu_X\Big\{\pi_X\pi^{-1}S_r(\pmb{m})&\bigcap {\bigcup_{1\le j\le pN_{e,m}-1}g^{-j}\pi_X\pi^{-1}S_r(\pmb{m})}\Big\}\nonumber\\
    &\le \mu_X(\pi_X\pi^{-1}\Lambda_{r,pN_{e,m}}(\pmb{m}))\nonumber\\
    &\precsim \mu_{\Delta}(\pi^{-1}\Lambda_{r,pN_{e,m}}(\pmb{m}))\nonumber\\
    &\precsim \mu_{\mathcal{M}}\big(\Lambda_{r,pN_{e,m}}(\pmb{m})\big) \nonumber\\
    &\precsim \mu_{\mathcal{M}}\Big[ S_r(\pmb{m})\bigcap \{y\in \mathcal{M}: \mathcal{Z}_{2r}(y)\le pN_{e,m}\}\Big]\nonumber\\
    &\precsim \mu_{\mathcal{M}}\Big[ \{B_r(\pi_\partial\pmb{m})\times [-\pi/2, \pi/2]\}\bigcap \{y\in \mathcal{M}: \mathcal{Z}_{2r}(y)\le pN_{e,m}\}\Big]\nonumber.
    \end{align}

 Besides, for any $\epsilon \in (0,1)$, there is $C_{\epsilon}>0$ such that \begin{gather*}
    p=n^{1-\epsilon}=C_{\epsilon}^{\pm 1} \mu_{\Delta_m}(\pi_{\Delta_m}\pi^{-1}S_r)^{-1+\epsilon}.
\end{gather*} Now we can continue the estimate above as
    \begin{align}
    &\precsim \mu_{\mathcal{M}}\Big[ \{B_r(\pi_\partial\pmb{m})\times [-\pi/2, \pi/2]\}\bigcap \{y\in \mathcal{M}: \mathcal{Z}_{2r}(y)\le C_{\epsilon} \mu_{\Delta_m}(\pi_{\Delta_m}\pi^{-1}S_r)^{-1+\epsilon}N_{e,m}\}\Big]\nonumber\\
    &\precsim \mu_{\mathcal{M}}\Big[ \{B_r(\pi_\partial\pmb{m})\times [-\pi/2, \pi/2]\}\bigcap \{y\in \mathcal{M}: \frac{-C_{\epsilon,m}+\log \mathcal{Z}_{2r}(y)}{-\log \mu_{\Delta_m}(\pi_{\Delta_m}\pi^{-1}S_r)}\le 1-\epsilon \}\Big]\nonumber\\
    &\precsim \mu_{\mathcal{M}}\Big[ \{B_r(\pi_\partial\pmb{m})\times [-\pi/2, \pi/2]\}\bigcap \{y\in \mathcal{M}: \inf_{r'<r_0}\frac{-C_{\epsilon,m}+\log \mathcal{Z}_{2r'}(y)}{-\log \mu_{\Delta_m}(\pi_{\Delta_m}\pi^{-1}S_{r'})}\le 1-\epsilon \}\Big]\label{2}
\end{align} where  $C_{\epsilon,m}=\log C_{\epsilon}+\log N_{e,m} $ and any $r_0>r$. 

Before proceeding to estimating (\ref{2}), we will need some preparations. By Lemma \ref{returnlimit}, we have \begin{align}
    \lim_{r_0 \to 0}\mu_{\mathcal{M}} \Big\{y\in \mathcal{M}: \inf_{r'<r_0}\frac{-C_{\epsilon,m}+\log \mathcal{Z}_{2r'}(y)}{-\log \mu_{\Delta_m}(\pi_{\Delta_m}\pi^{-1}S_{r'})}\le 1-\epsilon \Big\}=0.\label{inflimit0}
\end{align}

Let $h:=\frac{d\mu_{\mathcal{M}}}{d\Leb_{\mathcal{M}}}$ be the density of $\mu_{\mathcal{M}}$, and $\Leb_{[-\pi/2, \pi/2]}$, and $\Leb_{\partial Q}$ are the Lebesgue measures on $[-\pi/2, \pi/2]$ and $\partial Q$, respectively. For any $q \in \partial Q$, define now a measure $\mu_{q}$ \begin{align}
    \mu_q(A):=\int_A h(q, \cdot)d\Leb_{[-\pi/2, \pi/2]}\label{fiberwisemeasure}
\end{align} for any measurable $A\subseteq [-\pi/2, \pi/2]$.  Let also \begin{align*}
    U_{q, r_0}:=\big\{v\in [-\pi/2, \pi/2]: \inf_{r'<r_0}\frac{-C_{\epsilon,m}+\log \mathcal{Z}_{2r'}(q,v)}{-\log \mu_{\Delta_m}(\pi_{\Delta_m}\pi^{-1}S_{r'})}\le 1-\epsilon \big\}.
\end{align*}

Then (\ref{inflimit0}) is equivalent to \begin{align*}
    \lim_{r_0\to 0}\int_{\partial Q} \mu_q(U_{q, r_0})d\Leb_{\partial Q}=0.
\end{align*}

For any $\delta>0$ let $T_{r_0, \delta}:=\{q \in \partial Q: \mu_q(U_{q,r_0})>\delta\}$. Then $\lim_{r_0 \to 0}\Leb_{\partial Q}(T_{r_0,\delta})=0$. Now, using the Lebesgue differentiation theorem (which holds also on $\partial Q$), for any $r_0, \delta>0$, there is a full measure set $Q_{r_0, \delta}$ in $\partial Q$ such that for a.e. $q \in Q_{r_0, \delta}$, \begin{align*}
    \lim_{r \to 0}\frac{1}{\Leb_{\partial Q}\big(B_{r}(q)\big)} \int_{B_{r}(q)} \mathbbm{1}_{T_{r_0,\delta}}d\Leb_{\partial Q}=\mathbbm{1}_{T_{r_0,\delta}}(q).
\end{align*}

Hence, if $q \notin T_{r_0,\delta}$, then \begin{align*}
    \mu_q(U_{q,r_0})< \delta,\quad \int_{B_{r}(q)} \mathbbm{1}_{T_{r_0,\delta}}d\Leb_{\partial Q}=o\big(\Leb_{\partial Q}\big\{B_{r}(q)\big\}\big).
\end{align*}

Now we choose $r_0=1/n, \delta=1/k$. Then \begin{align*}
    \Leb_{\partial Q} (\bigcap_{n}T_{1/n,1/k})=0,\quad \Leb_{\partial Q} (\bigcup_{k}\bigcap_{n}T_{1/n,1/k})=0.
\end{align*}

Choose $\pi_{\partial }\pmb{m}\in \bigcap_{n,k} Q_{1/n,1/k}\bigcap (\bigcap_{k}\bigcup_{n}T^c_{1/n,1/k})$, which has a full measure. Then for any $k \gg 1$ there is $n_{k,\pi_{\partial} \pmb{m}} \in \mathbb{N}$ such that for any $n \ge n_{k,\pi_{\partial} \pmb{m}}$, $\pi_{\partial}\pmb{m}\notin T_{1/n, 1/k}$. (Here we used that $T_{1/t,1/k} \supseteq T_{1/(1+t),1/k} $ for any $t\ge 1$). 

For any $k \gg 1$, and any sufficiently small $r \in (0,1)$, there is $n \ge n_{k,\pi_{\partial} \pmb{m}}$ such that $r \in [\frac{1}{n+1}, \frac{1}{n})$. We choose $r_0=n^{-1}$, and continue the estimates of (\ref{2}) as  \begin{align*}
    &\precsim\mu_{\mathcal{M}}\{(q,v)\in \mathcal{M}:q\in B_r(\pi_{\partial}\pmb{m}), v\in U_{q, 1/n}\}\\
    &\precsim\int_{B_r(\pi_{\partial}\pmb{m})}\mu_{q}(U_{q,1/n})d\Leb_{\partial Q}\\
    &\precsim\int_{B_r(\pi_{\partial}\pmb{m})\bigcap T_{1/n,1/k}}\mu_{q}(U_{q,1/n})d\Leb_{\partial Q}+\int_{B_r(\pi_{\partial}\pmb{m})\bigcap T^c_{1/n,1/k}}\mu_{q}(U_{q,1/n})d\Leb_{\partial Q}\\
    &\precsim \int_{B_r(\pi_{\partial}\pmb{m})}\mathbbm{1}_{ T_{1/n,1/k}}d\Leb_{\partial Q}+\int_{B_r(\pi_{\partial}\pmb{m})\bigcap T^c_{1/n,1/k}}k^{-1} d\Leb_{\partial Q}\\
    &= o\big(\Leb_{\partial Q}\big\{B_{r}(\pi_{\partial}\pmb{m})\big\}\big)+ O\big(\Leb_{\partial Q}\big\{B_{r}(\pi_{\partial} \pmb{m})\big\}\big)k^{-1},
\end{align*}which means that \begin{align*}
    \lim_{r\to 0}\frac{\mu_X\Big\{\pi_X\pi^{-1}S_r(\pmb{m})\bigcap {\bigcup_{1\le j\le pN_{e,m}}g^{-j}\pi_X\pi^{-1}S_r(\pmb{m})}\Big\}}{\Leb_{\partial Q}\big\{B_{r}(\pi_{\partial} \pmb{m})\big\}}=O(1/k),
\end{align*}does not depend on $n$ anymore. Let $k \to \infty$.  
Then for any $q\in \bigcap_{n,k} Q_{1/n,1/k}\bigcap (\bigcap_{k}\bigcup_{n}T^c_{1/n,1/k})$,\begin{align*}
    \lim_{r\to 0}\frac{\mu_X\Big\{\pi_X\pi^{-1}S_r(\pmb{m})\bigcap {\bigcup_{1\le j\le pN_{e,m}}g^{-j}\pi_X\pi^{-1}S_r(\pmb{m})}\Big\}}{\Leb_{\partial Q}\big\{B_{r}(q)\big\}}=0.\end{align*}

Finally, because $B_r(q)\times [-\pi/2, \pi/2]$ is a quasi-section, we have that if $r>0$ is small enough, then $\mu_{X}\big(\pi_X\pi^{-1}S_r(\pmb{m})\big)\approx \mu_{\mathcal{M}}(B_r(q)\times [-\pi/2, \pi/2])\approx r\approx \Leb_{\partial Q}\big\{B_{r}(q)\big\}$. It concludes a proof of this lemma.
\end{proof}

\section{Conclusion of a proof of Theorem \ref{thm}}\label{section8}

Recall that (\ref{shortreturn}) is already proved. Then it follows from Lemma \ref{poissonappro} that, what is required, is to estimate (\ref{correlationinpoissonlaw}). In this section several technical lemmas are dedicated to a proof of (\ref{correlationinpoissonlaw}), followed by a proof of Theorem \ref{thm} at the end of this section.

From (\ref{correlationinpoissonlaw}) and throughout this section, $n\approx r^{-1}, p:=n^{1-\epsilon}$, and $\epsilon\in (0,1)$ is sufficiently small number in Lemma \ref{poissonappro}. Notice that, in Lemma \ref{poissonappro}, $n$ also depends on any fixed bounded intervals $J_1, J_2 \cdots, J_{k'}$ in $\mathbb{R}^+\bigcup \{0\}$. Since they are fixed, from now on we drop them and write $n \approx r^{-1}$ only. Now consider\begin{align*}
    \sum_{0 \le l \le n-p}\sup_{\pmb{a}\ge 1}\sup_{h\in[0,1]} \Big| \mathbb{E}\Big[\mathbbm{1}_{\pmb{X}_0=\pmb{a}}  h(\pmb{X}_p,\cdots, \pmb{X}_{n-l})\Big]-\mathbb{E}\mathbbm{1}_{\pmb{X}_0=\pmb{a}}  \mathbb{E}h(\pmb{X}_p,\cdots, \pmb{X}_{n-l})\Big|.
\end{align*}

Since there are finitely many $\pmb{a}\ge 1$, we just need to estimate\begin{align*}
    n\cdot \sup_{h\in[0,1]} \Big| \mathbb{E}\Big[\mathbbm{1}_{\pmb{X}_0= \pmb{a}}  h(\pmb{X}_p,\cdots, \pmb{X}_{n-l})\Big]-\mathbb{E}\mathbbm{1}_{\pmb{X}_0= \pmb{a}}  \mathbb{E}h(\pmb{X}_p,\cdots, \pmb{X}_{n-l})\Big|
\end{align*}

To simplify notations we set throughout this section \begin{gather*}
    T=F^{R_m},\quad U=F_{e,m}^{N_{e,m}},\quad  H=\pi_{\Delta_m}\pi^{-1}S_r.
\end{gather*} 

\[ \begin{tikzcd}
\Delta'_{e,m} \arrow{r}{F_{e,m}^{N_{e,m}}} \arrow[swap]{d}{\pi_{e,m}} & \Delta'_{e,m} \arrow{d}{\pi_{e,m}} \arrow{r}{\text{inclusion}} & \Delta_{e,m}\\%
\Delta_m \arrow{r}{(F^{R_m})^{N_{e,m}}}& \Delta_m
\end{tikzcd}, \quad \begin{tikzcd}
\Delta'_{e,m} \arrow{r}{U} \arrow[swap]{d}{\pi_{e,m}} & \Delta'_{e,m} \arrow{d}{\pi_{e,m}} \arrow{r}{\text{inclusion}} & \Delta_{e,m}\\%
\Delta_m \arrow{r}{T^{N_{e,m}}}& \Delta_m
\end{tikzcd}.
\]

Let an integer $k > m+1$, which will be determined later. For any $i=0,1,\cdots N_{e,m}-1$,\begin{gather*}
    B_i=U^{-k}\pi_{e,m}^{-1}T^{-i}H, \quad \pmb{B}=(\mathbbm{1}_{B_0},\cdots, \mathbbm{1}_{B_{N_{e,m}-1}}).
\end{gather*}

We lift now the dynamical system $(\Delta_m, (F^{R_m})^{N_{e,m}}, \mu_{\Delta_m})$ to the mixing hyperbolic Young tower $(\Delta'_{e,m}, F_{e,m}^{N_{e,m}}, \mu_{\Delta'_{e,m}})$, as is shown in the next lemma.
\begin{lemma}\label{3}
\begin{align*}
    &n\cdot \sup_{h\in[0,1]} \Big| \mathbb{E}\Big[\mathbbm{1}_{\pmb{X}_0=\pmb{a}}  h(\pmb{X}_p,\cdots, \pmb{X}_{n-l})\Big]-\mathbb{E}\mathbbm{1}_{\pmb{X}_0=\pmb{a}}  \mathbb{E}h(\pmb{X}_p,\cdots, \pmb{X}_{n-l})\Big|\\
   &=n\cdot \sup_{h\in[0,1]} \Big| \int \Big[\mathbbm{1}_{\pmb{B}=\pmb{a}}  h(\pmb{B}\circ U^p,\cdots, \pmb{B}\circ U^{n-l})\Big]d\mu_{\Delta_{e,m}'}\\
   & \quad -\int \mathbbm{1}_{\pmb{B}=\pmb{a}}d\mu_{\Delta_{e,m}'}  \int h(\pmb{B}\circ U^p,\cdots, \pmb{B}\circ U^{n-l}) d\mu_{\Delta_{e,m}'}\Big|.
\end{align*}
\end{lemma}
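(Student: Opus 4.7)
The plan is to prove this identity by a direct computation that rests on three ingredients: the semi-conjugacy $\pi_{e,m}\circ F_{e,m}=F^{R_m}\circ\pi_{e,m}$ (so that $\pi_{e,m}\circ U=T^{N_{e,m}}\circ\pi_{e,m}$), the pushforward relation $(\pi_{e,m})_*\mu_{\Delta_{e,m}'}=\mu_{\Delta_m}$ established at the end of the previous section, and the $U$-invariance of $\mu_{\Delta_{e,m}'}$. There is no hard analysis here; the content is essentially a change of variables. The point of introducing the extra $U^{-k}$ into the definition of $B_i$ will be to prepare for applying the decay of correlations estimate~(\ref{decorrelationhyperbolicthickertower}) in the next step, since sets of the form $U^{-k}(\cdots)$ can be approximated by unions of elements of $\mathcal{Q}^m_k$ using the diameter estimate in Lemma~\ref{decayofcorrelationofthinkerhyperbolicyoungtower}.

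The first step is to unfold the definition of $\pmb{X}_i$ on the tower. Writing $\mathbbm{1}_H\circ T^{N_{e,m}i+j}\circ\pi_{e,m}=\mathbbm{1}_H\circ T^j\circ T^{N_{e,m}i}\circ\pi_{e,m}=\mathbbm{1}_{\pi_{e,m}^{-1}T^{-j}H}\circ U^i$, and setting $\pmb{A}:=(\mathbbm{1}_{\pi_{e,m}^{-1}T^{-j}H})_{j=0}^{N_{e,m}-1}$, I get the coordinate-wise identity $\pmb{X}_i\circ\pi_{e,m}=\pmb{A}\circ U^i$ for every $i\ge 0$. In particular $\{\pmb{X}_0\ge 1\}\circ\pi_{e,m}=\{\pmb{A}\ge 1\}$.

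The second step applies the pushforward: for any test function $h\in[0,1]$,
\begin{align*}
\mathbb{E}\bigl[\mathbbm{1}_{\pmb{X}_0\ge 1}\,h(\pmb{X}_p,\dots,\pmb{X}_{n-l})\bigr]
&=\int \mathbbm{1}_{\pmb{A}\ge 1}\,h(\pmb{A}\circ U^p,\dots,\pmb{A}\circ U^{n-l})\,d\mu_{\Delta_{e,m}'},
\end{align*}
and the same identity holds for $\mathbb{E}\,\mathbbm{1}_{\pmb{X}_0\ge 1}$ and $\mathbb{E}\,h(\pmb{X}_p,\dots,\pmb{X}_{n-l})$ separately.

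The final step uses $U$-invariance of $\mu_{\Delta_{e,m}'}$: composing the integrand with $U^k$ and noting $\pmb{A}\circ U^k=\pmb{B}$ (since $\mathbbm{1}_{B_j}=\mathbbm{1}_{\pi_{e,m}^{-1}T^{-j}H}\circ U^k$), the integral above equals
\[
\int \mathbbm{1}_{\pmb{B}\ge 1}\,h(\pmb{B}\circ U^p,\dots,\pmb{B}\circ U^{n-l})\,d\mu_{\Delta_{e,m}'},
\]
and the analogous identity holds for the product of means. Subtracting and taking the supremum over $h\in[0,1]$ and multiplying by $n$ yields the claimed identity. The only subtlety is careful bookkeeping of the coordinates of the vector $\pmb{X}_i$, which I expect to be the main potential source of error; everything else is a one-line application of a standard property.
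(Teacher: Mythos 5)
Your proposal is correct and takes essentially the same approach as the paper: you unfold $\pmb{X}_i$ on the tower via the semi-conjugacy to get $\pmb{X}_i\circ\pi_{e,m}=\pmb{A}\circ U^i$ (your $\pmb{A}$ is the paper's $\pmb{X}_0\circ\pi_{e,m}$), transfer expectations with $(\pi_{e,m})_*\mu_{\Delta_{e,m}'}=\mu_{\Delta_m}$, and then shift by $U^k$-invariance using $\pmb{A}\circ U^k=\pmb{B}$; the bookkeeping is done carefully and matches the paper's computation step for step.
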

\begin{proof} Note that $\pmb{X}_0=(\mathbbm{1}_{H}, \mathbbm{1}_{H}\circ T \cdots, \mathbbm{1}_{H} \circ T^{N_{e,m}-1})$, $\pmb{X}_i=\pmb{X}_0 \circ T^{N_{e,m}i}$, $\pmb{B}=\pmb{X}_0\circ \pi_{e,m}\circ U^k$. We have\begin{align*}
    &n\cdot \sup_{h\in[0,1]} \Big| \mathbb{E}\Big[\mathbbm{1}_{\pmb{X}_0=\pmb{a}}  h(\pmb{X}_p,\cdots, \pmb{X}_{n-l})\Big]-\mathbb{E}\mathbbm{1}_{\pmb{X}_0=\pmb{a}}  \mathbb{E}h(\pmb{X}_p,\cdots, \pmb{X}_{n-l})\Big|\\
    &=n\cdot \sup_{h\in[0,1]} \Big| \mathbb{E}\Big[\mathbbm{1}_{\pmb{X}_0=\pmb{a}}  h(\pmb{X}_0 \circ T^{N_{e,m}p},\cdots, \pmb{X}_0 \circ T^{N_{e,m}(n-l)})\Big]\\
    &\quad -\mathbb{E}\mathbbm{1}_{\pmb{X}_0=\pmb{a}}  \mathbb{E}h(\pmb{X}_0 \circ T^{N_{e,m}p},\cdots, \pmb{X}_0 \circ T^{N_{e,m}(n-l)})\Big|\\
    &=n\cdot \sup_{h\in[0,1]} \Big| \int \Big[\mathbbm{1}_{\pmb{X}_0\circ \pi_{e,m}=\pmb{a}}  h(\pmb{X}_0  \circ \pi_{e,m}\circ U^{p},\cdots, \pmb{X}_0 \circ \pi_{e,m}\circ U^{n-l})\Big]d\mu_{\Delta_{e,m}'}\\
    &\quad -\int \mathbbm{1}_{\pmb{X}_0\circ \pi_{e,m}=\pmb{a}} d\mu_{\Delta_{e,m}'} \int h(\pmb{X}_0  \circ \pi_{e,m}\circ U^{p},\cdots, \pmb{X}_0 \circ \pi_{e,m}\circ U^{n-l})d\mu_{\Delta_{e,m}'}\Big|\\
    &=n\cdot \sup_{h\in[0,1]} \Big| \int \Big[\mathbbm{1}_{\pmb{B}=\pmb{a}}  h(\pmb{B}\circ U^p,\cdots, \pmb{B}\circ U^{n-l})\Big]d\mu_{\Delta_{e,m}'}\\
    & \quad -\int \mathbbm{1}_{\pmb{B}=\pmb{a}}d\mu_{\Delta_{e,m}'}  \int h(\pmb{B}\circ U^p,\cdots, \pmb{B}\circ U^{n-l}) d\mu_{\Delta_{e,m}'}\Big|,
\end{align*}where the last equality holds because $U^k_* \mu_{\Delta'_{e,m}}=\mu_{\Delta'_{e,m}}$.
\end{proof}

Now we will cover $\{\pmb{B}=\pmb{a}\}$ and $B_i$ by elements $Q\in \mathcal{Q}_{2k}^m$ (here $k > m+1$ will be determined later). Define \begin{gather*}
    \overline{B_i}:=\bigcup_{Q \bigcap B_i \neq \emptyset}Q,\quad  \partial B_i:=\bigcup_{Q \bigcap \overline{B_i}\setminus B_i \neq \emptyset} Q,\\
    A:=\{\pmb{B}=\pmb{a}\}, \quad \overline{A}:=\bigcup_{Q \bigcap A\neq \emptyset}Q, \quad \partial A:=\bigcup_{Q \bigcap \overline{A}\setminus A \neq \emptyset}Q,\\
    \pmb{\overline{B}}:=(\mathbbm{1}_{\overline{B_0}}, \cdots, \mathbbm{1}_{\overline{B_{N_{e,m}-1}}}), \quad \pmb{\partial{B}}:=(\mathbbm{1}_{\partial{B_0}}, \cdots, \mathbbm{1}_{\partial{B_{N_{e,m}-1}}}).
\end{gather*}

Clearly, the following properties hold \begin{gather*}
    \overline{B_i}\setminus B_i\subseteq \partial B_{i}, \quad \overline{A}\setminus A \subseteq \partial A, \quad A \subseteq \overline{A}\subseteq \{\pmb{\overline{B}}\ge 1\}.
\end{gather*}

Furthermore, for any $Q\subseteq \partial A$, there are $x,y \in Q$ and $i\in [0,N_{e,m}-1]$ such that $x \in B_i$, but $y \notin B_i$ or $x \notin B_i$ but $y \in B_i$. It means that $Q \subseteq \partial B_i$. Therefore \[\overline{A}\setminus A \subseteq \partial A \subseteq \bigcup_{i \le N_{e,m}-1}\partial B_i=\{\pmb{\partial B}\ge 1\}.\]
\begin{lemma}\label{4} The term in Lemma \ref{3} can be further estimated as
\begin{align*}
    &\sup_{h \in [0,1]}\Big| \int \Big[\mathbbm{1}_{A}  h(\pmb{B}\circ U^p,\cdots, \pmb{B}\circ U^{n-l})\Big]d\mu_{\Delta_{e,m}'} -\mu_{\Delta_{e,m}'} (A)\int h(\pmb{B}\circ U^p,\cdots, \pmb{B}\circ U^{n-l}) d\mu_{\Delta_{e,m}'}\Big|\\
    &\precsim_m [1+n\mu_{\Delta_{e,m}'}(\pmb{\overline{B}}\ge 1)]\mu_{\Delta_{e,m}'} (\pmb{\partial{B}}\ge 1)+\beta_m^{p-2k} \mu_{\Delta_{e,m}'}(\pmb{\overline{B}}\ge 1),
\end{align*}where a constant in $``\precsim_m"$ does not depend on $A,\pmb{B}$, and $\beta_m$ is the one in Lemma \ref{decayofcorrelationofthinkerhyperbolicyoungtower}.
\end{lemma}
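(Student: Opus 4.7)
The plan is to exploit that while $\pmb{B}$ is not itself adapted to the partition $\mathcal{Q}_{2k}^m$, the enlargement $\pmb{\overline{B}}$ and the boundary $\pmb{\partial B}$ both are: by construction each $\overline{B_i}$ and each $\partial B_i$ is a union of atoms of $\mathcal{Q}_{2k}^m$. Replacing $\pmb{B}$ by $\pmb{\overline{B}}$ will therefore unlock the decay of correlations estimate (\ref{decorrelationhyperbolicthickertower}), and the cost of the swap is controlled by the $\pmb{\partial B}$-measure. To set things up I would start from the pointwise bounds $|\mathbbm{1}_{\pmb{B}\ge 1}-\mathbbm{1}_{\pmb{\overline{B}}\ge 1}|\le \mathbbm{1}_{\pmb{\partial B}\ge 1}$ (from $\overline{A}\setminus A\subseteq\{\pmb{\partial B}\ge 1\}$) and, using $h\in[0,1]$, $|h(\pmb{B}\circ U^p,\ldots,\pmb{B}\circ U^{n-l})-h(\pmb{\overline{B}}\circ U^p,\ldots,\pmb{\overline{B}}\circ U^{n-l})|\le \sum_{p\le j\le n-l}\mathbbm{1}_{\pmb{\partial B}\ge 1}\circ U^j$.

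Next I would telescope the left-hand side into three pieces by inserting $\int\mathbbm{1}_{\pmb{\overline{B}}\ge 1}h(\pmb{\overline{B}}\circ U^p,\ldots)\,d\mu_{\Delta_{e,m}'}$ and $\mu_{\Delta_{e,m}'}(\pmb{\overline{B}}\ge 1)\int h(\pmb{\overline{B}}\circ U^p,\ldots)\,d\mu_{\Delta_{e,m}'}$: an indicator-and-integrand replacement (I), a pure decorrelation gap (II), and a product-side replacement (III). Piece (II) is handled directly by (\ref{decorrelationhyperbolicthickertower}): $\{\pmb{\overline{B}}\ge 1\}$ is a union of $\mathcal{Q}_{2k}^m$-atoms and the function $h(\pmb{\overline{B}},\pmb{\overline{B}}\circ U,\ldots)\circ U^p$ is measurable and bounded by $1$, producing an error of order $\beta_m^{p-O(k)}\mu_{\Delta_{e,m}'}(\pmb{\overline{B}}\ge 1)$. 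Piece (III) factors as $[\mu_{\Delta_{e,m}'}(\pmb{\overline{B}}\ge 1)-\mu_{\Delta_{e,m}'}(\pmb{B}\ge 1)]\int h(\pmb{\overline{B}}\circ U^p,\ldots)\,d\mu_{\Delta_{e,m}'}$ plus $\mu_{\Delta_{e,m}'}(\pmb{B}\ge 1)$ times the $U$-invariant estimate $\int|h(\pmb{B}\circ U^p,\ldots)-h(\pmb{\overline{B}}\circ U^p,\ldots)|\,d\mu_{\Delta_{e,m}'}\le n\mu_{\Delta_{e,m}'}(\pmb{\partial B}\ge 1)$, giving at most $[1+n\mu_{\Delta_{e,m}'}(\pmb{\overline{B}}\ge 1)]\mu_{\Delta_{e,m}'}(\pmb{\partial B}\ge 1)$.

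The main obstacle is piece (I): a crude $U$-invariance bound on the $h$-difference would produce $n\mu_{\Delta_{e,m}'}(\pmb{\partial B}\ge 1)$, strictly weaker than the target $n\mu_{\Delta_{e,m}'}(\pmb{\overline{B}}\ge 1)\mu_{\Delta_{e,m}'}(\pmb{\partial B}\ge 1)$ and insufficient for the lemma. The remedy is to apply (\ref{decorrelationhyperbolicthickertower}) to each summand $\int\mathbbm{1}_{\pmb{\overline{B}}\ge 1}\mathbbm{1}_{\pmb{\partial B}\ge 1}\circ U^j\,d\mu_{\Delta_{e,m}'}$ for $p\le j\le n-l$, after first majorizing $\mathbbm{1}_{\pmb{B}\ge 1}\le \mathbbm{1}_{\pmb{\overline{B}}\ge 1}$: both indicators are $\sigma(\mathcal{Q}_{2k}^m)$-measurable, so each summand is bounded by $\mu_{\Delta_{e,m}'}(\pmb{\overline{B}}\ge 1)\mu_{\Delta_{e,m}'}(\pmb{\partial B}\ge 1)+C_m\beta_m^{j-O(k)}\mu_{\Delta_{e,m}'}(\pmb{\overline{B}}\ge 1)$, and summing yields exactly $n\mu_{\Delta_{e,m}'}(\pmb{\overline{B}}\ge 1)\mu_{\Delta_{e,m}'}(\pmb{\partial B}\ge 1)+\beta_m^{p-O(k)}\mu_{\Delta_{e,m}'}(\pmb{\overline{B}}\ge 1)$. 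Combining the contributions from (I), (II), (III) and absorbing the $O(k)$ into the implied constants proves the claim.
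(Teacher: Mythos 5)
Your proposal is correct and follows essentially the same strategy as the paper's own proof: replace $\pmb{B}$ by the $\mathcal{Q}^m_{2k}$-measurable enlargement $\pmb{\overline{B}}$, control the swap cost via $\pmb{\partial B}$, and invoke the decay estimate (\ref{decorrelationhyperbolicthickertower}) both for the main decorrelation gap and, crucially, a second time to tighten the leading swap-cost term from $n\mu(\pmb{\partial B}\ge 1)$ down to $n\mu(\pmb{\overline{B}}\ge 1)\mu(\pmb{\partial B}\ge 1)$. The only cosmetic difference is in piece (I): you bound the $h$-difference by the sum $\sum_{p\le j\le n-l}\mathbbm{1}_{\pmb{\partial B}\ge 1}\circ U^j$ and apply decorrelation termwise with a geometric series, whereas the paper bounds it by the single indicator $\mathbbm{1}_{\bigcup_{p\le i\le n-l}\pmb{\partial B}\circ U^i\ge 1}$ and applies decorrelation once; both yield the same estimate. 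One small caution on your closing phrase ``absorbing the $O(k)$ into the implied constants'': since $k$ is later taken proportional to $p$, a discrepancy of the form $\beta_m^{p-4k}$ versus $\beta_m^{p-2k}$ cannot be absorbed into a $k$-independent constant, so you should track the exact exponent produced by applying (\ref{decorrelationhyperbolicthickertower}) to atoms of $\mathcal{Q}^m_{2k}$; this matters only for the eventual choice of $k$ in terms of $p$ at the end of Section 8 and does not affect the structure of the argument.
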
 

\begin{proof} Using $\overline{A}\setminus A \subseteq \partial A \subseteq  \{\pmb{\partial B}\ge 1\}$ and $A \subseteq  \{\pmb{B}\ge 1\}$, we have
 \begin{align*}
     &\Big| \int \Big[\mathbbm{1}_{A}  h(\pmb{B}\circ U^p,\cdots, \pmb{B}\circ U^{n-l})\Big]d\mu_{\Delta_{e,m}'} -\mu_{\Delta_{e,m}'} (A)\int h(\pmb{B}\circ U^p,\cdots, \pmb{B}\circ U^{n-l}) d\mu_{\Delta_{e,m}'}\Big|\\
     &\le \Big| \int \mathbbm{1}_{A}  \Big[h(\pmb{B}\circ U^p,\cdots, \pmb{B}\circ U^{n-l})- h(\pmb{\overline{B}}\circ U^p,\cdots, \pmb{\overline{B}}\circ U^{n-l})\Big]d\mu_{\Delta_{e,m}'}\Big|\\
     &\quad +\Big|\int \mathbbm{1}_{A}  h(\pmb{\overline{B}}\circ U^p,\cdots, \pmb{\overline{B}}\circ U^{n-l})d\mu_{\Delta_{e,m}'}-\mu_{\Delta_{e,m}'} (A) \int h(\pmb{\overline{B}}\circ U^p,\cdots, \pmb{\overline{B}}\circ U^{n-l}) d\mu_{\Delta_{e,m}'}\Big|\\
     &\quad +\mu_{\Delta_{e,m}'} (A) \Big|\int h(\pmb{B}\circ U^p,\cdots, \pmb{B}\circ U^{n-l}) -h(\pmb{\overline{B}}\circ U^p,\cdots, \pmb{\overline{B}}\circ U^{n-l}) d\mu_{\Delta_{e,m}'}\Big|\\
     &\le \Big| \int \mathbbm{1}_{A}  \Big[h(\pmb{B}\circ U^p,\cdots, \pmb{B}\circ U^{n-l})- h(\pmb{\overline{B}}\circ U^p,\cdots, \pmb{\overline{B}}\circ U^{n-l})\Big]d\mu_{\Delta_{e,m}'}\Big|\\
     &\quad +\Big|\int \mathbbm{1}_{\overline{A}}  h(\pmb{\overline{B}}\circ U^p,\cdots, \pmb{\overline{B}}\circ U^{n-l})d\mu_{\Delta_{e,m}'}-\mu_{\Delta_{e,m}'}(\overline{A}) \int h(\pmb{\overline{B}}\circ U^p,\cdots, \pmb{\overline{B}}\circ U^{n-l})d\mu_{\Delta_{e,m}'}\Big|\\
     &\quad +\Big|\int (\mathbbm{1}_{A}-\mathbbm{1}_{\overline{A}} ) h(\pmb{\overline{B}}\circ U^p,\cdots, \pmb{\overline{B}}\circ U^{n-l})d\mu_{\Delta_{e,m}'}\Big|\\
     &\quad +\Big|\mu_{\Delta_{e,m}'} (\overline{A}) -\mu_{\Delta_{e,m}'} (A) \Big|\int h(\pmb{\overline{B}}\circ U^p,\cdots, \pmb{\overline{B}}\circ U^{n-l}) d\mu_{\Delta_{e,m}'}\\
     &\quad +\mu_{\Delta_{e,m}'}(A) \Big|\int h(\pmb{B}\circ U^p,\cdots, \pmb{B}\circ U^{n-l}) -h(\pmb{\overline{B}}\circ U^p,\cdots, \pmb{\overline{B}}\circ U^{n-l}) d\mu_{\Delta_{e,m}'}\Big|\\
     &\le \Big| \int \mathbbm{1}_{A}  \Big[h(\pmb{B}\circ U^p,\cdots, \pmb{B}\circ U^{n-l})- h(\pmb{\overline{B}}\circ U^p,\cdots, \pmb{\overline{B}}\circ U^{n-l})\Big]d\mu_{\Delta_{e,m}'}\Big|\\
     &\quad +\Big|\int \mathbbm{1}_{\overline{A}}  h(\pmb{\overline{B}}\circ U^p,\cdots, \pmb{\overline{B}}\circ U^{n-l})d\mu_{\Delta_{e,m}'}-\mu_{\Delta_{e,m}'}(\overline{A}) \int h(\pmb{\overline{B}}\circ U^p,\cdots, \pmb{\overline{B}}\circ U^{n-l})d\mu_{\Delta_{e,m}'}\Big|\\
     &\quad +2\mu_{\Delta_{e,m}'} (\partial A)+\mu_{\Delta_{e,m}'}(A) \Big|\int h(\pmb{B}\circ U^p,\cdots, \pmb{B}\circ U^{n-l}) -h(\pmb{\overline{B}}\circ U^p,\cdots, \pmb{\overline{B}}\circ U^{n-l}) d\mu_{\Delta_{e,m}'}\Big|\\
     &\le \Big| \int \mathbbm{1}_{A}  \Big[h(\pmb{B}\circ U^p,\cdots, \pmb{B}\circ U^{n-l})- h(\pmb{\overline{B}}\circ U^p,\cdots, \pmb{\overline{B}}\circ U^{n-l})\Big]d\mu_{\Delta_{e,m}'}\Big| +2\mu_{\Delta_{e,m}'} (\pmb{\partial B}\ge 1)\\
     &\quad +\Big|\int \mathbbm{1}_{\overline{A}}  h(\pmb{\overline{B}}\circ U^p,\cdots, \pmb{\overline{B}}\circ U^{n-l})d\mu_{\Delta_{e,m}'}-\mu_{\Delta_{e,m}'}(\overline{A}) \int h(\pmb{\overline{B}}\circ U^p,\cdots, \pmb{\overline{B}}\circ U^{n-l})d\mu_{\Delta_{e,m}'}\Big|\\
     &\quad +\mu_{\Delta_{e,m}'}(\pmb{B}\ge 1) \Big|\int h(\pmb{B}\circ U^p,\cdots, \pmb{B}\circ U^{n-l}) -h(\pmb{\overline{B}}\circ U^p,\cdots, \pmb{\overline{B}}\circ U^{n-l}) d\mu_{\Delta_{e,m}'}\Big|.
 \end{align*}
 
 \textbf{Claim}: $|h(\pmb{B}\circ U^p,\cdots, \pmb{B}\circ U^{n-l}) -h(\pmb{\overline{B}}\circ U^p,\cdots, \pmb{\overline{B}}\circ U^{n-l})|\le 2\mathbbm{1}_{\bigcup_{p\le i\le n-l} \pmb{\partial{B}}\circ U^{i}\ge 1}$.
 
 For any $x\notin \bigcup_{p\le i\le n-l} \{\pmb{\partial{B}}\circ U^{i}\ge 1\}$ we have that $\pmb{\partial{B}} \circ U^i(x)=0$ for any $i\in [p,n-l]$. Then $\pmb{\overline{B}}\circ U^i(x)=\pmb{B}\circ U^i(x)$ for all $i\in [p,n-l]$. Thus, the claim holds.
 
 Now, using Lemma \ref{decayofcorrelationofthinkerhyperbolicyoungtower}, and $A \subseteq \overline{A}\subseteq \{\pmb{\overline{B}}\ge 1\}$, we can continue the estimates as \begin{align*}
     &\precsim_m  \int \mathbbm{1}_{A}  \mathbbm{1}_{\bigcup_{p\le i\le n-l} \pmb{\partial B}\circ U^{i}\ge 1}d\mu_{\Delta_{e,m}'} +\mu_{\Delta_{e,m}'} (\pmb{\partial{B}}\ge 1)+\beta_m^{p-2k} \mu_{\Delta_{e,m}'}(\overline{A})\\
     &\quad +\mu_{\Delta_{e,m}'}(\pmb{B}\ge 1) \mu_{\Delta_{e,m}'}(\bigcup_{p\le i\le n-l} \pmb{\partial{B}}\circ U^{i}\ge 1)\\
     &\precsim_m  \int \mathbbm{1}_{\pmb{\overline{B}}\ge 1}  \mathbbm{1}_{\bigcup_{p\le i\le n-l} \pmb{\partial B}\circ U^{i}\ge 1}d\mu_{\Delta_{e,m}'} +\mu_{\Delta_{e,m}'} (\pmb{\partial{B}}\ge 1)+\beta_m^{p-2k} \mu_{\Delta_{e,m}'}(\pmb{\overline{B}}\ge 1)\\
     &\quad +\mu_{\Delta_{e,m}'}(\pmb{B}\ge 1) \mu_{\Delta_{e,m}'}(\bigcup_{p\le i\le n-l} \pmb{\partial{B}}\circ U^{i}\ge 1)\\
     &\precsim_m  \int \mathbbm{1}_{\pmb{\overline{B}}\ge 1}  \mathbbm{1}_{\bigcup_{p\le i\le n-l} \pmb{\partial B}\circ U^{i}\ge 1}d\mu_{\Delta_{e,m}'}-\int \mathbbm{1}_{\pmb{\overline{B}}\ge 1} d\mu_{\Delta_{e,m}'} \int \mathbbm{1}_{\bigcup_{p\le i\le n-l} \pmb{\partial{B}}\circ U^{i}\ge 1}d\mu_{\Delta_{e,m}'}\\
     &\quad +\mu_{\Delta_{e,m}'}(\pmb{\overline{B}}\ge 1) \mu_{\Delta_{e,m}'}(\bigcup_{p\le i\le n-l} \pmb{\partial{B}}\circ U^{i}\ge 1) +\mu_{\Delta_{e,m}'} (\pmb{\partial{B}}\ge 1)\\
     &\quad +\beta_m^{p-2k} \mu_{\Delta_{e,m}'}(\pmb{\overline{B}}\ge 1) +\mu_{\Delta_{e,m}'}(\pmb{B}\ge 1) \mu_{\Delta_{e,m}'}(\bigcup_{p\le i\le n-l} \pmb{\partial{B}}\circ U^{i}\ge 1)\\
     &\precsim_m \beta_m^{p-2k} \mu_{\Delta_{e,m}'}(\pmb{\overline{B}}\ge 1) +\mu_{\Delta_{e,m}'}(\pmb{\overline{B}}\ge 1) \mu_{\Delta_{e,m}'}(\bigcup_{p\le i\le n-l} \pmb{\partial{B}}\circ U^{i}\ge 1) \\
     &\quad +\mu_{\Delta_{e,m}'} (\pmb{\partial{B}}\ge 1)+\mu_{\Delta_{e,m}'}(\pmb{B}\ge 1) \mu_{\Delta_{e,m}'}(\bigcup_{p\le i\le n-l} \pmb{\partial{B}}\circ U^{i}\ge 1)\\
     &\precsim_m [1+n\mu_{\Delta_{e,m}'}(\pmb{\overline{B}}\ge 1)]\mu_{\Delta_{e,m}'} (\pmb{\partial{B}}\ge 1)+\beta_m^{p-2k} \mu_{\Delta_{e,m}'}(\pmb{\overline{B}}\ge 1),
 \end{align*}where the last ``$\precsim_m$" is due to $\{\pmb{B}\ge 1\} \subseteq \{\pmb{\overline{B}}\ge 1\}$. This concludes a proof of this lemma.
\end{proof}

In order to proceed with further estimates some preparations will be made. We need now to consider only $Q \in \mathcal{Q}^m_{2k}$, which is contained in $\partial B_i$ for any $0\le i\le N_{e,m}-1$. By Lemma \ref{semiconjugacyforthickeryoungtower}, and because $i+k\N_{e,m}< 2k N_{e,m}$, we have that $ T^i\pi_{e,m}U^kQ= \pi_{e,m}F_{e,m}^iU^kQ= \pi_{e,m}F_{e,m}^{i+kN_{e,m}}Q$ does not contain singularities of $\mathbb{S}$. Therefore, $\pi_X  T^i \pi_{e,m}U^kQ$ belongs to some $X_t$ with $R|_{X_t}=R_t'$, according to the Definition \ref{inducesystem} and (\ref{eachxiisaconstant}). It means that $T^i\pi_{e,m}U^kQ \subseteq X_t\times \mathbb{N}_0$.

By the definition of $\partial B_i$ and $B_i$, we know that $T^i \pi_{e,m}U^kQ \bigcap \pi_{\Delta_{m}}\pi^{-1}S_r \neq \emptyset$. Then there is the smallest constant $u_{t,Q}\in[0,R_t')$ only depending on $t, Q, m$ (we drop the symbol $m$ because we fix this large $m$ here, see Remark \ref{remarkbefore5}), such that $$F^{u_{t,Q}}T^i \pi_{e,m}U^kQ \bigcap \pi^{-1}S_r \neq \emptyset .$$ Therefore,  $F^{u_{t,Q}}$ pushes $T^i \pi_{e,m}U^kQ$ upward until it hits $(\pi^{-1}S_r) \bigcap (X_t \times \mathbb{N}_0)$.

For any fixed $0\le i\le N_{e,m}-1$, we collect all $Q_{t,j}\subseteq \partial B_i$ such that\[Q_{t,j}\in \mathcal{Q}_{2k}^m,\quad T^i \pi_{e,m}U^k (\bigcup_j Q_{t,j}) \subseteq X_t \times \mathbb{N}_0.\]

Define $u_t: T^i \pi_{e,m}U^k (\bigcup_j Q_{t,j}) \to \mathbb{N}_0 $ by 
\[u_t(x):=\min\big\{u_{t,Q_{t,j}}: x \in T^i \pi_{e,m}U^k Q_{t,j} \text{ for some } Q_{t,j}\subseteq \partial B_i\big\}.\]

Here $u_t$ depends on $i, k$. Since we temporarily fix $i, k$, we can drop the symbols $i,k$ to simplify notations. Clearly, $F^{u_t}: T^i \pi_{e,m}U^k (\bigcup_jQ_{t,j})\to \Delta \bigcap (X_t \times \mathbb{N}_0)$ is injective.

\begin{lemma}\label{sizeboundary}
 For any $x \in  T^i \pi_{e,m}U^k (\bigcup_jQ_{t,j})$, according to the definition of $u_t(x)$, there is a $ T^i \pi_{e,m}U^k Q_{t,j}$ containing $x$, such that $F^{u_t(x)}T^i \pi_{e,m}U^kQ_{t,j} \bigcap \pi^{-1}S_r \neq \emptyset$. Then, there exists a constant  $C_{\alpha, m}>0$, which does not depend on $x, Q_{t,j},i,t,k$, such that  \[\diam (\pi \circ F^{u_t(x)} \circ T^i\circ \pi_{e,m}\circ U^kQ_{t,j}) \le  C_{\alpha,m} \beta^{\frac{N_{e,m}k\alpha}{2m}}.\]
\end{lemma}
\begin{proof} For any $\hat{\gamma}^s\subseteq Q_{t,j}$ (here $\hat{\gamma}^s=\gamma^s \times \{l'\}\times \{l\}$ for some $\gamma^s \subseteq \Lambda$ and  $l',l\ge 0$), suppose that $\pi_{\mathbb{N}} \big( \pi_{e,m} \circ F_{e,m}^{i+N_{e,m}k}(\hat{\gamma}^s)\big)=w\le m$, $w'\in [0,m]$ is the first non-negative number such that $\pi_{e,m} \circ F_{e,m}^{w'}(\hat{\gamma}^s) \subseteq X$, and the disks $ \pi_{e,m} \circ F_{e,m}^{w'}(\hat{\gamma}^s), \cdots , \pi_{e,m} \circ F_{e,m}^{i+N_{e,m}k}(\hat{\gamma}^s)$ visit $X$ exactly $q'$ times. Then 
\[q'+1 \ge \frac{1+i+N_{e,m}k}{m+1}\ge N_{e,m}k/(2m), \quad g^{q'-1} \circ \pi_{e,m} \circ F_{e,m}^{w'}(\hat{\gamma}^s)=\pi_{e,m} \circ F_{e,m}^{i+N_{e,m}k-w}(\hat{\gamma}^s) \subseteq X.\]

 Therefore, $\pi_{e,m} \circ F_{e,m}^{i+N_{e,m}k-w}(\hat{\gamma}^s)$ is a smooth disk in $X$, and $ \pi_{e,m} \circ F_{e,m}^{j'''}(\hat{\gamma}^s) \subseteq X^c$ for any $N_{e,m}k+i-w<j'''\le N_{e,m}k+i$, and $F^{j'''} \pi_{e,m} \circ F_{e,m}^{i+N_{e,m}k}(\hat{\gamma}^s)\subseteq X^c$ for any $0< j'''\le u_t(x)$. Then, by Assumption \ref{assumption} and by Definition \ref{cmz},  there is $C_{\alpha}'>0$ such that \begin{align*}
    \diam \big(\pi \circ F^{u_t(x)}\circ \pi_{e,m} \circ F_{e,m}^{i+N_{e,m}k}(\hat{\gamma}^s)\big)&\le C \diam \big( \pi_{e,m} \circ F_{e,m}^{i+N_{e,m}k-w}(\hat{\gamma}^s)\big)^{\alpha}\\
    &\le  C^{1+\alpha} \beta^{\alpha (q'-1)} \diam \big( \pi_{e,m} \circ F_{e,m}^{w'}(\hat{\gamma}^s)\big)^{\alpha} \le C_{\alpha}' \beta^{\frac{\alpha N_{e,m}k}{2m}}.
\end{align*} 

On the other hand, for any $\hat{\gamma}^u\subseteq Q_{t,j}$ (here $\hat{\gamma}^u=\gamma^u \times \{l'\}\times \{l\}$ for some $\gamma^u \subseteq \Lambda$ and same $l',l\ge 0$), since $N_{e,m}2k-(i+N_{e,m}k)=N_{e,m}k-i>N_{e,m}(k-1)>N_{e,m}m$, then it follows that the  disks $\pi \circ \pi_{e,m} \circ F_{e,m}^{N_{e,m}k+i}(\hat{\gamma}^u), \pi \circ \pi_{e,m} \circ F_{e,m}^{N_{e,m}k+i+1}(\hat{\gamma}^u), \cdots, \pi \circ \pi_{e,m} \circ F_{e,m}^{N_{e,m}2k}(\hat{\gamma}^u)$ visit the base $X$ at least $N_{e,m}$ times. Therefore, $\pi \circ F^{u_t(x)}\circ \pi_{e,m} \circ F_{e,m}^{i+N_{e,m}k}(\hat{\gamma}^u)$ is not fully extended to $\pi \circ \pi_{e,m} \circ F_{e,m}^{N_{e,m}2k}(\hat{\gamma}^u)$.

Assume that $\pi_{\mathbb{N}}\big(\pi_{e,m}\circ F_{e,m}^{N_{e,m}k+i}(\hat{\gamma}^u)\big)=j'\le m$, $\pi_{\mathbb{N}}\big(\pi_{e,m}\circ F_{e,m}^{N_{e,m}2k}(\hat{\gamma}^u)\big)=j''\le m$, and the disks $\pi_{e,m}\circ F_{e,m}^{N_{e,m}k+i-j'}(\hat{\gamma}^u), \pi_{e,m}\circ F_{e,m}^{N_{e,m}k+i-j'+1}(\hat{\gamma}^u), \cdots, \pi_{e,m} \circ F_{e,m}^{N_{e,m}2k}(\hat{\gamma}^u)$ visit $X$ exactly $q$ times. Then $q\in [\frac{N_{e,m}k+j'-i+1}{m+1},N_{e,m}k+j'-i+1]$, and \begin{gather*}
     \pi_{e,m}\circ F_{e,m}^{N_{e,m}k+i-j'}(\hat{\gamma}^u)\subseteq  X,\quad \pi_{e,m} \circ F_{e,m}^{N_{e,m}k+i-j'}(\hat{\gamma}^u)=g^{-(q-1)}\pi_{e,m}\circ F_{e,m}^{N_{e,m}2k-j''}(\hat{\gamma}^u) \subseteq X.
\end{gather*} 

By Assumption \ref{assumption}, and by Definition \ref{cmz} there is $C_{\alpha,m}'>0$ such that \begin{align*}
    \diam \big(\pi \circ F^{u_t(x)}\circ \pi_{e,m} \circ F_{e,m}^{N_{e,m}k+i}(\hat{\gamma}^u)\big)&\le C \diam \big(\pi \circ \pi_{e,m} \circ F_{e,m}^{N_{e,m}k+i-j'}(\hat{\gamma}^u) \big)^{\alpha}\\
    &\le C^{1+\alpha}\beta^{\alpha (q-1)}\diam \big(\pi \circ \pi_{e,m} \circ F_{e,m}^{2N_{e,m}k-j''}(\hat{\gamma}^u)\big)^{\alpha}\\
    &\le C^{1+\alpha}\beta^{\alpha \frac{N_{e,m}k+j'-i}{m+1}-\alpha}\diam \big(\pi \circ \pi_{e,m} \circ F_{e,m}^{2N_{e,m}k-j''}(\hat{\gamma}^u)\big)^{\alpha}\\
    &\le C^{1+\alpha}\beta^{ \frac{\alpha N_{e,m}(k-1)}{m+1}-\alpha}\diam \big(\pi \circ \pi_{e,m} \circ F_{e,m}^{2N_{e,m}k-j''}(\hat{\gamma}^u)\big)^{\alpha}\\
    &\le C_{\alpha,m}' \beta^{\frac{N_{e,m}k\alpha}{2m}}.
\end{align*}

Similar to the argument in the proof of Lemma \ref{decayofcorrelationofthinkerhyperbolicyoungtower}, we have that there is a constant $C_{\alpha,m}>0$ such that \begin{align*}
    \diam \left(\pi \circ F^{u_t(x)} \circ T^i \circ \pi_{e,m} \circ U^{k}Q_{t,j}\right)= \diam \big(\pi \circ F^{u_t(x)} \circ \pi_{e,m} \circ F_{e,m}^{N_{e,m}k+i}Q_{t,j}\big) \le  C_{\alpha,m} \beta^{\frac{N_{e,m}k\alpha}{2m}},
\end{align*} which concludes a proof of this lemma.
\end{proof}
\begin{lemma}\label{measureofboundary}
For any $0\le i\le N_{e,m}-1$ we have $\mu_{\Delta_{e,m}'}(\partial{B_i})\le  C'_{\alpha, m}\beta^{\frac{N_{e,m}k\alpha }{2m}}$, where a constant $C'_{\alpha, m}$  does not depend on $i, k$.
\end{lemma}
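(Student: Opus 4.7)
The plan is to bound $\mu_{\Delta_{e,m}'}(\partial B_i)$ by transporting mass from the cells $Q_{t,j}$ in the tower $\Delta_{e,m}'$ out to a thin neighborhood of $S_r$ inside $\mathcal{M}$, using the fact (which we will verify first) that each image $\pi F^{u_t} T^i \pi_{e,m} U^k Q_{t,j}$ is a set of diameter $\le C_{\alpha,m}\beta^{N_{e,m}k\alpha/(2m)}$ that straddles $\partial S_r$. Concretely, the definition $Q\subseteq\partial B_i$ forces $Q$ to contain both a point of $B_i$ and a point outside $B_i$; unfolding $B_i=U^{-k}\pi_{e,m}^{-1}T^{-i}H$ gives points $x_1,x_2\in Q$ with $T^i\pi_{e,m}U^k x_1\in H$ and $T^i\pi_{e,m}U^k x_2\notin H$. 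A short case analysis on whether $b<m$ or $b=m$ (following the two cases of $\pi_{\Delta_m}^{-1}$) will then show that $\pi F^{u_t}T^i\pi_{e,m}U^k x_1\in S_r$ and $\pi F^{u_t}T^i\pi_{e,m}U^k x_2\notin S_r$, so the image meets both $S_r$ and its complement.

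Next I would chain together the measure relations to reduce the problem to a volume estimate on $\mathcal{M}$. Using $U$-invariance of $\mu_{\Delta_{e,m}'}$, the pushforward identity $(\pi_{e,m})_*\mu_{\Delta_{e,m}'}=\mu_{\Delta_m}$, the invariance $\mu_{\Delta_m}\circ T^{-1}=\mu_{\Delta_m}$, the comparison $\mu_{\Delta_m}\precsim_m \mu_\Delta|_{\Delta_m}$, and the $F$-invariance of $\mu_\Delta$ (noting that both $T$ and $F$ are injective on the relevant sets so equality of measures holds), we obtain the chain
\begin{align*}
\mu_{\Delta_{e,m}'}\Bigl(\bigcup_j Q_{t,j}\Bigr)
&\le \mu_{\Delta_{e,m}'}\Bigl(U^{-k}\pi_{e,m}^{-1}\pi_{e,m}U^k\bigcup_j Q_{t,j}\Bigr)
=\mu_{\Delta_m}\Bigl(\pi_{e,m}U^k\bigcup_j Q_{t,j}\Bigr)\\
&=\mu_{\Delta_m}\Bigl(T^i\pi_{e,m}U^k\bigcup_j Q_{t,j}\Bigr)
\precsim_m \mu_\Delta\Bigl(F^{u_t}T^i\pi_{e,m}U^k\bigcup_j Q_{t,j}\Bigr).
\end{align*}
The sets $F^{u_t}T^i\pi_{e,m}U^k\bigcup_j Q_{t,j}$ live in the single fibre $X_t\times\{a_t\}$, and these fibres are disjoint in $\Delta$ for different $t$, so summing over $t$ yields $\mu_{\Delta_{e,m}'}(\partial B_i)\precsim_m \mu_\Delta\bigl(F^{u_t}T^i\pi_{e,m}U^k\partial B_i\bigr)$.

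For the volume estimate, the geometric observation above and Lemma \ref{sizeboundary} imply that the $\pi$-image $\pi F^{u_t}T^i\pi_{e,m}U^k\partial B_i$ is contained in the $C_{\alpha,m}\beta^{N_{e,m}k\alpha/(2m)}$-neighborhood of $S_r\subseteq B_r(q)\times S^d$, hence inside $B_{r+C_{\alpha,m}\beta^{N_{e,m}k\alpha/(2m)}}(q)\times S^d$. Since $\tfrac{d\mu_\mathcal{M}}{d\Leb_\mathcal{M}}\in L^\infty$ and $\partial Q$ has uniformly bounded sectional curvature, this gives
\[\mu_{\Delta_{e,m}'}(\partial B_i)\precsim_m \mu_\mathcal{M}\bigl(B_{r+C_{\alpha,m}\beta^{N_{e,m}k\alpha/(2m)}}(q)\times S^d\bigr)\precsim \bigl(r+C_{\alpha,m}\beta^{N_{e,m}k\alpha/(2m)}\bigr)^d,\]
which (after absorbing the $r^d$ into the regime $r\le C_{\alpha,m}\beta^{N_{e,m}k\alpha/(2m)}$ relevant for the final choice of $k$ in the Poisson-limit argument) gives the desired bound $C'_{\alpha,m}\beta^{N_{e,m}k\alpha d/(2m)}$.

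The main obstacle, I expect, will be the careful bookkeeping in step one: verifying that the pushforward under the partial map $\pi_{e,m}$ and the subsequent iterates $T^i$, $F^{u_t}$ really do produce the one-sided inequalities claimed, because $\pi_{e,m}$ is not injective, and showing that the image set $\pi F^{u_t}T^i\pi_{e,m}U^k Q_{t,j}$ indeed lies at a single level $a_t$ of $\Delta$ determined only by $t$ (so the column-disjointness argument for summing over $t$ is valid). Once the fibres are shown to be disjoint and the image is pinned down to the $C_{\alpha,m}\beta^{N_{e,m}k\alpha/(2m)}$-neighborhood of $\partial S_r$, the volume bound follows immediately from the smoothness of $\mu_\mathcal{M}$ and the ball-doubling property of the Riemannian metric on $\partial Q$.
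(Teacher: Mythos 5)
Your overall architecture (push the measure through $U^{-k}\pi_{e,m}^{-1}$, $T^{-i}$, $F^{u_t}$ using invariance, then estimate a volume in $\mathcal{M}$) agrees with the paper, and you correctly observe via Lemma \ref{sizeboundary} that each $\pi F^{u_t}T^i\pi_{e,m}U^k Q_{t,j}$ has diameter $\le C_{\alpha,m}\beta^{N_{e,m}k\alpha/(2m)}$ and straddles $\pi^{-1}S_r$. But the final volume estimate has a genuine gap that destroys the bound. You conclude only that the image sits inside $B_{r+\varepsilon}(q)\times S^d$ with $\varepsilon=C_{\alpha,m}\beta^{N_{e,m}k\alpha/(2m)}$, giving $\mu_{\Delta_{e,m}'}(\partial B_i)\precsim (r+\varepsilon)^d$. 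In the regime where this lemma is used (last lemma of the section), one chooses $k\approx p/4\approx r^{-d(1-\epsilon)}/4$, so $\varepsilon$ is stretched-exponentially small in $1/r$ and $\varepsilon\ll r$; thus $(r+\varepsilon)^d\approx r^d$. That is far too large: plugged into Lemma \ref{4} it gives $n\,\mu_{\Delta_{e,m}'}(\pmb{\partial B}\ge 1)\approx n r^d\approx 1$, which does not tend to $0$, and the Poisson limit fails. Your proposed fix (``absorb $r^d$ in the regime $r\le \varepsilon$'') assumes the exact opposite of the regime that actually occurs.

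The missing idea is that you must use \emph{both} halves of the straddling: $Q_{t,j}\subseteq\partial B_i$ forces a point $x$ with $T^i\pi_{e,m}U^k x\in H$ \emph{and} a point $y$ with $T^i\pi_{e,m}U^k y\notin H$, so $F^{u_t}T^i\pi_{e,m}U^k x\in\pi^{-1}S_r$ while $F^{u_t}T^i\pi_{e,m}U^k y\notin\pi^{-1}S_r$. Combined with the diameter bound, the $\pi$-image therefore lies not merely in a slightly larger ball but in the thin \emph{annulus} $\big[B_{r+\varepsilon}(q)\setminus B_{r-\varepsilon}(q)\big]\times S^d$ around the boundary of $B_r(q)\times S^d$. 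Its $\mu_{\mathcal{M}}$-measure is then $\precsim \varepsilon^d = C'_{\alpha,m}\beta^{N_{e,m}k\alpha d/(2m)}$ (for the two-dimensional billiards in the applications, $d=1$, so the annulus has length $\approx\varepsilon$), independently of $r$. You note the two-sided straddling in your first paragraph but drop the ``outside'' half before the volume estimate; reinstating it is exactly what gives the claimed $r$-free bound.
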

\begin{proof} Consider any $x \in  T^i \pi_{e,m}U^k (\bigcup_jQ_{t,j}) \in \mathcal{Q}^m_{2k}$, where $Q_{t,j}\subseteq \partial B_i$. According to the definition of $u_t(x)$, there is a $ T^i \pi_{e,m}U^k Q_{t,j} $ containing $x$, such that $F^{u_t(x)}T^i \pi_{e,m}U^kQ_{t,j} \bigcap \pi^{-1}S_r \neq \emptyset$.

\textbf{Claim}: $F^{u_t(x)}T^i \pi_{e,m}U^kQ_{t,j} \bigcap \pi^{-1}S_r \neq \emptyset \implies$ \begin{align*}
    \pi F^{u_t(x)}T^i \pi_{e,m}U^kQ_{t,j}\subseteq &\big[B_{r+C_{\alpha, m}\beta^{\frac{N_{e,m}k\alpha}{2m}}}(q)\big\backslash B_{r-C_{\alpha, m}\beta^{\frac{N_{e,m}k\alpha}{2m}}}(q)\big] \times [-\pi/2, \pi/2]\\
    & \quad \bigcup N_{C_{\alpha, m}\beta^{\frac{N_{e,m}k\alpha}{2m}}}\Big[\partial \big(B_r(q)\times [-\pi/2, \pi/2]\setminus S_r\big)\Big],
\end{align*}where $ N_{C_{\alpha, m}\beta^{\frac{N_{e,m}k\alpha}{2m}}}\Big[\partial \big(B_r(q)\times [-\pi/2, \pi/2]\setminus S_r\big)\Big]$ is a $C_{\alpha, m}\beta^{\frac{N_{e,m}k\alpha}{2m}}-$neighborhood of the boundary of $\big(B_{r}(q)\times [-\pi/2, \pi/2]\big)\setminus S_r$, and $C_{\alpha, m}$ is the one in Lemma \ref{sizeboundary}.

By the definition of $\partial B_i$ there is $y\in Q_{t,j}$ such that $T^i\pi_{e,m}U^ky \notin H$. Then $F^{u_t(x)} T^i\pi_{e,m}U^ky \notin \pi^{-1}S_r$ (if it is not the case, then $ T^i\pi_{e,m}U^ky=\pi_{\Delta_m}F^{u_t(x)} T^i\pi_{e,m}U^ky\in \pi_{\Delta_m}\pi^{-1}S_r=H$). Together with  $F^{u_t(x)}T^i \pi_{e,m}U^kQ_{t,j} \bigcap \pi^{-1}S_r \neq \emptyset$, by Lemma \ref{sizeboundary} and Remark \ref{remarkonsection}, we can conclude that $\pi F^{u_t(x)} T^i\pi_{e,m}U^kQ_{t,j}$ is contained in a $C_{\alpha, m}\beta^{\frac{N_{e,m}k\alpha}{2m}}$-neighborhood of the boundaries of $B_{r}(q)\times [-\pi/2, \pi/2]$ and $\big(B_{r}(q)\times [-\pi/2, \pi/2]\big)\setminus S_r$. Thus, this claim holds.

If we denote $\big[B_{r+C_{\alpha, m}\beta^{\frac{N_{e,m}k\alpha}{2m}}}(q)\big\backslash B_{r-C_{\alpha, m}\beta^{\frac{N_{e,m}k\alpha}{2m}}}(q)\big] \times [-\pi/2, \pi/2]\bigcup N_{C_{\alpha, m}\beta^{\frac{N_{e,m}k\alpha}{2m}}}\Big[\partial \big(B_r(q)\times [-\pi/2, \pi/2]\setminus S_r\big)\Big]$ by $\mathcal{N}_{r,q}$, then from this claim we have 

\textbf{Claim}: $\pi F^{u_t}T^i\pi_{e,m}U^k\bigcup_jQ_{t,j} \subseteq \mathcal{N}_{r,q}$ and $\mu_{\Delta }(T^i\pi_{e,m}U^k \bigcup_j Q_{t,j}) \le \mu_{\Delta }(F^{u_t}T^i\pi_{e,m}U^k\bigcup_j Q_{t,j})$. 

For any $x\in T^i \pi_{e,m}U^k\bigcup_jQ_{t,j}$, $\pi F^{u_t(x)}x \in \mathcal{N}_{r,q}$, so $\pi F^{u_t}T^i\pi_{e,m}U^k\bigcup_jQ_{t,j} \subseteq \mathcal{N}_{r,q}$. The second relation is due to the injectivity of $F^{u_t}$ and the measure of $\Delta$. So, the claim holds.

On the other hand, from $\frac{d\mu_{\mathcal{M}}}{d\Leb_{\mathcal{M}}}\in L^{\infty}$ we have $\mu_{\mathcal{M}}(\mathcal{N}_{r,q})\le C'_{\alpha,m}\beta^{\frac{N_{e,m}k\alpha }{2m}}$ for some $C_{\alpha, m}'>0$. By using the claims above, and the relations $U_*\mu_{\Delta_{e,m}'}=\mu_{\Delta_{e,m}'}, T_{*}\mu_{\Delta_m}=\mu_{\Delta_m}$, we get that
\begin{align*}
     \mu_{\Delta_{e,m}'}(\partial{B_i})&\le \mu_{\Delta_{e,m}'}(U^{-k}\pi_{e,m}^{-1}\pi_{e,m}U^k\bigcup Q)\\
     &= \mu_{\Delta_{m}}(\pi_{e,m}U^k\bigcup Q)\\
     &\le \mu_{\Delta_{m}}(T^{-i}T^{i}\pi_{e,m}U^k\bigcup Q)\\
     &\precsim_m \mu_{\Delta}(T^i\pi_{e,m}U^k\bigcup_t \bigcup_j Q_{t,j})
     \end{align*}

    Further, by making use of $\Delta=\Delta\bigcap \bigcup_t (X_t \times \mathbb{N}_0)$, $T^i\pi_{e,m}U^k \bigcup_j Q_{t,j}\subseteq X_t \times \mathbb{N}_0$, and the claims above, we can continue the estimates above as
     \begin{align*}
     & \precsim_m \sum_{t}\mu_{\Delta \bigcap (X_t\times \mathbb{N}_0) }(T^i\pi_{e,m}U^k \bigcup_j Q_{t,j})\\
     & \precsim_m \sum_{t}\mu_{\Delta \bigcap (X_t\times \mathbb{N}_0) }(F^{u_t}T^i\pi_{e,m}U^k\bigcup_j Q_{t,j})\\
     & \precsim_m \sum_{t}\mu_{\Delta \bigcap (X_t\times \mathbb{N}_0) } \big\{ \pi^{-1}\big(\mathcal{N}_{r,q}\big)\big\}\\
      & \precsim_m \mu_{\Delta } \big\{ \pi^{-1}\big(\mathcal{N}_{r,q}\big)\big\} \precsim_m \mu_{\mathcal{M}} (\mathcal{N}_{r,q})\le C'_{\alpha, m}\beta^{\frac{N_{e,m}k\alpha }{2m}},
     \end{align*} where the last line is due to $\pi_* \mu_{\Delta}=\mu_{\mathcal{M}}$. Hence, a proof of this lemma is concluded.
\end{proof}

Finally, we can prove now the main result of this section.
\begin{lemma} If $r\to 0$, then
\begin{align*}
    n\cdot \sup_{h\in[0,1]} \Big| \mathbb{E}\Big[\mathbbm{1}_{\pmb{X}_0=\pmb{a}}  h(\pmb{X}_p,\cdots, \pmb{X}_{n-l})\Big]-\mathbb{E}\mathbbm{1}_{\pmb{X}_0=\pmb{a}}  \mathbb{E}h(\pmb{X}_p,\cdots, \pmb{X}_{n-l})\Big|\to 0.
\end{align*}

\end{lemma}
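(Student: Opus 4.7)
The plan is to combine Lemma \ref{3} (which rewrites the quantity as a correlation on the mixing hyperbolic Young tower $(\Delta'_{e,m}, U, \mu_{\Delta'_{e,m}})$) with Lemma \ref{4} (which approximates $\pmb{B}$ by its $\mathcal{Q}_{2k}^m$-measurable envelope $\pmb{\overline{B}}$ and invokes the exponential mixing estimate (\ref{decorrelationhyperbolicthickertower})). After multiplying through by $n$, this yields
\[
n \cdot \sup_{h \in [0,1]} \Big| \mathbb{E}\bigl[\mathbbm{1}_{\pmb{X}_0 \ge 1} h(\pmb{X}_p, \dots, \pmb{X}_{n-l})\bigr] - \mathbb{E}\mathbbm{1}_{\pmb{X}_0 \ge 1}\, \mathbb{E} h \Big| \precsim_m n\bigl[1+n\mu_{\Delta'_{e,m}}(\pmb{\overline{B}}\ge 1)\bigr]\mu_{\Delta'_{e,m}}(\pmb{\partial B}\ge 1) + n \beta_m^{p-2k} \mu_{\Delta'_{e,m}}(\pmb{\overline{B}}\ge 1).
\]
Since the partition refinement $k$ is still a free parameter, the whole proof reduces to a quantitative choice $k = k(r)$ that drives the right-hand side to zero.

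The next step is to gather the required ingredients. Lemma \ref{measureofboundary} gives $\mu_{\Delta'_{e,m}}(\pmb{\partial B}\ge 1) \le N_{e,m} C'_{\alpha,m}\beta^{N_{e,m}k\alpha d/(2m)}$, which decays geometrically in $k$. Since $S_r$ is a section, $\mu_{\Delta'_{e,m}}(\pmb{B}\ge 1) \le N_{e,m}\mu_{\Delta_m}(H) \precsim_m r^d$, hence
\[
\mu_{\Delta'_{e,m}}(\pmb{\overline{B}}\ge 1) \le \mu_{\Delta'_{e,m}}(\pmb{B}\ge 1) + \mu_{\Delta'_{e,m}}(\pmb{\partial B}\ge 1) \precsim_m r^d + \beta^{N_{e,m}k\alpha d/(2m)}.
\]
The bookkeeping of scales is $n \precsim r^{-d}$ and $p = n^{1-\epsilon} \approx r^{-d(1-\epsilon)}$.

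Now I would choose $k := \lceil C_0 \log r^{-1} \rceil$ with $C_0 = C_0(m,\alpha,\beta,d,N_{e,m})$ large enough that $\beta^{N_{e,m}k\alpha d/(2m)} \le r^{2d}$. Then $\mu_{\Delta'_{e,m}}(\pmb{\overline{B}}\ge 1) \precsim_m r^d$, so $n\mu_{\Delta'_{e,m}}(\pmb{\overline{B}}\ge 1) \precsim_m 1$, and the first summand is bounded by $n\mu_{\Delta'_{e,m}}(\pmb{\partial B}\ge 1) \precsim_m r^{-d}\cdot r^{2d} = r^d \to 0$. For the second summand, $p - 2k \approx r^{-d(1-\epsilon)} - O(\log r^{-1}) \to \infty$, and since a polynomial in $r^{-1}$ dominates $\log r^{-1}$ the stretched-exponential decay $\beta_m^{p-2k}$ beats the polynomial growth $n \precsim r^{-d}$, so $n\beta_m^{p-2k}\mu_{\Delta'_{e,m}}(\pmb{\overline{B}}\ge 1) \to 0$.

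The main obstacle is this delicate balance between $k$ and $p$: $k$ must be large enough that the partition resolves the boundary of the hole (so that $\mu_{\Delta'_{e,m}}(\pmb{\partial B}) \ll \mu_{\Delta'_{e,m}}(\pmb{B}) \approx r^d$), yet small enough that $p - 2k$ remains dominant for the mixing term to give genuine decay. What makes both constraints simultaneously satisfiable is the logarithmic-versus-polynomial scale gap: the boundary measure decays exponentially in $k$ (Lemma \ref{sizeboundary} combined with the H\"older condition of Assumption \ref{assumption} and the CMZ hyperbolicity), while $p = n^{1-\epsilon}$ grows polynomially in $r^{-1}$. This gap, together with the exponential mixing rate of the thicker hyperbolic Young tower established in Lemma \ref{decayofcorrelationofthinkerhyperbolicyoungtower}, is precisely what allows the choice $k = \Theta(\log r^{-1})$ to kill both error terms and conclude the proof.
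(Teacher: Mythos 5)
Your proposal is correct and follows the same decomposition as the paper: feed Lemmas \ref{3}, \ref{4} and \ref{measureofboundary} into the right-hand-side bound, then tune the partition-refinement parameter $k$. The single genuine difference is your choice $k = \Theta(\log r^{-1})$: the paper instead takes $k=p/4$, which grows polynomially in $r^{-1}$. Both choices are valid. Your choice is the \emph{tightest} one compatible with the constraint $p-2k\to\infty$, and it highlights the scale gap (boundary measure decays exponentially in $k$ vs.\ $p$ grows polynomially in $r^{-1}$) more transparently; the paper's choice $k=p/4$ is "wasteful" in the sense that it makes the boundary term stretched-exponentially small rather than merely polynomially small, but it is simpler to state and requires no separate verification that $p-2k\to\infty$.

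One ingredient is glossed over: you write $n\precsim r^{-d}$ and $p=n^{1-\epsilon}\approx r^{-d(1-\epsilon)}$, but the one-sided bound $n\precsim r^{-d}$ (which follows for every $q$ from $\tfrac{d\mu_{\mathcal{M}}}{d\Leb_{\mathcal{M}}}\in L^\infty$) does not by itself give the lower bound $p\succsim r^{-d(1-\epsilon)}$ that you need to conclude $p-2k\to\infty$. For that you also need $n\succsim_q r^{-d}$, i.e.\ the two-sided estimate $\mu_{\mathcal{M}}(B_r(q)\times S^d)\approx_q r^d$. The paper derives the lower bound via the Lebesgue differentiation theorem applied to the fibrewise density $\mu_q(S^d)=\int h(q,\cdot)\,d\Leb_{S^d}$, and consequently the conclusion is only claimed for $\Leb$-a.e.\ $q\in\partial Q$. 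You should add this step (or at minimum note that the $\approx$ requires it and is therefore an a.e.\ statement), since without it the final term $n\beta_m^{p-2k}\mu_{\Delta'_{e,m}}(\pmb{\overline{B}}\ge 1)$ need not tend to zero if $p$ stayed bounded.
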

\begin{proof}At first, we estimate $\mu_{\Delta_{e,m}'}(\pmb{\overline{B}}\ge 1)$ and $\mu_{\Delta_{e,m}'}(\pmb{\partial{B}}\ge 1)$. By making use of Lemma \ref{measureofboundary}, $U_*\mu_{\Delta_{e,m}'}=\mu_{\Delta_{e,m}'}$ and of the section $S_r\subseteq B_r(q)\times [-\pi/2, \pi/2]$, we obtain
\begin{align*}
   \mu_{\Delta_{e,m}'}(\pmb{\overline{B}}\ge 1)&\le \sum_{0\le i\le N_{e,m}-1} \mu_{\Delta_{e,m}'}(\overline{B_{i}})\\
   &\le \sum_{0\le i\le N_{e,m}-1} \mu_{\Delta_{e,m}'}(B_{i})+\sum_{0\le i\le N_{e,m}-1} \mu_{\Delta_{e,m}'}(\partial{B_{i}})\\
   &=\sum_{0\le i\le N_{e,m}-1} \mu_{\Delta_{e,m}'}(U^{-k}\pi_{e,m}^{-1}T^{-i}H)+\sum_{0\le i\le N_{e,m}-1} \mu_{\Delta_{e,m}'}(\partial{B_{i}})\\
   &=\sum_{0\le i\le N_{e,m}-1} \mu_{\Delta_m}(H)+\sum_{0\le i\le N_{e,m}-1} \mu_{\Delta_{e,m}'}(\partial{B_{i}})\\
   &=N_{e,m}\mu_{\Delta_m}(H)+\sum_{0\le i\le N_{e,m}-1} \mu_{\Delta_{e,m}'}(\partial{B_{i}})\\
   &\precsim_{\alpha, m} \mu_{\Delta} (\pi^{-1}S_r)+ \beta^{\frac{N_{e,m}k\alpha }{2m}}\precsim_{\alpha,m} r+\beta^{\frac{N_{e,m}k\alpha }{2m}},\\
   \mu_{\Delta_{e,m}'}(\pmb{\partial{B}}\ge 1)&\le \sum_{0\le i\le N_{e,m}-1} \mu_{\Delta_{e,m}'}(\partial{B_i})\precsim_{\alpha,m} \beta^{\frac{N_{e,m}k\alpha }{2m}}.
\end{align*}

Recall that, in (\ref{fiberwisemeasure}), $\mu_{\mathcal{M}}(B_r(q)\times [-\pi/2, \pi/2])=\int_{B_r(q)} \mu_{q'}([-\pi/2, \pi/2])d\Leb_{\partial Q}$. Since $\frac{d\mu_{\mathcal{M}}}{d\Leb_{\mathcal{M}}}>0$ almost surely, then $\mu_{q'}([-\pi/2, \pi/2])>0$ $\Leb_{\partial Q}$-a.s. $q' \in \partial Q$. Therefore, by the Lebesgue differentiation theorem, \begin{align*}
    \lim_{r\to 0}\frac{\mu_{\mathcal{M}}(B_r(q)\times [-\pi/2, \pi/2])}{\Leb_{\partial Q} B_r(q)}=\lim_{r \to 0}\frac{\int_{B_r(q)} \mu_{q'}([-\pi/2, \pi/2])d\Leb_{\partial Q}}{{\Leb_{\partial Q} B_r(q)}}=\mu_q([-\pi/2, \pi/2])>0
\end{align*}holds for $\Leb_{\partial Q}$-a.e. $q \in \partial Q$. Hence, \begin{gather*}
    n^{-1}\approx \mu_{\Delta_m}(\pi_{\Delta_m}\pi^{-1}S_r)\approx_{m}\mu_{\mathcal{M}}(S_r)\approx_{m}\mu_{\mathcal{M}}\big(B_r(q) \times [-\pi/2, \pi/2]\big)\approx_{m,q} r,\\
    p=n^{1-\epsilon}\approx_{\epsilon} \mu_{\Delta_m}(\pi_{\Delta_m}\pi^{-1}S_r)^{-1+\epsilon}\approx_{m,\epsilon}\mu_{\mathcal{M}}(S_r)^{-1+\epsilon}\approx_{m,\epsilon}r^{-(1-\epsilon)}.
\end{gather*}

By using the estimates above, Lemmas \ref{3} and \ref{4}, and choosing $k=p/4$, we obtain that \begin{align*}
    &n\cdot \sup_{h\in[0,1]} \Big| \mathbb{E}\Big[\mathbbm{1}_{\pmb{X}_0=\pmb{a}}  h(\pmb{X}_p,\cdots, \pmb{X}_{n-l})\Big]-\mathbb{E}\mathbbm{1}_{\pmb{X}_0=\pmb{a}}  \mathbb{E}h(\pmb{X}_p,\cdots, \pmb{X}_{n-l})\Big|\\
    &\precsim_m n\cdot [1+n\mu_{\Delta_{e,m}'}(\pmb{\overline{B}}\ge 1)]\mu_{\Delta_{e,m}'} (\pmb{\partial{B}}\ge 1)+n\beta_m^{p-2k} \mu_{\Delta_{e,m}'}(\pmb{\overline{B}}\ge 1)\\
    & \precsim_{m,\epsilon,q}r^{-1}[1+r^{-1}(r+\beta^{\frac{N_{e,m}\alpha }{2m}\frac{r^{-(1-\epsilon)}}{4}})]\beta^{\frac{N_{e,m}\alpha }{2m}\frac{r^{-(1-\epsilon)}}{4}}+r^{-1}(r+\beta^{\frac{N_{e,m}\alpha }{2m}\frac{r^{-(1-\epsilon)}}{4}})\beta_m^{\frac{r^{-(1-\epsilon)}}{2}}
\end{align*}converges to zero, when $r\to 0$. It concludes a proof.
\end{proof}

\begin{proof}[Proof of Theorem \ref{thm}]
We finished the estimates of (\ref{shortreturn}) and (\ref{correlationinpoissonlaw}) in the Lemma \ref{poissonappro}. Thus, in view of the Remark \ref{remarkbefore5}, Theorem \ref{thm} holds.
\end{proof}
\section{Applications}\label{sectionapp}
\subsection{A practical scheme for applications of the obtained results to concrete systems} Here we present some criteria to verify the cone conditions in Assumption \ref{assumption} for two-dimensional billiards. The following notations will be used throughout this section: $x=(q, \phi) \in \mathcal{M}$, $\pi_{\partial_{\partial Q} Q}x=q$, $dx=(dq, d\phi)\in \mathcal{T}_x\mathcal{M}$, $\mathcal{K}=\mathcal{K}(q)$ is the curvature of the boundary at a point $q \in \partial Q$.  Let $f: \mathcal{M}\to \mathcal{M}$ be a billiard map which maps points of the phase space at reflection times to their images at the next reflection time. It preserves an invariant measure $d\mu_{\mathcal{M}}:=(2\Leb_{\partial Q} \partial Q)^{-1}\cos \phi d\phi dq$. Let $x_n=(q_n,\phi_n)=f^n(x_0)$ for all $n \in \mathbb{Z}$, where $x_0 \in \mathcal{M}$. A wave front is a smooth curve in $Q$ equipped with a continuous family of unit normal vectors. Denote by $B^{+}(x_n)$ (resp. $B^-(x_n)$) a curvature of a wave front right after (resp. before) the collision with the boundary $x_n\in \mathcal{M}$. We list now several basic formulas  (see e.g. \cite{CMbook}) for two-dimensional billiards \begin{gather}
    v:=d\phi/dq=B^-\cos \phi +\mathcal{K}=B^+\cos \phi -\mathcal{K},\nonumber\\
    1/B^{-}(x_{n+1})=\tau_{n}+1/B^{+}(x_{n}),\nonumber\\
    Df(x_n)=\frac{-1}{\cos \phi_{n+1}}\begin{bmatrix}
    \tau_n\mathcal{K}(q_n)+\cos \phi_n    & \tau_n \\
    \tau_n\mathcal{K}(q_n)\mathcal{K}(q_{n+1})+\mathcal{K}(q_n)\cos\phi_{n+1} + \mathcal{K}(q_{n+1})\cos \phi_n \quad & \tau_n \mathcal{K}(q_{n+1})+\cos \phi_{n+1}\label{derivativeofbilliardmap}
\end{bmatrix},\\
\frac{||dx_{n+1}||_p}{||dx_n||_p}=\frac{||Df(dx_n)||_p}{||dx_n||_p}=|1+\tau_nB^{+}(x_n)|\label{pesudoderivative},\\
||dx_n||=\frac{||dx_n||_p}{\cos \phi_n} \sqrt{1+(\frac{d\phi_n}{dq_n})^2}\label{rnderivative},
\end{gather}where $\tau_n$ is the length of the free path from $x_{n}$ to $x_{n+1}$, $||dx_n||_p:=\cos \phi_n |dq_n|$, $||dx_n||:=\sqrt{(d\phi_n)^2+(dq_n)^2}$.

In what follows we always assume that all boundary components of a billiard table are at least $C^3$.  We call boundary components with zero curvature flat components, dispersing components are convex inwards, and focusing components are convex outwards.

\begin{lemma}\label{practicalcone}
Suppose that regular components of the boundary $\partial Q$ are either flat, or dispersing, or focusing, and
\begin{itemize}
    
\item[] each focusing component $\Gamma_i$ is an arc of a circle, but not a full circle, and this circle does not intersect any other components of the boundary $\partial Q$. In what follows we will call this condition the simplest focusing chaos (SFC) condition.
\end{itemize}
Then there are cone fields $C^u,C^s \subseteq \mathcal{T}(\mathcal{M})$, which satisfy the conditions in Assumption \ref{assumption}.
\end{lemma}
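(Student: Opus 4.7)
\textbf{Proof plan for Lemma \ref{practicalcone}.}

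The plan is to construct the cone fields using the postcollisional curvature $B^+$ (equivalently, the slope $v = d\phi/dq$) of an infinitesimal wavefront, following Sinai's construction on dispersing components and Bunimovich's defocusing construction on focusing components. Since $\dim \mathcal{M} = 2$ and $d = 1$, each cone in a fiber $\mathcal{T}_x\mathcal{M}$ is naturally described by an interval of slopes $v = d\phi/dq \in \overline{\mathbb{R}}$; the fiber $\mathcal{T}(\{q\} \times S^1)$ corresponds to $v = \infty$, so by taking cones whose slope intervals are bounded at $+\infty$ (resp.\ $-\infty$) the almost-sure condition $\dim(\mathcal{T}(\{q\} \times S^1)\cap \interior C^{u/s}) < d$ is automatic, with the fiber lying on the boundary of each cone.

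Concretely, I would set, at a postcollisional point $x = (q,\phi)$,
\[
C^u_x := \{dx:\ v(x) \in I^u(q)\}, \qquad C^s_x := \{dx:\ v(x) \in I^s(q)\},
\]
where $I^u(q)$ and $I^s(q)$ are chosen as follows: on a dispersing component, $I^u = [\mathcal{K}(q),+\infty)$, which is Sinai's classical unstable cone and is strictly forward-invariant thanks to the continued-fraction identity $1/B^-(x_{n+1}) = \tau_n + 1/B^+(x_n)$ combined with the reflection relation $B^+ - B^- = 2\mathcal{K}/\cos\phi$; on a flat component, $I^u$ is just transported by the free flight; and on a focusing component, the SFC hypothesis ensures that consecutive focusing collisions occur on the same circular arc, so $B^+$ evolves under an induced return map whose invariant cone family (the Bunimovich--Wojtkowski--Markarian cones) can be pulled back to define $I^u$ on focusing pieces. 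The stable cone is defined symmetrically, with $I^s$ taking values in an interval like $(-\infty, -\mathcal{K}(q)]$ by time reversal. Transversality $\interior C^u \cap \interior C^s = \emptyset$ then follows at once from $I^u \cap I^s = \emptyset$ (except possibly at the $v = \infty$ endpoint, which is on the boundary).

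To verify the dynamical conditions (c), (e), (f) of Assumption \ref{assumption}, I would apply the explicit derivative formula (\ref{derivativeofbilliardmap}): a direct computation shows that $Df$ sends a slope $v_n$ at $x_n$ to
\[
v_{n+1} = \mathcal{K}(q_{n+1}) + \frac{\cos\phi_{n+1}}{\tau_n + 1/(v_n - \mathcal{K}(q_n)) \cdot \cos\phi_n^{-1}},
\]
so any finite slope $v_n$ is pushed into a bounded interval after one collision; iterating, $(Df)^n \mathcal{T}(\{q\} \times S^1)$, which has initial slope $v = \infty$, lands strictly inside $C^u$ on dispersing and flat components after a single collision, and does so after a finite, uniformly bounded number of collisions on focusing arcs (once the trajectory exits the focusing cluster). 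The corresponding statement for $(Df)^{-n}\mathcal{T}(\{q\}\times S^1)$ and $C^s$ is obtained by time reversal. The almost-everywhere qualifier excludes the zero-measure set of orbits that forever graze a focusing arc or are asymptotic to a periodic grazing orbit.

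The main obstacle is the focusing components: on a single focusing collision the candidate cone is \emph{not} strictly invariant, and invariance is recovered only after traversing a complete focusing cluster. This is handled by the Bunimovich defocusing mechanism, which is applicable precisely because the SFC hypothesis guarantees that each focusing arc is an arc of a circle lying strictly in the interior of $Q$ (so the arc together with the chord connecting its endpoints bounds an open region free of other components of $\partial Q$); consequently the inducing construction of Chernov--Markarian applies verbatim to produce the required invariant cone family. Once this is in place, conditions (b), (c), and the almost-sure conditions (d)--(f) all reduce to the slope computations above, completing the verification.
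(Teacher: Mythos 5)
Your overall framing is right and matches the paper's construction: encode cones as slope intervals $v=d\phi/dq\in\overline{\mathbb{R}}$, note that $\mathcal{T}(\{q\}\times S^1)$ is $\{v=\infty\}$ so it can only lie on the boundary of a bounded cone, use the continued-fraction evolution of $B^+$, and use the SFC hypothesis to control the focusing component. The dispersing cone $I^u=[\mathcal{K},+\infty]$ is also what the paper uses.

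However, there is a genuine gap in the focusing case, and it is the central part of the lemma. You write that ``on a single focusing collision the candidate cone is \emph{not} strictly invariant, and invariance is recovered only after traversing a complete focusing cluster,'' and you therefore propose to pull cones back from an induced return map. This is both unnecessary and problematic. It is unnecessary because the cone the paper uses on a focusing arc, namely $I^u_x=[\mathcal{K}(q),0]$ (equivalently $B^+(x)\in[2\mathcal{K}/\cos\phi,\,\mathcal{K}/\cos\phi]$, the standard Bunimovich cone of converging fronts), \emph{is} weakly invariant under a single step of $Df$, and strictly invariant on its interior away from $\{\phi=\pm\pi/2\}$; this is exactly the case-by-case computation the paper carries out (focusing $\to$ focusing uses SFC in the form $\tau_0\ge -\mathcal{K}(q_0)^{-1}\cos\phi_0-\mathcal{K}(q_1)^{-1}\cos\phi_1$, and the strictness comes from $B^+(x_0)$ being in the open interval). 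It is problematic because Assumption 2.10(4) explicitly demands $(Df)\,C^u\subseteq C^u$ and $(Df)^{-1}C^s\subseteq C^s$, i.e.\ invariance under the billiard map $f$ itself, not under an induced or return map; a cone field built by pulling back along a focusing cluster is not, on its face, $Df$-invariant at each intermediate reflection, which is precisely what must be verified. The property that is recovered only after a complete focusing series is expansion, not cone invariance --- conflating these is the core misconception here.

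A related overstatement: you claim $(Df)\mathcal{T}(\{q\}\times S^1)$ only enters $\interior C^u$ after a bounded number of focusing collisions ``once the trajectory exits the focusing cluster.'' In fact the SFC inequality $\tau_0\ge -2\mathcal{K}(q_1)^{-1}\cos\phi_1$ already forces $d\phi_1/dq_1\le\mathcal{K}(q_1)/2<0$ after a \emph{single} reflection onto a focusing arc, with the only degenerate case $\phi_1=\pm\pi/2$; the paper handles that zero-measure exceptional set cleanly by working on $\Phi=(\bigcup_i f^{-i}\{\phi=\pm\pi/2\})^c$, a full-measure $f$-invariant set, and then pushing the conclusion to a.e.\ $q\in\partial Q$ by a Fubini-type argument. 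To repair your argument you should drop the induced-map construction, define the focusing cone to be $[\mathcal{K},0]$, and verify single-step strict invariance of the interior for the four combinations of signs of $(\mathcal{K}(q_0),\mathcal{K}(q_1))$ using the two SFC inequalities.
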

\begin{proof}
We construct $C^u, C^s$ fiber-wisely as follows. For any $x=(q, \phi)$\begin{gather*}
    C^u_x:=\{(dq,d\phi)\in \mathcal{T}_x\mathcal{M}: \mathcal{K}\le d\phi/dq \le \infty\},\\ \interior{C^u_x}:=\{(dq,d\phi)\in \mathcal{T}_x\mathcal{M}\setminus \{0\}: \mathcal{K}< d\phi/dq < \infty\}
\end{gather*}for dispersing and flat boundary components, and \begin{gather*}
    C^u_x:=\{(dq,d\phi)\in \mathcal{T}_x\mathcal{M}: \mathcal{K}\le d\phi/dq \le 0\},\\ \interior{C^u_x}:=\{(dq,d\phi)\in \mathcal{T}_x\mathcal{M}\setminus \{0\}: \mathcal{K}< d\phi/dq < 0\}
\end{gather*}for focusing arcs.
\begin{gather*}
    C^s_x:=\{(dq,d\phi)\in \mathcal{T}_x\mathcal{M}: -\infty \le d\phi/dq \le -\mathcal{K}\},\\
    \interior{C^s_x}:=\{(dq,d\phi)\in \mathcal{T}_x\mathcal{M}\setminus \{0\}: -\infty < d\phi/dq < -\mathcal{K}\}
\end{gather*}for dispersing and flat boundary components, and \begin{gather*}
    C^s_x:=\{(dq,d\phi)\in \mathcal{T}_x\mathcal{M}: 0\le d\phi/dq \le -\mathcal{K}\}\\
    \interior{C^s_x}:=\{(dq,d\phi)\in \mathcal{T}_x\mathcal{M}\setminus \{0\}: 0< d\phi/dq < -\mathcal{K}\}
\end{gather*}for focusing arcs.

Define  $C^u:=\bigcup_{x\in \mathcal{M}}C^u_x$,  $\interior{C^u}:=\bigcup_{x\in \mathcal{M}}\interior{C^u_x}$, $C^s:=\bigcup_{x\in \mathcal{M}}C^s_x$ and $\interior{C^s}:=\bigcup_{x\in \mathcal{M}}\interior{C^s_x}$ in $\mathcal{T}\mathcal{M}$. It was shown in Theorem 8.9 of \cite{CMbook} that $Df(C^u_x)\subseteq C^u_{f(x)}$, $Df^{-1}(C^s_x)\subseteq C^s_{f^{-1}(x)}$. Clearly $\interior{C^u}\bigcap \interior{C^s}=\emptyset$, $\dim(\interior{C^u}\bigcap \interior{C^s})=0<1$.

Now let $x_0=(q_0, \phi_0)\in \{q\}\times [-\pi/2,\pi/2]$. Clearly, $\dim \big(\mathcal{T}(\{q\}\times [-\pi/2,\pi/2])\bigcap \interior{C^u}\big)=0<1$ and $\dim \big(\mathcal{T}(\{q\}\times [-\pi/2,\pi/2])\bigcap \interior{C^s}\big)=0<1$. Now we will prove that the following claims hold.

\textbf{Claim:} $(Df)\mathcal{T}(\{q\}\times [-\pi/2,\pi/2]) \subseteq C^u$, $(Df)\mathcal{T}(\{q\}\times [-\pi/2,\pi/2]) \subseteq \interior{C^u}$ if $\phi_1 \neq \pm \pi/2$. 

Suppose that $dx_0 \in \mathcal{T}(\{q\}\times [-\pi/2,\pi/2])$. Then $B^+(x_0)\cos \phi_0-\mathcal{K}(q_0)=d\phi_0/dq_0=\infty$, i.e.,  $B^+(x_0)=\infty$. It implies that \begin{align*}
    \frac{d\phi_1}{dq_1}=\mathcal{K}(q_1)+\frac{\cos \phi_1}{\tau_0+1/B^+(x_0)}=\mathcal{K}(q_1)+\frac{\cos \phi_1}{\tau_0}\in [\mathcal{K}(q_1), \infty).
\end{align*}

If $q_1$ belongs to a focusing arc, then by the SFC-condition $\tau_0 \ge -2\mathcal{K}(q_1)^{-1} \cos \phi_1 $, which implies that \begin{align*}
    \frac{d\phi_1}{dq_1}=\mathcal{K}(q_1)+\frac{\cos \phi_1}{\tau_0}\le \mathcal{K}(q_1)+\frac{\cos \phi_1}{-2\mathcal{K}(q_1)^{-1} \cos \phi_1}=\mathcal{K}(q_1)/2<0.
\end{align*}

Then $dx_1 \in C^u_{x_1}$ for any $q \in \partial Q$. Particularly, if 
$\phi_1 \neq \pm \pi/2$, then $\frac{d\phi_1}{dq_1}>\mathcal{K}(q_1)$. Thus $dx_1 \in \interior{C^u_{x_1}}$, and the claim holds.

\textbf{Claim:} $(Df)^{-1}\mathcal{T}(\{q\}\times [-\pi/2,\pi/2]) \subseteq C^s$, $(Df)^{-1}\mathcal{T}(\{q\}\times [-\pi/2,\pi/2]) \subseteq \interior{C^s}$ if $\phi_{-1} \neq \pm \pi/2$. 

Suppose that $dx_0 \in \mathcal{T}(\{q\}\times [-\pi/2,\pi/2])$. Then $B^-(x_0)\cos \phi_0+\mathcal{K}(q_0)=d\phi_0/dq_0=\infty$, i.e.,  $B^-(x_0)=\infty$. Therefore \begin{gather*}
    0=1/B^{-}(x_0)=\tau_{-1}+1/B^{+}(x_{-1}),\\
    \frac{d\phi_{-1}}{dq_{-1}}=-\mathcal{K}(q_{-1})+B^{+}(x_{-1})\cos \phi_{-1}=-\mathcal{K}(q_{-1})-\frac{\cos \phi_{-1}}{\tau_{-1}}\in (-\infty, -\mathcal{K}(q_{-1})].
\end{gather*}

If $q_{-1}$ belongs to a focusing arc, then by the SFC-condition we have $\tau_{-1} \ge -2\mathcal{K}(q_{-1})^{-1} \cos \phi_{-1} $, which implies that
\begin{align*}
    \frac{d\phi_{-1}}{dq_{-1}}=-\mathcal{K}(q_{-1})-\frac{\cos \phi_{-1}}{\tau_{-1}}\ge  -\mathcal{K}(q_{-1})-\frac{\cos \phi_{-1}}{-2\mathcal{K}(q_{-1})^{-1} \cos \phi_{-1}}=-\mathcal{K}(q_{-1})/2>0.
\end{align*}

Hence, $dx_{-1} \in C^s_{x_{-1}}$ for any $q \in \partial Q$. Particularly, if 
$\phi_{-1} \neq \pm \pi/2$, then $\frac{d\phi_{-1}}{dq_{-1}}<-\mathcal{K}(q_{-1})$. Thus $dx_{-1} \in \interior{C^s_{x_{-1}}}$, which proves the claim.

\textbf{Claim:} For the set $\Phi:=(\bigcup_{i\in \mathbb{Z}}f^{-i}\{\phi=\pm \pi/2\})^c\subseteq \mathcal{M}$ we have $\mu_{\mathcal{M}}(\Phi)=1$.

This claim follows from the facts that $\mu_{\mathcal{M}}$ is $f$-invariant, and $\mu\{\phi=\pm \pi/2\}=0$. 

\textbf{Claim:} For any $x_0\in \Phi$ we have $(Df)\interior{C^u_{x_0}} \subseteq \interior{C^u_{f(x_0)}}$ and $(Df)^{-1}\interior{C^s_{x_0}} \subseteq \interior{C^s_{f^{-1}(x_0)}}$.

In view of the involution property of a billiard map, we just need to show that $(Df)\interior{C^u_{x_0}} \subseteq \interior{C^u_{f(x_0)}}$. Let $dx_0 \in \interior{C^u_{x_0}}$. Then \begin{gather*}
    \frac{d\phi_0}{dq_0}=-\mathcal{K}(q_0)+ B^+(x_0)\cos \phi_0 \in (\mathcal{K}(q_0), \infty) \text{ if } \mathcal{K}(q_0)\ge 0,\\
    \frac{d\phi_0}{dq_0}=-\mathcal{K}(q_0)+ B^+(x_0)\cos \phi_0 \in (\mathcal{K}(q_0), 0) \text{ if }\mathcal{K}(q_0)< 0.
\end{gather*} 

In order to prove the relation $dx_1 \in \interior{C^u_{x_1}}$, we will show that \begin{align*}
    \frac{d\phi_1}{dq_1}=\mathcal{K}(q_1)+\frac{\cos \phi_1}{\tau_0+1/B^+(x_0)}\in (\mathcal{K}(q_1), \infty) \text{ if } \mathcal{K}(q_1)\ge 0,\\
    \frac{d\phi_1}{dq_1}=\mathcal{K}(q_1)+\frac{\cos \phi_1}{\tau_0+1/B^+(x_0)}\in (\mathcal{K}(q_1),0) \text{ if } \mathcal{K}(q_1)< 0,
\end{align*} for case by case, depending on the positions of $q_0, q_1$, where $x_0 \in \Phi$.

If $\mathcal{K}(q_0)\ge0$ and $\mathcal{K}(q_1)\ge 0$, then $B^{+}(x_0)>0$, $d\phi_1/dq_1 \in (0, \infty)$.

If now $\mathcal{K}(q_0)\ge 0$ and $\mathcal{K}(q_1)< 0$, then $B^{+}(x_0)>0$, $d\phi_1/dq_1>\mathcal{K}(q_1)$, and by the SFC-condition we get \begin{align*}
    \frac{d\phi_1}{dq_1}< \mathcal{K}(q_1)+\frac{\cos \phi_1}{\tau_0}\le \mathcal{K}(q_1)+\frac{\cos \phi_1}{-2\mathcal{K}(q_1)^{-1}\cos \phi_1}\le \mathcal{K}(q_1)/2<0.
\end{align*}

If $\mathcal{K}(q_0)<0$ and $\mathcal{K}(q_1)\ge 0$, then $B^{+}(x_0)\in \big(\frac{2\mathcal{K}(q_0)}{\cos \phi_0},\frac{\mathcal{K}(q_0)}{\cos \phi_0} \big)\subseteq (-\infty, 0)$, and by the SFC-condition \begin{gather*}
   \tau_0+1/B^{+}(x_0)>\tau_0+\mathcal{K}(q_0)^{-1}\cos \phi_0
   \ge -2\mathcal{K}(q_0)^{-1}\cos \phi_0+\mathcal{K}(q_0)^{-1}\cos \phi_0>0\\
   \implies \frac{d\phi_1}{dq_1}= \mathcal{K}(q_1)+\frac{\cos \phi_1}{\tau_0+1/B^{+}(x_0)} \in (\mathcal{K}(q_1), \infty).
\end{gather*} 

If $\mathcal{K}(q_0)<0$ and $\mathcal{K}(q_1)< 0$, then $B^{+}(x_0)\in \big(\frac{2\mathcal{K}(q_0)}{\cos \phi_0}, \frac{\mathcal{K}(q_0)}{\cos \phi_0}\big)$, $\tau_0 +1/B^{+}(x_0)>0$ and \begin{gather*}
    \frac{d\phi_1}{dq_1}= \mathcal{K}(q_1)+\frac{\cos \phi_1}{\tau_0+1/B^{+}(x_0)}>\mathcal{K}(q_1).
\end{gather*} 

According to the SFC-condition  (i.e., $\tau_0\ge -\mathcal{K}(q_0)^{-1}\cos\phi_0-\mathcal{K}(q_1)^{-1}\cos\phi_1$),
\begin{gather*}
    \frac{d\phi_1}{dq_1}< \mathcal{K}(q_1)+\frac{\cos \phi_1}{\tau_0+ \mathcal{K}(q_0)^{-1}\cos\phi_0}\le \mathcal{K}(q_1)+\frac{\cos \phi_1}{-\mathcal{K}(q_1)^{-1}\cos \phi_1}=0.
\end{gather*}

Therefore $dx_1 \in \interior{C^u_{x_1}}$, and the claim holds.

Combining all the claims above, we obtain that for any $x_0=(q, \phi_0)\in \Phi $, $n \ge 1$, \begin{gather*}
    (Df)^n\mathcal{T}(\{q\}\times [-\pi/2,\pi/2]) \subseteq \interior{C^u},\quad (Df)^{-n}\mathcal{T}(\{q\}\times [-\pi/2,\pi/2]) \subseteq \interior{C^s}.
\end{gather*}

\textbf{Claim:} For a.e. $q\in \partial Q$, we have \begin{gather*}
    (Df)^n\mathcal{T}(\{q\}\times [-\pi/2,\pi/2]) \subseteq \interior{C^u},\quad (Df)^{-n}\mathcal{T}(\{q\}\times [-\pi/2,\pi/2]) \subseteq \interior{C^s}.
\end{gather*}

If it is not the case, then there exists a subset $O\subseteq \partial Q$ with $\Leb_{\partial Q} O>0$, so that for any $q\in O$, \begin{gather*}
    (Df)^n\mathcal{T}(\{q\}\times [-\pi/2,\pi/2]) \nsubseteq \interior{C^u} \text{ or } (Df)^{-n}\mathcal{T}(\{q\}\times [-\pi/2,\pi/2]) \nsubseteq \interior{C^s}.
\end{gather*}

Then $\mu_{\mathcal{M}}(O \times [-\pi/2,\pi/2])>0$, $(O \times [-\pi/2,\pi/2])\bigcap \Phi\neq \emptyset$, and for any $x_0=(q,\phi_0)\in (O \times [-\pi/2,\pi/2])\bigcap \Phi $,\begin{gather*}
    (Df)^n\mathcal{T}(\{q\}\times [-\pi/2,\pi/2]) \nsubseteq \interior{C^u} \text{ or } (Df)^{-n}\mathcal{T}(\{q\}\times [-\pi/2,\pi/2]) \nsubseteq \interior{C^s}.
\end{gather*}

Hence, we came to a contradiction, and the claim holds, which concludes a proof of this lemma.\end{proof}

All billiard systems, which will be considered below, satisfy the conditions of Lemma \ref{practicalcone}. Therefore the condition on existence of the cone fields in Assumption \ref{assumption} holds automatically.

\subsection{Sinai and Diamond billiards}
Pictures of billiard tables of Sinai billiards and of diamond billiards  are presented in Figures \ref{F1} and \ref{F2}. Choose the first return time $R=1$. The Assumption \ref{assumption} in this case holds automatically. 

\begin{figure}[!htb]
   \begin{minipage}{0.6\textwidth}
     \includegraphics[width=.7\linewidth]{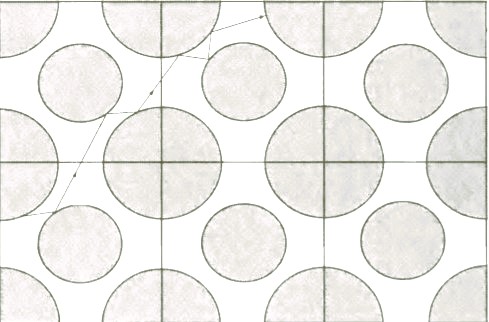}
     \caption{Sinai billiard}
     \label{F1}
   \end{minipage}\hfill
   \begin{minipage}{0.5\textwidth}     \includegraphics[width=.6\linewidth]{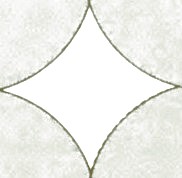}
     \caption{Diamond billiard}\label{F2}
   \end{minipage}
\end{figure}

\begin{corollary}
Theorem \ref{thm} holds for Sinai billiards with a bounded or unbounded horizon (see e.g. \cite{CMbook}), and for diamond billiards (see e.g. \cite{peneijm}).
\end{corollary}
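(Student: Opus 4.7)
The plan is to verify the two hypotheses of Theorem \ref{thm} — existence of a CMZ structure and validity of Assumption \ref{assumption} — for each of the three billiard families in the statement. Following the author's prescription I take $R \equiv 1$, so that $X = \mathcal{M}$, $f^R = f$, and $\Delta = \mathcal{M}\times\{0\}$. With this trivial inducing scheme many items collapse: the partition of $X$ has the single piece $X_1 = \mathcal{M}$ with no interior boundary, so Assumption \ref{assumption}(2) holds; the H\"older condition Assumption \ref{assumption}(3) is vacuous because $j < R(x)$ forces $j = 0$; since $\pi_X: \Delta\to X$ is the identity, every set $B_r(q)\times S^d$ is already a section and so Assumption \ref{assumption}(1) is automatic; finally $\int R\,d\mu_X = 1 < \infty$ and $\gcd\{R\} = 1$, covering items (7) and (8) of Definition \ref{cmz}.

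The non-trivial CMZ items (1), (4)–(6) together with the K-mixing required in (2) are exactly the ``Chernov--Markarian--Zhang'' package for hyperbolic billiards with singularities: uniform exponential hyperbolicity, bounded distortion along unstable manifolds, uniformly bounded curvature of unstable manifolds, absolute continuity of the stable holonomy, and the growth lemmas. For Sinai billiards with finite horizon these are classical (\cite{Young,Chernovjsp,CMbook}); for Sinai billiards with unbounded horizon they are established in \cite{CZnon,CZcmp}, the only delicate point being the growth lemma, which must accommodate the unbounded complexity of the singularity set coming from infinite corridors; for diamond billiards they are verified in \cite{peneijm} on the general framework of \cite{CMbook}. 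I would cite rather than reprove these results.

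The remaining hypothesis is the cone condition Assumption \ref{assumption}(4), which is precisely the output of Lemma \ref{practicalcone}. Sinai billiards are built from dispersing scatterers (and possibly flat walls of the torus), and the diamond billiards of \cite{peneijm} are built from flat and/or dispersing components; in both cases no focusing arcs occur, so the SFC-hypothesis of Lemma \ref{practicalcone} is trivially met. The lemma then supplies invariant cone fields $C^u, C^s \subseteq \mathcal{T}(\mathcal{M})$ with $\dim(\interior C^u \cap \interior C^s) = 0 < 1 = d$ and the required forward/backward inclusions $(Df)^n \mathcal{T}(\{q\}\times S^1) \subseteq \interior C^u$, $(Df)^{-n}\mathcal{T}(\{q\}\times S^1) \subseteq \interior C^s$ for $\Leb$-a.e.\ $q \in \partial Q$.

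The main obstacle is the growth lemma in the unbounded-horizon Sinai case, where the complexity of the singularity set grows without bound; this is settled in \cite{CZnon}. With all the above verified, Theorem \ref{thm} applies and yields $\mathcal{N}^{r,q} \to_d \mathcal{P}$ for $\Leb$-a.e.\ $q \in \partial Q$ in each of the three families.
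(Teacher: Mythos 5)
Your proposal takes essentially the same route as the paper: the paper's entire proof of this corollary is to set $R\equiv 1$, observe that Assumption \ref{assumption} then holds automatically (sections, boundary containment, and the H\"older bound become trivial when $R=1$), and invoke Lemma \ref{practicalcone} for the cone condition since the boundary has no focusing arcs. You have simply spelled out the ``automatic'' step and cited the standard literature for the CMZ package, which is exactly what the paper leaves implicit.
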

\begin{remark}
In fact, if a billiard map of a two-dimensional billiard has a CMZ structure, i.e., $R=1$,  and if the boundary of a billiard table satisfies the conditions of Lemma \ref{practicalcone}, then Theorem \ref{thm} holds. Moreover, such billiards have exponentially mixing rates (or exponential decay of correlations), i.e., of order $O(\rho^n)$ for some $\rho\in (0,1)$.
\end{remark}

In the following subsections, we consider two-dimensional slowly mixing billiards, which were studied in \cite{CZcmp,CZnon}. The rates of mixing (decay of correlations) for these billiards are either of order $O(n^{-1})$, or $O(n^{-2})$.
\subsection{Squashes or Stadium-type billiards}
A billiard table $Q$ of a squash billiard is a convex domain bounded by two circular arcs and two straight (flat) segments tangent to the arcs at their common endpoints. A squash billiard is called a stadium if flat sides are parallel, see Figure \ref{F3}. Initially called squash billiards, they were later sometimes called ``skewed" stadia, drive-belt billiards, etc, see Figure \ref{F4}. Note that squashes contain a boundary arc, which is longer than a half circle. We will verify now the Assumption \ref{assumption} for this class of billiards.

\begin{figure}[!htb]
   \begin{minipage}{0.6\textwidth}
     \includegraphics[width=.6\linewidth]{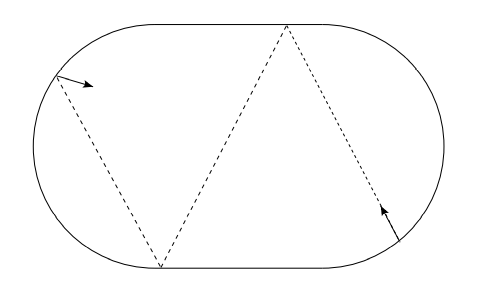}
     \caption{Stadium billiard}
     \label{F3}
   \end{minipage}\hfill
   \begin{minipage}{0.6\textwidth}
     \includegraphics[width=.6\linewidth]{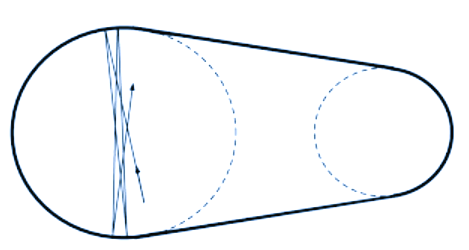}
     \caption{Squash billiard}\label{F4}
   \end{minipage}
\end{figure}

Let for a (“straight”) stadium $X\subseteq \mathcal{M}$ be the region where the first collisions (in a series of consecutive collisions with one and the same circular arc) with circular arcs occur. Denote by $R$ the first return time to $X$ for the billiard map $f$. Let $l$ be the length of each straight segment in the boundary of a billiard table. Without loss of generality, we may assume that the radius of circular arcs equals $1$.
\begin{enumerate}
\item $\bigcup_{i\ge 1}\partial \{x\in X:R(x)=i\}\subseteq \mathbb{S}$. Note that, if $R$ varies between $i$ and $i+1$, then a collision must occur at the endpoints of two circular arcs. It follows from (\ref{derivativeofbilliardmap}) that $\mathcal{K}$ has a jump after such collisions. The singularities of $f^R$ appear only because of this type of collisions with the boundary.
\item $B_r(q)\times [-\pi/2,\pi/2]$ is a quasi-section. Consider the map $\pi_X: \pi^{-1}\big(B_r(q)\times [-\pi/2,\pi/2]\big) \to X$. Let it be non-injective. Then there are $x_1,x_2\in \pi^{-1}\big(B_r(q)\times [-\pi/2,\pi/2]\big)$, such that $\pi_X x_1=\pi_X x_2=x\in X$. This implies that there are $0\le n_1<n_2 <R(x)$, such that $f^{n_1}x=x_1$, $f^{n_2}x=x_2$, and $d(q,\pi_{\partial Q}x_1)<r$, $d(q,\pi_{\partial Q}x_2)<r$, where $x$ is a point of the first collision (in a series) with one of the circular arcs.

Suppose that $q$ belongs to a circular arc, and $r$ is sufficiently small. Then $\pi_{\partial}x_1$ and $\pi_{\partial}x_2$ belong to a $r$-neighborhood of $q$.  So the orbits of $x_1$ and of $x_2$ are sliding along the same circular arc, and $f^{n_2-n_1}x_1=x_2$. Hence the angle $|\phi|$ for $x_1$ (and $x_2$) is greater or equal to $\pi/2-2r/(2n_2-2n_1)\ge \pi/2-r$. Therefore, a ``non-injective" configuration is $\{(q',\phi)\in \mathcal{M}: d(q,q')<r, |\phi|>\pi/2-r\}$. Clearly, it has a measure of order $O(r^3)$. 

Suppose now that $q$ belongs to the flat part of the boundary, and that $r$ is sufficiently small. Since the collisions at $x_1, x_2$ occur after the first collision at $x$ on a circular arc and there is no reflection off another circular arc yet, then $x_1, x_2$ must be bouncing on the boundary component, which contains $q$. Since the radius of the circular arc is $1$, and at these points  the angle of reflection $|\phi|$ is the same, then $|\phi|$ does not exceed $\arctan [2r/(2n_2-2n_1)]\le \arctan(r)$. Therefore, the set of ``non-injective" configurations in this case is $\{(q',\phi)\in \mathcal{M}: d(q,q')<r, |\phi|<\arctan(r)\}$, which has a measure of order $O(r^2)$. 

Summarizing the arguments above, we have that the set of ``non-injective" configurations has a measure of order $O(r^2)$. Therefore $B_r(q)\times [-\pi/2,\pi/2]$ is a quasi-section.
     
\item H\"older continuity along small (un)stable manifolds. The reason here  for using  ``small" manifolds is that in Definition \ref{unstablemfd} (un)stable manifolds are supposed to be maximal, and may not be H\"older continuous. Therefore, besides the singularities $\mathbb{S}_1$ of $f^R$, other points in $X$ have to be added into $\mathbb{S}$ (see Remark \ref{singular}). We define $\mathbb{S}$ by $(f^R)^{-1}\mathbb{S}_1\bigcup \mathbb{S}_1 \bigcup f^R\mathbb{S}_1$, and define  ``small" (un)stable manifolds in the same way as that in Definition \ref{unstablemfd}. 

First we consider a stable manifold with the stable cone \begin{align*}
    C^s(x):=\{dx=(dq, d\phi)\in \mathcal{T}_{x}\mathcal{M}:   & B^{+}(x)\in [-\frac{1}{\cos \phi},0] \text{ if } \mathcal{K}(q)=-1,\\
    & B^{+}(x)\in [-\infty, 0] \text{ if } \mathcal{K}(q)=0\}.
\end{align*} 

Suppose that the first collision on a circular arc is at a point $x_0 \in X$, and its stable manifold is $\gamma^s(x_0)$. Several cases must be considered.
\begin{enumerate}
    \item Sliding along a circular arc, i.e., the points $x_0, x_1, \cdots, x_k$ $(k\ge 0)$ belong to the same circular arc, and $x_{k+1}$ does not. Then $dx_k \in C^{s}(x_k) $, i.e., 
    \begin{align*}
        &B^+(x_k)=\frac{-2}{\cos \phi_k}+\frac{1}{\tau_{k-1}+\frac{1}{B^+(x_{k-1})}}\in [-\frac{1}{\cos \phi_{k}},0]\\
        & \implies B^+(x_{k-1})\in [\frac{-1}{\tau_{k-1}-\cos \phi_{k}}, \frac{-1}{\tau_{k-1}-\frac{\cos \phi_k}{2}}]=[\frac{-1}{\cos \phi_{0}}, \frac{-1}{\frac{3\cos \phi_{0}}{2}}].
    \end{align*}

Inductively, we have for any $i\in [0,k]$,\begin{gather*}
        B^{+}(x_i)\in \big[\frac{-1}{\cos \phi_{0}}, \frac{-2(k-i)}{[2(k-i)+1]\cos \phi_{0}}\big],
        \end{gather*} and for any $i\in [0,k)$,
        \begin{gather*}
        |1+\tau_i B^+(x_i)|\le 1.
    \end{gather*}
    
    By (\ref{pesudoderivative}) and (\ref{rnderivative}), for any $i\in [0,k]$, \begin{gather*}
        \frac{||dx_i||_p}{||dx_0||_p}=\prod_{j\le i-1}|1+\tau_j B^+(x_j)|\le 1,\\
        \frac{||dx_i||}{||dx_0||}=\frac{||dx_i||_p}{||dx_0||_p} \frac{\cos \phi_0}{\cos \phi_i} \frac{\sqrt{1+(\frac{d\phi_{i}}{dq_{i}})^2}}{\sqrt{1+(\frac{d\phi_{0}}{dq_{0}})^2}}\precsim 1,
    \end{gather*}where the last $``\precsim"$ holds thanks to the fact that on a circular arc $\cos \phi_0=\cos \phi_i$, and a slope of the stable manifold is uniformly bounded. Therefore, $f^i|_{\gamma^s(x_0)}$ is Lipschitz in this case. 
    \item Suppose that $x_{k+1}, \cdots, x_{k+n}$, $(n>1)$ are on the flat sides, and $x_{k+n+1}$ is on another circular arc. Then \begin{align*}
        &B^+(x_{k+n+1})=\frac{-2}{\cos \phi_{k+n+1}}+\frac{1}{\tau_{k+n}+\frac{1}{B^+(x_{k+n})}}\in [-\frac{1}{\cos \phi_{k+n+1}},0]\\
        & \implies B^+(x_{n+k})\in [\frac{-1}{\tau_{n+k}-\cos \phi_{n+k+1}}, \frac{-1}{\tau_{n+k}-\frac{\cos \phi_{n+k+1}}{2}}].
    \end{align*} Observe now that $\mathcal{K}(q_{n+k})=0$. Then \begin{align*}
        &B^+(x_{k+n})=\frac{0}{\cos \phi_{k+n}}+\frac{1}{\tau_{k+n-1}+\frac{1}{B^+(x_{k+n-1})}}\in [\frac{-1}{\tau_{n+k}-\cos \phi_{n+k+1}}, \frac{-1}{\tau_{n+k}-\frac{\cos \phi_{n+k+1}}{2}}]\\
        & \implies B^+(x_{n+k-1})\in [\frac{-1}{\tau_{n+k}+\tau_{n+k-1}-\cos \phi_{n+k+1}}, \frac{-1}{\tau_{n+k}+\tau_{n+k-1}-\frac{\cos \phi_{n+k+1}}{2}}].
    \end{align*}
    
    Inductively, we have for any $0\le i< n$\begin{gather*}
        B^+(x_{n+k-i})\in [\frac{-1}{\sum_{j\le i}\tau_{n+k-j}-\cos \phi_{n+k+1}}, \frac{-1}{\sum_{j\le i}\tau_{n+k-j}-\frac{\cos \phi_{n+k+1}}{2}}],\\
     |1+\tau_{n+k-i}B^+(x_{n+k-i})|\le 1,
    \end{gather*}where the last inequality is due to the SFC-condition, i.e., $\tau_{n+k}\ge 2\cos \phi_{n+k+1}$.
    
    By (\ref{pesudoderivative}), for any $i\in [k+1,k+n]$, \begin{gather*}
        \frac{||dx_i||_p}{||dx_0||_p}=\prod_{j\le i-1}|1+\tau_j B^+(x_j)|\le 1.
        \end{gather*}
    
    Since $n>1$, then $|\phi_i|$ is uniformly bounded away from $\pi/2$. Thus
        \begin{gather*}
        \frac{||dx_i||}{||dx_0||}=\frac{||dx_i||_p}{||dx_0||_p} \frac{\cos \phi_0}{\cos \phi_i} \frac{\sqrt{1+(\frac{d\phi_{i}}{dq_{i}})^2}}{\sqrt{1+(\frac{d\phi_{0}}{dq_{0}})^2}}\precsim 1,
    \end{gather*}where the last $``\precsim"$ holds due to the fact that a slope of a stable manifold is uniformly bounded. Therefore, $f^i|_{\gamma^s(x_0)}$ in this case is Lipschitz too.
    \item Consider now transitions between circular arcs through a flat side, i.e., $x_k$ is the last in a series collision on a circular arc, and $x_{k+1}$ is on a flat side, while $x_{k+2}$ is on another circular arc. We have \begin{gather*}
        \frac{||dx_{k+1}||_p}{||dx_0||_p}=\prod_{j\le k}|1+\tau_j B^+(x_j)|\le 1,\\
        \frac{||dx_{k+1}||}{||dx_0||}=\frac{||dx_{k+1}||_p}{||dx_0||_p} \frac{\cos \phi_0}{\cos \phi_{k+1}} \frac{\sqrt{1+(\frac{d\phi_{k+1}}{dq_{k+1}})^2}}{\sqrt{1+(\frac{d\phi_{0}}{dq_{0}})^2}}\precsim \frac{\cos \phi_0}{\cos \phi_{k+1}}= \frac{\cos \phi_k}{\cos \phi_{k+1}}.
        \end{gather*}
        
        For each $\phi_k$ a possible minimum of $\cos \phi_{k+1}$ (i.e., the maximum of $\phi_{k+1}$) satisfies the relation \begin{gather*}
            \cos^2 \phi_k= (1-l^2) \cos^2 \max \phi_{k+1}+2l\cos \max \phi_{k+1} \sin \max \phi_{k+1},
        \end{gather*}which holds if $q_{k+1}$ is at the end point of this flat side (or at the end point of another circular arc). Thus \begin{gather*}
            \frac{||dx_{k+1}||}{||dx_0||}\precsim_l \frac{1}{\cos \phi_k} \precsim k.
        \end{gather*}
        
        Since $\diam \gamma^s(x_0)=O(1/k^2)$, we have \begin{gather*}
        \diam f^{k+1}\big(\gamma^s(x_0)\big) \precsim k \diam \gamma^s(x_0) \precsim [\diam \gamma^s(x_0)]^{1/2}.\end{gather*}
\end{enumerate}

The arguments above show that there exists $C>0$, such that for any $\gamma^s\subseteq \big(\bigcup_{i \ge -1}(f^R)^{-i}\mathbb{S}_1\big)^c$, \begin{align*}
        d_{f^j\gamma^s}(f^jx, f^jy)\le C d_{\gamma^s}(x,y)^{1/2} \text{ for all }j<R(x).
    \end{align*}
    
Next we turn to unstable manifolds, and to the  unstable cone field \begin{align*}
    C^u(x):=\{dx=(dq, d\phi)\in \mathcal{T}_{x}\mathcal{M}:   & B^{+}(x)\in [-\frac{2}{\cos \phi},-\frac{1}{\cos \phi}] \text{ if } \mathcal{K}(q)=-1,\\
    & B^{+}(x)\in [0, \infty] \text{ if } \mathcal{K}(q)=0\}.
\end{align*} 

Suppose that the first collision in a series on a circular arc is  at $x_0 \in X$. The unstable manifold at this point is $\gamma^u(x_0)$. Again we will consider several cases.
\begin{enumerate}
    \item Sliding along a circular arc, i.e., $x_0, x_1, \cdots, x_k$ $(k\ge 0)$ belong to one and the same circular arc, while $x_{k+1}$ does not. Then $dx_0 \in C^{u}(x_0) $, i.e., 
    \begin{gather*}
        B^+(x_1)=\frac{-2}{\cos \phi_1}+\frac{1}{\tau_{0}+\frac{1}{B^+(x_{0})}},\quad  B^+(x_0)\in [-\frac{2}{\cos \phi_0},-\frac{1}{\cos \phi_0}],\\
         \implies B^+(x_{1})\in [-\frac{4}{3\cos \phi_{1}}, -\frac{1}{\cos \phi_{1}}].
    \end{gather*}
    
    Inductively, we have for any $i\in [0,k]$,\begin{gather*}
        B^{+}(x_i)\in \big[-\frac{2i+2}{(2i+1)\cos \phi_i},-\frac{1}{\cos \phi_{i}}\big],
        \end{gather*} and for any $i\in [0,k)$,
        \begin{gather*}
        |1+\tau_i B^+(x_i)|\le \frac{2i+3}{2i+1}.
    \end{gather*}
    
    By (\ref{pesudoderivative}) and (\ref{rnderivative}), for any $i\in [0,k)$, \begin{gather*}
        \frac{||dx_i||_p}{||dx_0||_p}=\prod_{j\le i-1}|1+\tau_j B^+(x_j)| \precsim i \le k,\\
        \frac{||dx_i||}{||dx_0||}=\frac{||dx_i||_p}{||dx_0||_p} \frac{\cos \phi_0}{\cos \phi_i} \frac{\sqrt{1+(\frac{d\phi_{i}}{dq_{i}})^2}}{\sqrt{1+(\frac{d\phi_{0}}{dq_{0}})^2}}\precsim  k,
    \end{gather*}where the last $``\precsim"$ holds because $\cos \phi_0=\cos \phi_i$ on a circular arc, and the slope of a stable manifold is uniformly bounded.
    
    Our small unstable manifold $\gamma^u(x_0)$ is contained in a connected component of the set $\big(\bigcup_{i\ge  -1}(f^R)^{i}\mathbb{S}_1\big)^c$. Thus the length of $\gamma^u(x_0)$ is $O(k^{-2})$. Hence, \begin{gather*}
       \diam f^i \big(\gamma^u(x_0)\big) \precsim k \diam \gamma^u(x_0) \precsim [\diam \gamma^u(x_0)]^{1/2}.
    \end{gather*}
    Therefore, $f^i|_{\gamma^s(x_0)}$ is H\"older continuous.
    \item Bouncing on flat sides. Here the points $x_{k+1}, \cdots, x_{k+n}$, $(n>1)$ are on flat sides, and $x_{k+n+1}$ belongs to another circular arc. \begin{gather*}
        B^+(x_{k+1})=\frac{0}{\cos \phi_{k+1}}+\frac{1}{\tau_{k}+\frac{1}{B^+(x_{k})}},\quad  B^+(x_{k})\in [-\frac{2k+2}{(2k+1)\cos \phi_{k}},-\frac{1}{\cos \phi_{k}}],\\
         B^+(x_{k+2})=\frac{0}{\cos \phi_{k+2}}+\frac{1}{\tau_{k+1}+\frac{1}{B^+(x_{k+1})}}=\frac{1}{\tau_{k+1}+\tau_k+\frac{1}{B^+(x_{k})}}.
    \end{gather*} 
    Inductively, we have for any $0\le i \le n$\begin{gather*}
          B^+(x_{k+i})=\frac{1}{\sum_{0\le j \le i-1}\tau_{k+j}+\frac{1}{B^+(x_{k})}},\\
      |1+\tau_{k+i}B^+(x_{k+i})|=\big|\frac{\sum_{0\le j \le i}\tau_{k+j}+\frac{1}{B^+(x_{k})}}{\sum_{0\le j \le i-1}\tau_{k+j}+\frac{1}{B^+(x_{k})}}\big|.
    \end{gather*}
    
    Since $n>1$, then $k$ may assume only a finite number of values. Then, it is bounded by a constant depending on $Q$ only, while $|\phi_k|, \cdots, |\phi_{k+n}|$ are bounded away from $\pi/2$ by a positive constant, which also depends only on $Q$. By (\ref{pesudoderivative}) for any $i\in [k+1,k+n]$, \begin{gather*}
        \frac{||dx_i||_p}{||dx_0||_p}=\prod_{j\le i-1}|1+\tau_j B^+(x_j)|\precsim \big|\frac{\sum_{0\le j \le i}\tau_{k+j}+\frac{1}{B^+(x_{k})}}{\frac{1}{B^+(x_k)}}\big| \precsim n,\\
        \frac{||dx_i||}{||dx_0||}=\frac{||dx_i||_p}{||dx_0||_p} \frac{\cos \phi_0}{\cos \phi_i} \frac{\sqrt{1+(\frac{d\phi_{i}}{dq_{i}})^2}}{\sqrt{1+(\frac{d\phi_{0}}{dq_{0}})^2}}\precsim n,
    \end{gather*}where the last $``\precsim"$ holds thanks to the fact that the slope of a stable manifold is uniformly bounded. 
    
    A small unstable manifold $\gamma^u(x_0)$ is contained in a connected component of $\big(\bigcup_{i\ge  -1}(f^R)^{i}\mathbb{S}_1\big)^c$. Thus the length of $\gamma^u(x_0)$ is $O(n^{-2})$. Hence \begin{gather*}
       \diam f^i \big(\gamma^u(x_0)\big) \precsim n \diam \gamma^u(x_0) \precsim [\diam \gamma^u(x_0)]^{1/2}.
    \end{gather*}
    Therefore $f^i|_{\gamma^s(x_0)}$ is H\"older continuous.
    \item Sliding on a flat side, i.e, $x_k$ corresponds to a point of the last collision with a circular arc, $x_{k+1}$ is on a flat side, and $x_{k+2}$ is on another circular arc. We have\begin{gather*}
     B^+(x_{k})\in [-\frac{2k+2}{(2k+1)\cos \phi_{k}},-\frac{1}{\cos \phi_{k}}],\\
     \implies \frac{||dx_{k+2}||_p}{||dx_{k+1}||_p}=|1+\tau_{k+1}B^+(x_{k+1})|=\Big|1+\frac{\tau_{k+1}}{\tau_k+\frac{1}{B^+(x_k)}}\Big|\ge \frac{\tau_k+\tau_{k+1}-\frac{2k+1}{2k+2}\cos \phi_k}{\tau_k-\frac{2k+1}{2k+2}\cos \phi_k}\ge 1,
  \end{gather*}where the last $``\ge"$ is due to the SFC-condition. Therefore \begin{align*}
      \frac{||dx_{k+2}||}{||dx_{k+1}||}\approx \frac{||dx_{k+2}||_p}{||dx_{k+1}||_p} \frac{\cos \phi_{k+1}}{\cos \phi_{k+2}}\succsim \frac{\cos \phi_{k+1}}{\cos \phi_{k+2}}\succsim \cos \phi_{k+2},
  \end{align*}where the argument for the last $``\succsim"$ is the same as that for the case of ``sliding on the flat sides" for stable manifolds. Since $\gamma^u(x_0)\subseteq  \big(\bigcup_{i \ge -1}(f^R)^{i}\mathbb{S}_1\big)^c$, then $f^{k+2}\gamma^u(x_0)$ belongs to a $k'$-sliding cell, $\diam f^{k+2}\big( \gamma^u(x_0)\big)=O(1/k'^2)$ and $\cos \phi_{k+2}\approx 1/k'$, so $
      \frac{||dx_{k+2}||}{||dx_{k+1}||}\succsim \cos \phi_{k+2} \approx 1/k'$. From Lemma 8.45 of \cite{CMbook} we have \begin{align*}
            \diam  f^{k+1}\big(\gamma^u(x_0) \big) \precsim k'\diam  f^{k+2}\big(\gamma^u(x_0) \big) \precsim [\diam f^{k+2}\big(\gamma^u(x_0) \big)]^{1/2}\precsim [\diam \gamma^u(x_0)]^{1/8}.
        \end{align*}
\end{enumerate}

The arguments above show that there is $C>0$ such that for any $\gamma^u\subseteq \big(\bigcup_{i \ge -1}(f^R)^{i}\mathbb{S}_1\big)^c$ \begin{align*}
        d_{f^j\gamma^u}(f^jx, f^jy)\le C d_{\gamma^u}(x,y)^{1/8} \text{ for all }j<R(x).
    \end{align*}
\end{enumerate}

Now we turn to a squash (or a ``skewed” stadium). Since two flat sides are not parallel, we can suppose that the angle between them is $\gamma>0$. Following \cite{CZnon}, define $X$ to be the same set as that for ``straight" stadiums. Verification of Assumption \ref{assumption} is the same as for a ``straight" stadium. Therefore we will skip the details and outline only the differences.
\begin{enumerate}
    \item $B_r(q)\times [-\pi/2,\pi/2]$ is a quasi-section. If $q$ is on the flat sides, then the angle of reflection increases or decreases by $\gamma$ between two consecutive bounces on flat sides. Then $B_r(q)\times [-\pi/2,\pi/2]$ is a section, provided that $r$ is much smaller than $\gamma$. 
    
    Let now a point $q$ belongs to a circular arc. The analysis in the case for orbits, sliding on these arcs, is the same as the one for a straight stadium. Another case is a bouncing on a circular arc. It is enough to consider this case for the bigger arc only. Let us estimate a measure of the set of ``non-injective" configurations. Suppose that $(p_1,\phi_1)=f^n(x)$ and $(p_2,\phi_2)=f^m(x)$, where $n<m$, $d(q,p_1)<r$, $d(q,p_2)<r$, $x=(p_0,\theta)\in X$, $n,m<R(x)$, and $\theta$ is sufficiently small. Assume that $r$ is so small that all points $q, p_0, p_1, p_2$ are on the bigger arc. From the elementary geometry we have $\phi_1=\phi_2=\theta$, $d(p_0, p_1)=4n\theta$, $d(p_0, p_2)=4m\theta$, and $d(p_1,p_2)=4(m-n)\theta$. Therefore, $d(p_1,p_2)=4(m-n)\theta<2r$, which implies that $\phi_1=\phi_2=\theta<r/2$. Thus the set of ``non-injective" configurations for bouncing on the circular arc is contained in $\{(p,\phi)\in \mathcal{M}: p\in B_r(q), |\phi|<r/2\}$, which has a measure of order $O(r^2)$. 
    \item Another difference is in verification of H\"older continuity. Besides the three cases studied for the (straight) stadiums, we also need to consider here a bouncing on the bigger circular arc. The argument there is exactly the same as that for ``sliding on a circular arc".
\end{enumerate}

We can conclude now this subsection by stating the following
\begin{corollary}
Theorem \ref{thm} holds for squash billiards (stadiums with unequal or equal focusing arcs).
\end{corollary}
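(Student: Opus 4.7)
The plan is to invoke Theorem \ref{thm} after verifying its two hypotheses for the squash billiard: the existence of a CMZ structure on $(\mathcal{M},f)$, and the geometric Assumption \ref{assumption}. The natural base for the induced system is the set $X$ of first collisions (within a maximal consecutive series) with a circular arc, with $R$ the first return time to $X$ under the billiard map $f$. I would then point to \cite{CZnon,CZcmp}, where Chernov--Zhang construct the hyperbolic Young tower for exactly this induced system and establish all structural ingredients of Definition \ref{cmz}: uniform hyperbolicity of $f^R$, the SRB measure $\mu_X$ with bounded density, distortion and absolute continuity of the holonomy, the growth lemma, integrability $\int R\,d\mu_X < \infty$, and mixing $\gcd\{R\}=1$.

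To verify Assumption \ref{assumption}, the cone condition (item 4) follows immediately from Lemma \ref{practicalcone}, since the focusing arcs in a squash are strictly less than full circles and the underlying circles do not meet other boundary components, so the SFC-condition holds. The singularity condition (item 2) is transparent: $R$ can jump only at collisions with the endpoints of circular arcs, where $\mathcal{K}$ is discontinuous, so $\bigcup_i\partial\{R=i\}\subseteq\mathbb{S}$. The quasi-section condition (item 1) has been reduced to showing that the ``non-injective'' configurations in $B_r(q)\times S^1$ form a set of measure $o(r)$; the analysis already carried out for ``straight'' stadia gives $O(r^2)$, and for a ``skewed'' stadium the additional angle $\gamma>0$ between the two flat segments forces injectivity on the flat part as soon as $r\ll\gamma$, while the bouncing-on-the-big-arc case is handled by an elementary geometric estimate yielding again an $O(r^2)$ bound.

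The H\"older condition (item 3) is the real obstacle. My plan would be to enlarge $\mathbb{S}$ to $(f^R)^{-1}\mathbb{S}_1\cup\mathbb{S}_1\cup f^R\mathbb{S}_1$ (so that (un)stable manifolds satisfy uniform slope bounds), and then estimate $\|dx_j\|/\|dx_0\|$ using the formulas \eqref{pesudoderivative}--\eqref{rnderivative} in four regimes: sliding along a circular arc, sliding along a flat segment between two arcs, transitioning from an arc to a flat side, and bouncing on the big arc of a squash. The first three are exactly the cases executed for ``straight'' stadia in the preceding pages, yielding a $1/2$-H\"older bound for stable manifolds and (using the standard $\diam\gamma^{u}=O(k^{-2})$ cell estimate together with Lemma 8.45 of \cite{CMbook}) a $1/8$-H\"older bound for unstable manifolds. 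The fourth regime (big-arc sliding in a squash) is morally identical to the sliding-on-an-arc case, since $\cos\phi$ is preserved along the series. The hardest subcase is the transition through a flat side for unstable manifolds, where $\cos\phi_{k+2}$ can be arbitrarily small; here the SFC-condition provides $|1+\tau_{k+1}B^+(x_{k+1})|\ge 1$ and, combined with the cell-diameter bound, closes the H\"older estimate.

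Once all four items of Assumption \ref{assumption} are verified, the hypotheses of Theorem \ref{thm} are in force, and $\mathcal{N}^{r,q}\to_d \mathcal{P}$ for $\Leb$-a.e.\ $q\in\partial Q$, which is exactly the content of the corollary. No step outside of what has already been performed in the preceding analysis is required.
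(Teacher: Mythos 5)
Your proposal follows the paper's verification of Assumption \ref{assumption} almost line for line: the same choice of $X$ (first collisions in a series with an arc) and $R$, reliance on Chernov--Zhang for the CMZ structure, Lemma \ref{practicalcone} for the cone condition under SFC, the endpoint-discontinuity argument for $\bigcup_i\partial\{R=i\}\subseteq\mathbb{S}$, the $O(r^2)$ bound on non-injective configurations (split between flat-side injectivity for $r\ll\gamma$ and the elementary geometric estimate on the big arc), the same enlargement $\mathbb{S}=(f^R)^{-1}\mathbb{S}_1\cup\mathbb{S}_1\cup f^R\mathbb{S}_1$, and the same case analysis for H\"older continuity yielding exponents $1/2$ (stable) and $1/8$ (unstable), with the big-arc bouncing regime reduced to the arc-sliding case. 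This is the paper's argument; no material difference.
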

\subsection{Another class of billiards with focusing components} In this section we consider billiard tables $Q$ for which each smooth component $\Gamma_i \subseteq \partial Q$ of the boundary is either dispersing, i.e., convex inwards, or focusing, i.e., convex
outwards. A curvature of every dispersing component is bounded
away from zero and infinity. We assume that every focusing component is an arc of a circle, and that there are no points of $\partial Q$ on that circle or inside it, other than the arc itself (that is, the SFC-condition). We assume also that two dispersing components intersect (if they do) transversally (i.e., there are no cusps) and, besides, every focusing arc is not longer than a half of the corresponding circle, e.g. see the Figure \ref{F5}.
Denote the union of dispersing components by $\partial Q^{+}$, and the union of focusing components by $\partial Q^{-}$. Let $X\subseteq \mathcal{M}$ be \begin{gather*}
    X:=(\partial Q^{+}\times [-\pi/2,\pi/2]) \bigcup \{x \in \mathcal{M}: \pi_{\partial Q}x \in \partial Q^{-}, \pi_{\partial Q}x \text{ and }\pi_{\partial Q}(f^{-1}x)\text{ belong to  different }\Gamma_i \},
\end{gather*}  i.e., only the first collisions on circular arcs and any collisions with the dispersing components are included in $X$. Therefore, the case with $R>1$ may occur only in the series of reflections off a circular arc.

\begin{figure}[!htb]
   \begin{minipage}{0.6\textwidth}
\includegraphics[width=.6\linewidth]{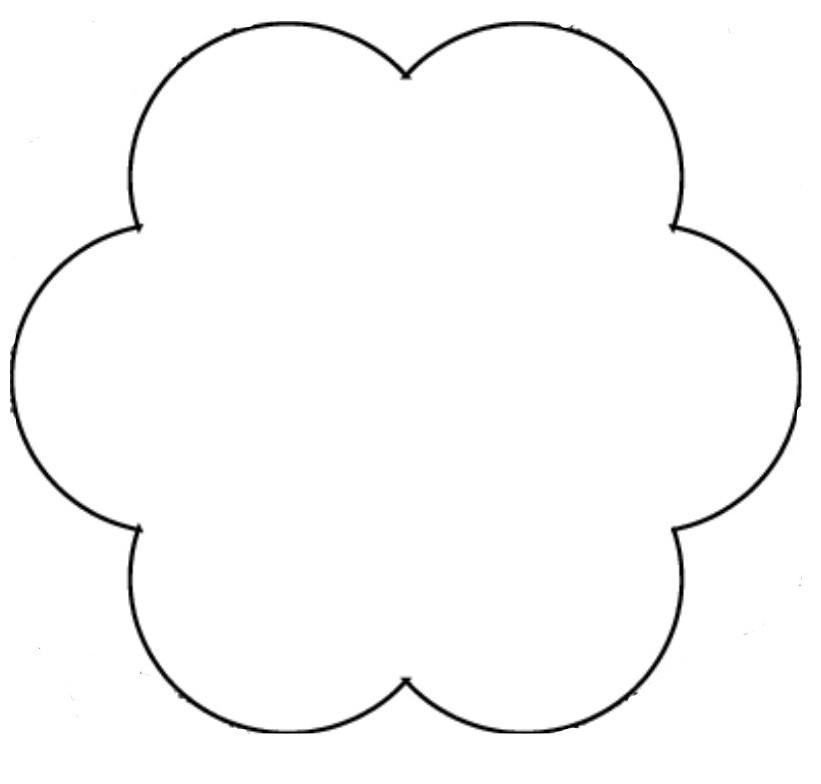}
     \caption{Flower billiard}
     \label{F5}
   \end{minipage}\hfill
   \begin{minipage}{0.48\textwidth}
     \includegraphics[width=.6\linewidth]{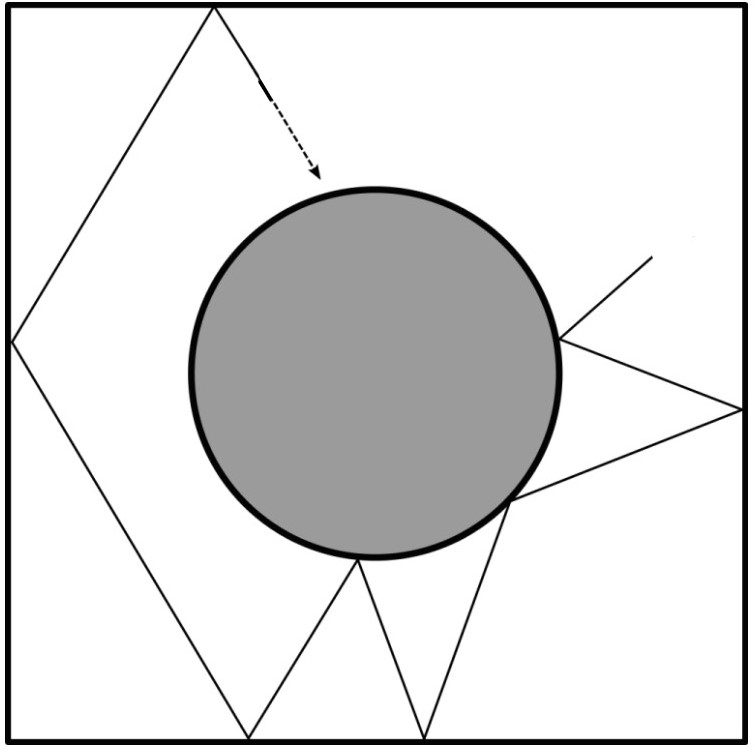}
     \caption{Semi-dispersing billiard}\label{F6}
   \end{minipage}
\end{figure}

The verification of Assumption \ref{assumption} is similar (actually it is easier) to the one for stadium billiards. Therefore we just outline below the differences.
\begin{enumerate}
    \item $B_r(q)\times [-\pi/2,\pi/2]$ is a quasi-section. If a point $q$ belongs to dispersing components, then clearly $B_r(q)\times [-\pi/2,\pi/2]$ is a section.  If $q$ belongs to circular arc, then $B_r(q)\times [-\pi/2,\pi/2]$ is a quasi-section. The argument in this case is exactly the same as for stadium billiards.
    \item H\"older continuity. Note that $R(x)>1$ only occurs if $\pi_{\partial Q}x\in \partial Q^{-}$. In this case, the orbit $\{x, f(x), f^2x \cdots f^{R-1}(x)\}$ is a series of consecutive reflections off a circular arc. Thus the argument for H\"older continuity is exactly the same as that for stadium-type billiards.
\end{enumerate}

Now we can conclude this subsection by stating the following
\begin{corollary}
Theorem \ref{thm} holds for the class of  billiards analyzed in this subsection.
\end{corollary}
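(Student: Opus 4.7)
The plan is to verify that the system $(\mathcal{M}, f)$ with the induced map $(X, f^R)$ defined above is a CMZ structure in the sense of Definition \ref{cmz} and that Assumption \ref{assumption} is satisfied; once this is done the corollary is an immediate consequence of Theorem \ref{thm}. The CMZ structure itself (hyperbolicity, SRB measure, distortion bounds, absolute continuity of holonomies, growth lemmas, finiteness of $\int R\,d\mu_X$, and $\gcd\{R\}=1$) is essentially contained in the work of Chernov--Zhang on hyperbolic billiards with focusing arcs satisfying the SFC-condition; I would invoke those results directly, with $X$ as defined (first entries into focusing arcs together with all collisions on dispersing components), so that between two successive visits to $X$ the orbit consists of a series of consecutive reflections off a single circular arc.

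For the cone fields in Assumption \ref{assumption}(4), I would apply Lemma \ref{practicalcone} verbatim: dispersing components, focusing circular arcs satisfying the SFC-condition, and (trivial) flat components are exactly the hypotheses of that lemma, so invariant cones $C^u,C^s$ with the required transversality and the forward/backward invariance under $Df^{\pm n}$ on $\mathcal{T}(\{q\}\times S^1)$ exist for a.e.\ $q\in\partial Q$. Assumption \ref{assumption}(2) holds because the only discontinuities of $R$ come from orbits whose defining circular arc changes (endpoint collisions or transitions), and at such points $\mathcal{K}$ jumps, so $\partial X_i\subseteq\mathbb{S}$ by formula \eqref{derivativeofbilliardmap}; I would add $(f^R)^{-1}\mathbb{S}_1\cup f^R\mathbb{S}_1$ to $\mathbb{S}$ exactly as in the stadium case so that small (un)stable manifolds lie inside a single cell of the induced partition.

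For the quasi-section condition \ref{assumption}(1), the argument splits into two cases. If $q\in\partial Q^+$ is a dispersing point, the map $\pi_X\colon\pi^{-1}(B_r(q)\times S^1)\to X$ is injective for small $r$ because each preimage point in $X$ is realised by a single reflection in $B_r(q)$ (no sliding can occur on a dispersing arc). If $q\in\partial Q^-$ lies on a focusing arc, I would repeat the stadium argument verbatim: two preimages $(p_1,\phi_1)=f^n(x)$ and $(p_2,\phi_2)=f^m(x)$ belonging to one and the same series of reflections on the arc force $\phi_1=\phi_2$ and $4(m-n)|\phi_i|<2r$, so $|\phi_i|<r/2$; hence the non-injectivity set is contained in $\{(p,\phi):p\in B_r(q),\,|\phi|<r/2\}$, whose measure is $O(r^2)=o(r)=o(\mu_{\mathcal{M}}(B_r(q)\times S^1))$.

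Finally for the Hölder condition \ref{assumption}(3), since $R(x)>1$ happens only for points $x$ whose orbit up to time $R-1$ is a series of consecutive reflections off one and the same circular arc, the calculation reduces to the ``sliding on a circular arc'' case already done in the stadium subsection. Using $\cos\phi_0=\cos\phi_i$ along the arc, the formulas \eqref{pesudoderivative}--\eqref{rnderivative} for $B^+$ on the arc give $\|dx_i\|/\|dx_0\|\precsim k$ on a $k$-sliding cell both on stable and unstable cones, and the cells have diameter $O(1/k^2)$, which yields the $\tfrac12$-Hölder bound $d_{f^j\gamma^k}(f^jx,f^jy)\le C\,d_{\gamma^k}(x,y)^{1/2}$ for all $j<R(x)$; the transitions at $j=0$ and $j=R-1$ are absorbed into $\mathbb{S}$. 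The main obstacle I anticipate is the verification that no additional ``bouncing between components'' scenarios can occur during an induced excursion $\{x,f(x),\dots,f^{R-1}(x)\}$ in the broader flower-like geometry; here the definition of $X$ together with the assumption that every focusing arc subtends at most a half-circle and cusps are forbidden ensures that the excursion is confined to a single arc, so no new case beyond ``sliding on a circular arc'' actually arises. All conditions of Theorem \ref{thm} being verified, the Poisson limit law $\mathcal{N}^{r,q}\to_d\mathcal{P}$ holds for $\Leb$-a.e.\ $q\in\partial Q$.
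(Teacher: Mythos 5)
Your overall route matches the paper's: reduce the corollary to checking the CMZ structure (cite Chernov--Zhang), verify the cone hypotheses via Lemma \ref{practicalcone}, identify the singularity set via the curvature jump, check the quasi-section property case by case, and observe that H\"older continuity reduces to the ``sliding along a circular arc'' computation from the stadium subsection because $R>1$ only happens during a series of reflections off a single arc. That is exactly how the paper handles this class.

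There is, however, a concrete error in your quasi-section argument for $q$ on a focusing arc. You claim to ``repeat the stadium argument verbatim'' but then reproduce the \emph{squash} argument for the arc that is longer than a half-circle (the bouncing case), where the chord passes near the diameter, consecutive hits are separated by $\approx 4\theta$ with $\theta$ small, and one concludes $|\phi|<r/2$. In the class considered in this subsection every focusing arc is at most a half-circle, so bouncing across the arc cannot occur: a trajectory with small $|\phi|$ leaves the arc after one reflection. The only way two iterates $f^{n_1}x$ and $f^{n_2}x$ ($n_1<n_2<R(x)$) can both land in $B_r(q)\subseteq\partial Q^-$ is the sliding (whispering-gallery) regime with $|\phi|$ close to $\pm\pi/2$. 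The actual non-injectivity set is therefore $\{(q',\phi):d(q,q')<r,\ |\phi|>\pi/2-r\}$, whose $\mu_{\mathcal{M}}$-measure is $O(r^3)$, not the set $\{|\phi|<r/2\}$ of measure $O(r^2)$ you wrote down. Your final conclusion that the non-injective part is $o\bigl(\mu_{\mathcal{M}}(B_r(q)\times S^1)\bigr)$ is nevertheless correct (indeed by an even better margin), but the geometric description and the displayed inequality $4(m-n)|\phi_i|<2r$ refer to a scenario that does not arise here and would not survive scrutiny as written; you should substitute the sliding estimate from the stadium subsection.
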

\subsection{Semi-dispersing billiards} In this subsection we consider billiard tables of the following type. Let $R_0 \subseteq \mathbb{R}^2$ be a rectangle, and scatterers $B_1, \cdots, B_r \subseteq \interior R_0$ are open strictly convex sub-domains with smooth, (at least $C^3)$, or piece-wise smooth boundaries whose curvature is bounded away from zero,
and such that $B_i \bigcap B_j=\emptyset$ for $i\neq j$. The boundary of a billiard table $Q = R_0\setminus \bigcup_iB_i$ is partially dispersing (convex inwards) and partially neutral (flat), e.g. see Figure \ref{F6}. The flat part is $\partial R_0$.

Denote  by $\partial Q^{+}$ the union of dispersing components, and the union of four flat sides by $\partial R_0$. Let $X:=\{x\in \mathcal{M}: \pi_{\partial Q}x\in \partial Q^+\}$, where $R$ is the first return time to $X$. If $\sup R<\infty$, then this billiard system has exponential decay of correlations, i.e., it is not slowly mixing. So we assume that $\sup R=\infty$. Clearly $f^R$ is a billiard map of a Sinai billiard with an infinite horizon. Verification of the Assumption \ref{assumption} for such billiards has a few differences with the one for stadium billiards. Namely

\begin{enumerate}
    \item clearly, $B_r(q)\times [-\pi/2,\pi/2]$ is a section if $q \in \partial Q^+$. We will show that, if $q \in \partial R_0$, then $B_r(q)\times [-\pi/2,\pi/2]$ is a quasi-section. Without loss of generality, we may assume that $R_0=[0,1]^2$, $q=(0, c) \in \partial R_0$, where $c \in (0,1)$. Unfold now the bounded billiard table $Q$ to $\mathbb{R}^2$ by mirror reflections after collisions with the flat boundary. Then a billiard orbit, which is a broken line in $Q$, is lifted to become a straight line in $\mathbb{R}^2$, and scatterers $B_1, \cdots, B_r$ are lifted to generate a periodic configuration of scatterers in $\mathbb{R}^2$. (Note that this trick is the same as the one used for Sinai billiards with infinite horizon. Namely, the analysis of a billiard flow between two reflections off scatterers in $R_0$ reduces to consideration of straight segments between consecutive reflections off scatterers in $\mathbb{R}^2$). Suppose that a point $q$ is lifted in this way to the points $\{(p, c+k): p, k \in \mathbb{Z}\}$. 
    
    In order to prove that $B_r(q) \times [-\pi/2,\pi/2]$ is a quasi-section for sufficiently small $r>0$, we will study the ``non-injective" part, i.e., a measure of the configuration $(q_1, \phi_1) \in  B_r(q) \times [-\pi/2,\pi/2]$, which satisfies $f^n(q_1, \phi_1)=(q_2, \phi_2) $ for some $n\ge 1, (q_2, \phi_2) \in B_r(q) \times [-\pi/2,\pi/2]$. A billiard orbit, which is moving along the following set of points \begin{equation*}
        (q_1, \phi_1), f(q_1, \phi_1), \cdots, f^{n-1}(q_1, \phi_1), f^n(q_1, \phi_1)=(q_2, \phi_2),
    \end{equation*} does not intersect $\bigcup_{i} B_i$. The lifting to $\mathbb{R}^2$  has the following property. There are $p, k \in \mathbb{Z}$ (which depend on $\phi_1, n$) such that the line between $q_1\in B_r(q)$ and $q_2+(p,k)\in B_r(q+(p,k))$ does not intersect the periodic configuration of the scatterers, and the slope of this line is 
    \begin{equation*}
        \tan \phi_1=k/p+O(r)=\frac{k/\gcd(k,p)}{ p/\gcd(k,p)}+O(r).
    \end{equation*}
    
    Since this line intersects $B_r(q)$ and $B_r(q+(p,k))$, then it intersects  $B_r(q+(p/i,k/i))$ for any $i$, such that $i|p$ and $i|k$, in particular, $i=\gcd(p,k)$. Therefore every ``non-injective" configuration $(q_1, \phi_1)\in B_r(q) \times [-\pi/2,\pi/2]$ corresponds to unique direction vector $(p/\gcd(k,p), k/\gcd(k,p))$. Without loss of generality, we assume that $\gcd(k,p)=1$. Here $p$ means that the line between $q_1$ and $q_2+(p,k)$ intersects the vertical boundary $\approx p$ times, which implies that the billiard flow in $Q$ divides $R_0$ into several pieces with measures of order $O(1/p)$. Let $k$ mean that the line between $q_1$ and $q_2+(p,k)$ intersects  the horizontal boundary $\approx k$ times, which implies that the billiard flow in $Q$ divides $R_0$ into several pieces with measure of order $O(1/k)$. If $p$ or $k$ is larger than $M>0$ (which depends on the size of $\bigcup_iB_i$), then the billiard orbit, which is passing consecutively through the points \begin{equation*}
        (q_1, \phi_1), f(q_1, \phi_1), \cdots, f^{n-1}(q_1, \phi_1), f^n(q_1, \phi_1)=(q_2, \phi_2)
    \end{equation*}  must intersect $\bigcup_{i} B_i$. Therefore $\max\{p,k\}$ must not exceed $M$ for such billiard orbit, in order for it not to intersect $\bigcup_{i} B_i$. It implies that there are finitely many pairs $(p,k)$ with $\gcd(p,k)=1$ such that $\tan \phi_1=k/p+O(r)$. Then $\phi_1$ has a measure of order $O(r)$. So, a measure of ``non-injective" configuration $(q_1,\phi_1) \in B_r(q)\times [-\pi/2,\pi/2]$ is of order $O(r^2)$. Hence $ B_r(q)\times [-\pi/2,\pi/2]$ is a quasi-section.
    
    \item H\"older continuity.  Consider, at first, unstable manifolds in $X$ (which are, actually, homogeneous unstable manifolds, defined, e.g., in section 5.4 of \cite{CMbook}, see also Remark \ref{singular}). Suppose that $x_0 \in X$, and that $n:=R(x_0)$ is sufficiently large. Then the orbit of $x_0$ hits $\partial R_0$ many times before getting  back to $\partial Q^+$. Clearly for billiards of this type all possible angles $\phi_1$ are bounded by $\pi/2$. Consider now the worse case when $\phi_1\approx 0$, i.e., $|\phi_1|< \phi_Q$ for some small $\phi_Q$, (which depends only on exact shape of a billiard table under consideration), such that $\phi_1=\phi_2=\cdots=\phi_{j-1}=\phi_{j+1}=\cdots =\phi_{n-1}$, and $\phi_j=\pi/2-\phi_1 \approx \pi/2$ for some $2\le j< n$.  Then for any $i \neq j$ we have $\cos \phi_j\approx 1/n$, and $\cos \phi_i\approx 1$. Assume now that $f^n\big(\gamma^u(x_0)\big)$ belongs to a homogeneity strip (see its definition on page 536 in \cite{Chernovjsp}), for instance to $\cos \phi_n \approx 1/k^2$. Observe also that  $k \succsim n^{1/4}$ (see page 544 of \cite{Chernovjsp}).  It is known, see e.g. \cite{CMbook}, that $B^+(x_0)\in [\frac{2K(q_0)}{\cos \phi_0}, \frac{2K(q_0)}{\cos \phi_0}+O(1)]$. Thus we have\begin{gather*}
       \frac{1}{B^{+}(x_j)}=\frac{1}{B^{-}(x_i)}=\sum_{0\le i \le j-1 }\tau(f^ix_0) + \frac{1}{B^{+}(x_0)}\\
       \implies \frac{||dx_n||_p}{||dx_j||_p}=|1+\sum_{j \le i <n}\tau(f^ix)B^+(x_j)|=\frac{\sum_{0\le i \le n-1}\tau(f^ix_0) + \frac{1}{B^{+}(x_0)}}{\sum_{0\le i \le j-1 }\tau(f^ix_0) + \frac{1}{B^{+}(x_0)}} \ge 1\\
       \implies \frac{||dx_n||}{||dx_j||}\succsim \frac{\cos \phi_j}{\cos \phi_n}\frac{||dx_n||_p}{||dx_j||_p} \succsim \frac{\cos \phi_j}{\cos \phi_n} \approx \frac{1/n}{1/k^2} \succsim \frac{1}{k^2}\\
       \implies \diam f^j\big(\gamma^u(x_0)\big) \precsim k^2 \diam f^n\big(\gamma^u(x_0)\big) \precsim [\diam f^n\big(\gamma^u(x_0)\big)]^{1/3},
         \end{gather*}where the last $``\precsim"$ is due to $\diam f^n\big[\gamma^u(x_0)\big]=O(1/k^3)$. We also have that
       \begin{gather*}
       \frac{||dx_n||_p}{||dx_0||_p}=|1+\sum_{j <n}\tau(f^jx)B^+(x_0)|\approx \frac{n}{\cos \phi_0}\\
       \implies \frac{||dx_n||}{||dx_0||}\approx \frac{\cos \phi_0}{\cos \phi_n} |1+\sum_{j <n}\tau(f^jx)B^+(x_0)|\approx \frac{n}{\cos \phi_n}.
    \end{gather*} 
    
    Let now $s=\cos \phi_n$. Then $|ds|\approx |d\phi_n|\approx |dx_n|$. Hence, we get\begin{gather*}
        \diam \gamma^u(x_0)=\int_{f^n \gamma^u(x_0)} \frac{s}{n}|dx_n|\approx \int_{f^n \gamma^u(x_0)} \frac{s}{n}|ds| \succsim \frac{1}{n} (\int_{f^n \gamma^u(x_0)}|ds|)^2 \approx \frac{[\diam f^n \gamma^u(x_0)]^2}{n}.
    \end{gather*} 
    
    Since $\diam \gamma^u(x_0) =O(1/n^2)$, then \begin{gather*}
        \diam f^n \gamma^u(x_0) \precsim [n  \diam \gamma^u(x_0)]^{1/2} \precsim  [\diam \gamma^u(x_0)]^{1/4}.
    \end{gather*}
    
    By combining now all the arguments above we obtain \begin{gather*}
         \diam f^j \gamma^u(x_0) \precsim  [\diam f^n \gamma^u(x_0)]^{1/3}  \precsim  [\diam \gamma^u(x_0)]^{1/12}.
    \end{gather*}
    
    If $i \in [0,n) \setminus \{j\}$, then, by making use of the relation $\diam \gamma^u(x_0) =O(1/n^2)$, we get \begin{gather*}
        \frac{||dx_i||_p}{||dx_0||_p}=|1+\sum_{j <i}\tau(f^jx)B^+(x_0)|\precsim \frac{n}{\cos \phi_0}\\
        \implies \frac{||dx_i||}{||dx_0||}\approx \frac{\cos \phi_0}{\cos \phi_i} |1+\sum_{j <n}\tau(f^jx)B^+(x_0)|\precsim \frac{n}{\cos \phi_i}\precsim n\\
        \implies  \diam f^i \gamma^u(x_0) \precsim n  \diam \gamma^u(x_0) \precsim  [\diam \gamma^u(x_0)]^{1/2}.
        \end{gather*}
        
        So far we proved H\"older continuity for the case when $\phi_1<\phi_Q$. Actually this argument is analogous to the one for a series of reflections off the flat sides in stadium billiards. If $\phi_1> \phi_Q$, then all $\phi_i$ $i\in [0, R(x_0))$ are uniformly bounded away from $0$ and $\pi/2$.  Then the argument is the same as for bouncing on the flat sides in stadium billiards. Actually, this case is much easier, and we skip a proof. Therefore we obtain  H\"older continuity of unstable manifolds. For stable manifolds the argument is similar and, basically, the same as  the one for stadium billiards. Thus we do not repeat it here. 
\end{enumerate}

Therefore, we obtain the following result.
\begin{corollary}
Theorem \ref{thm} holds for the class of semi-dispersing billiards considered in this subsection.
\end{corollary}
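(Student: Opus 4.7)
The plan is to verify the two blanket hypotheses of Theorem \ref{thm}: that $(X,f^R)$ is a CMZ structure (Definition \ref{cmz}) and that Assumption \ref{assumption} holds. The subsection was set up precisely to carry out these verifications, so most of the work has already been done and the proof will mainly collect the pieces.

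First, I will invoke the Chernov--Young construction for dispersing billiards with (possibly infinite) horizon, applied to $f^R\colon X\to X$. The point is that between two consecutive collisions with the convex scatterers any orbit only reflects off the flat sides $\partial R_0$, so unfolding across $\partial R_0$ conjugates $f^R$ to the billiard map of a periodic configuration of convex scatterers in $\mathbb{R}^2$, i.e.\ a Sinai billiard, to which the machinery of \cite{Chernovjsp, CMbook} applies. All eight items of Definition \ref{cmz}, including hyperbolicity, the growth lemmas with the homogeneity strip decomposition, distortion and absolute continuity bounds, integrability $\int R\, d\mu_X < \infty$ via Kac's lemma, and the mixing condition $\gcd\{R\}=1$, are then standard. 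The only extra check is that the partition $X=\bigcup_i X_i$ with $R|_{X_i}$ constant can be chosen so that $\bigcup_i \partial X_i \subseteq \mathbb{S}$; this is automatic because the value of $R$ jumps precisely across the discontinuity curves of $f^R$.

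Next, I will verify Assumption \ref{assumption}. Item (4), the cone conditions, follows at once from Lemma \ref{practicalcone} since the boundary has only flat and dispersing components and the SFC-condition is vacuous. Item (1), the quasi-section property, is the unfolding estimate already spelled out: for $q\in \partial R_0$ and small $r>0$ any non-injective configuration in $B_r(q)\times S^1$ determines a primitive direction $(p,k)$ with $\gcd(p,k)=1$ and $\max\{|p|,|k|\}\le M$, yielding $\mu_{\mathcal{M}}(B_r(q)\times S^1 \setminus S_r)=O(r^2)=o(r)$, while $\mu_{\mathcal{M}}(B_r(q)\times S^1)\approx r$; for $q\in \partial Q^+$ the set is a genuine section. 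Item (2) is immediate from how level sets of $R$ sit inside $\mathbb{S}$. Item (3), H\"older continuity during a single return, is the analysis already sketched; the delicate subcase is the long excursion with $R=n$ large in which a single intermediate angle $\phi_j$ is close to $\pi/2$. Here, combining Chernov's bound $k \succsim n^{1/4}$ on the arrival homogeneity strip with formulas (\ref{pesudoderivative})--(\ref{rnderivative}), one obtains $\diam f^j \gamma^u \precsim (\diam \gamma^u)^{1/12}$ uniformly in $j<n$, and an analogous (easier) bound for stable disks.

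With both hypotheses verified, Theorem \ref{thm} applies and yields $\mathcal{N}^{r,q}\to_d \mathcal{P}$ for a.e.\ $q\in \partial Q$. The main obstacle I anticipate is the unstable-side H\"older estimate during long infinite-horizon excursions: one must control simultaneously the cumulative expansion from $x_0$ to an arbitrary intermediate $x_j$ and the approach into an increasingly narrow homogeneity strip at the final bounce. The key inputs are Chernov's relation between $n$ and the homogeneity index and the observation that the bulk of the expansion is concentrated at the single near-grazing bounce off $\partial R_0$, both of which are already supplied in the subsection; everything else is essentially soft.
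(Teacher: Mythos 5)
Your proposal is correct and follows essentially the same route as the paper: verify the CMZ structure for the induced map $f^R$ on $X$ (the collisions with the dispersing scatterers), which reduces via unfolding across $\partial R_0$ to an infinite-horizon Sinai billiard; obtain the cone fields from Lemma \ref{practicalcone}; establish the quasi-section property for $q\in\partial R_0$ by the unfolding/direction-vector counting argument giving a non-injective set of measure $O(r^2)=o(r)$; and prove H\"older continuity along stable/unstable disks during an excursion, with the delicate case being a single near-grazing bounce off a flat side handled via the homogeneity-strip bound $k\succsim n^{1/4}$ and the $p$-metric derivative formulas to get a uniform $[\diam\gamma^u]^{1/12}$ estimate. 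The only organizational difference is that you spell out the CMZ-structure verification explicitly (item by item, via Chernov--Young), whereas the paper's subsection treats the CMZ structure as known from \cite{CZcmp,CZnon} and concentrates on verifying Assumption \ref{assumption}; substantively the arguments coincide.
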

\begin{remark}
From the proof of this corollary, it could be seen that the singularities for this class of semi-dispersing billiards have, in a sense, a similar structure as the singularities in the stadium-type billiards. This is a reason why these two classes of billiards have the same rate of decay of correlations (see \cite{CZcmp, CZnon}).
\end{remark}

\begin{acknowledgements}
We thank the both anonymous referees for numerous comments and suggestions, which allowed to essentially improve readability of the paper. L.B. was partially supported by the NSF grant DMS-2054659.
\end{acknowledgements}

%
%

\medskip

\bibliography{bibtext}

\end{document}